\theoremstyle{plain}
\newtheorem{theorem}{Theorem}[section]
\newtheorem{proposition}[theorem]{Proposition}
\newtheorem{lemma}[theorem]{Lemma}
\newtheorem{corollary}[theorem]{Corollary}
\newtheorem{remark}[theorem]{Remark}
\newtheorem{definition}[theorem]{Definition}
\newtheorem{example}[theorem]{Example}
\newcommand{\N}{\mathbf{N}}
\newcommand{\Z}{\mathbf{Z}}
\newcommand{\F}{\mathbf{F}}
\newcommand{\GL}{\mathrm{GL}}
\newcommand{\PGL}{\mathrm{PGL}}
\newcommand{\PO}{\mathrm{PO}}
\newcommand{\A}{\mathbf{A}}
\newcommand{\PP}{\mathbf{P}}
\newcommand{\G}{\mathbf{G}}
\newcommand{\Autgp}[1]{\mathbf{Aut}_{#1}}
\newcommand{\sm}[1]{#1^{\textrm{sm}}}
\renewcommand{\phi}{\varphi}
\newcommand{\m}{\mathfrak{m}}
\newcommand{\p}{\mathfrak{p}}
\newcommand{\fonction}[4]{\left\{\begin{array}[c]{ccc}#1 & \longrightarrow & #2 \\ #3 & \longmapsto & #4 \end{array}\right.}
\DeclareMathOperator{\id}{id}
\DeclareMathOperator{\Spec}{Spec}
\DeclareMathOperator{\Aut}{Aut}
\DeclareMathOperator{\Hom}{Hom}
\DeclareMathOperator{\pr}{pr}
\DeclareMathOperator{\Pic}{Pic}
\DeclareMathOperator{\Gal}{Gal}
\DeclareMathOperator{\Vect}{Vect}
\DeclareMathOperator{\coker}{coker}
\title{Almost homogeneous curves over an arbitrary field}
\author{Bruno Laurent \thanks{Univ. Grenoble Alpes, CNRS, Institut Fourier, F-38000 Grenoble, France \newline \url{Bruno.Laurent@univ-grenoble-alpes.fr}}}
\date{}
\begin{document}

\maketitle

\begin{abstract}
We classify the pairs $(C,G)$ where $C$ is a seminormal curve over an arbitrary field $k$ and $G$ is a smooth connected algebraic group acting faithfully on $C$ with a dense orbit, and we determine the equivariant Picard group of $C$. We also give a partial classification when $C$ is no longer assumed to be seminormal.
\end{abstract}


\section{Introduction}

A variety which is homogeneous under the action of an algebraic group is a very symmetric object. The study of equivariant compactifications of homogeneous varieties leads to the notion of almost homogeneous varieties. These are the varieties with a dense orbit. For example, toric varieties are the normal almost homogeneous varieties under the action of a torus.

\bigskip
In case of curves, the situation is rather nice for several reasons. First, a curve is almost homogeneous under the action of a smooth connected algebraic group if and only if the action is non-trivial. Moreover the complement of the dense orbit consists only of finitely many fixed points.

Second, we have natural compactifications. More precisely, if $C$ is a regular curve over a field $k$ then there exists a regular projective curve $\widehat{C}$ and an open immersion $C \hookrightarrow \widehat{C}$. Such a curve satisfies a universal property: for any proper scheme $Y$ over $k$, every morphism $C \to Y$ extends uniquely to a morphism $\widehat{C} \to Y$. In particular $\widehat{C}$ is unique up to unique isomorphism. We call it the regular completion of $C$ and the points of $\widehat{C} \setminus C$ are called the points at infinity.

Finally, if $C$ is a projective curve then the functor which associates with a $k$-scheme $S$ the abstract group $\Aut_{S}(C \times S)$ of $S$-automorphisms of $C \times S$ is representable by an algebraic group denoted by $\Autgp{C}$ (see \cite[Ex. 7.1.2]{BRI_structure}).

\bigskip
The study of automorphisms of curves has a long story. A first step was a theorem of Adolf Hurwitz stating that a compact Riemann surface of genus at least $2$ has finitely many automorphisms. This result was generalized several times, and in particular Maxwell Rosenlicht gave in \cite[Th. p.4]{ROSFF} a version for smooth projective curves over an arbitrary field, in the language of algebraic function fields in one variable. He also classified in \cite[Th. p.10]{ROSFF} the algebraic function fields which he called ``exceptional'', that is, those of genus at least $1$ having infinitely many automorphisms (fixing a given place if the genus equals $1$). The link with regular curves is standard: there is an anti-equivalence between the category of regular curves over $k$ and the category of algebraic function fields in one variable over $k$, which associates with a curve $C$ its function field $k(C)$, and the closed points of $C$ correspond to the places of $k(C)$. Some years after, Peter Russell gave in \cite[Th. 4.2]{RUS} an interpretation of Rosenlicht's classification in geometric terms, which we state for simplicity in case $k$ is separably closed: a regular projective curve $C$ of genus at least $1$ has infinitely many automorphisms (fixing a given closed point if the genus equals $1$) if and only if $C$ is the regular completion of a torsor under a non-trivial form of the additive group $\G_{a,k}$.

\bigskip
From the geometric point of view, a natural approach to classify almost homogeneous curves is to understand the regular completions of homogeneous curves, and the behavior of the points at infinity. Vladimir Popov thus obtained in \cite[Ch. 7]{POP} a full classification of almost homogeneous curves over an algebraically closed field of characteristic zero, and determined their abstract automorphism group. Our objective is to extend this result by classifying the pairs $(C,G)$ where $C$ is a curve over an arbitrary field and $G$ is a smooth connected algebraic group acting faithfully on $C$ with a dense orbit. 

\bigskip
The complexity of the classification depends on the class of singularity of the curves. After regular curves, the next class to look at is the class of seminormal curves. Roughly speaking, these are the curves whose branches intersect as transversally as possible. They can be easily described in the language of pinchings developed by Daniel Ferrand in \cite{FER} (see Section \ref{section_normalization} and Lemma \ref{lemma_SN_curve_pinching}). We obtain the following theorem.

\begin{theorem}\label{th_general_classification}
Let $C$ be a seminormal curve, $G$ is a smooth connected algebraic group and $\alpha : G \times C \to C$ an action. The action is faithful and $C$ is almost homogeneous if and only if one of the following cases holds:  
\begin{enumerate}
\item (homogeneous curves)\label{case_homogeneous}
  \begin{enumerate}
  \item $C$ is a smooth projective conic and $G \simeq \Autgp{C}$;
  \item $C \simeq \A^1_k$ and $G \simeq \G_{a,k} \rtimes \G_{m,k}$ (acting by affine transformations);
  \item $G$ is a form of $\G_{a,k}$ and $C$ is a $G$-torsor;
  \item $G$ is a form of $\G_{m,k}$ and $C$ is a $G$-torsor;
  \item $C$ is a smooth projective curve of genus $1$ and $G \simeq \Autgp{C}^\circ$;
  \end{enumerate}
\item (regular, non-homogeneous curves)\label{case_regular_qhomogeneous}
  \begin{enumerate}
  \item $C \simeq \PP^1_k$ and $G \simeq \G_{a,k} \rtimes \G_{m,k}$;
  \item $G$ is a form of $\G_{a,k}$ and $C$ is the regular completion of a $G$-torsor;
  \item $C \simeq \A^1_k$ or $\PP^1_k$ and $G \simeq \G_{m,k}$;
  \item $C$ is a smooth projective conic and $G$ is the centralizer of a separable point \mbox{of degree $2$};
  \end{enumerate} 
\item (seminormal, singular, non-homogeneous curves)\label{case_SN_qhomogeneous}
  \begin{enumerate}
  \item\label{case_qhomogeneous_Ga} $G$ is a non-trivial form of $\G_{a,k}$ and $C$ is obtained by pinching the point at infinity $\widetilde{P}$ of the regular completion of a $G$-torsor on a point $P$ whose residue field $\kappa(P)$ is a strict subextension of $\kappa(\widetilde{P})/k$;
  \item $C$ is obtained by pinching two $k$-rational points of $\PP^1_k$ on a $k$-rational point and $G \simeq \G_{m,k}$;
  \item $C$ is obtained by pinching a separable point $\widetilde{P}$ of degree $2$ of a smooth projective conic $\widetilde{C}$ on a $k$-rational point, and $G$ is the centralizer of $\widetilde{P}$ in $\Autgp{\widetilde{C}}$.
  \end{enumerate}
\end{enumerate} 
\end{theorem}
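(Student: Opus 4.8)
The plan is to treat the two implications separately, with essentially all the work in the ``only if'' direction. The ``if'' direction is a case-by-case verification: in each listed case one exhibits an explicit dense orbit and checks faithfulness from the concrete description of the group and of the (possibly pinched) curve. For the homogeneous cases \ref{case_homogeneous} this is immediate, and for the pinched cases \ref{case_SN_qhomogeneous} one observes that the dense orbit of the regular model survives the pinching and that the $G$-action descends along the pinching map by functoriality.

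For the ``only if'' direction, the first step is to pass to the normalization. Since $C$ is seminormal, Lemma \ref{lemma_SN_curve_pinching} presents it as the pinching of its normalization $\nu \colon \widetilde{C} \to C$, with $\widetilde C$ a regular curve, along a finite conductor. The $G$-action lifts uniquely to $\widetilde C$ by the universal property of normalization, and the dense orbit pulls back to a dense orbit, so $(\widetilde C, G)$ is a regular almost homogeneous curve. Moreover the conductor, being canonically attached to $\nu$, is $G$-stable; as $G$ is smooth and connected, each of the finitely many closed points in its support has a connected finite orbit and is therefore fixed. Hence the singular points of $C$ all arise by pinching $G$-fixed points of $\widetilde C$, and the problem splits into (i) classifying regular almost homogeneous curves and (ii) deciding which pinchings of fixed points yield a seminormal curve.

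For (i) I would replace $\widetilde C$ by its regular completion $\widehat{\widetilde C}$, to which the action extends by the universal property of the completion, and study the regular projective curve $X := \widehat{\widetilde C}$ together with its dense orbit $\Omega \cong G/H$. Rosenlicht's finiteness theorem \cite{ROSFF} forces the genus of $X$ to be $0$ or $1$, since in higher genus $\Aut$ is finite and cannot contain a positive-dimensional $G$. If $\Omega$ is complete one gets the genus-$1$ homogeneous curve with $G \simeq \Autgp{X}^\circ$, or a conic acted on transitively, forcing $G \simeq \Autgp{X}$. Otherwise $\Omega$ is affine and, again following Rosenlicht, is a torsor under a form of $\G_{a,k}$ or of $\G_{m,k}$, or the affine line under $\G_{a,k}\rtimes \G_{m,k}$; the complement $X \setminus \Omega$ is then the set of fixed points at infinity, whose number and residue fields are read off from the form. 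The delicate point here is the imperfect-field analysis: a non-trivial form of $\G_{a,k}$ has a regular, non-smooth completion of positive genus (this is exactly Russell's geometric reformulation \cite{RUS}), so ``genus $1$'' splits into the elliptic case and the additive case, which I would separate by testing whether $\Autgp{X}^\circ$ is proper.

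Finally, for (ii) I would run through the fixed-point configurations produced in (i) and, using Ferrand's pinching criterion for seminormality \cite{FER}, determine which identifications of fixed points keep the curve seminormal while preserving a dense orbit. For a form of $\G_{a,k}$ there is a single fixed point at infinity $\widetilde P$, and the only compatible seminormal pinchings collapse $\widetilde P$ onto a point $P$ with $\kappa(P) \subsetneq \kappa(\widetilde P)$, yielding case \ref{case_qhomogeneous_Ga}; for $\G_{m,k}$ acting on $\PP^1_k$ the two rational fixed points may be glued into a node; and for a conic the separable degree-$2$ fixed locus may be pinched to a rational point. The main obstacle I anticipate is precisely this interaction between seminormality and the arithmetic of residue fields at the fixed points over an imperfect base: for each form one must control $\kappa(\widetilde P)$ and the sub-$k$-algebras of it that can occur as $\mathcal{O}_{C,P}/\mathfrak{m}$ in a seminormal pinching, and check that the descended action stays faithful. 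Ruling out all remaining identifications — the non-seminormal ones and those that destroy the dense orbit — is where the careful bookkeeping will lie.
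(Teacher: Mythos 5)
Your global architecture is the paper's: present the seminormal $C$ as the pinching of its regular normalization along the conductor (reduced, by seminormality), lift the action, classify the regular almost homogeneous pairs $(\widetilde{C},G)$, then decide which pinchings of fixed points keep $C$ seminormal — and your observation that the strict inclusion $\kappa(P)\subsetneq\kappa(\widetilde{P})$ is exactly what rules out pseudo-isomorphisms is the right one. But your step (i) contains a genuine error. The claim that Rosenlicht's finiteness theorem forces the genus of $X$ to be $0$ or $1$ is false over an imperfect field: Rosenlicht's ``exceptional'' function fields — which by Russell's reformulation are precisely the regular completions of torsors under non-trivial forms of $\G_{a,k}$ — have infinitely many automorphisms and can have genus strictly greater than $1$ (indeed arbitrarily large). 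So the trichotomy you set up would discard exactly the curves of cases 2(b) and \ref{case_qhomogeneous_Ga} whenever their genus exceeds $1$, and your patch (``genus $1$ splits into the elliptic case and the additive case'') still caps the genus incorrectly. The paper avoids this trap by base-changing to $\overline{k}$ \emph{before} taking the regular completion: over $\overline{k}$ the completion of the open orbit genuinely is $\PP^1_{\overline{k}}$ or a genus-$1$ curve (Proposition \ref{prop_aut_genus} applies there), the $\PP^1_{\overline{k}}$ case is settled by the Borel-subgroup analysis in $\PGL_{2,\overline{k}}$, and the forms of $\G_{a,k}$ and $\G_{m,k}$ appear only at the descent stage. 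If you insist on working over $k$ with Rosenlicht's theorem, you must use the correct statement — genus $\geq 1$ with infinite automorphisms \emph{characterizes} the additive forms — rather than a genus bound.

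Two further points. First, in the two-dimensional case you simply assert the open orbit is ``the affine line under $\G_{a,k}\rtimes\G_{m,k}$'', but a priori you only know $C_{\overline{k}}\simeq\A^1_{\overline{k}}$ and $G_{\overline{k}}\simeq\G_{a,\overline{k}}\rtimes\G_{m,\overline{k}}$, and nontrivial forms of $\A^1_k$ exist over imperfect fields; the paper needs a dedicated argument here (descend a maximal torus $T\simeq\G_{m,k}$ through the purely inseparable splitting extension, find a $k$-point with non-constant $T$-orbit map, extend it to a morphism $\PP^1_k\to\widehat{C}$ to force $\widehat{C}\simeq\PP^1_k$), and your proposal omits it entirely. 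Second, a smaller gap: your fixed-point argument (``connected finite orbit, therefore fixed'') only gives fixedness of the support. To descend the action through the pinching, and to see that $\lambda$ is automatically equivariant, you need $G$ to act trivially on the reduced zero-dimensional \emph{scheme} $\widetilde{Z}$, including on residue fields that may be nontrivial purely inseparable extensions of $k$; this is where smoothness of $G$ is essential (Lemma \ref{lemma_action_trivial_dim0}, which fails for $\mu_p$), not connectedness alone.
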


Let us notice that, since in the case \ref{case_qhomogeneous_Ga} the curves are parameterized by subextensions of $\kappa(\widetilde{P})/k$, these curves can form an uncoutable family (see Remark \ref{rem_continuous_family}).

\bigskip
When $C$ is not necessarily seminormal, the situation is more complex and requires to study representations of algebraic groups. We give a classification in Sections \ref{section_arbitrary_Ga} and \ref{section_arbitrary_Gm}, except for almost homogeneous curves under the action of $\G_{a,k} \rtimes \G_{m,k}$ or a form of $\G_{a,k}$ when the field $k$ has positive characteristic, because in this case the representations of the forms of $\G_{a,k}$ (and even $\G_{a,k}$ itself) are not so well-understood. The full classification remains an open problem.

Another open problem is to determine the automorphism group scheme of almost homogeneous projective curves, especially for the curves obtained by pinching the regular completion of a torsor under a non-trivial form of $\G_{a,k}$.

\bigskip
For projective almost homogeneous curves under the action of an algebraic group $G$, one may want to try to embed them in the projectivization of a $G$-module. The standard tool is the notion of $G$-linearized line bundles. We describe in Proposition \ref{prop_pinching_linearized_line_bundles} the $G$-linearized line bundles over a variety obtained by a pinching. We use this description to determine in Theorem \ref{th_general_PicG} the equivariant Picard groups of the curves appearing in Theorem \ref{th_general_classification}.

\paragraph*{Notations and conventions.} We fix a base field $k$ and an algebraic closure $\overline{k}$, and denote by $k_s$ the separable closure of $k$ in $\overline{k}$. For all $k$-schemes $X$ and $S$, the base change $X \times_k S$ shall be denoted by $X_S$; in case $S= \Spec K$ for some field extension $K/k$, we simply write $X_K$. 

All morphisms between $k$-schemes are morphisms over $k$. A variety over $k$ is a separated scheme of finite type over $\Spec k$ which is geometrically integral. A curve is a variety of dimension $1$. An algebraic group over $k$ is a group scheme of finite type over $\Spec k$. A subgroup of an algebraic group is a (closed) subgroup scheme. We may consider non-smooth groups, but all the groups acting on a variety shall be assumed smooth.

For a smooth algebraic group $G$, we denote by $X(G) = \Hom_{k_s-\mathrm{gp}}(G_{k_s},\G_{m,k_s})$ the abstract group of characters of $G_{k_s}$. Let $\underline{X(G)}_{k_s}$ be the corresponding constant group scheme over $k_s$ and $\widehat{G}$ the \'etale sheaf given by $\widehat{G}(S) = \left(\Hom_{k_s-\textrm{sch}}(S_{k_s},\underline{X(G)}_{k_s})\right)^{\Gal(k_s/k)}$ for every $k$-scheme $S$. For any Galois extension $K/k$ we have $\widehat{G}(\Spec K) \simeq \Hom_{K-\mathrm{gp}}(G_K,\G_{m,K})$ and this group is simply denoted by $\widehat{G}(K)$.

\paragraph*{Acknowledgements.} I gratefully thank Michel Brion for his helpful and enlightening ideas and suggestions. I also thank Rapha\"el Achet, Mohamed Benzerga and Delphine Pol for fruitful discussions.


\section{Regular almost homogeneous curves}

\subsection{Scheme-theoretic actions}

In this section we recall some elementary facts about the actions of algebraic groups, in the setting of schemes. We use \cite{DEM} as a general reference.

\bigskip
Let $X$ be a variety, $G$ a smooth algebraic group with neutral element $e$ and an action $\alpha : \fonction{G \times X}{X}{(g,x)}{gx}$. For any $k$-rational point $x \in X(k)$ the orbit morphism $\alpha_x : \fonction{G}{X}{g}{gx}$ is flat and factorizes as $G \to G x \to X$ where $G x$ is a reduced scheme, the first morphism is faithfully flat and the second one is an immersion. Thus $G x$ is a $G$-stable subscheme of $X$, called the orbit of $x$ (see \cite[II 5.3.1]{DEM}).

For any closed subscheme $Y$ of $X$, the functors $N_G(Y)$ and $C_G(Y)$ which associate with each $k$-scheme $S$ the abstract groups 
\begin{align*}
N_G(Y)(S) & = \{g \in G(S) \mid  g \textrm{ induces an automorphism of } Y_S\} \\ &  = \{g \in G(S) \mid \forall S' \to S, \forall y \in Y(S') \subseteq X(S'), gy \in Y(S')\}
\end{align*}
and
\begin{align*}
C_G(Y)(S) & = \{g \in G(S) \mid  g \textrm{ induces the identity on } Y_S\} \\ & = \{g \in G(S) \mid \forall S' \to S, \forall y \in Y(S'), gy=y\}
\end{align*}
are representable by subgroups of $G$, called respectively the normalizer and the centralizer of $Y$ (see \cite[II 1.3.6]{DEM}). In case $Y=X$, the centralizer is called the kernel of the action and is a normal subgroup of $G$. In case $Y$ is a $k$-rational point $x$, the centralizer is denoted by $G_x$ and is called the isotropy subgroup of $x$; it is also the fiber at $x$ of $\alpha_x$, and $\alpha_x$ induces an isomorphism $G/G_x \simeq G x$ (see below for a reminder on quotients). 

Similarly, the functor $X^G$ which associates with each $k$-scheme $S$ the set 
\begin{align*}
X^G(S) & = \{x \in X(S) \mid \textrm{the orbit morphism } \alpha_x : G_S \to X_S \textrm{ is trivial}\} \\
& = \{x \in X(S) \mid \forall S' \to S, \forall g \in G(S'), gx=x \}
\end{align*}
is representable by a closed subscheme of $X$, called the subscheme of fixed points. Moreover $X^G(\overline{k})$ is the set of the elements of $X(\overline{k})$ which are fixed under the action of $G(\overline{k})$.

\bigskip

One would like to define $X$ to be homogeneous (resp. almost homogeneous) if there exists a unique orbit (respectively a dense orbit). Since $X$ may not have any $k$-rational point, a more convenient definition can be given as follows.

\begin{definition}\label{def_almost_hom}
The variety $X$ is said to be homogeneous (resp. almost homogeneous) under the action of $G$ if the morphism $\gamma=(\alpha,\pr_2) : \fonction{G \times X}{X \times X}{(g,x)}{(gx,x)}$ is surjective (resp. dominant).
\end{definition}

However, one recovers the natural definition in terms of orbits after a suitable field extension, as shown by the following lemmas.

\begin{lemma}
Let $K/k$ be a field extension. The following assertions are equivalent:
\begin{enumerate}[label=\roman*)]
 \item The variety $X$ is homogeneous under the action of $G$.
 \item The variety $X_K$ is homogeneous under the action of $G_K$.
 \item The action of the abstract group $G(\overline{k})$ on the set $X(\overline{k})$ is transitive.
 \item If there exists a $K$-rational point $x \in X(K)$ then the orbit $G_K x$ equals $X_K$.
\end{enumerate}
\end{lemma}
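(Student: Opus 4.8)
The plan is to prove the chain of equivalences by establishing a cycle, using the characterization of homogeneity via surjectivity of $\gamma = (\alpha,\pr_2)$ from Definition \ref{def_almost_hom} as the central pivot. First I would establish the equivalence i) $\Leftrightarrow$ ii), which is the geometrically fundamental step. Surjectivity of a morphism of finite-type $k$-schemes is insensitive to field extension: $\gamma$ is surjective if and only if its base change $\gamma_K = (\alpha_K, \pr_2)$ is surjective. Indeed, the base change $(G \times X)_K \to (X \times X)_K$ along $\Spec K \to \Spec k$ is faithfully flat, so surjectivity ascends and descends between the two levels. Since the formation of $\gamma$ commutes with base change --- that is, $\gamma_K$ is precisely the analogous morphism for the action $\alpha_K$ of $G_K$ on $X_K$ --- this gives i) $\Leftrightarrow$ ii) directly.

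Next I would handle i) $\Leftrightarrow$ iii). Taking $K = \overline{k}$ in the previous step reduces us to analyzing homogeneity over the algebraically closed field $\overline{k}$, where we may reason with $\overline{k}$-points. The morphism $\gamma$ is surjective if and only if $\gamma_{\overline{k}}$ is surjective, and since a morphism of finite-type schemes over an algebraically closed field is surjective exactly when it is surjective on $\overline{k}$-points, $\gamma_{\overline{k}}$ is surjective if and only if the map $(g,x) \mapsto (gx,x)$ on $\overline{k}$-points is surjective onto $X(\overline{k}) \times X(\overline{k})$. This last condition says precisely that for every pair $(x,y)$ of $\overline{k}$-points there exists $g \in G(\overline{k})$ with $gx = y$, which is the transitivity of the $G(\overline{k})$-action on $X(\overline{k})$. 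This yields i) $\Leftrightarrow$ iii).

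Finally I would treat the implications involving iv). The implication i) $\Rightarrow$ iv) follows from ii): assuming homogeneity, $X_K$ is homogeneous under $G_K$, and if a $K$-rational point $x \in X(K)$ exists, then the orbit morphism $\alpha_x : G_K \to X_K$ factors as $G_K \to G_K x \hookrightarrow X_K$ with $G_K x$ a $G_K$-stable reduced subscheme. Homogeneity forces the $G_K(\overline{k})$-action on $X(\overline{k})$ to be transitive, so the orbit $G_K x$, being a locally closed $G_K$-stable subscheme containing all $\overline{k}$-points of $X_K$, must equal $X_K$. For the converse direction, I would note that iv) is a conditional statement, and the cleanest route is to observe that it is implied by the previously established equivalences together with the remark that homogeneity is detectable after the faithfully flat base change to a field $K$ over which a rational point exists; concretely, $G_K x = X_K$ forces $\gamma_K$ to be surjective (every point of $X_K \times X_K$ lies in the image since any two points are $G_K$-conjugate), whence $\gamma$ is surjective by the descent argument of the first step.

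The main obstacle I anticipate is the careful handling of iv), since it is phrased conditionally (\emph{if} a $K$-rational point exists) and one must avoid circularity: the hypothesis of iv) may be vacuous for a given $K$, so its equivalence with the unconditional statements i)--iii) must be understood correctly --- iv) is logically equivalent to the others precisely because homogeneity can always be tested after passing to a suitable extension $K$ (for instance a residue field of a closed point) over which $X$ acquires a rational point, and over $\overline{k}$ a rational point always exists. The technical care lies in invoking the orbit structure from the preliminary discussion (the factorization $G/G_x \simeq Gx$ and the reducedness of orbits) to conclude that an orbit containing every geometric point is the whole variety, which uses that $X$ is reduced and of finite type.
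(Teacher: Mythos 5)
Your proposal is correct and follows essentially the same route as the paper: both reduce i)--iii) to the fact that surjectivity of the morphism $\gamma=(\alpha,\pr_2)$ between finite-type $k$-schemes is insensitive to field extension and is detected on $\overline{k}$-points (the paper cites \cite[Exp. XII, Prop. 3.2]{SGA1} where you re-derive this via faithfully flat descent), and both handle iv) by translating the scheme equality $G_K x = X_K$ into surjectivity of the orbit morphism and hence transitivity of $G(\overline{K})$ on $X(\overline{K})$. Your closing caveat about the conditional (possibly vacuous) form of iv) is well taken, but this looseness is present in the paper's statement and proof as well, which likewise only establish the equivalence of $G_K x = X_K$ with homogeneity for each $K$-rational point $x$ that actually exists.
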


\begin{proof}
The assertions $i)$, $ii)$ and $iii)$ are equivalent because a morphism $f : Y \to Y'$ between schemes of finite type over $k$ is surjective if and only if $f_K$ is surjective, and also if and only if the induced map of sets $ : Y(\overline{k}) \to Y'(\overline{k})$ is surjective (see \cite[Exp. XII, Prop. 3.2]{SGA1}). In particular this holds for the morphism $\gamma$. Moreover for any $K$-rational point $x$ of $X_K$ the equality of schemes $G_K x = X_K$ is equivalent to the surjectivity of the orbit morphism $G_K \to X_K$, to the surjectivity of the induced map $G(\overline{K}) \to X(\overline{K})$ and to the transitivity of the action of $G(\overline{K})$ on $X(\overline{K})$.
\end{proof}

\begin{lemma}\label{lemma_almost_hom_field_extension}
The following assertions are equivalent:
\begin{enumerate}[label=\roman*)]
 \item The variety $X$ is almost homogeneous under the action of $G$.
 \item For every field extension $K/k$, the variety $X_K$ is almost homogeneous under the action of $G_K$.
 \item There exists a field extension $K/k$ such that the variety $X_K$ is almost homogeneous under the action of $G_K$.
 \item \label{unique_open_orbit}There exists an open subscheme $U$ of $X$ which is $G$-stable and homogeneous.
 \item \label{item_qh_orbit_point}There exists a field extension $K/k$ and a $K$-rational point $x \in X(K)$ such that the orbit $G_K x$ is an open subscheme of $X_K$.
\end{enumerate}
In \ref{item_qh_orbit_point} the extension $K/k$ can be chosen to be finite and separable. Furthermore, the open subscheme $U$ is unique, and we call it the open orbit.
\end{lemma}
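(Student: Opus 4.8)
The plan is to prove the equivalences along the cycle $i) \Leftrightarrow ii) \Leftrightarrow iii)$ and $iii) \Rightarrow iv) \Rightarrow v) \Rightarrow iii)$, extracting the finite separable refinement from the step $iv) \Rightarrow v)$ and the uniqueness of $U$ from the construction in $iii) \Rightarrow iv)$. The equivalence of $i)$, $ii)$ and $iii)$ should be immediate from the definition: almost homogeneity is dominance of $\gamma$, and since $X$ is geometrically integral the product $X \times X$ is geometrically integral, so dominance of $\gamma$ can be tested on $\overline{k}$-points and is therefore insensitive to the base field, exactly as surjectivity was handled in the previous lemma. Thus I would dispatch $i) \Leftrightarrow ii) \Leftrightarrow iii)$ first.

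The core of the argument is $iii) \Rightarrow iv)$, and this is where I expect the main difficulty. First I would pass to $\overline{k}$, where $X_{\overline{k}}$ is again almost homogeneous, and produce the open orbit there. Since $\gamma_{\overline{k}}$ is dominant, its image is constructible (Chevalley) and dense, hence contains a dense open $W \subseteq X_{\overline{k}} \times X_{\overline{k}}$; looking at the fibers of $\pr_2 : W \to X_{\overline{k}}$ over a general $\overline{k}$-point $y$ shows that $W_y \subseteq G_{\overline{k}} y$ is dense, so $G_{\overline{k}} y$ is a dense orbit and therefore open (an orbit is open in its closure). Two dense open orbits meet and hence coincide, so $X_{\overline{k}}$ has a unique open orbit $O$. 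The point is that $O$ is canonical --- its complement is exactly the locus where the orbit dimension drops below $\dim X$ --- so it is stable under $\Aut(\overline{k}/k)$; by topological descent (the space $|X|$ is the quotient of $|X_{\overline{k}}|$ by $\Aut(\overline{k}/k)$) the open subset $O$ comes from an open subscheme $U \subseteq X$ with $U_{\overline{k}} = O$. That $U$ is $G$-stable and homogeneous can then be checked after base change to $\overline{k}$, the homogeneity of $U$ following from that of $U_{\overline{k}} = O$ by the previous lemma. The delicate points here are verifying that the open orbit really is defined over $\overline{k}$ (equivalently, that dominance over $k$ forces a dense orbit geometrically) and that its canonical nature makes the descent go through; the characteristic $p$ case is covered because the descent is through $\overline{k}$ rather than $k_s$.

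For $iv) \Rightarrow v)$ together with the refinement, I would use that $U$ is a nonempty variety, hence geometrically reduced, so $U(k_s)$ is dense and in particular nonempty; choosing $x \in U(k_s)$ gives a point defined over a finite separable extension $K/k$, and homogeneity of $U$ yields $G_K x = U_K$ by the previous lemma, which is open in $X_K$. This proves $v)$ with $K/k$ finite and separable. For $v) \Rightarrow iii)$, if $G_K x$ is open in $X_K$ then over $\overline{K}$ it is a dense orbit $O$, and the image of $\gamma_K$ contains $O \times_K O$, which is dense in $X_K \times X_K$; hence $\gamma_K$ is dominant and $X_K$ is almost homogeneous, which is assertion $iii)$. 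Finally, uniqueness of the open orbit follows from the construction: any $G$-stable homogeneous open $U'$ becomes, over $\overline{k}$, a single dense open orbit, necessarily equal to $O$, so $U'_{\overline{k}} = U_{\overline{k}}$ and thus $U' = U$.
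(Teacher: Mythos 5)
Your proposal is correct, but it reaches the key implication by a genuinely different route than the paper. For $iii) \Rightarrow iv)$ (and the finite separable refinement), the paper does not construct the open orbit by hand: it invokes a result of Demazure (\cite[Sect.\ 5, p.519]{DEMcre}) which directly produces a \emph{finite separable} extension $K/k$ and a point $x \in X(K)$ with $G_K x$ open in $X_K$, and then performs finite Galois descent, checking that $G_K x$ is $\Gal(K/k)$-stable because any Galois conjugate of the open orbit is again an open orbit and two dense opens meet. You instead build the dense open orbit over $\overline{k}$ from scratch (Chevalley plus the fibers of $\pr_2$ over a general point --- this part is sound, since a nonempty open fiber $W_y \subseteq G_{\overline{k}}y$ of a dense open $W$ is dense in the irreducible $X_{\overline{k}}$, and an orbit is open in its closure), then descend the open subscheme along the possibly infinite extension $\overline{k}/k$ using its canonicity, and only afterwards recover the finite separable refinement from the density of $U(k_s)$ in the geometrically reduced variety $U$. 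Both routes are logically complete; the paper's is shorter granted the reference, while yours is self-contained. The one step you should flesh out is the ``topological descent'': $X_{\overline{k}} \to X$ is \emph{not} of finite presentation, so it is not open in general; what makes the quotient claim true is that it is integral (hence closed) and surjective, so submersive, and that $\Aut(\overline{k}/k) \simeq \Gal(k_s/k)$ acts transitively on its fibers (reduce to $X_{k_s} \to X$ via the universal homeomorphism $X_{\overline{k}} \to X_{k_s}$, then pass to the limit over finite Galois subextensions, using compactness of the profinite group for transitivity). With that justified, an $\Aut(\overline{k}/k)$-stable open set such as $O$ --- stable because any semilinear automorphism commuting with the $G$-action sends the open orbit to an open orbit, which is unique --- is the preimage of an open $U \subseteq X$ with $U_{\overline{k}} = O$, and $G$-stability and homogeneity of $U$ descend as you say. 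Your remaining implications ($iv) \Rightarrow v)$ via $x \in U(k_s)$ and the homogeneity lemma, $v) \Rightarrow iii)$ via the image of $\gamma_K$ containing orbit times orbit, and the uniqueness of $U$ by comparing over $\overline{k}$) match the paper's arguments essentially verbatim.
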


\begin{proof}
Let $\pi : X_K \times X_K \to X \times X$ be the projection, which is an open morphism. The set-theoretic image of the morphism $\gamma_K : G_K \times X_K \to X_K \times X_K$ is $\pi^{-1}(\gamma(G \times X))$. So $\gamma_K(G_K \times X_K)$ contains a nonempty open subset of $X_K \times X_K$ if and only if $\gamma(G \times X)$ contains a nonempty open subset of $X \times X$. Thus it follows from Chevalley's theorem that $\gamma$ is dominant if and only if $\gamma_K$ is dominant (see \cite[I 3.3.8]{DEM}).

\medskip
Assume that $X$ is almost homogeneous. By \cite[Sect. 5, p.519]{DEMcre}, there exists a finite separable extension $K/k$ and a point $x \in X(K)$ such that the orbit $G_K x$ is open in $X_K$. Any element $\gamma$ of $\Gal(K/k)$ induces an automorphism of $X_K$. Then the open subscheme $\gamma(G_K x)$ is the orbit of $\gamma(x)$. The two open subschemes $G_K x$ and $\gamma(G_K x)$ must have a nonempty intersection, so these two orbits are equal. In other words, $G_K x$ is $\Gal(K/k)$-stable. Thus, by Galois descent, there exists an open subscheme $U$ of $X$ such that $U_K = G_K x$, and this subscheme is $G$-stable and homogeneous. Moreover, if $U'$ is a $G$-stable and homogeneous open subscheme of $X$ then $U_{\overline{k}}$ and $U'_{\overline{k}}$ are two open orbits in $X_{\overline{k}}$ so again we have $U_{\overline{k}} = U'_{\overline{k}}$ and $U=U'$.

\medskip
If $X$ contains a $G$-stable and homogeneous open subscheme $U$ then the image of $\gamma$ contains $U \times U$ so $\gamma$ is dominant.

\medskip
If \ref{item_qh_orbit_point} is true then the orbit $G_K x$ is a $G_K$-stable and homogeneous open subscheme of $X_K$ so $X_K$ is almost homogeneous.
\end{proof}

\begin{lemma}
Let \label{lemma_action_trivial_dim0}$G$ be a smooth connected algebraic group and $Z$ a zero-dimensional reduced scheme of finite type over $k$. The only action $\alpha : G \times Z \to Z$ is the trivial one.
\end{lemma}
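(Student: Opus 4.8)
The plan is to show that the action morphism $\alpha$ coincides with the second projection $\pr_2 : G \times Z \to Z$, which is precisely the trivial action. Since two morphisms of finite-type $k$-schemes out of a \emph{reduced} source into a \emph{separated} target are equal as soon as they agree on $\overline{k}$-points, I would first reduce to checking that $\alpha$ and $\pr_2$ induce the same map on $\overline{k}$-points. For this it is convenient to base change to $\overline{k}$: because $Z$ is reduced, zero-dimensional and of finite type over $k$, the scheme $Z_{\overline{k}}$ is a finite disjoint union of copies of $\Spec \overline{k}$, that is, a finite discrete set of $\overline{k}$-points, on which $G(\overline{k})$ acts by permutations.

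Next I would exploit the connectedness of $G$. Since $G$ is connected and carries the rational point $e$, the base change $G_{\overline{k}}$ is connected. For a fixed point $z \in Z(\overline{k})$, the orbit morphism $\alpha_z : G_{\overline{k}} \to Z_{\overline{k}}$ sends the connected scheme $G_{\overline{k}}$ into the discrete scheme $Z_{\overline{k}}$, hence is constant; as $\alpha_z(e) = z$, its image is $\{z\}$, so $g \cdot z = z$ for every $g \in G(\overline{k})$. Thus $\alpha$ and $\pr_2$ agree on all $\overline{k}$-points of $G \times Z$.

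Finally I would upgrade this pointwise equality to an equality of morphisms. The equalizer of $\alpha$ and $\pr_2$ is the preimage of the diagonal under $(\alpha,\pr_2) : G \times Z \to Z \times Z$, hence a closed subscheme of $G \times Z$ since $Z$ is separated, and it contains every $\overline{k}$-point, hence every closed point. The step where the hypothesis on $G$ enters is that $G \times Z$ is reduced: writing $Z = \coprod_i \Spec k_i$ with each $k_i/k$ finite, each factor $G \times \Spec k_i = G_{k_i}$ is smooth over $k_i$, hence reduced, even when $k_i/k$ is inseparable. Being reduced and of finite type over $k$, the scheme $G \times Z$ is Jacobson, so a closed subscheme containing all its closed points is the whole scheme; therefore $\alpha = \pr_2$. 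I expect this last passage to be the only real subtlety: the reducedness of $G \times Z$ — which is exactly what the smoothness of $G$ guarantees — is indispensable, since a non-smooth connected group such as $\mu_p$ or $\alpha_p$ can act non-trivially on a reduced purely inseparable point through infinitesimal automorphisms, so the conclusion genuinely fails without smoothness.
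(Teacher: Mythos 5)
Your proof is correct, and it takes a genuinely different route from the paper's. The paper reduces to $k$ separably closed, uses connectedness to see that each point $z$ of $Z$ is stable (the image of the irreducible scheme $G \times z$ is irreducible and contains $ez = z$), thereby reduces to $Z = \Spec K$ with $K/k$ finite purely inseparable, and then invokes smoothness only through the density of $G(k)$ in $G$: each $g \in G(k)$ induces a $k$-automorphism of $K$, and a purely inseparable extension admits no nontrivial $k$-automorphism. You instead pass up to $\overline{k}$, where connectedness of $G_{\overline{k}}$ makes every orbit morphism into the topologically discrete scheme $Z_{\overline{k}}$ constant, and you upgrade agreement on $\overline{k}$-points to equality $\alpha = \pr_2$ of morphisms via the equalizer (closed since $Z$ is separated), Jacobson-ness of finite-type schemes over a field, and reducedness of $G \times Z$ --- which is exactly where smoothness of $G$ enters for you. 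The trade-off: the paper confronts inseparability head-on with field theory and implicitly uses density plus reducedness in its ``it suffices to check on $G(k)$'' step, while your version sidesteps inseparable field theory entirely and isolates the role of each hypothesis explicitly (connectedness gives constancy of orbit maps, smoothness gives reducedness of the ambient scheme), in a way that matches the paper's own counterexamples: for $G = \mu_p$ the scheme $G \times Z$ is non-reduced and the equalizer argument genuinely breaks, and likewise for non-reduced $Z$.

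One side-claim needs repair, though it is a mis-statement rather than a gap: when $k$ is imperfect, $Z_{\overline{k}}$ need \emph{not} be a disjoint union of copies of $\Spec \overline{k}$. For instance $Z = \Spec k\left(a^{1/p}\right)$ gives $Z_{\overline{k}} = \Spec \overline{k}[y]/(y^p)$, a fat point --- precisely the situation in the paper's remark about $\mu_p$. Your argument survives because it only uses that the underlying space of $Z_{\overline{k}}$ is finite and discrete and that each of its points supports a unique $\overline{k}$-point (the residue fields equal $\overline{k}$, and a $\overline{k}$-point kills nilpotents); both facts hold without any reducedness of $Z_{\overline{k}}$, so you should simply weaken that sentence accordingly.
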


\begin{proof}
We can assume that $k$ is separably closed. The scheme $Z$ is noetherian and zero-dimensional so it is affine and its underlying topological space is a finite set endowed with the discrete topology. For each $z \in Z$ (considered as an open subscheme of $Z$), $G \times z$ is irreducible so $\alpha(G \times z)$ is irreducible too. This set contains $ez=z$ so $\alpha(G \times z) = z$. Thus we have a factorization $\alpha : G \times z \to z$ and we can assume that $Z$ is integral. We may write $Z = \Spec K$ for some field extension $K/k$. Since $Z$ is of finite type over $k$, $K/k$ is a finite extension, and hence it is purely inseparable.

Since $G$ is geometrically reduced, $G(k)$ is dense in $G$. Then it suffices to show that the elements of $G(k)$ act trivially on $Z$. For $g \in G(k)$, the automorphism of $Z$ induced by $g$ corresponds to a $k$-automorphism of $K$. But $K/k$ is purely inseparable, so the unique $k$-automorphism of $K$ is the identity.
\end{proof}

\begin{remark}
This result is not true in general if $G$ is not smooth. Indeed, let $k$ be an imperfect field of characteristic $p$, $a \in k$ which is not a $p$th power, $K = k\left(a^{1/p}\right) = k[X]/(X^p-a)$ and $Z = \Spec K$. Then $Z$ is a zero-dimensional reduced scheme of finite type over $k$. However the infinitesimal group $\mu_p$ acts non trivially on $Z$ by multiplication on $X$.

The result is not true either in general if $Z$ is not reduced.
\end{remark}

\begin{lemma}
Let \label{lemma_existence_open_orbit}$G$ be a smooth connected algebraic group, $C$ a curve and $\alpha : G \times C \to C$ an action. The curve $C$ is almost homogeneous if and only if the action $\alpha$ is non-trivial. In this case, the open orbit is the complement of the subscheme $Z = C^G$ of fixed points.
\end{lemma}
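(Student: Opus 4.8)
The plan is to establish the stated equivalence first, reducing everything to an algebraically closed base field, and then to identify the open orbit. One implication is immediate: if $\alpha$ is trivial then every orbit is a single point, which cannot be dense in the irreducible one-dimensional curve $C$, so an almost homogeneous curve has non-trivial action. For the converse I would pass to $\overline{k}$. This is harmless, since almost homogeneity is preserved under arbitrary field extension by Lemma~\ref{lemma_almost_hom_field_extension}, while triviality of $\alpha$ amounts to the equality of morphisms $\alpha = \pr_2$, which may be tested after the faithfully flat base change $\overline{k}/k$. So I assume $k$ algebraically closed and $\alpha$ non-trivial, and aim to produce a dense orbit.

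Over $\overline{k}$ every point is rational, so by the description of orbits recalled in the Scheme-theoretic actions subsection each orbit $Gx$ is a reduced locally closed subscheme of $C$; it is moreover irreducible, being the image of the irreducible group $G$ (smooth and connected), and hence of dimension $0$ or $1$. If some orbit has dimension $1$, then its closure is an irreducible closed subset of dimension $1$ in the irreducible curve $C$, hence equals $C$, so this orbit is open and dense and $C$ is almost homogeneous by Lemma~\ref{lemma_almost_hom_field_extension}~\ref{item_qh_orbit_point}. If instead every orbit has dimension $0$, then each $Gx$ is a single reduced point and every $\overline{k}$-point of $C$ is fixed; since $C^G(\overline{k})$ is precisely the set of fixed $\overline{k}$-points, the support of the closed subscheme $C^G$ is all of $C$, and as $C$ is reduced this forces $C^G = C$. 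Testing the defining condition of $C^G$ on the universal point $\id_C \in C(C)$ then yields $\alpha = \pr_2$, contradicting non-triviality. This proves the equivalence, and in the almost homogeneous case the open orbit $U$ of Lemma~\ref{lemma_almost_hom_field_extension} exists and is unique.

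It remains to identify $U$ with $C \setminus Z$. Both are $G$-stable, so it suffices to compare supports over $\overline{k}$ and then descend. Over $\overline{k}$ the complement of $U$ is closed and zero-dimensional, and each of its points has a zero-dimensional orbit, hence is fixed and lies in $Z$; conversely a fixed point has a zero-dimensional orbit and so cannot lie in the one-dimensional orbit $U$. Thus $|Z| = |C \setminus U|$ over $\overline{k}$, and since $U$ and $C \setminus Z$ are open subschemes of $C$ already defined over $k$ with equal base change to $\overline{k}$, faithfully flat descent gives $U = C \setminus Z$. The main obstacle is the converse implication, and within it the passage from ``every geometric point is fixed'' to the scheme-theoretic triviality of $\alpha$: this is exactly where the reducedness of $C$ and the functorial definition of $C^G$ must be used, since the equality on points alone would not otherwise suffice. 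The openness of a one-dimensional orbit and the descent of $U = C \setminus Z$ are then routine.
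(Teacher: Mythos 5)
Your proof is correct and follows essentially the same route as the paper: pass to $\overline{k}$, observe each orbit is irreducible of dimension $0$ or $1$ so that a one-dimensional orbit is open and dense while all-zero-dimensional orbits force triviality, and identify the open orbit as the complement of the fixed points before descending to $k$. The only differences are cosmetic refinements -- you justify the step from ``every geometric point is fixed'' to the scheme-theoretic equality $\alpha = \pr_2$ via reducedness and the universal point (a point the paper glosses over), and where the paper invokes its Lemma~\ref{lemma_action_trivial_dim0} on the reduced zero-dimensional complement of the open orbit, you argue directly that orbits inside the $G$-stable finite complement are zero-dimensional, which amounts to the same idea.
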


\begin{proof}
If $C$ is not almost homogeneous then for $x \in C(\overline{k})$, the closure of the orbit $G_{\overline{k}}x$ in $C_{\overline{k}}$ is irreducible and cannot have dimension $1$, so the orbit is trivial. Thus $G$ acts trivially.

Assume that $C$ is almost homogeneous. Let $U = C \setminus Z$. Then $U_{\overline{k}}$ is the open orbit in $C_{\overline{k}}$. Indeed, let $x \in C(\overline{k})$ such that the orbit $G_{\overline{k}} x$ is open. By Lemma \ref{lemma_action_trivial_dim0}, the complement of this orbit is the set of fixed points of $C(\overline{k})$. So we have $U_{\overline{k}} = G_{\overline{k}} \setminus Z_{\overline{k}} = G_{\overline{k}} \setminus Z(\overline{k}) =  G_{\overline{k}} x$. 
\end{proof}

\bigskip
We now show how to restrict to faithful actions. First recall that if $H$ is a subgroup of a smooth algebraic group $G$ then by \cite[Exp. VIA, 3.2]{SGA3-1} there exists a smooth scheme of finite type $G/H$, an action $G \times G/H \to G/H$ and a $G$-equivariant morphism $\pi : G \to G/H$ which is a $H$-torsor (where $H$ acts on the right on $G$ by multiplication); in particular $\pi$ is faithfully flat. If $H$ is a normal subgroup then there exists a unique algebraic group structure on $G/H$ for which $\pi$ is a group morphism. For every $k$-scheme $S$, the exact sequence of algebraic groups $$1 \to H \to G \to G/H \to 1$$ yields an exact sequence of abstract groups $$1 \to H(S) \to G(S) \to (G/H)(S).$$ The last morphism need not be surjective, but it is surjective if $S= \Spec \overline{k}$. More generally, for any $k$-scheme $S$ and $\overline{g} \in (G/H)(S)$ there exists a morphism $S' \to S$ which is faithfully flat of finite presentation and an element $g \in G(S')$ such that $\overline{g} = \pi(g)$ in $(G/H)(S')$.

\begin{lemma}
Let $H$ be the kernel of the action $\alpha$. Then there exists a unique action $\beta : G/H \times X \to X$ such that the diagram \begin{tikzpicture}[baseline=(m.center)]
\matrix(m)[matrix of math nodes,
row sep=2.5em, column sep=2.5em,
text height=1.5ex, text depth=0.25ex,ampersand replacement=\&]
{G\times X \& X \\
G/H \times X \& \\};
\path[->] (m-1-1) edge node[above] {$\alpha$} (m-1-2);
\path[->](m-1-1) edge node[left] {$\pi\times\id_X$} (m-2-1);
\path[->] (m-2-1) edge node[below right] {$\beta$} (m-1-2);
\end{tikzpicture} commutes. Moreover the action $\beta$ is faithful, and $X$ is homogeneous (resp. almost homogeneous) under the action of $G$ if and only if it is so under the action of $G/H$.
\end{lemma}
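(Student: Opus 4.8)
The plan is to construct $\beta$ by faithfully flat descent along $\pi \times \id_X$ and then to read off its remaining properties from the corresponding properties of $\alpha$. First I would record the structural facts already available: since $H$ is the kernel of the action it is a normal subgroup of $G$, so $G/H$ carries a canonical algebraic group structure for which $\pi : G \to G/H$ is a group morphism, and $\pi$ is an $H$-torsor, hence faithfully flat of finite presentation. Base-changing by $X$, the morphism $\pi \times \id_X : G \times X \to G/H \times X$ is again faithfully flat of finite presentation, in particular an fppf cover and an epimorphism of schemes.

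For existence and uniqueness I would use descent for morphisms along this cover. Because $\pi$ is an $H$-torsor, the fiber product $(G \times X) \times_{G/H \times X} (G \times X)$ is identified with $G \times H \times X$, the two projections to $G \times X$ being $(g,h,x) \mapsto (g,x)$ and $(g,h,x) \mapsto (gh,x)$. The essential point, which is exactly the defining property of $H$, is that $\alpha$ is constant along the fibers of $\pi \times \id_X$: on points this reads $\alpha(gh,x) = g(hx) = gx = \alpha(g,x)$, since $h \in H$ acts trivially on $X$. Descent therefore produces a unique morphism $\beta : G/H \times X \to X$ with $\beta \circ (\pi \times \id_X) = \alpha$, uniqueness being immediate as $\pi \times \id_X$ is an epimorphism. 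Equivalently and more concretely, I would describe $\beta$ on the functor of points: given $\overline{g} \in (G/H)(S)$ and $x \in X(S)$, one lifts $\overline{g}$ to $g \in G(S')$ along an fppf cover $S' \to S$ and sets $\overline{g}\cdot x := g \cdot x$; this is well defined since two lifts differ by an element of $H(S')$ acting trivially, and it glues because $G/H$ is an fppf sheaf. That $\beta$ satisfies the axioms of an action then follows at once from those of $\alpha$, verified after such a lift.

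It remains to check faithfulness and the homogeneity statement. For faithfulness, suppose $\overline{g} \in (G/H)(S)$ lies in the kernel of $\beta$; lifting it to $g \in G(S')$ along an fppf cover, the equality $g \cdot x = \overline{g}\cdot x = x$ for all $x$ forces $g \in H(S')$, so that $\overline{g}|_{S'} = \pi(g) = e$, and since $G/H$ is a sheaf and $S' \to S$ is a cover we conclude $\overline{g} = e$; thus the kernel of $\beta$ is trivial. For the homogeneity equivalence I would compare $\gamma = (\alpha, \pr_2)$ with $\gamma' = (\beta, \pr_2)$: the relation $\beta \circ (\pi \times \id_X) = \alpha$ gives $\gamma = \gamma' \circ (\pi \times \id_X)$, and since $\pi \times \id_X$ is surjective the set-theoretic images of $\gamma$ and $\gamma'$ coincide. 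Hence $\gamma$ is surjective (resp. dominant) if and only if $\gamma'$ is, which by Definition \ref{def_almost_hom} is precisely the claim that $X$ is homogeneous (resp. almost homogeneous) under $G$ if and only if it is so under $G/H$.

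The main obstacle is the descent step in the second paragraph: one must correctly identify the fiber product of the cover with $G \times H \times X$ and verify that triviality of the $H$-action is exactly the compatibility condition making $\alpha$ descend. Once $\beta$ is in hand, faithfulness and the homogeneity equivalence are formal consequences of the surjectivity of $\pi \times \id_X$ and the sheaf property of $G/H$.
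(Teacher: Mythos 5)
Your proposal is correct and takes essentially the same route as the paper: the paper deduces the existence and uniqueness of $\beta$ from the categorical-quotient property of the $H$-torsor $\pi \times \id_X$, which is precisely the fppf-descent argument you make explicit (including the identification of the fiber product with $G \times H \times X$ and the $H$-invariance of $\alpha$ as the compatibility condition). Your verification of the action axioms and of faithfulness by lifting along an fppf cover and using that $\pi \times \id_X$ is an epimorphism, as well as the homogeneity equivalence via $\gamma = \gamma' \circ (\pi \times \id_X)$ and surjectivity of $\pi \times \id_X$, match the paper's proof step by step.
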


\begin{proof}
The argument is very standard. The morphism $\pi \times \id_X : G \times X \to G/H \times X$ is a $H$-torsor so it is a categorical quotient. Since $\alpha : G \times X \to X$ is $H$-invariant (where $H$ acts on the right on the first factor of $G \times X$), there exists a unique morphism $\beta$ such that the diagram commutes.

\medskip
Let us show that $\beta$ is an action. We have to show that for every $k$-scheme $S$ and every $\overline{g}_1 \in (G/H)(S)$, $\overline{g}_2 \in (G/H)(S)$ and $x \in X(S)$ we have $\overline{g}_1(\overline{g}_2x) = (\overline{g}_1\overline{g}_2)x$ and $\pi(e) x = x$ (where $e$ is the neutral element of $G(S)$). The second equality is obvious. Let $S' \to S$ be a morphism which is faithfully flat of finite presentation and $g_1 \in G(S')$, $g_2 \in G(S')$ such that $\overline{g}_1 = \pi(g_1)$ and $\overline{g}_2 = \pi(g_2)$ in $(G/H)(S')$. Then in $X(S')$ we have $\overline{g}_1(\overline{g}_2x) = g_1(g_2x) = (g_1g_2)x = (\overline{g}_1\overline{g}_2)x$. Since the map $X(S) \to X(S')$ is injective (because $S' \to S$ is an epimorphism), the equality $\overline{g}_1(\overline{g}_2x) = (\overline{g}_1\overline{g}_2)x$ already holds in $X(S)$.

\medskip
Let $N$ be the kernel of the action $\beta$. We need to prove that for every $k$-scheme $S$, $N(S)$ is the trivial group. Let $\overline{g} \in N(S)$. Let $S' \to S$ and $g \in (G/H)(S')$ as previously. Then for every $S'' \to S'$ and $x \in X(S'')$ we have $gx = \overline{g}x = x$. So $g$ is in the kernel of the abstract action $G(S') \times X(S') \to X(S')$, that is, $g \in H(S')$. Then $\overline{g} = \pi(g) = e$ in $(G/H)(S')$ so, as before, $\overline{g} = e$ in $(G/H)(S)$.

Finally, since $\pi \times \id_X$ is surjective, the morphism $\gamma : G \times X \to X \times X$ is surjective (resp. dominant) if and only if so is the analogous morphism $G/H \times X \to X \times X$.
\end{proof}

\bigskip
We can also restrict to the case $G$ is connected.

\begin{lemma}\label{lemma_connected_homogeneous}
Let $G^\circ$ be the neutral component of $G$. Then $X$ is homogeneous (resp. almost homogeneous) under the action of $G$ if and only if it is so under the action of $G^\circ$.
\end{lemma}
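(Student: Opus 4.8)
The plan is to prove both equivalences simultaneously, the routine implication being that homogeneity (resp. almost homogeneity) under $G^\circ$ ascends to $G$, and the substantive one being the descent from $G$ to $G^\circ$.

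First I would dispose of the easy direction. Since $\gamma_{G^\circ} : G^\circ \times X \to X \times X$ is the restriction of $\gamma = \gamma_G$ to the subscheme $G^\circ \times X$, we have an inclusion of set-theoretic images $\gamma_{G^\circ}(G^\circ \times X) \subseteq \gamma(G \times X)$. Hence if $\gamma_{G^\circ}$ is surjective (resp. dominant) then so is $\gamma$, which says that homogeneity (resp. almost homogeneity) under $G^\circ$ implies the same under $G$.

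For the converse I would reduce to the case where $k$ is algebraically closed. Both homogeneity and almost homogeneity are insensitive to field extension (for homogeneity by the first equivalence of this subsection, for almost homogeneity by Lemma \ref{lemma_almost_hom_field_extension}), and the formation of the neutral component commutes with field extension, so that $(G^\circ)_{\overline{k}} = (G_{\overline{k}})^\circ$. It therefore suffices to treat the pair $(X_{\overline{k}}, G_{\overline{k}})$, and we assume $k$ algebraically closed. Now suppose $X$ is almost homogeneous under $G$ (the homogeneous case is identical, with ``dense orbit'' replaced by ``full orbit''), and choose $x \in X(k)$ whose orbit $O := Gx$ is dense and open. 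Since $G^\circ$ is a normal subgroup of finite index, we may write $G(k) = \bigsqcup_{i=1}^{n} g_i G^\circ(k)$ for finitely many representatives $g_i \in G(k)$; using normality, $g_i G^\circ x = G^\circ(g_i x)$, so that
$$O = Gx = \bigcup_{i=1}^{n} G^\circ(g_i x)$$
is a finite union of $G^\circ$-orbits $O_i := G^\circ(g_i x)$. These orbits are pairwise disjoint, each is locally closed (hence open in its closure $\overline{O_i}$ in $X$), and they all share a common dimension, being the images of $O_1$ under the automorphisms $g_i g_1^{-1}$ of $X$.

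The main point is to exploit the irreducibility of $X$. Taking closures in the equality $\bigcup_i \overline{O_i} = \overline{O} = X$, irreducibility forces $\overline{O_i} = X$ for at least one $i$; as the $O_i$ are equidimensional, this gives $\dim O_i = \dim X$, and hence $\overline{O_i} = X$, for every $i$. Consequently each $O_i$ is open and dense in $X$. But two non-empty open subsets of the irreducible space $X$ must meet, whereas the $O_i$ are pairwise disjoint; therefore $n = 1$. Thus $O = G^\circ x$ is a single $G^\circ$-orbit, which is dense (resp. equal to $X$), proving that $X$ is almost homogeneous (resp. homogeneous) under $G^\circ$. The one step requiring care is precisely this combination of equidimensionality and irreducibility, which collapses the finitely many translated $G^\circ$-orbits comprising $O$ into a single dense one.
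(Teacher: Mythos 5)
Your argument is correct in substance, but it takes a genuinely different route from the paper's, most visibly in the almost homogeneous case. The paper treats the two cases separately: for homogeneity it covers $X$ by the open sets $\alpha_x(G^\circ g)$, $g \in G(k)$ (openness coming from flatness of the orbit morphism), and irreducibility forces any two of these to meet, hence the $G^\circ(k)$-orbits to coincide; for almost homogeneity it never invokes the open orbit at all, but shows directly that the restriction $\gamma_0 : G^\circ \times X \to X \times X$ of the morphism $\gamma$ of Definition \ref{def_almost_hom} is dominant, via a dimension count: right translation by a point of each irreducible component $G_i$ identifies $\overline{\gamma(G^\circ \times X)}$ with $\overline{\gamma(G_i \times X)}$, the finitely many closures cover $X \times X$, and irreducibility of $X \times X$ then forces $\overline{\gamma(G^\circ \times X)} = X \times X$. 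You instead unify both cases: after reducing to $\overline{k}$, you produce the open (or full) orbit $O = Gx$ via Lemma \ref{lemma_almost_hom_field_extension}, decompose it into finitely many translated $G^\circ$-orbits using normality and finiteness of the index, and run the equidimensionality-plus-irreducibility argument inside $X$ rather than in $X \times X$. Your version buys uniformity and a more concrete orbit-level picture; the paper's $\gamma$-based argument for the almost homogeneous case buys independence from the open-orbit lemma (which is, however, already available at this point in the paper, so there is no circularity in your use of it). Your homogeneous case is essentially the paper's argument in slightly different clothing (locally closed orbits and closures, versus openness of the orbit map).

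One wording slip is worth fixing: the orbits $O_i = G^\circ(g_i x)$ attached to your chosen coset representatives need \emph{not} be pairwise disjoint, since distinct representatives can yield the same $G^\circ(k)$-orbit (for instance whenever the stabilizer of $x$ meets several components of $G$, the extreme case being a $G^\circ$ acting with $G/G^\circ$ in the kernel). What is true is the standard fact that two $G^\circ(k)$-orbits are either equal or disjoint. So replace ``pairwise disjoint'' by ``either equal or disjoint'', run your openness-and-density argument on the \emph{distinct} orbits among the $O_i$, and conclude not that $n = 1$ but that all the $O_i$ coincide; since $x \in O$, this still gives $O = G^\circ x$, and the rest of your proof goes through unchanged.
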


\begin{proof}
We can assume that $k$ is algebraically closed. Assume first that $X$ is homogeneous under the action of $G$. The cosets $G^\circ g$ for $g \in G(k)$ form an open covering of $G$. Pick $x \in X(k)$. The orbit morphism $\alpha_x$ is open because it is flat. Hence the sets $\alpha_x(G^\circ g)$ form an open covering of $X$. Since $X$ is irreducible, the open subsets $\alpha_x(G^\circ)$ and $\alpha_x(G^\circ g)$ must have a common $k$-rational point. But $\alpha_x(G^\circ)(k)$ and $\alpha_x(G^\circ g)(k)$ are the orbits of $x$ and $gx$ under the abstract group $G^\circ(k)$. Then they are equal and the abstract group $G^\circ(k)$ acts transitively on $X(k)$.

\medskip
Assume now that $X$ is almost homogeneous under the action of $G$. By hypothesis, the morphism $\gamma : G \times X \to X \times X$ defined in Definition \ref{def_almost_hom} is dominant. Let us show that its restriction $\gamma_0$ to $G^\circ \times X$ is also dominant. For every irreducible component $G_i$ of $G$, the right multiplication by some $g_i \in G_i(k)$ induces isomorphisms $G^\circ \simeq G_i$ and $\overline{\gamma(G^\circ \times X)} \simeq \overline{\gamma(G_i \times X)}$. Moreover we have $\displaystyle \bigcup_i \overline{\gamma(G_i \times X)} = \overline{\bigcup_i \gamma(G_i \times X)} = \overline{\gamma(G\times X)} = X \times X$ (the first equality holds because there are finitely many irreducible components). Hence $\dim \overline{\gamma(G^\circ \times X)} = \dim X \times X$. But $\overline{\gamma(G^\circ \times X)}$ and $X \times X$ are irreducible, so they are equal and $\gamma_0$ is dominant.

The converses are trivial.
\end{proof}


\subsection{Forms of the multiplicative and additive groups}

Tori are very well-known. For the one-dimensional ones, we can specialize the general results of \cite[8.11 p.117]{BOREL} to get the following lemma.

\begin{lemma}
Let $G$ be a non-trivial form of $\G_{m,k}$. There exists a Galois extension $K/k$ of degree $2$ such that $G_K \simeq \G_{m,K}$. In addition, every character of $G$ is trivial. 
\end{lemma}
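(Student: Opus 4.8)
The plan is to use the cohomological classification of forms of tori from \cite[8.11 p.117]{BOREL}. First I would recall that, since $G_{\overline{k}} \simeq \G_{m,\overline{k}}$, the torus $G$ is in fact split by a finite \emph{separable} Galois extension, so one may work with $\Gamma = \Gal(k_s/k)$ acting on the geometric character group $X(G) \simeq \Z$. The isomorphism classes of forms of $\G_{m,k}$ are classified by the pointed set $H^1(\Gamma, \Aut(\G_{m,k_s}))$; since $\Aut(\G_{m,k_s}) \simeq \Aut(\Z) = \{\pm 1\} \simeq \Z/2\Z$ as a constant group with trivial $\Gamma$-action, this set is the group $\Hom_{\text{cont}}(\Gamma, \Z/2\Z)$ of continuous homomorphisms, and $G$ corresponds to such a homomorphism $\rho$.

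Next, because $G$ is a \emph{non-trivial} form, $\rho$ is non-trivial, hence surjective onto $\Z/2\Z$. Its kernel $\ker\rho$ is an open subgroup of index $2$ in $\Gamma$, and I would let $K = (k_s)^{\ker\rho}$ be the corresponding fixed field: this is a separable extension of degree $2$, and it is Galois since any index-$2$ subgroup is normal. After base change to $K$, the action of $\Gal(k_s/K) = \ker\rho$ on $X(G)$ becomes trivial, so the form becomes split; that is, $G_K \simeq \G_{m,K}$.

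For the second assertion, I would identify $\widehat{G}(k)$ with the $\Gamma$-invariants $X(G)^\Gamma$ of the character lattice. The twisted action of $\Gamma$ on $X(G) \simeq \Z$ is through $\rho$ followed by the action of $\Z/2\Z = \{\pm 1\}$ on $\Z$ (the base action on the character lattice of a split torus being trivial). Since $\rho$ is surjective, some $\sigma \in \Gamma$ acts by $-1$, so an invariant character $n \in \Z$ must satisfy $-n = n$, forcing $n = 0$. Hence $\widehat{G}(k) = 0$, i.e. every $k$-rational character of $G$ is trivial.

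The only genuinely delicate point is matching the abstract classifying datum to an actual quadratic extension and checking that the twisted $\Gamma$-action on $X(G)$ is precisely $\rho$ composed with inversion; once this dictionary between forms and Galois actions on the character lattice is set up (which is the content of the cited result in \cite{BOREL}), both conclusions are immediate. I do not expect any serious obstacle beyond arranging this correspondence carefully, in particular ensuring that $K/k$ is separable, which is automatic here since tori are split by separable extensions.
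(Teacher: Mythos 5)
Your proposal is correct and takes essentially the same route as the paper: both arguments come down to the homomorphism from the Galois group to $\Aut(X(G)) \simeq \{\pm 1\}$ induced by the (twisted) action on the character lattice $X(G) \simeq \Z$, whose kernel cuts out the quadratic Galois extension $K/k$ splitting $G$, and whose surjectivity forces the $\Gamma$-invariants of $\Z$ (i.e.\ the $k$-rational characters) to vanish. The only cosmetic difference is packaging: you phrase the classification via $H^1(\Gal(k_s/k),\Aut(\G_{m,k_s})) = \Hom_{\mathrm{cont}}(\Gal(k_s/k),\Z/2\Z)$, whereas the paper works directly with a finite Galois splitting extension $K'/k$ and the action of $\Gal(K'/k)$ on $\widehat{G}(K')$, which amounts to the same homomorphism.
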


\begin{proof}
Let $K'/k$ be a finite Galois extension such that $G_{K'} \simeq \G_{m,K'}$. The Galois group $\Gamma' = \Gal(K'/k)$ acts on the group of characters $\widehat{G}(K') = \Hom_{K'-\mathrm{gp}}(G_{K'},\G_{m,K'})$, and an element of $\widehat{G}(K')$ is defined over $k$ if and only if it is $\Gamma'$-invariant. But $G$ is not isomorphic to $\G_{m,k}$ so not every character of $G_{K'}$ is defined over $k$. Thus $\Gamma'$ acts non-trivially on $\widehat{G}(K') \simeq \widehat{\G_{m,K'}}(K') \simeq \Z$. Then the kernel $\Gamma$ of the morphism $\Gamma' \to \Aut \Z \simeq \{-1,1\}$ is a normal subgroup of index $2$ of $\Gamma'$. By Galois correspondence, there exists a Galois subextension $K/k$ such that $[K:k]=2$, $\Gal(K'/K) = \Gamma$ and $\Gal(K/k) \simeq \Gamma'/\Gamma$. On the one hand every character of $G_{K'}$ is defined over $K$, so $G_{K} \simeq \G_{m,K}$. On the other hand, the group $\Gamma'/\Gamma \simeq \{-1,1\}$ acts non-trivially on $\widehat{G}(K) \simeq \Z$ so the only $(\Gamma'/\Gamma)$-invariant element of $\widehat{G}(K)$ is the trivial character.
\end{proof}

There is also a geometric description of the forms of $\G_{m,k}$ and their torsors in terms on conics. We first need an easy lemma about the group structures of $\A^1_k \setminus \{0\}$.

\begin{lemma}
Up to the choice of the neutral element, the only algebraic group structure on $\A^1_k\setminus\{0\}$ is $\G_{m,k}$.
\end{lemma}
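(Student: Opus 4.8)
The plan is to translate the problem into the language of Hopf algebras and to compute directly. Write $A = k[t,t^{-1}]$, so that $\A^1_k \setminus \{0\} = \Spec A$, and recall that an algebraic group structure on this affine variety is the same datum as a Hopf algebra structure on $A$, given by a comultiplication $\Delta : A \to A \otimes_k A$ and a counit $\varepsilon : A \to k$ (the antipode being uniquely determined). The counit $\varepsilon$ corresponds to the neutral element $e \in (\Spec A)(k)$. Since the $k$-points of $\Spec A$ are the morphisms $t \mapsto a$ with $a \in k^\times$, and since the automorphism $t \mapsto a^{-1} t$ of the variety $\Spec A$ sends the point $t = 1$ to $e$, transporting the group structure along this automorphism yields an isomorphic algebraic group whose neutral element is $t = 1$. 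I may therefore assume $\varepsilon(t) = 1$, which accounts for the phrase ``up to the choice of the neutral element''.

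The core of the argument is then the computation of $\Delta$. As $\Delta$ is a morphism of $k$-algebras and $t$ is invertible in $A$, the element $\Delta(t)$ is a unit of $A \otimes_k A \cong k[t_1^{\pm 1}, t_2^{\pm 1}]$. The units of this Laurent polynomial ring (the group algebra of $\Z^2$ over a field) are exactly the monomials $c\, t_1^m t_2^n$ with $c \in k^\times$ and $m, n \in \Z$, so $\Delta(t) = c\, t_1^m t_2^n$. I would then impose the two counit axioms $(\varepsilon \otimes \id) \circ \Delta = \id$ and $(\id \otimes \varepsilon) \circ \Delta = \id$: applying them to $t$ and using $\varepsilon(t) = 1$ gives $c\, t^n = t$ and $c\, t^m = t$, whence $c = 1$ and $m = n = 1$. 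Thus $\Delta(t) = t \otimes t$, which is precisely the comultiplication of $\G_{m,k}$. Since the group law is determined by $\Delta$ and $\varepsilon$, the group is $\G_{m,k}$.

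I do not expect any deep obstacle here: once the problem is phrased via the Hopf algebra $A$, the whole content is that the comultiplication is forced to be the diagonal one. The only points requiring genuine care are the description of the unit group of $A \otimes_k A$ (which constrains the possible values of $\Delta(t)$) and the preliminary normalisation of the neutral element, together with the remark that a bialgebra admits at most one antipode, so that $\Delta$ and $\varepsilon$ indeed recover the full group structure.
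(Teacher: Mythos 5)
Your proof is correct, and it takes a genuinely different route from the paper's. The paper argues geometrically: after translating the neutral element to $1$, it reduces to $k$ algebraically closed, views left multiplication by each $x \in G(k)$ as a variety automorphism of $\A^1_k\setminus\{0\}$, hence of the form $T \mapsto aT^{\varepsilon}$ with $a \in k^\times$ and $\varepsilon \in \{-1,1\}$; evaluating at the neutral element forces $a = x$, and the case $\varepsilon = -1$ is excluded because such an involution has fixed points (the square roots of $x$), which a nontrivial left translation cannot have. Your Hopf-algebraic computation instead pins down the comultiplication directly: $\Delta(t)$ must be a unit of $A \otimes_k A \cong k[t_1^{\pm 1},t_2^{\pm 1}]$, the trivial-units description of this group algebra of $\Z^2$ gives $\Delta(t) = c\,t_1^m t_2^n$, and the two counit axioms force $c=1$, $m=n=1$. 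Both arguments ultimately rest on the same underlying fact — the unit group (equivalently, the automorphism group) of a Laurent polynomial ring over a field consists only of trivial units — but your counit computation replaces the paper's fixed-point argument ruling out the inversion $T \mapsto xT^{-1}$, works directly over $k$ with no base change to $\overline{k}$ and no appeal to density of rational points (using only that the unit section is a $k$-point, which legitimates the normalisation $\varepsilon(t)=1$), and in fact proves slightly more: any bialgebra structure on $k[t^{\pm 1}]$ with counit at $t=1$ is the standard one, coassociativity and the antipode never being needed beyond the remark that the antipode is unique. The paper's version, in exchange, only needs units in one variable (via automorphisms of $k[T,T^{-1}]$) rather than in two. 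The one cosmetic point: the automorphism you use for the normalisation should send $e$ (the point $t=a$) and the point $t=1$ to each other — whether it is induced by $t \mapsto at$ or $t \mapsto a^{-1}t$ depends on your convention for reading a ring map as a map of points — but either choice transports the structure as required, so nothing is at stake.
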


\begin{proof}
Let $G$ be an algebraic group whose underlying space is $\A^1_k \setminus\{0\}$. After a suitable translation, we may assume that the neutral element is $1$. We can also assume that $k$ is algebraically closed, since the multiplication map $G \times G \to G$ and the usual multiplication on $\G_{m,k}$ are equal if and only if they become equal after some field extension. Let $x \in G(k)$. The left multiplication by $x$ is an automorphism of $\A^1_k \setminus\{0\}$, which corresponds to an algebra automorphism of $k[T,T^{-1}]$. Such an automorphism is given by $T \mapsto aT^\varepsilon$ for some $a \in k^\times$ and $\varepsilon \in \{-1,1\}$. The left multiplication by $x$ maps the neutral element $1$ to $x$, so $a=x$. If $\varepsilon = -1$ and $x \neq 1$ then there are fixed points (namely the square roots of $x$), which is impossible. Thus $\varepsilon = 1$ and as expected the law is given by $(x,y) \mapsto xy$.
\end{proof}

\begin{lemma}
Let \label{lemma_conic}$G$ be an algebraic group and $C$ a curve. Then $G$ is a non-trivial form of $\G_{m,k}$ and $C$ is a $G$-torsor if and only if there exists a smooth projective conic $\widehat{C}$, a Galois extension $K/k$ of degree $2$ and a point $P \in \widehat{C}$  (endowed with its structure of a reduced closed subscheme) with residue field $K$ such that $C = \widehat{C} \setminus \{P\}$ and $G$ is the centralizer of $P$ in $\Autgp{\widehat{C}}$.
\end{lemma}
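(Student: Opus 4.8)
The plan is to work over the quadratic extension $K/k$ of the previous lemma, where a non-trivial form of $\G_{m,k}$ splits, and to exploit the explicit geometry of $\PP^1_K$. The shared local picture is this: a smooth projective conic with a $K$-point becomes $\PP^1_K$ after base change to $K$; a degree-$2$ point $P$ with residue field $K$ becomes, over $K$, a pair of $K$-rational points permuted by $\Gal(K/k)$; and the centralizer in $\PGL_{2,K} = \Autgp{\PP^1_K}$ of two distinct $K$-points is the diagonal torus $\G_{m,K}$, acting on the complement by multiplication. Both implications reduce to this computation, using that centralizers and the torsor condition are compatible with the faithfully flat base change $\Spec K \to \Spec k$.

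For the ``if'' direction I would start from the data $(\widehat{C},K,P)$. Since $P = \Spec K$, the scheme $P_K = \Spec(K \otimes_k K) = \Spec(K \times K)$ carries $K$-rational points, so $\widehat{C}$ acquires a $K$-point and $\widehat{C}_K \simeq \PP^1_K$; I place the two points of $P_K$ at $0$ and $\infty$, so that $\Gal(K/k)$, acting transitively on $P_K$, swaps them. Then $G_K = C_{\Autgp{\widehat{C}}}(P)_K$ is the centralizer of $\{0,\infty\}$ in $\PGL_{2,K}$, namely the diagonal torus $\G_{m,K}$, so $G$ is a form of $\G_{m,k}$; the swap of $0$ and $\infty$ induced by $\Gal(K/k)$ acts on this torus by inversion, whence the form is non-trivial. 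Finally $G$ preserves $C = \widehat{C} \setminus \{P\}$ (it fixes $P$), and over $K$ the action of $G_K = \G_{m,K}$ on $C_K = \PP^1_K \setminus \{0,\infty\} = \G_{m,K}$ is multiplication, so $C_K$ is a trivial $G_K$-torsor; since the torsor condition (that $(\alpha,\pr_2)$ be an isomorphism) can be checked after base change to $K$, the curve $C$ is a $G$-torsor.

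For the ``only if'' direction, let $C$ be a $G$-torsor with $G$ a non-trivial form of $\G_{m,k}$. By Hilbert 90 the $\G_{m,K}$-torsor $C_K$ is trivial, so $C_K \simeq \G_{m,K}$ and $C$ is a smooth curve with $C_{\overline{k}} \simeq \G_{m,\overline{k}}$. Its regular completion $\widehat{C}$ is therefore a smooth projective curve with $\widehat{C}_{\overline{k}} \simeq \PP^1_{\overline{k}}$, i.e. a smooth projective conic, and $\widehat{C} \setminus C$ is a reduced degree-$2$ closed subscheme which over $K$ becomes the two boundary points $\{0,\infty\}$ of $C_K = \G_{m,K}$. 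The generator of the character lattice of $G$ has its zero at $0$ and its pole at $\infty$, so the non-triviality of the form (the non-trivial element of $\Gal(K/k)$ acting by $-1$ on characters) forces $\Gal(K/k)$ to swap $0$ and $\infty$; hence $\widehat{C} \setminus C$ is a single point $P$ of degree $2$ with residue field $K$. It then remains to identify $G$ with $C_{\Autgp{\widehat{C}}}(P)$: the free, hence faithful, torsor action of $G$ on $C$ extends to $\widehat{C}$ and yields a closed immersion $G \hookrightarrow \Autgp{\widehat{C}}$ whose image centralizes $P$; over $K$ this image is the full diagonal torus, i.e. all of $C_{\Autgp{\widehat{C}}}(P)_K$, so by descent $G = C_{\Autgp{\widehat{C}}}(P)$.

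The main obstacle I expect is the last step of the ``only if'' direction: promoting the fibrewise extension of automorphisms of the regular curve $C$ to its completion $\widehat{C}$ into an honest homomorphism of algebraic groups $G \to \Autgp{\widehat{C}}$, and then justifying that the identification with the centralizer, verified over $K$, descends to $k$. Representability of $\Autgp{\widehat{C}}$ (valid since $\widehat{C}$ is projective) together with the smoothness of $G$ and faithfully flat descent should make both points go through, but they require the most care; the remaining verifications are routine base-change computations on $\PP^1_K$.
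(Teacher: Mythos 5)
Your proposal is correct and follows essentially the same route as the paper: pass to the quadratic Galois extension $K$, use Hilbert's theorem 90 and the regular completion to reduce everything to $\PP^1_K$ with boundary $\{0,\infty\}$ and the diagonal torus, and descend the identification of $G$ with the centralizer of $P$. The only local difference is in one sub-step: where you rule out the split case via the Galois action on the character lattice (the swap of $0$ and $\infty$ acting by $-1$), the paper argues by contradiction, noting that a trivial form would make $C \simeq \A^1_k \setminus \{0\}$ (so the boundary would be two rational points), using its lemma that the only group structure on $\A^1_k\setminus\{0\}$ is $\G_{m,k}$ --- both arguments are sound.
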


\begin{proof}
Let $G$ be a non-trivial form of $\G_{m,k}$, $C$ a $G$-torsor and $\widehat{C}$ the regular completion of $C$. Let $K/k$ be a Galois extension of degree $2$ such that $G_K \simeq \G_{m,K}$. By Galois cohomology and Hilbert's theorem $90$, the set of isomorphism classes of $\G_{m,K}$-torsors over $K$ is in bijection with $H^1(\Gal(K_s/K),K_s^\times) = \{1\}$ so $C_K\simeq \A^1_K \setminus \{0\}$. Since the extension $K/k$ is separable, the curve $(\widehat{C})_K$ is regular so it is the regular completion of $C_K$. Thus $(\widehat{C})_K \simeq \PP^1_K$ and the complement of $C_K$ consists of two $K$-rational points. Therefore $\widehat{C}$ is a smooth projective conic and $\widehat{C}\setminus C$ consists of $1$ or $2$ closed points. The residue field $\kappa(P)$ of a point $P$ in $\widehat{C}\setminus C$ is a subextension of $K/k$, and the number of points above $P$ in $(\widehat{C})_K$ is equal to the degree of $\kappa(P)/k$ since this extension is separable. Hence $\widehat{C}\setminus C$ is either a set of two $k$-rational points or a unique point $P$ with $\kappa(P)=K$. In the first case we have $\widehat{C} \simeq \PP^1_k$, the underlying space of $G$ is $\A^1_k \setminus \{0\}$ and so $G \simeq \G_{m,k}$, which contradicts the hypothesis. Hence the second case holds and $G$ is the centralizer of $P$ in $\Autgp{\widehat{C}}$ since $\G_{m,K}$ is the centralizer of two $K$-rational points in $\Autgp{\PP^1_K}$. 

\medskip
Conversely, such a $G$ acts on the complement $C$ of $P$. The curve $C$ is a $G$-torsor because it becomes so after extension to $K$. As before we have $G_K \simeq \G_{m,K}$. Moreover $G$ is not isomorphic to $\G_{m,k}$, otherwise by the same argument as above we would have $C \simeq \A^1 \setminus \{0\}$ so the complement would be a set of two $k$-rational points.
\end{proof}

The forms of $\G_{a,k}$ were classified by Russell in \cite{RUS}. He obtained the following result.

\begin{proposition}
If \label{prop_Russell} $C$ (resp. $G$) is a form of $\A^1_k$ (resp. $\G_{a,k}$) then there exists a smallest finite field extension $K/k$ such that $C_K \simeq \A^1_K$ (resp. $G_K \simeq \G_{a,K}$) and this extension is purely inseparable. 
If $k$ has characteristic $p >0$ then the forms of $\G_{a,k}$ are the algebraic groups isomorphic to a subgroup of $\G_{a,k}^2$ given by an equation of the form $$y^{p^n} = x + a_1x^p + \cdots + a_mx^{p^m}$$ for some integers $m \geq 0$ and $n \geq 0$ and some coefficients $a_i$ in $k$. If $G$ is given by such an equation then the smallest field extension $K/k$ such that $G_K \simeq \G_{a,K}$ is $K= k(a_1^{p^{-n}},\ldots,a_m^{p^{-n}})$. Furthermore, the $G$-torsors are the schemes isomorphic to a closed subscheme $C$ of $\A^2_k$ given by an equation of the form $$y^{p^n} = b + x + a_1x^p + \cdots + a_mx^{p^m}$$ for some coefficient $b$ in $k$. The complement of $C$ in its regular completion $\widehat{C}$ is a non-smooth point $P$ whose residue field is a subextension of $K/k$.
\end{proposition}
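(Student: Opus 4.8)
The plan is to take from Russell's paper \cite{RUS} the purely algebraic heart of the statement — that a form splits over a smallest, purely inseparable extension, and that forms of $\G_{a,k}$ are cut out by equations $y^{p^n} = x + a_1 x^p + \cdots + a_m x^{p^m}$ with splitting field $k(a_1^{p^{-n}},\ldots,a_m^{p^{-n}})$ — and to supply by hand the two geometric refinements that are genuinely needed later: the normal form of the $G$-torsors and the description of the point at infinity. The case of forms of $\A^1_k$ is then obtained from the group case by passing to the function field, which is a purely inseparable extension of $k(t)$.

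For the torsors I would exploit the additivity of the defining equation. Writing $\phi(x) = x + a_1 x^p + \cdots + a_m x^{p^m}$ and $F \colon \G_{a,k}^2 \to \G_{a,k}$, $(x,y) \mapsto y^{p^n} - \phi(x)$, every monomial is an additive polynomial, so $F$ is a homomorphism whose kernel is exactly $G$. Already the Frobenius $y \mapsto y^{p^n}$ is surjective, hence so is $F$, and a surjective homomorphism of smooth algebraic groups is faithfully flat; thus
$$0 \to G \to \G_{a,k}^2 \xrightarrow{F} \G_{a,k} \to 0$$
is exact. Passing to flat cohomology and using the additive Hilbert $90$ vanishing $H^1(k,\G_{a,k}) = 0$ yields $H^1(k,G) \simeq \coker\bigl(F \colon k^2 \to k\bigr)$. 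It then remains to identify the image of the class of $b \in k$ under the connecting map with the curve $\{y^{p^n} = b + \phi(x)\}$, which is the twist of $C$ by the corresponding cocycle; conversely any $G$-torsor becomes $\A^1_K$ over $K$, hence is a smooth affine form of $\A^1_k$ and so appears on this list.

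For the point at infinity I would argue at the level of places rather than base-changing the completion directly, since a purely inseparable base change need not preserve regularity. The function field $k(C)$ satisfies $k(C) \otimes_k K = K(C_K) = K(t)$, a field, so $K(t)/k(C)$ is a finite purely inseparable, hence radicial, extension; it therefore induces a bijection on places. As $C_K \simeq \A^1_K$ has a single place at infinity, $k(C)$ has a single infinite place, so $\widehat{C} \setminus C$ is one closed point $P$. Comparing residue fields at this place with the residue field $K$ at $\infty \in \PP^1_K$ shows $\kappa(P)$ embeds in $K$, so it is a subextension of the purely inseparable extension $K/k$; and since a regular point of a $k$-curve is smooth precisely when its residue field is separable over $k$, the point $P$ is regular but non-smooth whenever the form is nontrivial (in which case $\kappa(P) \neq k$).

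The main obstacle I anticipate is not the cohomology but the bookkeeping at infinity: one must avoid the temptation to base-change $\widehat{C}$ to $K$ and read off a point of $\PP^1_K$, because regularity can be destroyed by the inseparable extension $K/k$. Routing the argument through the radicial bijection on places sidesteps this, and reduces the counting and the residue-field analysis to the explicit situation over $K$, where $C_K \simeq \A^1_K$ and its regular completion is $\PP^1_K$.
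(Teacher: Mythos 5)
The paper offers no proof of this proposition --- it is stated as a quotation of Russell's results in \cite{RUS} --- so your attempt can only be judged on its own merits. Two of your three supplied arguments are correct. The torsor classification is sound: writing $\phi(x) = x + a_1x^p + \cdots + a_mx^{p^m}$, the map $F(x,y) = y^{p^n} - \phi(x)$ is a homomorphism $\G_{a,k}^2 \to \G_{a,k}$ with kernel $G$, it is surjective (hence faithfully flat) because it dominates the Frobenius $y \mapsto y^{p^n}$, and the cohomology sequence together with $H^1(k,\G_{a,k}) = 0$ gives $H^1(k,G) \simeq \coker(F : k^2 \to k)$, the connecting map sending $b$ to the fibre $F^{-1}(b) = \{y^{p^n} = b + \phi(x)\}$. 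The bookkeeping at infinity via places is also fine: $K(t)/k(C)$ is finite purely inseparable, a valuation extends uniquely along a purely inseparable extension, so $\widehat{C} \setminus C$ is a single closed point $P$ and $\kappa(P)$ embeds in the residue field $K$ of the infinite place of $K(t)$; your caution about not base-changing $\widehat{C}$ to $K$ is well placed.

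The genuine gap is the non-smoothness step. The criterion you invoke --- that a regular closed point of a $k$-curve is smooth \emph{precisely} when its residue field is separable over $k$ --- is false in the ``only if'' direction: the closed point $(x^p - a)$ of $\A^1_k$, for $a \in k \setminus k^p$, is a smooth point with purely inseparable residue field. So non-smoothness of $P$ does not follow from $\kappa(P) \neq k$ (and you did not actually prove $\kappa(P) \neq k$ for nontrivial forms; that itself requires an argument, since a priori $\kappa(P)=k$ would have to be excluded by showing it forces $\widehat{C} \simeq \PP^1_k$, $C \simeq \A^1_k$ and then $G \simeq \G_{a,k}$). Moreover the inference cannot be repaired from inseparability of $\kappa(P)$ alone, as the characteristic-$2$ form $G : y^2 = x + ax^2$ with $a \in k \setminus k^2$ shows: this is a nontrivial form of $\G_{a,k}$ with splitting field $k(a^{1/2})$, yet its projective closure $Y^2 = XZ + aX^2$ is a \emph{smooth} conic containing the rational point $[0:0:1]$, so $\widehat{G} \simeq \PP^1_k$ and the point at infinity $[1:a^{1/2}:0]$ is a smooth point with residue field $k(a^{1/2})$ (in the chart $X=1$ the curve is $v = u^2 + a$, i.e. $\A^1_k$); the ``non-smooth'' clause must therefore be taken from Russell's precise statement, with its exceptional case, or verified by an explicit local computation as the paper does in its example, not deduced from a residue-field criterion. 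A secondary weakness: your reduction of the $\A^1_k$-form clause to the group case ``by passing to the function field'' does not work as stated, because a form of $\A^1_k$ need not be a torsor under any form of $\G_{a,k}$ --- the paper's Lemma \ref{lemma_form_A1_not_almost_hom} exhibits forms of $\A^1_k$ whose automorphism group scheme is infinitesimal --- so this part, too, should either be cited from Russell or proved separately (e.g. triviality of separable forms via $H^1(\Gal(k_s/k), \G_{a}(k_s) \rtimes \G_{m}(k_s)) = 1$, plus Russell's argument for the existence of a smallest splitting field).
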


\begin{remark}
For a form $G$ of $\G_{a,k}$, the equation above is not unique. Russell gave a formula to decide whether two equations give isomorphic groups.
\end{remark}


\subsection{Classification of regular almost homogeneous curves}

The key result for the classification is the well-known following proposition. 

\begin{proposition}\cite[Ex. 7.1.2 and 7.1.3]{BRI_structure}
Let \label{prop_aut_genus}$C$ be a smooth projective curve of genus $g$, $Q$ a closed point of $C$ (endowed with its structure of a reduced closed subscheme) and $\Autgp{C,Q}$ the centralizer of $Q$ in $\Autgp{C}$. If $g > 1$ (resp. $g=1$) then $\Autgp{C}$ (resp. $\Autgp{C,Q}$) is a finite \'etale group.
\end{proposition}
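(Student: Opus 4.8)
The plan is to prove finiteness of these automorphism group schemes by reducing to the classical Hurwitz-type statement over an algebraically closed field. The group scheme $\Autgp{C}$ (resp. $\Autgp{C,Q}$) is a group scheme of finite type over $k$, and finiteness of a group scheme can be checked after base change to $\overline{k}$; moreover étaleness is equivalent to smoothness together with dimension zero. So it suffices to show that $(\Autgp{C})_{\overline{k}} \simeq \Autgp{C_{\overline{k}}}$ (and likewise the pointed version) is a finite reduced group scheme over $\overline{k}$. The compatibility of forming the automorphism functor with flat base change gives this identification.

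First I would reduce the dimension-zero claim to its identity component. The key input is that a smooth connected algebraic group acting faithfully on a curve makes the curve almost homogeneous (Lemma \ref{lemma_existence_open_orbit}), hence forces the curve to be rational or of genus one: specifically, if $\Autgp{C}^\circ$ (resp. $\Autgp{C,Q}^\circ$) were positive-dimensional, it would be a smooth connected group acting faithfully on $C_{\overline{k}}$ with a dense orbit, so $C_{\overline{k}}$ would be almost homogeneous. For $g>1$ this is impossible, since an almost homogeneous curve has its open orbit homogeneous under a smooth connected group, and such curves are classified (they are forms of $\A^1$, $\PP^1$, $\G_{m}$-related curves, or genus-one curves) and never have genus exceeding $1$. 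For $g=1$, the open orbit would be two-dimensional as a homogeneous space only if the isotropy were positive-codimensional, but fixing the point $Q_{\overline{k}}$ cuts the identity component down to the stabilizer, which for a genus-one curve and a fixed base point is trivial by the theory of elliptic curves (the automorphisms fixing the origin form a finite group). Thus the identity component is trivial in both cases, giving dimension zero.

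Second, I would address reducedness, i.e. smoothness of the group scheme, which over a field of positive characteristic is the genuinely nontrivial point since a priori $\Autgp{C}$ could carry infinitesimal structure. The cleanest route is to invoke that over $\overline{k}$ a finite group scheme is étale precisely when its Lie algebra vanishes, and to identify the Lie algebra of $\Autgp{C}$ with the space of global regular vector fields $H^0(C_{\overline{k}}, T_{C_{\overline{k}}})$; for a smooth projective curve of genus $g \geq 2$ this space vanishes by a degree count ($\deg T_C = 2-2g < 0$), and for $g=1$ the tangent sheaf is trivial of degree zero so $H^0$ is one-dimensional, but the constraint of fixing $Q$ kills the translations and forces the relevant Lie algebra to vanish as well. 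This gives smoothness, and combined with the dimension-zero conclusion yields that the group is finite étale.

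The main obstacle I anticipate is the positive-characteristic reducedness argument, and in particular handling the genus-one case with the marked point $Q$ carefully: one must be sure that passing to the centralizer of $Q$ truly removes the one-dimensional translation part of the Lie algebra, rather than leaving an infinitesimal subgroup. Here the cited result of Brion (\cite[Ex. 7.1.2 and 7.1.3]{BRI_structure}) presumably does the bookkeeping, so in practice I would lean on that reference for the infinitesimal analysis; the conceptual content on my side is the reduction to $\overline{k}$, the vanishing of global vector fields, and the classification-free argument that a positive-dimensional smooth connected automorphism group would contradict the genus constraints via almost homogeneity.
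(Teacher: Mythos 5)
The paper contains no internal proof to compare against: the bracketed citation \emph{is} the proof, the proposition being imported wholesale from Brion's notes (the same Example 7.1.2 that the introduction already invokes for representability of $\Autgp{C}$ by an algebraic group, i.e.\ a group scheme \emph{of finite type}). Judged on its own merits, your outline is essentially the standard argument, and it is correct in substance --- but you should notice that your third step carries all the weight and renders the second one unnecessary. Indeed $\mathrm{Lie}(\Autgp{C}) \simeq H^0(C_{\overline{k}}, T_{C_{\overline{k}}})$ vanishes for $g \geq 2$ since $\deg T_C = 2-2g < 0$, and $\mathrm{Lie}(\Autgp{C,Q})$ is identified (via dual numbers: $\phi = \id + \varepsilon D$ restricts to the identity on $Q$ iff $D(\mathcal{O}_C) \subseteq \mathcal{I}_Q$) with $H^0(C, T_C(-Q))$, of degree $-\deg Q < 0$ when $g = 1$; equivalently, a nonzero global vector field on an elliptic curve is translation-invariant, hence nowhere vanishing, so none vanishes at $Q$. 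This settles the infinitesimal worry you defer to the reference: no infinitesimal subgroup can survive, because the Lie algebra of the centralizer itself is zero. Now a group scheme of finite type over a field with trivial Lie algebra has $\Omega^1_{G/k} \simeq \mathcal{O}_G \otimes_k e^*\Omega^1_{G/k} = 0$ by translation, hence is unramified, hence \'etale, hence finite (being quasi-compact of dimension zero) --- so granting finite-type representability as the paper does, the Lie-algebra computation alone proves the entire statement, dimension zero included. The only content genuinely left to the reference, should one refuse the finite-type hypothesis, is finiteness of the component group, which is exactly the classical Hurwitz--Schmid/Rosenlicht finiteness of $\Aut(C)(\overline{k})$; trivial Lie algebra by itself yields only an \'etale group scheme locally of finite type.

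The one genuine defect is that your dimension-zero step is circular \emph{within this paper}: the classification of homogeneous curves you invoke (Lemma \ref{lemma_possible_homogeneous}, Theorem \ref{th_classification_homogeneous}) is deduced from Proposition \ref{prop_aut_genus} itself, which the paper explicitly flags as ``the key result for the classification''; so it cannot be used to prove it. (A non-circular version exists --- a one-parameter subgroup $\G_{a,\overline{k}}$ or $\G_{m,\overline{k}}$ would give a dominant map $\PP^1 \to C$, impossible for $g \geq 1$ by L\"uroth, and the abelian-variety case forces $C$ to be a genus-one torsor --- but it is simpler to drop the step.) A secondary slip in the same step: $\Autgp{C}^\circ$ need not be smooth in positive characteristic, so ``it would be a smooth connected group acting faithfully'' requires passing to the reduced subgroup $\left((\Autgp{C})^\circ_{\overline{k}}\right)_{\mathrm{red}}$, which over $\overline{k}$ is smooth connected of the same dimension. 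Both issues evaporate once you let the Lie-algebra argument do the whole job, since $\dim \Autgp{C} \leq \dim_k \mathrm{Lie}(\Autgp{C})$ already forces dimension zero.
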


The following lemma is a special case of a result of Andr\'e Weil on birational actions. For the sake of completeness, we give a proof based on ideas of Rosenlicht (see \cite[Th. 15]{ROS56}). Note that Michel Demazure gave in \cite[Prop. 5]{DEMcre} a similar statement for smooth curves. 

\begin{lemma}\label{lemma_action_regular_completion}
Let $C$ be a regular curve, $\widehat{C}$ its regular completion, $G$ a smooth connected algebraic group and $\alpha : G \times C \to C$ an action. The action $\alpha$ lifts uniquely to an action on $\widehat{C}$. Moreover, $G$ acts trivially on the complement $Z = \widehat{C} \setminus C$ endowed with its structure of a reduced closed subscheme of $\widehat{C}$.
\end{lemma}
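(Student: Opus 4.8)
The plan is to use the universal property of the regular completion to produce the lifted action, then verify the group-action axioms via uniqueness, and finally analyze the points at infinity by a dimension/irreducibility argument combined with Lemma \ref{lemma_action_trivial_dim0}.

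\medskip
\textbf{Producing the lift.} First I would fix $g \in G(k)$ (or rather work functorially). The automorphism $\alpha_g : C \to C$ composed with the open immersion $C \hookrightarrow \widehat{C}$ gives a morphism $C \to \widehat{C}$ to a proper (in fact projective regular) curve. By the universal property of the regular completion stated in the introduction, this extends uniquely to a morphism $\widehat{C} \to \widehat{C}$. To do this uniformly in $g$, I would instead consider the composite $G \times C \xrightarrow{\alpha} C \hookrightarrow \widehat{C}$ and try to extend it to $G \times \widehat{C} \to \widehat{C}$. The subtlety is that the universal property is stated for a curve $C$ mapping to a proper scheme, whereas here the source $G \times C$ is a surface. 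The clean way around this is to extend $\alpha$ over the generic point: $G \times \widehat{C}$ is a regular scheme away from the finitely many fibers over $Z$, and one extends the rational map $G \times \widehat{C} \dashrightarrow \widehat{C}$. Since $\widehat{C}$ is a regular projective curve, a rational map from a normal variety to it which is defined in codimension one extends everywhere (valuative criterion / the fact that a regular curve receives morphisms from normal schemes defined away from codimension $\geq 2$). Alternatively, and more robustly, I would extend $\alpha_g$ one $\overline{k}$-point of $G$ at a time using the curve-level universal property, and then assemble these into a morphism of schemes by a descent/flatness argument, checking that the family of extended automorphisms is itself a morphism $G \times \widehat{C} \to \widehat{C}$.

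\medskip
\textbf{Checking it is an action.} Once the extension $\widehat{\alpha} : G \times \widehat{C} \to \widehat{C}$ exists, the associativity diagram and the neutral-element identity hold on the dense open $G \times C$ (since $\alpha$ is an action there), hence they hold everywhere because $\widehat{C}$ is separated and the relevant morphisms agree on a schematically dense open subset. Uniqueness of the lift is immediate from the uniqueness clause in the universal property: two extensions of $\alpha$ agree on $G \times C$, which is dense, and a separated target forces them to coincide.

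\medskip
\textbf{Triviality on $Z$.} For the last assertion I would argue that $Z = \widehat{C} \setminus C$, with its reduced structure, is $G$-stable: each $\alpha_g$ for $g \in G(\overline{k})$ is an automorphism of $\widehat{C}$ carrying the open $C$ into $C$ (it restricts to $\alpha$ there), hence permutes the finitely many points of $Z$. Since $G$ is smooth and connected and $Z$ is a zero-dimensional reduced scheme of finite type over $k$, Lemma \ref{lemma_action_trivial_dim0} applies verbatim to the induced action $G \times Z \to Z$ and forces it to be trivial. I expect the only genuine obstacle to be the first step — rigorously promoting the pointwise curve-level extension into a morphism from the two-dimensional scheme $G \times \widehat{C}$, i.e. extending the rational map across the codimension-one locus sitting over $Z$; everything after that is formal agreement-on-a-dense-open and an appeal to the already-proved Lemma \ref{lemma_action_trivial_dim0}.
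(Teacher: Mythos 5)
There is a genuine gap at the heart of your first step. The extension principle you invoke --- ``a rational map from a normal variety to a regular projective curve which is defined in codimension one extends everywhere'' --- is false. The valuative criterion of properness, applied at the discrete valuation rings of codimension-one points of the normal source, gives exactly that the indeterminacy locus of a rational map to a proper scheme has codimension at least $2$; it says nothing about extending across that locus, and in general one cannot: the map $\A^1_k \times \PP^1_k \dashrightarrow \PP^1_k$, $(t,[x:y]) \mapsto [x:ty]$, is defined away from the single point $(0,[0:1])$ yet admits no extension there (the limit depends on the direction of approach), and similarly for the linear projection $\PP^2_k \dashrightarrow \PP^1_k$. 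The same example defeats your fallback plan of extending each slice $\alpha_g$ by the curve-level universal property and then ``assembling'': every slice of the map above does extend to a morphism of curves (the $t=0$ slice extends to the constant map $[1:0]$), yet the two-variable map remains undefined at $(0,[0:1])$. So pointwise slice extensions do not show that $(g,x)$ lies in the domain of definition $U$ of the rational map $G \times \widehat{C} \dashrightarrow \widehat{C}$, which is what is actually needed.

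What closes the gap in the paper is the group structure, via a Rosenlicht--Weil regularization argument that your proposal never uses. From the codimension-$\geq 2$ bound one only gets that $U$ meets each codimension-one component $G \times \Spec \kappa(P_i)$ of $G \times Z$ in a dense open, whence a nonempty open $V \subseteq G$ with $V \times Z \subseteq U$. One then shows $gx \in Z(\overline{k})$ for $g \in V(\overline{k})$ and $x \in Z(\overline{k})$ (if $gx$ lay in $C(\overline{k})$ then $g^{-1}(gx) = x$ would lie in $C(\overline{k})$, a contradiction), and finally uses $G(\overline{k}) = V(\overline{k})V(\overline{k})$, valid because $G$ is smooth and connected, to conclude that $gx$ is defined for every $g \in G(\overline{k})$ and $x \in \widehat{C}(\overline{k})$. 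This appeal to the composition law is essential, not a technicality you can route around with descent or flatness. Your remaining steps --- the action axioms and uniqueness by agreement on the schematically dense open $G \times C$ with separated target, the $G$-stability of the reduced subscheme $Z$, and triviality via Lemma \ref{lemma_action_trivial_dim0} --- do match the paper and are fine, except that the paper makes the stability of $Z$ scheme-theoretic by noting $N_G(Z)(k) = G(k)$ and invoking density of $G(k)$ after reducing to $k = k_s$, rather than arguing only on $\overline{k}$-points.
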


\begin{proof}
Let $\widehat{\alpha} : G \times \widehat{C} \dashrightarrow \widehat{C}$ be the rational action defined by $\alpha$. Let $U$ be its domain of definition. It is an open subscheme of $G \times \widehat{C}$ containing $G \times C$. In order to prove that $\widehat{\alpha}$ is defined everywhere, it is enough to show that for any $(g,x) \in G(\overline{k}) \times \widehat{C}(\overline{k})$ the element $gx$ is defined, that is, $(g,x) \in U(\overline{k})$. The scheme $G \times \widehat{C}$ is normal (since $G$ is smooth and $\widehat{C}$ is normal) and $\widehat{C}$ is proper over $k$ so $(G \times \widehat{C}) \setminus U$ has codimension at least $2$ in $G \times \widehat{C}$. Moreover the curve $(\widehat{C})_{k_s}$ is the regular completion of $C_{k_s}$ and $(G_{k_s} \times \widehat{C}_{k_s})\setminus U_{k_s}$ has codimension at least $2$ in $G_{k_s} \times \widehat{C}_{k_s}$. Thus we can assume $k=k_s$.

The subscheme $Z$ of $\widehat{C}$ consists of finitely many closed points $P_1,\ldots,P_r$ whose residue fields are purely inseparable extensions of $k$. We write $Z = \displaystyle \bigsqcup_{i=1}^r \Spec \kappa(P_i)$. The irreducible components $Z_i = G \times \Spec \kappa(P_i)$ of $G \times Z$ have codimension $1$ in $G \times \widehat{C}$. So $U \cap Z_i$ is a nonempty open subset of $Z_i$ (otherwise $Z_i \subseteq (G \times \widehat{C}) \setminus U$ and $Z_i$ would have codimension at least $2$). By \cite[IV2, Prop. 2.4.2]{EGA} the first projection $\pi_i : Z_i \to G$ is a homeomorphism, so $V = \displaystyle \bigcap_{i=1}^r \pi_i(U \cap Z_i)$ is a nonempty open subset of $G$ such that $V \times Z \subseteq U$. For $g \in V(\overline{k})$ and $x \in Z(\overline{k})$ we have $gx \in Z(\overline{k})$. Indeed if $gx \in C(\overline{k})$ then $g^{-1}(gx)$ is defined, so we have $g^{-1}(gx) = (g^{-1}g)x = ex = x$ (and all these elements are defined), which is impossible because $g^{-1}(gx) \in C(\overline{k})$ and $x \in Z(\overline{k})$. Finally $G(\overline{k}) = V(\overline{k})V(\overline{k})$, hence for any $g \in G(\overline{k})$ and $x \in Z(\overline{k})$, the element $gx$ is defined.

Let $N_G(Z)$ be the normalizer of $Z$ in $G$. Any element $g \in G(k)$ yields an automorphism of $\widehat{C}$, again denoted by $g$, such that the subset $Z$ is stable. Since $Z$ is a reduced subscheme, $g$ restricts to an automorphism of $Z$. Thus we have $N_G(Z)(k)=G(k)$. But $G$ is geometrically reduced so $G(k)$ is dense in $G$. So $N_G(Z)=G$, that is, the action on $\widehat{C}$ restricts to an action on $Z$. By Lemma \ref{lemma_action_trivial_dim0}, this action is trivial.
\end{proof}

\begin{remark}
In particular, the action of $G$ on $\widehat{C}$ gives a morphism $G \to \Autgp{\widehat{C}}$ which factorizes as $G \to \Autgp{\widehat{C},Z}$, where $\Autgp{\widehat{C},Z}$ is the centralizer of $Z$ in $\Autgp{\widehat{C}}$.
\end{remark}

From the above results we easily deduce a list of possible homogeneous curves (which can be found for example in \cite[Prop. 7.1.2]{POP} in case $k$ is algebraically closed).

\begin{lemma}
Let \label{lemma_possible_homogeneous}$C$ be a smooth curve. If $C$ is homogeneous under the action of a smooth algebraic group then $C$ is a form of $\PP^1_k$,  $\A^1_k$ or $\A^1_k \setminus \{0\}$, or $C$ is a projective curve of genus $1$.
\end{lemma}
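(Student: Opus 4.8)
The plan is to reduce to the case of an algebraically closed base field and then run a case analysis on the genus of the smooth completion. First I would replace $G$ by its neutral component, which changes nothing by Lemma \ref{lemma_connected_homogeneous}, so that $G$ may be assumed smooth and connected. Next, since homogeneity is insensitive to base field extension (the equivalence $i) \Leftrightarrow ii)$ in the first lemma of this section), I would base change to $\overline{k}$ and write $\overline{C} = C_{\overline{k}}$, a smooth curve homogeneous under the connected group $G_{\overline{k}}$. Being a form of $\PP^1_k$, $\A^1_k$ or $\A^1_k \setminus \{0\}$ is by definition a property of $\overline{C}$, and the genus is a geometric invariant, so it is enough to identify $\overline{C}$. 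Let $\overline{D}$ be the smooth projective completion of $\overline{C}$ and $g$ its genus. By Lemma \ref{lemma_action_regular_completion} the action extends to $\overline{D}$ and $G_{\overline{k}}$ acts trivially on the finite set $\overline{Z} = \overline{D} \setminus \overline{C}$; moreover the action of $G_{\overline{k}}$ on $\overline{C}$ is nontrivial, since a trivial action on a curve cannot be transitive on points.

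The recurring device is that any morphism from the smooth connected group $G_{\overline{k}}$ to a finite \'etale group scheme is trivial, its image being connected and contained in the identity component, which reduces to $e$. If $g \geq 2$, then $\Autgp{\overline{D}}$ is finite \'etale by Proposition \ref{prop_aut_genus}, so $G_{\overline{k}}$ would act trivially on $\overline{D}$, hence on $\overline{C}$ --- impossible. If $g = 1$ and $\overline{Z} \neq \emptyset$, I would pick a point $Q \in \overline{Z}$; the action factors through the centralizer $\Autgp{\overline{D},Q}$ (by the remark following Lemma \ref{lemma_action_regular_completion}), which is finite \'etale again by Proposition \ref{prop_aut_genus}, giving the same contradiction. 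Hence for $g = 1$ we must have $\overline{Z} = \emptyset$, so $\overline{C} = \overline{D}$ is projective; descending properness to $k$ shows $C$ is a projective curve of genus $1$.

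It remains to treat $g = 0$, where $\overline{D} \simeq \PP^1_{\overline{k}}$ and $G_{\overline{k}}$ acts trivially on the finite set $\overline{Z}$ through a homomorphism to $\Autgp{\PP^1_{\overline{k}}} = \PGL_{2,\overline{k}}$. The point is to bound the number of points at infinity: if $\lvert \overline{Z} \rvert \geq 3$, then $G_{\overline{k}}$ lands in the stabilizer in $\PGL_{2,\overline{k}}$ of three distinct points, which is trivial, contradicting nontriviality of the action. Thus $\lvert \overline{Z} \rvert \in \{0,1,2\}$, and correspondingly $\overline{C}$ is isomorphic to $\PP^1_{\overline{k}}$, $\A^1_{\overline{k}}$ or $\A^1_{\overline{k}} \setminus \{0\}$, so $C$ is a form of $\PP^1_k$, $\A^1_k$ or $\A^1_k \setminus \{0\}$. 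The only genuinely delicate point, and the main thing to get right, is this counting step together with the uniform ``connected into finite \'etale is trivial'' argument; everything else is bookkeeping about base change and the extension of the action to the completion supplied by Lemma \ref{lemma_action_regular_completion}.
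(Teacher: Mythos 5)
Your proof is correct, and it is essentially the paper's own argument: the paper gives no explicit proof of this lemma, stating only that it is ``easily deduced'' from the preceding results, namely Lemma \ref{lemma_action_regular_completion} (extension of the action to the regular completion, acting trivially on the boundary) and Proposition \ref{prop_aut_genus} (finiteness of $\Autgp{\overline{D}}$ for $g \geq 2$ and of $\Autgp{\overline{D},Q}$ for $g=1$), which is exactly the deduction you carry out. Your handling of the genus-$0$ case via the triviality of the pointwise stabilizer of three points in $\PGL_{2,\overline{k}}$, together with the reductions to $G^\circ$ and to $\overline{k}$, fills in the intended details correctly.
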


Let us remark that the converse is false. More specifically, some forms of $\A^1_k$ are not homogeneous, as shown by a counterexample mentioned without proof by Russell in \cite{RUS}.

\begin{lemma}\label{lemma_form_A1_not_almost_hom}
We assume that $k$ is imperfect. Let $P$ be a point on $\PP^1_k$ such that the extension $\kappa(P)/k$ is purely inseparable of degree at least $3$. We endow $P$ with its structure of a reduced closed subscheme of $\PP^1_k$. Then $C = \PP^1_k \setminus \{P\}$ is a form of $\A^1_k$ such that the centralizer $\Autgp{\PP^1_k,P}$ of $P$ in $\Autgp{\PP^1_k}$ is infinitesimal.
\end{lemma}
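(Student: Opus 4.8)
The plan is to treat the two assertions separately, and to reduce everything to a computation over $\overline{k}$. For the first assertion, that $C$ is a form of $\A^1_k$, I would base-change to $\overline{k}$. Since $\kappa(P)/k$ is purely inseparable, the $\overline{k}$-algebra $\kappa(P) \otimes_k \overline{k}$ is local, so the support of $P_{\overline{k}}$ is a single closed point of $\PP^1_{\overline{k}}$; removing it gives $C_{\overline{k}} \simeq \A^1_{\overline{k}}$. As $C$ is a nonempty open subscheme of the geometrically integral curve $\PP^1_k$, it is itself a curve, and the previous identification shows it is a form of $\A^1_k$.

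For the second assertion, recall that $\Autgp{\PP^1_k} \simeq \PGL_{2,k}$, and set $H = \Autgp{\PP^1_k,P}$, which is a closed subgroup scheme of $\PGL_{2,k}$ and hence of finite type. I would show $H$ is infinitesimal by establishing that it is finite and that $H(\overline{k}) = \{e\}$. The heart of the proof is the computation of $H(\overline{k})$. Since $P$ has degree at least $3$, it is not the $k$-rational point at infinity, so it lies in the affine chart $\Spec k[x]$ and corresponds to an irreducible polynomial $m(x)$ of degree $d = [\kappa(P):k]$; because $\kappa(P)/k$ is purely inseparable, $m$ has a single root $\alpha \in \overline{k}$, and therefore $P_{\overline{k}} = \Spec \overline{k}[x]/\bigl((x-\alpha)^d\bigr)$ is the length-$d$ thickening of $\alpha$.

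An element $g \in H(\overline{k})$ is an automorphism of $\PP^1_{\overline{k}}$ inducing the identity on $P_{\overline{k}}$; in particular it fixes $\alpha$. In the local parameter $t = x - \alpha$, such a $g$ is a Möbius transformation $t \mapsto \frac{at}{ct+1}$ with $a \in \overline{k}^\times$, and the induced endomorphism of $\overline{k}[t]/(t^d)$ sends $t$ to $at - act^2 + \cdots$. Requiring this to equal $t$ forces, upon comparing the coefficients of $t$ and of $t^2$ (the latter being a nonzero monomial modulo $t^d$ \emph{precisely because} $d \geq 3$), first $a = 1$ and then $c = 0$, whence $g = \id$. Thus $H(\overline{k}) = \{e\}$.

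Finally, since $H(\overline{k})$ is finite, $(H_{\overline{k}})_{\mathrm{red}}$ is a reduced finite-type $\overline{k}$-scheme with a single $\overline{k}$-point, so it is zero-dimensional; hence $\dim H = 0$, and a group scheme of finite type and dimension zero over a field is finite. As $H(\overline{k}) = \{e\}$, the base change $H_{\overline{k}}$ is then a finite local group scheme, so $H$ is infinitesimal. The main obstacle is the jet computation of the third paragraph: one must correctly translate the scheme-theoretic condition that $g$ fix $P$ into the congruence $\frac{at}{ct+1} \equiv t \pmod{t^d}$, and check that $d \geq 3$ is exactly what eliminates the one-parameter family $t \mapsto \frac{t}{ct+1}$ that would otherwise survive (and indeed does survive when $d = 2$, in which case the centralizer is one-dimensional).
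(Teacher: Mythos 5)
Your proof is correct and follows essentially the same route as the paper: base-change to $\overline{k}$, where pure inseparability gives a single point above $P$ (so $C_{\overline{k}} \simeq \A^1_{\overline{k}}$), and then the observation that a homography inducing the identity on the length-$d$ thickening $P_{\overline{k}}$ with $d \geq 3$ must be the identity, whence $\Autgp{\PP^1_k,P}(\overline{k})$ is trivial. You merely make explicit two steps the paper leaves implicit --- the jet computation forcing $a=1$ and $c=0$ (correctly locating where $d \geq 3$ is used), and the deduction that a finite-type group scheme with trivial group of $\overline{k}$-points is infinitesimal --- both of which are accurate.
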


\begin{proof}
Since $\kappa(P)/k$ is purely inseparable, there exists a unique point $P_{k_s}$ of $\PP^1_{k_s}$ above $P$ and we have $[\kappa(P_{k_s}):k_s] = [\kappa(P) : k]$. Thus we can assume $k=k_s$. Similarly, there exists a unique point $P_{\overline{k}}$ of $\PP^1_{\overline{k}}$ above $P$ and it is $\overline{k}$-rational. So $C_{\overline{k}} = \PP^1_{\overline{k}} \setminus \{P_{\overline{k}}\} \simeq \A^1_{\overline{k}}$, thus $C$ is a form of $\A^1_k$.

An element $f$ of $\Autgp{\PP^1_k,P}(\overline{k})$ is a homography fixing the closed subscheme $P_{\overline{k}}$ of $\PP^1_{\overline{k}}$, which has length $[\kappa(P):k] \geq 3$, thus $f$ is the identity. Hence $\Autgp{\PP^1_k,P}(\overline{k})$ is the trivial group.
\end{proof}

We can now prove the case \ref{case_homogeneous} of Theorem \ref{th_general_classification}.

\begin{theorem}
Let \label{th_classification_homogeneous}$C$ be a curve and $G$ a smooth connected algebraic group acting faithfully on $C$. The curve $C$ is homogeneous under the action of $G$ if and only if one of the following cases holds: 
\begin{enumerate}[label=(\alph*)]
  \item $C$ is a smooth projective conic and $G \simeq \Autgp{C}$;
  \item \label{case_class_hom_2}$C \simeq \A^1_k$ and $G \simeq \G_{a,k} \rtimes \G_{m,k}$ (acting by affine transformations);
  \item $G$ is a form of $\G_{a,k}$ and $C$ is a $G$-torsor;
  \item $G$ is a form of $\G_{m,k}$ and $C$ is a $G$-torsor;
  \item $C$ is a smooth projective curve of genus $1$ and $G \simeq \Autgp{C}^\circ$.
\end{enumerate}
\end{theorem}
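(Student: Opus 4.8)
The plan is to prove both implications, systematically reducing each assertion to the situation over $\overline{k}$ and then descending to $k$.

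For the converse implication I would check directly that each of the five families yields a faithful homogeneous action. Since homogeneity is equivalent to the transitivity of the action of $G(\overline{k})$ on $C(\overline{k})$, it suffices to argue over $\overline{k}$: in (a) the group $(\Autgp{C})_{\overline{k}} = \PGL_{2,\overline{k}}$ acts transitively on $\PP^1_{\overline{k}}$; in (b) the affine transformations act transitively on $\A^1_{\overline{k}}$; in (c) and (d) a torsor is by definition simply transitive; and in (e) the translations of the elliptic curve $(\Autgp{C}^\circ)_{\overline{k}}$ act transitively on $C_{\overline{k}}$. Faithfulness is clear in every case.

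For the direct implication, assume $C$ is homogeneous. First I would note that $C$ is smooth: its smooth locus is a nonempty $G$-stable open subscheme, so it is everything by homogeneity. Then Lemma \ref{lemma_possible_homogeneous} leaves four possibilities for $C$: a form of $\PP^1_k$, of $\A^1_k$, of $\A^1_k \setminus \{0\}$, or a projective curve of genus $1$. In the two non-proper cases I would pass to the regular completion $\widehat{C}$ and apply Lemma \ref{lemma_action_regular_completion}: the action extends to $\widehat{C}$ and fixes the points at infinity, so over $\overline{k}$ the group $G_{\overline{k}}$ is a connected subgroup of $\PGL_{2,\overline{k}}$ fixing one point (form of $\A^1_k$) or two points (form of $\A^1_k \setminus \{0\}$). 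In the conic and genus-$1$ cases $C$ is projective, so the faithful action embeds $G$ into $\Autgp{C}$, and into $(\Autgp{C})^\circ$ since $G$ is connected. Over $\overline{k}$ a transitive connected subgroup of $\PGL_2$ must be all of $\PGL_2$ (every proper connected subgroup lies in a Borel, hence fixes a point of $\PP^1$), giving $G \simeq \Autgp{C}$, case (a); and a transitive connected subgroup of the elliptic curve $(\Autgp{C}^\circ)_{\overline{k}}$ must be the whole curve, giving $G \simeq \Autgp{C}^\circ$, case (e). For a form of $\A^1_k \setminus \{0\}$ the reduction above forces $G_{\overline{k}} = \G_{m,\overline{k}}$; for a form of $\A^1_k$ the transitive connected subgroups of $\G_{a,\overline{k}} \rtimes \G_{m,\overline{k}}$ acting on $\A^1_{\overline{k}}$ are exactly $\G_{a,\overline{k}}$ and the whole group (a torus fixes a second point, so is not transitive), whence $\dim G \in \{1,2\}$. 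When $\dim G = 1$ the group $G$ is a form of $\G_{m,k}$ or of $\G_{a,k}$, hence commutative; then all geometric isotropy groups coincide with the trivial kernel of the action, the action is free and transitive, the map $G \times C \to C \times C$ is an isomorphism over $\overline{k}$ and so over $k$ by descent, and $C$ is a $G$-torsor. This yields cases (d) and (c).

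The remaining case, $C$ a form of $\A^1_k$ with $G_{\overline{k}} \simeq \G_{a,\overline{k}} \rtimes \G_{m,\overline{k}}$, is the one I expect to be the main obstacle. Here the derived subgroup $U = \mathcal{D}(G)$ is defined over $k$, is a one-dimensional normal form of $\G_{a,k}$ (as $\mathcal{D}(\G_{a,k} \rtimes \G_{m,k}) = \G_{a,k}$), and acts faithfully and, over $\overline{k}$, transitively; by the torsor argument of the previous paragraph $C$ is a $U$-torsor. A maximal torus $T \subseteq G$ is a form of $\G_{m,k}$ and normalizes $U$, and its conjugation action on $U$ is nontrivial, since $\G_m$ acts with nonzero weight on $\G_a$ inside $\G_{a,k} \rtimes \G_{m,k}$. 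The crucial input is that a nontrivial form of $\G_{a,k}$ admits no nontrivial action of a torus: by Russell's classification (Proposition \ref{prop_Russell}) such a form is cut out by an equation whose right-hand side carries monomials of distinct $p$-power degrees, which is incompatible with a grading by a nontrivial character. Hence $U \simeq \G_{a,k}$, so $C$ is a $\G_{a,k}$-torsor, and as $H^1(k,\G_{a,k}) = 0$ we obtain $C \simeq \A^1_k$. Finally Lemma \ref{lemma_action_regular_completion} gives an embedding $G \hookrightarrow \Autgp{\PP^1_k,\infty} = \G_{a,k} \rtimes \G_{m,k}$, and comparing dimensions (both sides connected of dimension $2$) forces $G \simeq \G_{a,k} \rtimes \G_{m,k}$, which is case (b).
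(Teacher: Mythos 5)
Most of your argument follows the same path as the paper: smoothness from homogeneity, the list of possibilities from Lemma \ref{lemma_possible_homogeneous}, extension of the action to the completion via Lemma \ref{lemma_action_regular_completion}, the enumeration of connected subgroups of $\PGL_{2,\overline{k}}$ fixing zero, one or two points, and commutativity plus trivial isotropy to get the torsor structures in cases (c) and (d). The genuine divergence is the hard case (b), where $C_{\overline{k}} \simeq \A^1_{\overline{k}}$ and $G_{\overline{k}} \simeq \G_{a,\overline{k}} \rtimes \G_{m,\overline{k}}$. The paper proves $C \simeq \A^1_k$ directly: by Proposition \ref{prop_Russell} the minimal splitting field of the form $C$ is purely inseparable, so one may assume $k$ separably closed; the maximal torus $T$ is then split, some $y \in C(k)$ has a nonconstant orbit map $T \to C$, which extends to a nonconstant morphism $\PP^1_k \to \widehat{C}$, forcing $\widehat{C} \simeq \PP^1_k$ and $C \simeq \A^1_k$. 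You argue structurally: $C$ is a torsor under $U = \mathcal{D}(G)$, a form of $\G_{a,k}$ on which a maximal torus acts nontrivially by conjugation, and nontrivial forms are excluded because they admit no nontrivial torus action. Your route is viable and arguably more conceptual (it isolates exactly why only the split group can occur), whereas the paper's is more elementary and self-contained.

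However, your justification of the crucial input is a genuine gap. You claim that Russell's equation $y^{p^n} = x + a_1x^p + \cdots + a_mx^{p^m}$ is ``incompatible with a grading by a nontrivial character,'' but this tacitly assumes that the torus action preserves the embedding $U \subseteq \G_{a,k}^2$ and makes $x$ and $y$ semi-invariant; nothing guarantees this, since Russell's presentation is not canonical and the coaction on $\mathcal{O}(U)$ need not be diagonal in those coordinates. The statement itself is true: it is the one-dimensional case of the fact that every torus action on a wound smooth connected unipotent group is trivial (see \cite[Prop.~B.4.4]{CGP}), a nontrivial form of $\G_{a,k}$ being precisely a wound one-dimensional unipotent group. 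So either cite that result, or close the gap with the paper's own device: after a finite separable extension $T$ splits while $U$ remains a nontrivial form (its minimal splitting field is purely inseparable by Proposition \ref{prop_Russell}, and triviality of an action descends along separable extensions); a nontrivial action of the split torus then gives a nonconstant orbit map through a rational point of $U$, which extends to a nonconstant $\PP^1 \to \widehat{U}$, so the underlying curve of $U$ is $\A^1$; and a form of $\G_{a,k}$ carried by $\A^1_k$ is $\G_{a,k}$ itself (extend the translation action to $\PP^1_k$ fixing the smooth rational point $\infty$, embed $U \hookrightarrow \G_{a,k} \rtimes \G_{m,k}$, and note that a one-dimensional connected unipotent subgroup projects trivially to $\G_{m,k}$). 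One further small repair: for a nontrivial form of $\A^1_k$ the base change $(\widehat{C})_{\overline{k}}$ is \emph{not} the regular completion of $C_{\overline{k}}$ --- the point at infinity is regular but non-smooth by Proposition \ref{prop_Russell}, so $(\widehat{C})_{\overline{k}}$ is singular there; to conclude that $G_{\overline{k}}$ sits in $\PGL_{2,\overline{k}}$ fixing one point, apply Lemma \ref{lemma_action_regular_completion} to $C_{\overline{k}}$ over $\overline{k}$ directly, as the paper does with $\widehat{C_{\overline{k}}}$. This failure of completion to commute with inseparable base change is not a pedantic point: it is exactly the phenomenon behind case \ref{case_qhomogeneous_Ga} of Theorem \ref{th_general_classification}.
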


\begin{proof}
If $C$ is homogeneous under the action of $G$ then $C$ is smooth so $C_{\overline{k}}$ is smooth too and we can consider its regular completion $\widehat{C_{\overline{k}}}$. By Lemmas \ref{lemma_lift_action} and \ref{lemma_existence_open_orbit}, the action of $G_{\overline{k}}$ on $C_{\overline{k}}$ lifts to an action on $\widehat{C_{\overline{k}}}$, and $C_{\overline{k}}$ is the complement of the subscheme of fixed points $Z = (\widehat{C_{\overline{k}}})^{G_{\overline{k}}}$.

\medskip
By Lemma \ref{lemma_possible_homogeneous}, either $\widehat{C_{\overline{k}}}$ has genus $1$ or $\widehat{C_{\overline{k}}}$ is isomorphic to $\PP^1_{\overline{k}}$. In case $\widehat{C_{\overline{k}}}$ has genus $1$, it is a torsor under $\Autgp{\widehat{C_{\overline{k}}}}^\circ$ (after the choice of a $\overline{k}$-rational point, $\widehat{C_{\overline{k}}}$ is an elliptic curve $E$ and we have $\Autgp{E}^\circ = E$ where $E$ acts on itself by translation). Since $G_{\overline{k}}$ is a subgroup of $\Autgp{\widehat{C_{\overline{k}}}}^\circ$ and $\dim \Autgp{\widehat{C_{\overline{k}}}}^\circ = 1$, we have $G_{\overline{k}} = \Autgp{\widehat{C_{\overline{k}}}}^\circ$ and $C_{\overline{k}} = \widehat{C_{\overline{k}}}$ . 

\medskip
We now assume $\widehat{C_{\overline{k}}} = \PP^1_{\overline{k}}$. Then $G_{\overline{k}}$ is a subgroup of $\Autgp{\PP^1_{\overline{k}}} = \PGL_{2,\overline{k}}$ and $Z$ consists of $0$, $1$ or $2$ points. If $G_{\overline{k}}$ is a strict subgroup of $\PGL_{2,\overline{k}}$ then, by \cite[Cor. 11.6]{BOREL}, $G_{\overline{k}}$ is solvable so (up to conjugation) it is a subgroup of the standard Borel subgroup $\G_{a,\overline{k}}\rtimes \G_{m,\overline{k}}$. Thus if $Z$ is empty then $C_{\overline{k}} \simeq \PP^1_{\overline{k}}$ and $G_{\overline{k}} = \PGL_{2,\overline{k}}$. If $Z$ consists of $2$ points then $C_{\overline{k}} \simeq \A^1_{\overline{k}} \setminus \{0\}$ and $G_{\overline{k}} \simeq \G_{m,\overline{k}}$. If $Z$ consists of $1$ point then $C_{\overline{k}} \simeq \A^1_{\overline{k}}$ and $G_{\overline{k}} \simeq \G_{a,\overline{k}}$ or $G_{\overline{k}} \simeq \G_{a,\overline{k}} \rtimes \G_{m,\overline{k}}$.

\medskip
The classification over $k$ follows immediately, except in case \ref{case_class_hom_2}. We assume $C_{\overline{k}} \simeq \A^1_{\overline{k}}$ and $G_{\overline{k}} \simeq \G_{a,\overline{k}} \rtimes \G_{m,\overline{k}}$. Let us prove $C \simeq \A^1_k$. By Proposition \ref{prop_Russell}, we can assume that $k$ is separably closed. Let $L/k$ be a (purely inseparable extension) such that $C_L \simeq \A^1_L$. As above, we have $G_L \simeq \G_{a,L} \rtimes \G_{m,L}$. Let $T$ be a maximal torus of $G$. Then $T_L$ is a maximal torus of $G_L$ so $T_L \simeq \G_{m,L}$. Since the extension $L/k$ is purely inseparable, we already have $T \simeq \G_{m,k}$. The curve $C$ is geometrically reduced so the subset $C(k)$ is dense in $C$. If for every $y \in C(k)$ the orbit morphism $T \to C$ of $y$ were constant then we would have $C^T(k) = C(k)$, so the subscheme $C^T$ of fixed points would be equal to $C$, contradicting the faithfulness of the action. Let $y \in C(y)$ be such that the orbit morphism $T \to C$ is non-constant. Let $\widehat{C}$ be the regular completion of $C$. Then the orbit morphism extends to a non-constant morphism $\PP^1_k \to \widehat{C}$. Thus $\widehat{C}$ is a regular projective curve of genus $0$ having $k$-rational points, so $\widehat{C} \simeq \PP^1_k$. Hence $C$ is a strict open subscheme of $\PP^1_k$, containing strictly $T \simeq \G_{m,k}$. So $C \simeq \A^1_k$, and thus $G \simeq \G_{a,k} \rtimes \G_{m,k}$.
\end{proof}

\begin{remark}
We use the term ``conic'' as a synonym of ``projective curve of arithmetic genus $0$''. Indeed, if $C$ is a projective curve of arithmetic genus $0$ then it is smooth, and the anticanonical bundle $\omega_C^{\otimes -1}$ is very ample and yields a closed immersion $C \hookrightarrow \PP^2_k$ such that $C$ is given by a homogeneous equation of degree $2$. Let $q$ be the corresponding quadratic form. Then we have $\Autgp{C} \simeq \PO(q)$ and it is a form of $\PGL_{2,k}$.
\end{remark}

The case \ref{case_regular_qhomogeneous} of Theorem \ref{th_general_classification} is a direct consequence.

\begin{corollary}
Let \label{cor_classification_regular_almost_homogeneous}$C$ be a regular curve, $G$ is a smooth connected algebraic group and $\alpha : G \times C \to C$ an action. The action is faithful and $C$ is almost homogeneous if and only if one of the following cases holds: 
\begin{enumerate}[label=(\alph*)]
  \item $C \simeq \PP^1_k$ and $G \simeq \G_{a,k} \rtimes \G_{m,k}$;
  \item $G$ is a form of $\G_{a,k}$ and $C$ is the regular completion of a $G$-torsor;
  \item $C \simeq \A^1_k$ or $\PP^1_k$ and $G \simeq \G_{m,k}$;
  \item \label{case_2d}$C$ is a smooth projective conic and $G$ is the centralizer of a separable point of degree $2$.
\end{enumerate}
\end{corollary}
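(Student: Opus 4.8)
The plan is to reduce everything to the homogeneous classification of Theorem \ref{th_classification_homogeneous} by passing to the open orbit, and then to understand how a non-homogeneous $C$ is built from its open orbit. First I would dispatch the reverse implication, which is easy: in each of the four cases the action is visibly non-trivial, so $C$ is almost homogeneous by Lemma \ref{lemma_existence_open_orbit}, and the action is faithful because $G$ already acts faithfully on a dense open orbit — a $G$-torsor in cases (b) and (d), the orbit $\A^1_k\setminus\{0\}$ in (c), and $\A^1_k$ in (a) — while an automorphism trivial on a dense open subscheme of the reduced curve $C$ is trivial. In case (d) the identification of $G$ with a non-trivial form of $\G_{m,k}$, and of $C\setminus\{P\}$ with a $G$-torsor, is exactly the content of Lemma \ref{lemma_conic}.

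For the forward implication, assume the action is faithful and $C$ almost homogeneous. By Lemma \ref{lemma_existence_open_orbit} the open orbit is $U=C\setminus C^G$, a dense open subscheme on which $G$ acts homogeneously; being homogeneous, $U$ is smooth, and since $U$ is schematically dense in the reduced curve $C$ the action on $U$ is again faithful, so Theorem \ref{th_classification_homogeneous} places $(U,G)$ in one of its five cases. The key geometric observation is that $U$ and $C$ have the same function field and hence the same regular completion $\widehat{C}$, giving a chain $U\subseteq C\subseteq\widehat{C}$; moreover, by Lemma \ref{lemma_action_regular_completion} applied to the regular curve $U$, the action extends to $\widehat{C}$ and every point of $\widehat{C}\setminus U$ is $G$-fixed. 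Thus $C$ is obtained from $U$ by adjoining some subset of these finitely many fixed points, and $C$ is non-homogeneous precisely when at least one such point is adjoined.

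I would then run through the five cases. In the two projective cases ($U$ a smooth conic or a smooth genus-$1$ curve) one has $U=\widehat{C}=C$, so no fixed point can be adjoined and $C$ is homogeneous — this is covered by Theorem \ref{th_classification_homogeneous} and contributes nothing new. In the remaining three cases $\widehat{C}\setminus U$ is non-empty: for $U\simeq\A^1_k$ with $G\simeq\G_{a,k}\rtimes\G_{m,k}$ the completion is $\PP^1_k$ with a single $k$-rational fixed point at infinity, and adjoining it gives case (a); for $U$ a torsor under a form of $\G_{a,k}$ the completion carries the unique fixed point at infinity described in Proposition \ref{prop_Russell}, and adjoining it gives case (b); and for $U$ a torsor under a form of $\G_{m,k}$ one uses Lemma \ref{lemma_conic} to split into two possibilities — either $G\simeq\G_{m,k}$ with $\widehat{C}\simeq\PP^1_k$ and two $k$-rational fixed points, yielding $\A^1_k$ or $\PP^1_k$ and case (c), or $G$ is a non-trivial form with $\widehat{C}$ a smooth conic and a single separable fixed point $P$ of degree $2$, so that adjoining $P$ yields the conic with $G$ the centralizer of $P$, i.e.\ case (d).

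The main obstacle is the bookkeeping in the third step: for each non-projective case one must correctly determine the points of $\widehat{C}\setminus U$, together with their residue fields and rationality, and decide which intermediate curves $U\subseteq C\subseteq\widehat{C}$ are admissible — and it is precisely here that Proposition \ref{prop_Russell} (for the $\G_{a,k}$-forms) and Lemma \ref{lemma_conic} (for the $\G_{m,k}$-forms) carry the real weight. A secondary technical point to check is that faithfulness descends from $C$ to $U$: this follows because the equalizer of a given automorphism and the identity is a closed subscheme containing the schematically dense $U$, but it should be argued with care since $C_S$ need not be reduced for non-reduced test schemes $S$, so one verifies that the schematic density of $U$ in the reduced curve $C$ is preserved under the flat base change to $S$.
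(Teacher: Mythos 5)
Your proposal is correct and follows essentially the same route as the paper's (much terser) proof: the paper likewise passes to the homogeneous open orbit $U$ via Lemma \ref{lemma_almost_hom_field_extension}, observes that $C$ sits between $U$ and the regular completion of $U$, and invokes Lemma \ref{lemma_conic} for case \ref{case_2d}, with the homogeneous classification of Theorem \ref{th_classification_homogeneous} supplying the case list. The extra bookkeeping you carry out --- extending the action to $\widehat{C}$ via Lemma \ref{lemma_action_regular_completion}, identifying the fixed points at infinity through Proposition \ref{prop_Russell}, and checking that faithfulness descends to the schematically dense orbit --- is exactly what the paper leaves implicit.
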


\begin{proof}
By Lemma \ref{lemma_almost_hom_field_extension}, if $C$ is almost homogeneous under the action of $G$ then it contains a $G$-stable open subscheme $U$ which is a homogeneous curve. Moreover $C$ is contained in the regular completion of $U$. For the case \ref{case_2d}, we use Lemma \ref{lemma_conic}.
\end{proof}

\section{Seminormal almost homogeneous curves}\label{sect_SN_curves}

\subsection{Normalization and pinching}\label{section_normalization}

We shall deduce the classification of seminormal almost homogeneous curves from the case of regular curves. In order to do so, we must link the action of a group $G$ on a curve with the action on the normalized curve.

We first need to recover a variety $X$ from its normalization $\widetilde{X}$. In case the field $k$ is algebraically closed and the singular locus of $X$ is a finite set, Jean-Pierre Serre gave in \cite[Ch. IV]{SerreGrpAlg} an explicit description by constructing the underlying space of $X$ and its structure sheaf. This can also be done in the more general language of ``pinchings''.  

The conductor $\mathcal{C}$ of the normalization $\nu : \widetilde{X} \to X$ is the coherent sheaf of ideals defined as the annihilator $$\mathcal{C}=\mathrm{Ann}_{\mathcal{O}_X} \nu_*\mathcal{O}_{\widetilde{X}}/\mathcal{O}_X.$$ For any $x \in X$, the stalk $\left(\nu_*\mathcal{O}_{\widetilde{X}}\right)_x$ is identified with the integral closure $\overline{\mathcal{O}}_{X,x}$ of $\mathcal{O}_{X,x}$ and we have \begin{align*} \mathcal{C}_x & = \mathrm{Ann}_{\mathcal{O}_{X,x}} \left(\nu_*\mathcal{O}_{\widetilde{X}}\right)_x/\mathcal{O}_{X,x} \\
& = \{ a \in \mathcal{O}_{X,x} \mid a \left(\nu_*\mathcal{O}_{\widetilde{X}}\right)_x \subseteq \mathcal{O}_{X,x} \}.
\end{align*} In other words, $\mathcal{C}_x$ is the largest ideal of $\mathcal{O}_{X,x}$ which is an ideal of $\overline{\mathcal{O}}_{X,x}$, that is to say, the conductor ideal in the classical sense. 

Let $i : Z \to X$ the closed immersion defined by $\mathcal{C}$ and $\widetilde{Z}$ the scheme-theoretic inverse image of $Z$ by $\nu$ in $\widetilde{X}$, given by the cartesian square 
\begin{tikzpicture}[baseline=(m.center)]
\matrix(m)[matrix of math nodes,
row sep=2.5em, column sep=2.5em,
text height=1.5ex, text depth=0.25ex,ampersand replacement=\&]
{\widetilde{Z} \& \widetilde{X} \\
Z \& X\\};
\path[->] (m-1-1) edge node[above] {$j$}(m-1-2);
\path[->] (m-1-1) edge node[left] {$\lambda$} (m-2-1);
\path[->] (m-2-1) edge node[below] {$i$} (m-2-2);
\path[->] (m-1-2) edge node[right] {$\nu$} (m-2-2);
\end{tikzpicture} called the conductor square. By base change, $j$ is a closed immersion and $\lambda$ is finite and surjective.

\begin{lemma}\label{lemma_conductor_base_change}
\begin{enumerate}
 \item The underlying space of $Z$ is the set of non-normal points of $X$. 
 \item The morphism $\nu$ induces an isomorphism from $\widetilde{X}\setminus\widetilde{Z}$ to $X \setminus Z$. 
 \item The morphism $\lambda : \widetilde{Z} \to Z$ is finite and schematically dominant.
 \item The square is cocartesian in the category of locally ringed spaces.
 \item For every separable extension $K/k$, 
 \begin{tikzpicture}[baseline=(m.center)]
\matrix(m)[matrix of math nodes,
row sep=2.5em, column sep=2.5em,
text height=1.5ex, text depth=0.25ex,ampersand replacement=\&]
{\widetilde{Z}_K \& \widetilde{X}_K \\
Z_K \& X_K\\};
\path[->] (m-1-1) edge node[above] {$j_K$}(m-1-2);
\path[->] (m-1-1) edge node[left] {$\lambda_K$} (m-2-1);
\path[->] (m-2-1) edge node[below] {$i_K$} (m-2-2);
\path[->] (m-1-2) edge node[right] {$\nu_K$} (m-2-2);
\end{tikzpicture}
is the conductor square of $X_K$.
\end{enumerate}
\end{lemma}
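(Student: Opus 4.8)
The plan is to handle the first three assertions by a direct computation with the conductor ideal in the local rings, to deduce the cocartesian property from Ferrand's description of pinchings, and finally to isolate the behaviour under separable base change, which is the place where the hypothesis on $K/k$ is genuinely used.

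First I would fix $x \in X$ and abbreviate $A = \mathcal{O}_{X,x}$, $\overline{A} = \left(\nu_*\mathcal{O}_{\widetilde{X}}\right)_x$ (the integral closure of $A$) and $\mathfrak{c} = \mathcal{C}_x$. The only non-formal input I would record at the outset is that, being the conductor, $\mathfrak{c}$ is an ideal not merely of $A$ but of $\overline{A}$, so that $\mathfrak{c}\,\overline{A} = \mathfrak{c}$. Assertion $1$ is then immediate, since $\mathfrak{c} = A$ if and only if $\overline{A} \subseteq A$, that is, if and only if $A$ is normal; thus the support of $\mathcal{O}_X/\mathcal{C}$ is exactly the non-normal locus, which is closed because $X$ is excellent. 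For assertion $2$, over $X \setminus Z$ the conductor is the unit ideal, hence $\nu_*\mathcal{O}_{\widetilde{X}} = \mathcal{O}_X$ there; as $\nu$ is finite, hence affine, the induced morphism $\nu^{-1}(X \setminus Z) \to X \setminus Z$ is an isomorphism, and $\nu^{-1}(X\setminus Z) = \widetilde{X} \setminus \widetilde{Z}$ because $\widetilde{Z}$ is the scheme-theoretic preimage of $Z$.

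For assertion $3$, finiteness of $\lambda$ is the base change of finiteness of $\nu$, so only schematic dominance, i.e. the injectivity of $\mathcal{O}_Z \to \lambda_*\mathcal{O}_{\widetilde{Z}}$, remains. Since $\widetilde{Z}$ is cut out by $\mathcal{C}\,\mathcal{O}_{\widetilde{X}}$ and $\nu$ is affine, I would identify $\lambda_*\mathcal{O}_{\widetilde{Z}}$ with $\nu_*\mathcal{O}_{\widetilde{X}}/\mathcal{C}\,\nu_*\mathcal{O}_{\widetilde{X}}$; the equality $\mathfrak{c}\,\overline{A} = \mathfrak{c}$ gives $\mathcal{C}\,\nu_*\mathcal{O}_{\widetilde{X}} = \mathcal{C}$, so this quotient is $\nu_*\mathcal{O}_{\widetilde{X}}/\mathcal{C}$ and the map in question is the reduction modulo $\mathcal{C}$ of the inclusion $\mathcal{O}_X \hookrightarrow \nu_*\mathcal{O}_{\widetilde{X}}$, whose kernel $(\mathcal{O}_X \cap \mathcal{C})/\mathcal{C}$ vanishes. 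For assertion $4$ the same computation shows that any $b \in \overline{A}$ whose class modulo $\mathfrak{c}$ comes from $A$ already lies in $A$, whence $A = (A/\mathfrak{c}) \times_{\overline{A}/\mathfrak{c}} \overline{A}$ and $\mathcal{O}_X$ is the fibre product sheaf $\mathcal{O}_Z \times_{\lambda_*\mathcal{O}_{\widetilde{Z}}} \nu_*\mathcal{O}_{\widetilde{X}}$. Combined with the identification of the underlying space of $X$ with the topological pushout of $Z \leftarrow \widetilde{Z} \to \widetilde{X}$ (from assertions $1$ and $2$), this is precisely the situation in which Ferrand \cite{FER} proves the conductor square to be cocartesian in locally ringed spaces, which I would simply invoke.

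The main obstacle is assertion $5$, where separability is essential. Since $X$ is geometrically integral, $X_K$ is integral for every $K/k$, so its normalization is defined. Because $K/k$ is separable, $\Spec K \to \Spec k$ is geometrically regular, and by the compatibility of normalization with regular base change (see \cite[IV]{EGA}) the finite morphism $\nu_K : \widetilde{X}_K \to X_K$ is the normalization of $X_K$ (in particular $\widetilde{X}_K$ is normal, and $\nu_K$ is finite and birational by assertion $2$ after base change). It then remains to see that the conductor commutes with this base change: applying the fact that the annihilator of a finitely generated module commutes with flat base change to $\nu_*\mathcal{O}_{\widetilde{X}}/\mathcal{O}_X$ along the flat morphism $X_K \to X$, and using that $\nu_*$ commutes with flat base change, yields $\mathcal{C}_{X_K} = \mathcal{C}\,\mathcal{O}_{X_K}$, so that $Z_K$ and $\widetilde{Z}_K$ are exactly the conductor subscheme of $X_K$ and its scheme-theoretic preimage. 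I expect the delicate point to be the invariance of the normalization under a possibly transcendental separable extension: this is exactly the step where the hypothesis cannot be dropped, since an inseparable base change may destroy normality.
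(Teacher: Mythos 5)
Your proposal is correct and follows essentially the same route as the paper: a local computation with the conductor ideal for assertions 1--3, reduction to Ferrand's pinching results for the cocartesian property, and for assertion 5 the ascent of normality along the (geometrically regular) separable extension together with flat base change of the conductor, which the paper obtains by citing Bourbaki (Alg\`ebre commutative I, \S 2.10, Cor.~2) and you re-derive as flat base change of the annihilator of the finitely generated module $\nu_*\mathcal{O}_{\widetilde{X}}/\mathcal{O}_X$. If anything, your explicit use of $\mathfrak{c}\,\overline{A} = \mathfrak{c}$ to prove the schematic dominance of $\lambda$ is more careful than the paper's one-line justification of that point.
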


\begin{proof}
The underlying space of $Z$ is $\{x \in X \mid 1 \notin \mathcal{C}_x \} = \{x \in X \mid \mathcal{O}_{X,x} \subsetneq \overline{\mathcal{O}}_{X,x}\}$. Then $X \setminus Z$ is the normal locus of $X$ and $\nu$ induces an isomorphism from $\nu^{-1}(X \setminus Z) = \widetilde{X}\setminus\widetilde{Z}$ to $X \setminus Z$. Moreover $\lambda$ is finite and schematically dominant because so is $\nu$ and the square is cartesian.

The square is cocartesian if and only if for every open subscheme $U$ of $X$, the square 
\begin{tikzpicture}[baseline=(m.center)]
\matrix(m)[matrix of math nodes,
row sep=2.5em, column sep=2.5em,
text height=1.5ex, text depth=0.25ex,ampersand replacement=\&]
{\lambda^{-1}i^{-1}(U) \& \nu^{-1}(U) \\
i^{-1}(U) \& U\\};
\path[->] (m-1-1) edge node[above] {$j$}(m-1-2);
\path[->] (m-1-1) edge node[left] {$\lambda$} (m-2-1);
\path[->] (m-2-1) edge node[below] {$i$} (m-2-2);
\path[->] (m-1-2) edge node[right] {$\nu$} (m-2-2);
\end{tikzpicture}
is cocartesian. The morphism $\nu : \nu^{-1}(U) \to U$ is the normalization and $i^{-1}(U)$ is the closed subscheme of $U$ defined by the conductor of $\nu : \nu^{-1}(U) \to U$. Thus we can assume that $X$ is affine. Let us write $X = \Spec A$. We have $\widetilde{X} = \Spec \overline{A}$ where $\overline{A}$ is the integral closure of $A$. Let $\mathfrak{c} = \{a \in A \mid a \overline{A} \subseteq A \}$ be the conductor ideal. For any prime ideal $\p \in \Spec A$, the integral closure of the localization $A_\p$ is $(\overline{A})_\p$ and, since $\overline{A}$ is a $A$-module of finite type, $\mathfrak{c}_\p$ is the conductor ideal of $(\overline{A})_\p$ in $A_\p$. Thus $\mathcal{C}$ is the coherent sheaf of ideals associated with $\mathfrak{c}$ and we have $Z = \Spec A/\mathfrak{c}$ and $\widetilde{Z} = \Spec \overline{A}/\mathfrak{c}$. Consequently, by \cite[Lemma 1.3 and Th. 5.1]{FER} the square is cocartesian.

The variety $\widetilde{X}_K$ is normal and $\nu_K$ is the normalization. In order to prove that the diagram obtained after the field extension to $K$ is the conductor square, we can assume that $X$ is affine. Then the result is a direct consequence of \cite[I 2.10, Cor. 2 p.40]{BOUAC1}.
\end{proof}

Conversely Ferrand showed that under a mild assumption the variety $X$ is obtained by ``pinching'' $\widetilde{Z}$ onto $Z$. As a special case of \cite[Th. 5.4 and Prop. 5.6]{FER} we have the following result.

\begin{proposition}\label{prop_existence_pinching}
Let $\widetilde{X}$ be a $k$-scheme, $j : \widetilde{Z} \to \widetilde{X}$ a closed immersion and $\lambda : \widetilde{Z} \to Z$ a finite and schematically dominant morphism. If every finite set of points of $\widetilde{Z}$ is contained in an affine open subset of $\widetilde{X}$ then there exists a unique $k$-scheme $X$ such that we have a square 
\begin{tikzpicture}[baseline=(m.center)]
\matrix(m)[matrix of math nodes,
row sep=2.5em, column sep=2.5em,
text height=1.5ex, text depth=0.25ex,ampersand replacement=\&]
{\widetilde{Z} \& \widetilde{X} \\
Z \& X\\};
\path[->] (m-1-1) edge node[above] {$j$}(m-1-2);
\path[->] (m-1-1) edge node[left] {$\lambda$} (m-2-1);
\path[->] (m-2-1) edge node[below] {$i$} (m-2-2);
\path[->] (m-1-2) edge node[right] {$\nu$} (m-2-2);
\end{tikzpicture}
which is cocartesian in the category of locally ringed spaces. Furthermore this square is cartesian, $\nu$ is finite and schematically dominant, $i$ is a closed immersion and $\nu$ induces an isomorphism from $\widetilde{X}\setminus\widetilde{Z}$ to $X \setminus Z$. We call such a square a ``pinching diagram''.

If $\widetilde{X}$ is a variety (resp. a proper variety) then so is $X$. Finally, if $\widetilde{X}$ is normal then $\nu$ is the normalization. 
\end{proposition}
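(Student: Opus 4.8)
The scheme-theoretic backbone of the statement --- the existence and uniqueness of $X$ fitting into a cocartesian square, the fact that this square is also cartesian, that $\nu$ is finite and schematically dominant, that $i$ is a closed immersion, and that $\nu$ restricts to an isomorphism over $X \setminus Z$ --- is precisely the content of \cite[Th. 5.4 and Prop. 5.6]{FER}: the hypothesis that every finite subset of $\widetilde{Z}$ lies in an affine open of $\widetilde{X}$ is exactly Ferrand's affineness condition guaranteeing that the amalgamated sum exists in the category of schemes. So the plan is to quote these results for the scheme-theoretic content and to spend the argument on the two final sentences, namely that $X$ inherits the property of being a (proper) variety, and that $\nu$ is the normalization when $\widetilde{X}$ is normal. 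Throughout I would work on an affine open $\Spec B \subseteq \widetilde{X}$ containing the relevant points, where the pinching is computed as $\Spec A$ with $A = B \times_{B/I} C$, where $I \subseteq B$ is the ideal defining $j$ and $C \hookrightarrow B/I$ is the map induced by $\lambda$, which is injective because $\lambda$ is schematically dominant.

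First I would record the ring-theoretic facts that drive everything. Since the projection $A \to B$ has kernel $\{(0,c) : c \in \ker(C \to B/I)\} = 0$, the ring $A$ is a subring of $B$; as $\widetilde{X}$ is integral, $B$ is a domain, hence so is $A$, and $X$ is integral. Identifying $A$ with its image in $B$, one has $I \subseteq A$, $A/I \simeq C$, and $B/A \simeq (B/I)/C$, which is a finite $C$-module because $\lambda$ is finite; therefore $B$ is a finite $A$-module, recovering the finiteness of $\nu$. Finite type of $A$ over $k$ then follows from the Artin--Tate lemma applied to the tower $k \subseteq A \subseteq B$, since $B$ is of finite type over $k$ and finite as an $A$-module. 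Separatedness and finite type of $X$ globally, as well as the preservation of properness, are part of the transfer of properties in \cite{FER}; the finite-type point is exactly the local computation just made.

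The one genuinely delicate point, which I expect to be the main obstacle, is \emph{geometric} rather than mere integrality: one cannot simply base-change the pinching to $\overline{k}$ and invoke integrality there, because Lemma \ref{lemma_conductor_base_change} only guarantees that the conductor square is preserved under separable extensions, whereas $\overline{k}/k$ need not be separable. The way around this is to exploit that $\nu$ is birational: since $\nu$ induces an isomorphism from the dense open $\widetilde{X} \setminus \widetilde{Z}$ onto $X \setminus Z$, the two schemes share a function field, $k(X) = k(\widetilde{X})$. As $\widetilde{X}$ is a variety, $k(\widetilde{X})/k$ is a regular field extension; hence so is $k(X)/k$, and $X$ is geometrically integral. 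Combined with separatedness and finite type, this shows that $X$ is a variety.

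For the remaining assertions: if $\widetilde{X}$ is proper then, factoring the universally closed morphism $\widetilde{X} \to \Spec k$ through the surjection $\nu$, the cancellation property of universally closed morphisms (using that $\nu$ is surjective) makes $X \to \Spec k$ universally closed, so $X$, being separated and of finite type, is proper. Finally, if $\widetilde{X}$ is normal it is integral and integrally closed in $k(\widetilde{X}) = k(X)$; since $\nu$ is finite, $\mathcal{O}_{\widetilde{X}}$ is integral over $\mathcal{O}_X$, so $\nu$ realizes $\widetilde{X}$ as the integral closure of $X$ in its function field, that is, $\nu$ is the normalization of $X$.
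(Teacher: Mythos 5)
Your proposal is correct, and it is in fact substantially more detailed than the paper's own treatment: the paper gives no proof of this proposition at all, presenting it as a special case of \cite[Th. 5.4 and Prop. 5.6]{FER} and, for the final claim that $\nu$ is the normalization (which is not explicit in Ferrand), pointing to Howe's thesis \cite{HOW} in the remark that follows. You delegate the same backbone to Ferrand but then prove the tail assertions directly, and each step checks out: the identification $A = B \times_{B/I} C \hookrightarrow B$ (injectivity of $\lambda^\sharp$ from schematic dominance), the d\'evissage $B/A \simeq (B/I)/C$ giving finiteness of $\nu$ (this is Ferrand's Lemme 1.3), Artin--Tate for finite type, the passage from $k(X) = k(\widetilde{X})$ regular over $k$ to geometric integrality (valid because, for an integral scheme of finite type over $k$, all associated points of $X_{\overline{k}}$ lie over the generic point, so geometric reducedness and irreducibility are detected on the function field), cancellation of universal closedness along the surjection $\nu$ for properness, and $B = \overline{A}$ inside the common fraction field for the normalization claim. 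One correction of emphasis rather than of substance: your stated reason for avoiding base change to $\overline{k}$ is misdirected. Since $A = \ker\bigl(B \oplus C \to B/I\bigr)$ and a field extension is flat, the pinching diagram itself commutes with \emph{arbitrary} field extensions, and schematic dominance of $\lambda$ is preserved by faithful flatness; what Lemma \ref{lemma_conductor_base_change} loses over inseparable extensions is only that the base-changed square remains the \emph{conductor} square, i.e.\ that $\nu_{\overline{k}}$ is still a normalization. So one could equally well have read off integrality of $X_{\overline{k}}$ from $A \otimes_k \overline{k} \hookrightarrow B \otimes_k \overline{k}$; your function-field argument is a correct alternative, not a forced detour. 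A last pedantic point shared by the statement itself: the normalization claim (and your use of the density of $\widetilde{X} \setminus \widetilde{Z}$) implicitly assumes $\widetilde{Z} \neq \widetilde{X}$, which holds in all of the paper's applications since there $\widetilde{Z}$ is finite and $\widetilde{X}$ is a curve.
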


\begin{remark} 
\begin{enumerate}[wide, labelwidth=!, labelindent=0pt, label=\roman*)] 
 \item The last part is not explicitly stated in Ferrand's article. A detailed justification can be found in Sean Howe's master thesis \cite{HOW}. 
 \item If $\widetilde{X}$ is a curve then the condition that every finite set of points of $\widetilde{Z}$ is contained in an affine open subset of $\widetilde{X}$ is automatically satisfied. In this case, $\widetilde{Z}$ and $Z$ are finite schemes and $Z$ is determined by the injective morphism $\lambda^\sharp : \mathcal{O}(Z) \to \mathcal{O}(\widetilde{Z})$. This corresponds to the intuitive picture: a curve has finitely many singular points and to normalize this curve one ``separates'' the branches; conversely, the curve can be recovered from its normalization by gluing points together.
\end{enumerate}
\end{remark}

An action on the variety can be lifted to an action on the normalized variety.

\begin{lemma}\label{lemma_lift_action}
Let $G$ be a smooth algebraic group, $X$ a variety, $\alpha : G \times X \to X$ an action and 
\begin{tikzpicture}[baseline=(m.center)]
\matrix(m)[matrix of math nodes,
row sep=2.5em, column sep=2.5em,
text height=1.5ex, text depth=0.25ex,ampersand replacement=\&]
{\widetilde{Z} \& \widetilde{X} \\
Z \& X\\};
\path[->] (m-1-1) edge node[above] {$j$}(m-1-2);
\path[->] (m-1-1) edge node[left] {$\lambda$} (m-2-1);
\path[->] (m-2-1) edge node[below] {$i$} (m-2-2);
\path[->] (m-1-2) edge node[right] {$\nu$} (m-2-2);
\end{tikzpicture} the conductor square. There exists a unique action $\widetilde{\alpha} : G \times \widetilde{X} \to \widetilde{X}$ such that $\nu$ is equivariant. Moreover the square 
\begin{tikzpicture}[baseline=(m.center)]
\matrix(m)[matrix of math nodes,
row sep=2.5em, column sep=2.5em,
text height=1.5ex, text depth=0.25ex,ampersand replacement=\&]
{G\times \widetilde{X} \& \widetilde{X} \\
G \times X \& X \\};
\path[->] (m-1-1) edge node[above] {$\widetilde{\alpha}$} (m-1-2);
\path[->](m-1-1) edge node[left] {$\id\times\nu$} (m-2-1);
\path[->] (m-2-1) edge node[below] {$\alpha$} (m-2-2);
\path[->] (m-1-2) edge node[right] {$\nu$} (m-2-2);
\end{tikzpicture}
is cartesian and the actions $\alpha$ and $\widetilde{\alpha}$ have the same kernel. Finally, the closed subschemes $Z$ and $\widetilde{Z}$ are $G$-stable.
\end{lemma}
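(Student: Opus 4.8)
The plan is to lean on two standard properties of normalization: that it commutes with the smooth second projection $\pr_2\colon G\times X\to X$, so that $\id_G\times\nu\colon G\times\widetilde X\to G\times X$ is the normalization of $G\times X$; and that the conductor subscheme is canonically attached to a scheme, hence preserved by every automorphism. The book-keeping is organised around the automorphism $\Phi\colon G\times X\to G\times X$, $(g,x)\mapsto(g,gx)$, with inverse $(g,y)\mapsto(g,g^{-1}y)$ and satisfying $\pr_2\circ\Phi=\alpha$.

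First I would construct $\widetilde\alpha$. Since $G$ is smooth and $\widetilde X$ is normal, $G\times\widetilde X$ is normal, and by compatibility of normalization with the smooth morphism $\pr_2$ the map $\id_G\times\nu$ is the normalization of $G\times X$; by functoriality, $\Phi$ lifts to a unique automorphism $\widetilde\Phi$ of $G\times\widetilde X$ with $(\id_G\times\nu)\circ\widetilde\Phi=\Phi\circ(\id_G\times\nu)$. Setting $\widetilde\alpha:=\pr_2\circ\widetilde\Phi$ (second projection to $\widetilde X$), a one-line computation gives $\nu\circ\widetilde\alpha=\alpha\circ(\id_G\times\nu)$, the equivariance of $\nu$. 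Uniqueness, and the associativity and unit axioms for $\widetilde\alpha$, all follow from the same principle: $\nu$ is an isomorphism over $X\setminus Z$, so two lifts (or the two sides of an axiom) agree on the dense open preimage of $X\setminus Z$ and hence everywhere, as $G\times\widetilde X$ is reduced and $\widetilde X$ is separated.

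Next I would show the square is cartesian by a pasting argument. The square with horizontal maps the projections to $\widetilde X$ and $X$ and vertical maps $\id_G\times\nu$ and $\nu$ is cartesian, since $(G\times X)\times_{\pr_2,X,\nu}\widetilde X\cong G\times\widetilde X$. The relations $\pr_2\circ\Phi=\alpha$, $\pr_2\circ\widetilde\Phi=\widetilde\alpha$ and $(\id_G\times\nu)\circ\widetilde\Phi=\Phi\circ(\id_G\times\nu)$ display the square of the lemma as the composite of this cartesian square with the square formed by the isomorphisms $\Phi$ and $\widetilde\Phi$; the latter is cartesian because its horizontal arrows are isomorphisms, so by the pasting lemma the square of the lemma is cartesian. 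The kernels then agree as follows. Write $N=\ker\alpha$ and $\widetilde N=\ker\widetilde\alpha$. Since $\nu$ is finite and schematically dominant it is an epimorphism, a property preserved by the flat base change $-\times_k S$; hence $\widetilde\alpha_g=\id$ forces $\alpha_g\circ\nu_S=\nu_S$ and then $\alpha_g=\id$, giving $\widetilde N\subseteq N$. Conversely, restricting the cartesian square along $N\hookrightarrow G$ produces a cartesian square whose lower arrow is $\alpha|_{N\times X}=\pr_2$; computing the fibre product identifies the upper arrow $\widetilde\alpha|_{N\times\widetilde X}$ with $\pr_2$, so $N$ acts trivially on $\widetilde X$ and $N\subseteq\widetilde N$.

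Finally, for $G$-stability: because normalization commutes with $\pr_2$, the conductor subscheme of $G\times X$ is $\pr_2^{-1}(Z)=G\times Z$, and since the conductor is canonical it is preserved by $\Phi$, whence $\alpha^{-1}(Z)=\Phi^{-1}(\pr_2^{-1}(Z))=G\times Z$, which is precisely the $G$-stability of $Z$; applying $(\id_G\times\nu)^{-1}$ to this equality and using equivariance gives $\widetilde\alpha^{-1}(\widetilde Z)=(\id_G\times\nu)^{-1}(G\times Z)=G\times\widetilde Z$, so $\widetilde Z$ is $G$-stable as well. I expect the delicate point to be the kernel equality over an arbitrary base scheme $S$: one cannot argue fibrewise through the normalization, because $\widetilde X_S$ need not be the normalization of $X_S$ when $S$ is non-reduced; the remedy, and the reason the cartesian square is proved first, is to transport that square by base change instead of lifting automorphisms pointwise.
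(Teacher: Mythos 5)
Most of your argument is sound and close to the paper's: the shear automorphism yielding the cartesian square is exactly the paper's device, your conductor-canonicity argument for the stability of $Z$ is a mild repackaging of the paper's appeal to flat base change of integral closures, and the inclusion $\ker\widetilde{\alpha}\subseteq\ker\alpha$ via the epimorphism property of $\nu_S$ (schematically dominant, preserved by the flat base change $S\to\Spec k$) is correct. The genuine gap is in the opposite inclusion $N=\ker\alpha\subseteq\ker\widetilde{\alpha}$. Restricting the cartesian square along $N\hookrightarrow G$ does give a cartesian square with lower arrow $\pr_2$, but cartesianness does not ``identify the upper arrow with $\pr_2$'': a cartesian square determines its apex only up to a unique isomorphism commuting with the two legs, not the top arrow itself. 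Concretely, the mediating map from $\bigl(N\times\widetilde{X},\,\widetilde{\alpha}|,\,\id\times\nu\bigr)$ to the fibre product $(N\times X)\times_{\pr_2,X,\nu}\widetilde{X}\simeq N\times\widetilde{X}$ is the shear $(n,\widetilde{x})\mapsto(n,\widetilde{\alpha}(n,\widetilde{x}))$, which is an isomorphism for \emph{any} action of $N$ on $\widetilde{X}$ lifting the trivial action on $X$; so your restricted square is cartesian no matter how $N$ acts upstairs, and all it records is $\nu\circ\widetilde{\alpha}|_{N\times\widetilde{X}}=\nu\circ\pr_2$, i.e.\ that $N$ acts by automorphisms of $\widetilde{X}$ over $X$ --- which already follows from mere commutativity. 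Deck-transformation-like (or, since $N$ may be non-reduced, infinitesimal) automorphisms over $X$ are not excluded by this formal step, and your reduced-dense-open trick, used earlier for the action axioms, is unavailable here precisely because $N\times\widetilde{X}$ need not be reduced --- the danger you flagged, but your proposed remedy of ``transporting the square by base change'' does not by itself close it.

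What is missing is the rigidity of $\nu$, and this is where the paper's proof differs: it uses that $\nu$ restricts to an isomorphism over the smooth locus $U$ of $X$, and that $U$ and $\nu^{-1}(U)$ are \emph{schematically} dense opens --- a property preserved under base change by any $k$-scheme $S$ (flat over the field $k$), even non-reduced. Then for $g\in G(S)$: $g$ is the identity on $X_S$ iff on the schematically dense $U_S$, iff on $\nu^{-1}(U)_S$ (since $\nu_S$ restricts there to a $G_S$-equivariant isomorphism), iff on $\widetilde{X}_S$; this yields both inclusions at once, the equalizer argument using only separatedness and schematic density. To repair your version: an automorphism $\theta$ of $N\times\widetilde{X}$ over $N\times X$ preserves $(\id\times\nu)^{-1}(N\times U)=N\times\nu^{-1}(U)$ and restricts to the identity there, hence $\theta=\id$ by schematic density; applied to the shear above this gives $\widetilde{\alpha}|_{N\times\widetilde{X}}=\pr_2$. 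So your conclusion is true, but the key input is the schematic density of the locus where $\nu$ is an isomorphism, not the cartesianness of the square.
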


\begin{proof}
The existence of the action $\widetilde{\alpha}$ is given in \cite[Prop. 2.5.1]{BRI_structure}. The square is cartesian because $u : \fonction{G \times X}{G \times X}{(g,x)}{(g,gx)}$ is an isomorphism making the diagram
\begin{tikzpicture}[baseline=(m.center)]
\matrix(m)[matrix of math nodes,
row sep=2.5em, column sep=2.5em,
text height=1.5ex, text depth=0.25ex,ampersand replacement=\&]
{G \times X \& G\times X \\
\& X \\};
\path[->] (m-1-1) edge node[above] {$u$} (m-1-2);
\path[->] (m-1-1) edge node[below left] {$\alpha$} (m-2-2);
\path[->] (m-1-2) edge node[right] {$\pr_2$} (m-2-2);
\end{tikzpicture}
commute  (and similarly for $\widetilde{\alpha}$).

\medskip
The smooth locus $U$ of $X$ and $\nu^{-1}(U)$ are schematically dense open subschemes of $X$ and $\widetilde{X}$. Thus for any $k$-scheme $S$, the open subschemes $U_S$ and $\nu^{-1}(U)_S = (\nu \times \id_S)^{-1}(U_S)$ are schematically dense in $X_S$ and $\widetilde{X}_S$. They are $G_S$-stable and the morphism $\nu \times \id_S : \nu^{-1}(U)_S \to U_S$ is a $G_S$-equivariant isomorphism. Hence an element $g \in G(S)$ induces the identity on $X_S$ if and only if it induces the identity on $U_S$, if and only if it induces the identity on $\nu^{-1}(U)_S$, if and only if it induces the identity on $\widetilde{X}_S$. Consequently $\alpha$ and $\widetilde{\alpha}$ have the same kernel.

\medskip
Let $\mathcal{C}$ be the conductor sheaf of $\nu$. By \cite[IV2, Prop. 6.8.5]{EGA}, the scheme $G \times \widetilde{X}$ is normal. Then $\id \times \nu : G \times \widetilde{X} \to G \times X$ is the normalization. It follows from \cite[I.2.10, Cor. 2 p.40]{BOUAC1} that $\alpha^*\mathcal{C}$ and $\pr_2^*\mathcal{C}$ are both equal to the conductor sheaf of $\id \times \nu$. In particular they are equal as subsheaves of $\mathcal{O}_{G\times X}$ (using the canonical isomorphisms $\alpha^* \mathcal{O}_X \simeq \mathcal{O}_{G \times X} \simeq \pr_2^* \mathcal{O}_X$). Therefore we have $\alpha^{-1}(Z) = G \times Z$ as subschemes of $G \times X$, thus $Z$ is $G$-stable. Since the morphism $\nu$ is equivariant, $\widetilde{Z}$ is $G$-stable too.
\end{proof}

\begin{lemma}
Let \label{lemma_descent_action}
\begin{tikzpicture}[baseline=(m.center)]
\matrix(m)[matrix of math nodes,
row sep=2.5em, column sep=2.5em,
text height=1.5ex, text depth=0.25ex,ampersand replacement=\&]
{\widetilde{Z} \& \widetilde{X} \\
Z \& X\\};
\path[->] (m-1-1) edge node[above] {$j$}(m-1-2);
\path[->] (m-1-1) edge node[left] {$\lambda$} (m-2-1);
\path[->] (m-2-1) edge node[below] {$i$} (m-2-2);
\path[->] (m-1-2) edge node[right] {$\nu$} (m-2-2);
\end{tikzpicture} 
be a pinching diagram as in Proposition \ref{prop_existence_pinching}. Let $G$ be an algebraic group and $\widetilde{\alpha} : G \times \widetilde{X} \to \widetilde{X}$ and $\beta : G \times Z \to Z$ two actions. If the closed subscheme $\widetilde{Z}$ is $G$-stable and $\lambda$ is equivariant then there exists a unique action $\alpha : G \times X \to X$ such that $Z$ is $G$-stable under this action, the induced action on $Z$ is $\beta$ and $\nu$ is equivariant.
\end{lemma}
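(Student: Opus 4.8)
The plan is to construct $\alpha$ from the universal property of the pinching square, which by Proposition \ref{prop_existence_pinching} is cocartesian in the category of locally ringed spaces. Since the source of the morphism we seek is $G \times X$ rather than $X$, the first step is to exhibit $G \times X$ itself as a pushout: I would show that applying the functor $G \times -$ to the pinching square again yields a cocartesian square, with vertices $G \times \widetilde{Z}$, $G \times \widetilde{X}$, $G \times Z$, $G \times X$ and the base-changed morphisms $\id \times j$, $\id \times \lambda$, $\id \times i$, $\id \times \nu$.

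To establish this I would argue exactly as in the proof of Lemma \ref{lemma_conductor_base_change}: cocartesianness of a pinching square can be checked locally on the target, so, after covering $G$ by affine opens $G_i$ and $X$ by affine opens, I may assume $X = \Spec A$, $\widetilde{X} = \Spec \overline{A}$, $Z = \Spec B$, $\widetilde{Z} = \Spec C$, in which case $A$ is the fibre product of rings $\overline{A} \times_C B$, that is, the kernel of the difference map $\overline{A} \times B \to C$ induced by $j^\sharp$ and $\lambda^\sharp$ (see \cite[Th. 5.1]{FER}). Because $G$ is smooth, hence flat, over $k$, the functor $- \otimes_k \mathcal{O}(G_i)$ is exact and therefore commutes with this kernel, so $A \otimes_k \mathcal{O}(G_i)$ is again the corresponding fibre product of rings. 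This is precisely the compatibility of the pinching with flat base change, and it shows that $G \times X$ is the pushout of the base-changed diagram.

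With this in hand I would set $f = \nu \circ \widetilde{\alpha} : G \times \widetilde{X} \to X$ and $h = i \circ \beta : G \times Z \to X$, and check that they agree after restriction to $G \times \widetilde{Z}$. As $\widetilde{Z}$ is $G$-stable, $\widetilde{\alpha}$ restricts to an action $\widetilde{\alpha}_{\widetilde{Z}} : G \times \widetilde{Z} \to \widetilde{Z}$, whence $f \circ (\id \times j) = \nu \circ j \circ \widetilde{\alpha}_{\widetilde{Z}} = i \circ \lambda \circ \widetilde{\alpha}_{\widetilde{Z}}$ using $\nu \circ j = i \circ \lambda$; on the other hand, equivariance of $\lambda$ gives $h \circ (\id \times \lambda) = i \circ \beta \circ (\id \times \lambda) = i \circ \lambda \circ \widetilde{\alpha}_{\widetilde{Z}}$, so the two compositions coincide. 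The universal property of the pushout $G \times X$ then yields a unique morphism $\alpha : G \times X \to X$ with $\alpha \circ (\id \times \nu) = \nu \circ \widetilde{\alpha}$ and $\alpha \circ (\id \times i) = i \circ \beta$. The first identity is the equivariance of $\nu$; since $i$ is a closed immersion, the second says that $Z$ is $G$-stable and that the induced action is $\beta$.

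It remains to check that $\alpha$ is an action and that it is unique, both of which I would again deduce from the pushout property. For associativity, the same flat base change argument, now applied to $G \times G$, shows that $G \times G \times X$ is a pushout; the two morphisms $\alpha \circ (\id \times \alpha)$ and $\alpha \circ (m \times \id_X)$, where $m$ is the multiplication of $G$, then restrict on $G \times G \times \widetilde{X}$ to $\nu \circ \widetilde{\alpha} \circ (\id \times \widetilde{\alpha})$ and $\nu \circ \widetilde{\alpha} \circ (m \times \id_{\widetilde{X}})$, which agree because $\widetilde{\alpha}$ is an action, and similarly on $G \times G \times Z$ via $\beta$; uniqueness in the pushout property forces the two morphisms to coincide. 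The unit axiom is verified the same way, restricting the endomorphism $\alpha \circ (e \times \id_X)$ of $X$ to $\widetilde{X}$ and to $Z$. Finally, any action with the three required compatibilities must restrict to $f$ and $h$, hence coincides with $\alpha$ by the pushout property. The only genuinely substantial point is the flat base change compatibility of the first two steps; everything afterwards is a formal diagram chase, so that is where I expect the essential (if routine) work to lie.
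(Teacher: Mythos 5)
Your proposal is correct and follows essentially the same route as the paper's proof: exhibit $G \times X$ (and $G \times G \times X$) as pushouts of the base-changed pinching squares, obtain $\alpha$ as the unique morphism completing the diagram formed by $\nu \circ \widetilde{\alpha}$ and $i \circ \beta$ (which agree on $G \times \widetilde{Z}$ exactly as you compute), and verify the action axioms and uniqueness by restriction to $G \times \widetilde{X}$ and $G \times Z$. The only divergence is that where the paper simply cites \cite[Th. 3.11]{HOW} for the cocartesianness of the square obtained by applying $G \times -$, you prove it directly by reducing to the affine case as in Lemma \ref{lemma_conductor_base_change} and using exactness of $- \otimes_k \mathcal{O}(G_i)$ (automatic over a field, so even smoothness of $G$ is not needed here), which is a valid and self-contained substitute.
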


\begin{proof}
It follows from \cite[Th. 3.11]{HOW} that the square 
\begin{tikzpicture}[baseline=(m.center)]
\matrix(m)[matrix of math nodes,
row sep=2.5em, column sep=2.5em,
text height=1.5ex, text depth=0.25ex,ampersand replacement=\&]
{G \times \widetilde{Z} \& G \times \widetilde{X} \\
G \times Z \& G \times X\\};
\path[->] (m-1-1) edge node[above] {$\id \times j$}(m-1-2);
\path[->] (m-1-1) edge node[left] {$\id \times \lambda$} (m-2-1);
\path[->] (m-2-1) edge node[below] {$\id \times i$} (m-2-2);
\path[->] (m-1-2) edge node[right] {$\id \times \nu$} (m-2-2);
\end{tikzpicture} 
is cocartesian in the category of locally ringed spaces. Let $\widetilde{\beta} : G \times \widetilde{Z} \to \widetilde{Z}$ be the action induced by $\widetilde{\alpha}$. By assumption the diagram 
\begin{tikzpicture}[baseline=(m.center)]
\matrix(m)[matrix of math nodes,
row sep=1.25em, column sep=0.75em,
text height=1.5ex, text depth=0.25ex,ampersand replacement=\&]
{G\times \widetilde{Z} \& \& G\times \widetilde{X} \& \& \\
\& \& \& \& \widetilde{X} \\
G\times Z \& \& G\times X \& \& \\
\& \& \& \& \\
\& Z \& \& \& X \\};
\path[->] (m-1-1) edge node[above] {$\id \times j$}(m-1-3);
\path[->] (m-1-1) edge node[left] {$\id \times \lambda$} (m-3-1);
\path[->] (m-3-1) edge node[below] {$\id \times i$} (m-3-3);
\path[->] (m-1-3) edge node[right] {$\id \times \nu$} (m-3-3);
\path[->] (m-1-3) edge [bend left=15] node[above] {$\widetilde{\alpha}$} (m-2-5);
\path[->] (m-3-1) edge [bend right=15] node[below left] {$\beta$} (m-5-2);
\path[->] (m-5-2) edge node[below] {$i$} (m-5-5);
\path[->] (m-2-5) edge node[right] {$\nu$} (m-5-5);
\end{tikzpicture}
commutes. Then there exists a unique morphism $\alpha : G \times X \to X$ which completes it. It remains to prove that $\alpha$ is action, that is to say the two composite morphisms $G \times G \times X \xrightarrow{\id \times \alpha} G \times X \xrightarrow{\alpha} X$ and $G \times G \times X \xrightarrow{\mu \times \id} G \times X \xrightarrow{\alpha} X$ are equal, as well as $\Spec k \times X \xrightarrow{e \times \id} G \times X \xrightarrow{\alpha} X$ and the second projection. This can be done in both cases by using the fact that the square 
\begin{tikzpicture}[baseline=(m.center)]
\matrix(m)[matrix of math nodes,
row sep=2.5em, column sep=2.5em,
text height=1.5ex, text depth=0.25ex,ampersand replacement=\&]
{G \times G \times \widetilde{Z} \& G \times G \times \widetilde{X} \\
G \times G \times Z \& G \times G \times X\\};
\path[->] (m-1-1) edge (m-1-2);
\path[->] (m-1-1) edge (m-2-1);
\path[->] (m-2-1) edge (m-2-2);
\path[->] (m-1-2) edge (m-2-2);
\end{tikzpicture} 
is cocartesian and showing that the two morphisms complete the same diagram. The details are left to the reader.
\end{proof}

The normalization behaves well with respect to almost-homogeneity.

\begin{lemma}
Let \label{lemma_almost_homogeneous_lift}$G$ be a smooth connected algebraic group, $X$ a variety, 
\begin{tikzpicture}[baseline=(m.center)]
\matrix(m)[matrix of math nodes,
row sep=2.5em, column sep=2.5em,
text height=1.5ex, text depth=0.25ex,ampersand replacement=\&]
{\widetilde{Z} \& \widetilde{X} \\
Z \& X\\};
\path[->] (m-1-1) edge node[above] {$j$}(m-1-2);
\path[->] (m-1-1) edge node[left] {$\lambda$} (m-2-1);
\path[->] (m-2-1) edge node[below] {$i$} (m-2-2);
\path[->] (m-1-2) edge node[right] {$\nu$} (m-2-2);
\end{tikzpicture} 
the conductor square, $\alpha : G \times X \to X$ an action and $\widetilde{\alpha} : G \times \widetilde{X} \to \widetilde{X}$ the action given by Lemma \ref{lemma_lift_action}. The variety $X$ is almost homogeneous if and only if $\widetilde{X}$ is almost homogeneous. In this case, if in addition $X$ is a curve $C$ then $Z$ and $\widetilde{Z}$ are contained in the subschemes $C^G$ and $\widetilde{C}^G$ of fixed points.
\end{lemma}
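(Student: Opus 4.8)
The plan is to prove the equivalence by comparing the two ``graph'' morphisms $\gamma=(\alpha,\pr_2)\colon G\times X\to X\times X$ and $\widetilde\gamma=(\widetilde\alpha,\pr_2)\colon G\times\widetilde X\to\widetilde X\times\widetilde X$ of Definition \ref{def_almost_hom}, and then to treat the curve statement separately and purely set-theoretically. First I would record the commutation that drives everything. Since $\nu$ is equivariant (Lemma \ref{lemma_lift_action}), a direct check on points gives $(\nu\times\nu)\circ\widetilde\gamma=\gamma\circ(\id\times\nu)$. Here $\id\times\nu$ is a base change of the finite surjective morphism $\nu$, hence finite and surjective, and $\nu\times\nu$ is a composite of two such base changes, hence also finite and surjective. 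I would also note that $\dim\widetilde X=\dim X$ (a finite surjective morphism preserves dimension), and that $\widetilde X\times\widetilde X$ is irreducible: indeed $\nu$ is finite and birational, so $\widetilde X$ and $X$ have the same function field, geometric irreducibility is a birational invariant, and therefore $\widetilde X$ is geometrically irreducible, which is exactly what makes the self-product over $k$ irreducible.

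Then the equivalence splits into two implications. If $\widetilde\gamma$ is dominant, then $(\nu\times\nu)\circ\widetilde\gamma$ is dominant as a composite of dominant morphisms, hence so is $\gamma\circ(\id\times\nu)$, and a fortiori $\gamma$ is dominant since its image contains that of the composite; so $X$ is almost homogeneous. Conversely, if $\gamma$ is dominant, then $\gamma\circ(\id\times\nu)$ has the same image as $\gamma$ because $\id\times\nu$ is surjective, hence is dominant, so $(\nu\times\nu)\circ\widetilde\gamma$ is dominant. This last step is the one I expect to be the main obstacle, because dominance does not in general descend along the surjection $\nu\times\nu$. Here it does: $\nu\times\nu$ is finite, hence closed, so $(\nu\times\nu)\bigl(\overline{\mathrm{im}\,\widetilde\gamma}\bigr)$ is a closed set containing a dense subset of $X\times X$, hence equals $X\times X$; as $\nu\times\nu$ is finite it preserves dimension, so $\overline{\mathrm{im}\,\widetilde\gamma}$ has dimension $\dim(X\times X)=\dim(\widetilde X\times\widetilde X)$, and being a closed subset of the irreducible $\widetilde X\times\widetilde X$ of full dimension it must be all of it. Thus $\widetilde\gamma$ is dominant and $\widetilde X$ is almost homogeneous.

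Finally I would prove the curve statement, which I read and only claim set-theoretically (scheme-theoretically it can fail: for the cuspidal cubic acted on by $\G_{m,k}$, the subscheme $\widetilde Z$ is non-reduced and carries a non-trivial action, so it is not contained in $\widetilde C^G$ as a subscheme). Assume $X=C$ is an almost homogeneous curve. By Lemma \ref{lemma_existence_open_orbit} the open orbit of $C$ equals $C\setminus C^G$, and being homogeneous it is smooth, in particular normal; hence it is disjoint from the non-normal locus underlying $Z$, which gives $Z\subseteq C^G$ as sets. For $\widetilde Z$, the morphism $\nu$ is an equivariant isomorphism from $\widetilde C\setminus\widetilde Z$ onto $C\setminus Z$ (Proposition \ref{prop_existence_pinching}); writing $U=C\setminus C^G$ for the open orbit, which is contained in $C\setminus Z$, the subscheme $\nu^{-1}(U)$ is then a $G$-stable open homogeneous subscheme of $\widetilde C$ contained in $\widetilde C\setminus\widetilde Z$. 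By the uniqueness of the open orbit (Lemma \ref{lemma_almost_hom_field_extension}), together with part one applied to $\widetilde C$ and Lemma \ref{lemma_existence_open_orbit}, this open subscheme is precisely the open orbit $\widetilde C\setminus\widetilde C^G$ of $\widetilde C$. Therefore $\widetilde Z\subseteq\widetilde C\setminus\nu^{-1}(U)=\widetilde C^G$ as sets, which completes the argument.
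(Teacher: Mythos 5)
Your proof is correct, and two of its three parts coincide with the paper's own argument: the paper also uses the commutative square relating $\gamma$ and $\widetilde{\gamma}$ (noting that $\id \times \nu$ is surjective and $\nu \times \nu$ is surjective and closed) to pass from dominance of $\widetilde{\gamma}$ to dominance of $\gamma$, and for the curve statement it argues exactly as you do, that $Z$ lies in the non-normal locus, hence in $C \setminus U$, so that $\widetilde{Z} = \nu^{-1}(Z) \subseteq \widetilde{C} \setminus \nu^{-1}(U) = \widetilde{C}^G$ as sets. Where you genuinely diverge is the converse implication. The paper's converse is shorter and structural: the open orbit $U$ of Lemma \ref{lemma_almost_hom_field_extension} is smooth, so $\nu^{-1}(U) \simeq U$ is a $G$-stable homogeneous open subscheme of $\widetilde{X}$, and criterion \ref{unique_open_orbit} of that lemma immediately yields almost homogeneity of $\widetilde{X}$. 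You instead descend dominance along $\nu \times \nu$ by hand, using finiteness and closedness of $\nu \times \nu$, preservation of dimension under finite surjective morphisms, and irreducibility of $\widetilde{X} \times \widetilde{X}$, which you justify correctly via birational invariance of geometric irreducibility (the function fields of $X$ and $\widetilde{X}$ agree). This is sound but more computational, and it buys nothing here: your own treatment of the curve statement already reproduces the paper's converse, since once you observe that $\nu^{-1}(U)$ is a $G$-stable homogeneous open subscheme of $\widetilde{C}$ you could invoke Lemma \ref{lemma_almost_hom_field_extension} directly and skip the dimension count entirely --- and that argument works for any variety $X$, not just curves. Finally, your caveat about the scheme-theoretic reading of the containment is well taken: the paper's proof, like yours, establishes only containment of supports, which is consistent with how the lemma is used later (e.g.\ ``$\widetilde{Z}$ is supported by'' the fixed points in the proofs of Theorems \ref{th_classification_Popov_Ga} and \ref{th_classification_Popov_Gm}), and your cuspidal example, where $\widetilde{Z} = \Spec k[u]/(u^2)$ carries the non-trivial $\G_{m,k}$-action $u \mapsto au$ while the fixed-point scheme of $\PP^1_k$ is reduced, correctly shows that scheme-theoretic containment can fail.
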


\begin{proof}
The morphisms $\gamma : G \times X \to X \times X$ and $\widetilde{\gamma} : G \times \widetilde{X} \to \widetilde{X} \times \widetilde{X}$ of Definition \ref{def_almost_hom} make the diagram 
\begin{tikzpicture}[baseline=(m.center)]
\matrix(m)[matrix of math nodes,
row sep=2.5em, column sep=2em,
text height=1.5ex, text depth=0.25ex,ampersand replacement=\&]
{G\times \widetilde{X} \& \widetilde{X} \times \widetilde{X}\\
G\times X \& X \times X\\};
\path[->] (m-1-1) edge node[above] {$\widetilde{\gamma}$} (m-1-2);
\path[->] (m-1-1) edge node[left] {$\id\times\nu$} (m-2-1);
\path[->] (m-2-1) edge node[below] {$\gamma$} (m-2-2);
\path[->] (m-1-2) edge node[right] {$\nu \times \nu$} (m-2-2);
\end{tikzpicture}
commute. The morphism $\nu$ is surjective and closed, so $\id \times \nu$ is surjective and $\nu \times \nu$ is surjective and closed. If $\widetilde{X}$ is almost homogeneous then $\widetilde{\gamma}$ is dominant, so $\gamma$ is dominant too and $X$ is almost homogeneous.

\medskip
Conversely, assume that $X$ is almost homogeneous. Let $U$ be the open orbit in $X$, given by Lemma \ref{lemma_almost_hom_field_extension}. Then $U$ is smooth so $\nu^{-1}(U) \simeq U$. Hence $\nu^{-1}(U)$ is a $G$-stable and homogeneous open subscheme of $\widetilde{X}$, so $\widetilde{X}$ is almost homogeneous.

\medskip
Assume that $X$ is an almost homogeneous curve $C$. By Lemma \ref{lemma_existence_open_orbit}, $U$ and $\nu^{-1}(U)$ are the complements of the subschemes of fixed points. Since $Z$ is the singular locus of $C$ and $U$ is smooth, $Z$ is contained in $C \setminus U$. Then $\widetilde{Z} = \nu^{-1}(Z)$ is contained in $\widetilde{C} \setminus \nu^{-1}(U)$.
\end{proof}


\subsection{Seminormality}

In this section we recall some properties of seminormal schemes. We use the article \cite{SWAN} of Richard Swan as a general reference.

\begin{definition}
Let $f : Y \to X$ be a morphism between reduced noetherian schemes. We say that $f$ is a pseudo-isomorphism if $f$ is integral and bijective and, for all $y \in Y$, the morphism $f^\sharp : \kappa(f(y)) \to \kappa(y)$ between the residue fields is an isomorphism.

We say that $X$ is seminormal if every pseudo-isomorphism $f : Y \to X$ is an isomorphism.
\end{definition}

\begin{remark}
\begin{enumerate}[wide, labelwidth=!, labelindent=0pt, label=\roman*)]
 \item In the literature, $f$ is often called a ``quasi-isomorphism'' or a ``subintegral morphism''. We propose the name ``pseudo-isomorphism'' to avoid the confusion with the other notions of quasi-isomorphism (such as the homological one). Moreover, the name ``subintegral morphism'' can be misleading since, contrary to what it can suggest, $f$ is indeed integral.
 \item By \cite[I, Prop. 3.5.8]{EGA} and \cite[IV2, Prop. 2.4.5]{EGA}, a pseudo-isomorphism is a universal homeomorphism.
 \item Being a pseudo-isomorphism is a local property on the target.
\end{enumerate}
\end{remark}

Normal schemes are obtained by gluing normal rings. Similarly, seminormal schemes are obtained from seminormal rings.

\begin{definition}\cite[Lemma 2.2]{SWAN}
Let $A$ be a reduced noetherian ring and set $$A^+ = \{ b \in \overline{A} \mid \forall \p \in \Spec A, b \in A_\p + R(\overline{A}_\p) \subseteq \overline{A}_\p \}$$ where $\overline{A}$ is the integral closure of $A$ in its total ring of fractions, $A_\p$ and $\overline{A}_\p$ are the localizations by the multiplicative set $A \setminus \p$ and $R(\overline{A}_\p)$ is the Jacobson radical of $\overline{A}_\p$. The morphism $\Spec A^+ \to \Spec A$ is a pseudo-isomorphism and $A^+$ is the largest subextension of $A \subseteq \overline{A}$ with this property. The ring $A$ is said to be seminormal if $A = A^+$.
\end{definition}

Being a seminormal ring is a local property.

\begin{lemma}\cite[Cor. 2.10]{SWAN}
Let $A$ be a reduced noetherian ring. The following assertions are equivalent: 
\begin{enumerate}[label=\roman*)]
 \item The ring $A$ is seminormal.
 \item For every $\p \in \Spec A$, $A_\p$ is seminormal.
 \item For every maximal ideal $\m \in \Spec A$, $A_\m$ is seminormal.
\end{enumerate}
\end{lemma}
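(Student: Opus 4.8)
The statement to prove is the equivalence of three conditions for a reduced noetherian ring $A$: seminormality of $A$, seminormality of all localizations $A_\p$ at primes, and seminormality of all localizations $A_\m$ at maximal ideals. This is a standard local-global principle, and the plan is to exploit that the construction $A \mapsto A^+$ commutes with localization.

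The plan is as follows. First I would establish the key compatibility: for any multiplicative subset $S$ of $A$, one has $(S^{-1}A)^+ = S^{-1}(A^+)$, where $A^+$ is the seminormalization defined above. The inclusion of $A$ into $\overline{A}$ localizes to the inclusion of $S^{-1}A$ into $S^{-1}\overline{A} = \overline{S^{-1}A}$, since integral closure commutes with localization. The defining condition for $A^+$ is stated primewise (the condition $b \in A_\p + R(\overline{A}_\p)$ ranges over all $\p \in \Spec A$), so it is manifestly compatible with passing to a further localization: the primes of $S^{-1}A$ are exactly the primes of $A$ disjoint from $S$, and the localizations $(S^{-1}A)_{\p}$ and $(S^{-1}\overline{A})_\p$ agree with $A_\p$ and $\overline{A}_\p$. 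From this, $S^{-1}(A^+)$ satisfies the defining condition for $(S^{-1}A)^+$, and one checks the reverse inclusion similarly. I would then record the immediate consequence $(A_\p)^+ = (A^+)_\p$ for every prime $\p$.

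Next, the implications. The direction $i) \Rightarrow ii)$ is then formal: if $A = A^+$, then for every $\p$ we have $(A_\p)^+ = (A^+)_\p = A_\p$, so each $A_\p$ is seminormal; and $ii) \Rightarrow iii)$ is trivial since maximal ideals are prime. For the crucial direction $iii) \Rightarrow i)$, I would argue that $A^+/A$ is a module whose localization at every maximal ideal vanishes. Indeed, the inclusion $A \hookrightarrow A^+$ localizes at a maximal ideal $\m$ to $A_\m \hookrightarrow (A^+)_\m = (A_\m)^+$; if $A_\m$ is seminormal this is an equality, so $(A^+/A)_\m = (A^+)_\m/A_\m = 0$. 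A module all of whose localizations at maximal ideals are zero is itself zero, whence $A^+ = A$ and $A$ is seminormal.

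The main obstacle, and the only step requiring genuine care, is the localization compatibility $(S^{-1}A)^+ = S^{-1}(A^+)$. One must verify that the integral closure, the total ring of fractions, the Jacobson radicals $R(\overline{A}_\p)$, and the primewise membership condition all interact correctly with localization; in particular the identification of $\Spec(S^{-1}A)$ with a subset of $\Spec A$ and the transitivity of localization $(S^{-1}A)_{\p} = A_\p$ must be used to match the defining conditions on both sides. Since the reference \cite{SWAN} already records these compatibilities for the seminormalization, I expect this to follow cleanly from the cited results, making the remaining module-theoretic argument entirely routine.
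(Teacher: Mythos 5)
The paper offers no proof of this lemma at all: it is quoted verbatim from Swan (the citation \cite[Cor. 2.10]{SWAN} is the entire justification), so the only meaningful comparison is with Swan's own derivation, and your plan reproduces it: Cor. 2.10 follows from the localization compatibility of the seminormalization (Swan's Prop. 2.9, which this very paper invokes when gluing the affine seminormalizations into $X^+$) together with the local-global argument you give. That second half is correct as written: $i) \Rightarrow ii)$ from $(A_\p)^+ = (A^+)_\p$, the trivial $ii) \Rightarrow iii)$, and $iii) \Rightarrow i)$ from exactness of localization, $(A^+/A)_\m \simeq (A_\m)^+/A_\m = 0$, and the vanishing of a module all of whose localizations at maximal ideals vanish. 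The background identifications are also unproblematic here: a reduced noetherian ring has finitely many minimal primes, so the total ring of fractions satisfies $Q(S^{-1}A) = S^{-1}Q(A)$ and hence $\overline{S^{-1}A} = S^{-1}\overline{A}$.

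One caution, though: ``one checks the reverse inclusion similarly'' overstates the symmetry, and this is the only place where the content of the proof lives. The inclusion $S^{-1}(A^+) \subseteq (S^{-1}A)^+$ really is manifest from the primewise definition, because the conditions defining $(S^{-1}A)^+$ are indexed by the primes of $A$ disjoint from $S$, a subset of those defining $A^+$ (and $A_\p + R(\overline{A}_\p)$ is an $A_\p$-module, so denominators from $S$ are harmless). But for the reverse inclusion an element of $(S^{-1}A)^+$ carries \emph{no} information at the primes $\q$ meeting $S$, whereas membership in $A^+$ is a condition at all primes; a naive colon-ideal argument stalls precisely there, since multiplying by a single $t \in S$ only trivializes the condition at primes containing $t$ (via $t b \in \q\,\overline{A}_\q \subseteq R(\overline{A}_\q)$), not at every prime meeting $S$. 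The standard repair — and Swan's actual route — is to characterize $A^+$ as the subintegral extension of $A$ in $\overline{A}$ that is seminormal in $\overline{A}$, check that both properties pass to localizations (the second by clearing denominators in the elementwise criterion $x^2, x^3 \in A \Rightarrow x \in A$), and conclude by uniqueness. Since you explicitly defer this step to the compatibilities recorded in \cite{SWAN}, your proposal is sound as a derivation of Cor. 2.10 from Prop. 2.9, but as written the word ``similarly'' conceals the one genuinely nontrivial step.
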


As an analogous of the normalization, there is a seminormalization. Its construction is well-known and we recall it briefly.

\begin{lemma}
Let $X$ be a reduced noetherian scheme and $\nu : \widetilde{X} \to X$ the normalization. There exists a seminormal scheme $X^+$, a pseudo-isomorphism $\sigma : X^+ \to X$ and a factorization $\widetilde{X} \to X^+ \xrightarrow{\sigma} X$ of $\nu$, satisfying the following universal property:
\begin{center}
\parbox{0.9\textwidth}{
for every seminormal scheme $Y$ and every morphism $f : Y \to X$, there is a unique factorization $Y \to X^+ \xrightarrow{\sigma} X$ of $f$.
}
\end{center}
The morphism $\sigma$ is called the seminormalization.
\end{lemma}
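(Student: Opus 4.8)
The plan is to construct $X^+$ by gluing affine pieces, read off that $\sigma$ is a pseudo-isomorphism and that $X^+$ is seminormal from the corresponding local statements, and then obtain the universal property by a base-change argument with the \emph{reduced} fibre product.

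\medskip
First I would construct $\sigma : X^+ \to X$ locally. Over each affine open $\Spec A \subseteq X$ I set $(\Spec A)^+ = \Spec A^+$, with $A^+$ the seminormalization ring of the preceding Definition. The only point to check is that this glues, i.e. that $A \mapsto A^+$ commutes with localization, so that $(A^+)_f \simeq (A_f)^+$ for $f \in A$; this is immediate from the defining condition of $A^+$, which is imposed primewise and involves only the localizations $A_\p$ and $\overline{A}_\p$ (together with $\overline{A_f} = (\overline{A})_f$). Gluing the $\Spec A^+$ along these identifications yields a scheme $X^+$ and an affine morphism $\sigma : X^+ \to X$. Since $\sigma$ is locally the pseudo-isomorphism $\Spec A^+ \to \Spec A$ and, by the Remark above, being a pseudo-isomorphism is local on the target, $\sigma$ is a pseudo-isomorphism; likewise $X^+$ is seminormal because it is locally $\Spec A^+$ with $A^+$ seminormal and seminormality is a local property by the Lemma above. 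For the factorization of $\nu$, I note that $\widetilde{X}$ is normal, hence seminormal (if $A$ is normal then $A = \overline{A}$, so $A^+ = A$); the factorization $\widetilde{X} \to X^+ \xrightarrow{\sigma} X$ then follows from the universal property proved below applied to $f = \nu$, or directly from the inclusions $A \subseteq A^+ \subseteq \overline{A}$.

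\medskip
The heart of the proof is the universal property. Let $Y$ be seminormal and $f : Y \to X$ a morphism; I would form the fibre product $Y \times_X X^+$ and set $W = (Y \times_X X^+)_{\mathrm{red}}$. The projection $\pr_1 : Y \times_X X^+ \to Y$ is the base change of $\sigma$, and by the Remark identifying pseudo-isomorphisms with universal homeomorphisms they are exactly the integral, universally injective, surjective morphisms, all three properties being stable under base change; hence $\pr_1$ is a pseudo-isomorphism. The reduction map $W \to Y \times_X X^+$ is a surjective closed immersion inducing isomorphisms on residue fields, so it too is a pseudo-isomorphism, and therefore so is the composite $W \to Y$. (When the normalization is finite, as in all our applications, $\sigma$ is finite and every scheme here is noetherian, so these pseudo-isomorphisms are of the required type.) As $Y$ is seminormal this composite is an isomorphism, and composing its inverse $Y \to W$ with $W \hookrightarrow Y \times_X X^+ \xrightarrow{\pr_2} X^+$ gives $g : Y \to X^+$ with $\sigma \circ g = f$. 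For uniqueness, any factorization $g'$ determines a section $(\id_Y, g') : Y \to Y \times_X X^+$ of $\pr_1$; since $Y$ is reduced this section factors through $W$, yielding a section of the isomorphism $W \to Y$, which is forced to be its inverse, whence $g' = g$.

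\medskip
The main obstacle is this last step, and within it the fact that $Y \times_X X^+$ need not be reduced: the argument works only because passing to $W = (Y \times_X X^+)_{\mathrm{red}}$ preserves the property of being a pseudo-isomorphism, which rests on their characterization as base-change-stable universal homeomorphisms. By contrast the gluing step is routine once the compatibility of $A \mapsto A^+$ with localization has been recorded.
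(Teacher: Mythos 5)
Your construction of $X^+$ by gluing the affine pieces $\Spec A^+$ is the same as the paper's (which delegates the compatibility of $A \mapsto A^+$ with localization to Swan's Prop.\ 2.9 rather than declaring it immediate --- it does require an argument, since an element of $(A_f)^+$ is only constrained at primes not containing $f$). For the universal property, however, you take a genuinely different route from the paper, and it contains a real error at its crux. You claim that pseudo-isomorphisms are \emph{exactly} the integral, universally injective, surjective morphisms, attributing this to the Remark; but the Remark asserts only one implication, and the converse is false. For an imperfect field $k$ of characteristic $p$, the morphism $\Spec k^{1/p} \to \Spec k$ is integral, universally injective and surjective between reduced noetherian schemes, yet it is not a pseudo-isomorphism: the residue field extension is purely inseparable rather than an isomorphism. (If your characterization were correct, no imperfect field would be seminormal, whereas normal rings are trivially seminormal --- as you yourself use when factoring $\nu$ through $X^+$.) Consequently ``hence $\pr_1$ is a pseudo-isomorphism'' does not follow: base change obviously preserves integrality, universal injectivity and surjectivity, but the residue-field-isomorphism condition --- the entire content of the notion --- is precisely what you never verify. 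The step is repairable: since $\sigma$ is bijective with trivial residue extensions, its fibre over $x \in X$ is a one-point scheme $\Spec R$ with $R_{\mathrm{red}} = \kappa(x)$, so the fibre of $\pr_1$ over $y \in Y$ is $\Spec(R \otimes_{\kappa(x)} \kappa(y))$, whose reduction is $\Spec \kappa(y)$; this supplies the missing residue-field isomorphisms for $W \to Y$. But as written your inference rests on a false equivalence, not on this computation.

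There is a second gap, which you flag but do not close. The paper defines pseudo-isomorphisms only between reduced \emph{noetherian} schemes, and the seminormality of $Y$ therefore only rules out pseudo-isomorphisms in that sense; your $W$ need not be noetherian, because the Lemma does not assume the normalization finite, and a reduced noetherian ring can have non-finite integral closure, so $\sigma$ need not be finite. Your parenthetical restriction to the case of finite normalization proves a special case of the stated Lemma, not the Lemma itself. The paper sidesteps both difficulties by an entirely different mechanism: it quotes Swan's affine universal property (Th.\ 4.1), whose elementwise ring-theoretic proof needs no finiteness or noetherian hypotheses on the extension, and then globalizes by affine open coverings. What your base-change argument buys, once the residue-field computation above is inserted and the noetherian issue is dealt with (e.g.\ by checking, Swan-style, that $\mathcal{O}(W)$ lands elementwise in $\mathcal{O}(V)^+ = \mathcal{O}(V)$ over affine pieces $V \subseteq Y$), is a more geometric and self-contained proof; but in its present form it is not a proof of the statement as given.
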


\begin{proof}
If $X$ is affine then we write $X = \Spec A$ and $X^+ = \Spec A^+$. The morphism $\sigma : X^+ \to X$ and the factorization $\widetilde{X} \to X^+ \xrightarrow{\sigma} X$ of $\nu$ are given by the inclusions $A \subseteq A^+ \subseteq \overline{A}$. More generally, we cover $X$ by affine open subschemes $U_i = \Spec A_i$ and it follows from \cite[Prop. 2.9]{SWAN} that we can glue the $\Spec A_i^+$ together to get a seminormal scheme $X^+$, a pseudo-isomorphism $\sigma : X^+ \to X$ and a factorization $\widetilde{X} \to X^+ \xrightarrow{\sigma} X$ of $\nu$. 

The universal property is given by \cite[Th. 4.1]{SWAN} in case $X$ and $Y$ are affine. The general case follows readily by taking affine open coverings.
\end{proof}

\begin{corollary}
Let \label{cor_SN_is_local}$X$ be a reduced noetherian scheme. The following assertions are equivalent: 
\begin{enumerate}[label=\roman*)]
 \item The scheme $X$ is seminormal.
 \item For every affine open subscheme $U$ of $X$, the ring $\mathcal{O}_X(U)$ is seminormal.
 \item For every $x \in X$, the ring $\mathcal{O}_{X,x}$ is seminormal.
 \item Every open subscheme of $X$ is seminormal.
 \item There exists an open covering of $X$ by seminormal schemes.
\end{enumerate}
\end{corollary}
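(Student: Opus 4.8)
The plan is to prove the five equivalences in Corollary \ref{cor_SN_is_local} by leveraging the two facts already established: first, that being a seminormal \emph{ring} is a local property (the preceding Lemma, \cite[Cor. 2.10]{SWAN}), and second, the existence and universal property of the seminormalization $\sigma : X^+ \to X$. The cleanest route is to show the cycle of implications $i) \Rightarrow iv) \Rightarrow v) \Rightarrow ii) \Leftrightarrow iii) \Rightarrow i)$, or equivalently to fix a convenient hub (say assertion $iii)$, locality at stalks) and show each other assertion is equivalent to it. I would first unwind the definitions: by the ring-theoretic locality lemma, for an affine open $U = \Spec A$ the ring $A = \mathcal{O}_X(U)$ is seminormal if and only if $A_\p$ is seminormal for every prime $\p$, i.e.\ if and only if $\mathcal{O}_{X,x}$ is seminormal for every $x \in U$. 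This already makes $ii)$ and $iii)$ interchangeable and ties both to a purely stalk-level condition.

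The heart of the argument is connecting these local/affine-local statements to the global scheme-theoretic definition of seminormality (assertion $i)$: every pseudo-isomorphism onto $X$ is an isomorphism). First I would establish $i) \Rightarrow iii)$: suppose $X$ is seminormal and fix $x \in X$; I want $\mathcal{O}_{X,x}$ seminormal. The key point is that being a pseudo-isomorphism is local on the target (noted in the Remark after the definition), so a pseudo-isomorphism onto $\Spec \mathcal{O}_{X,x}$ can be glued/spread out, or rather, one checks that if $\mathcal{O}_{X,x}$ admitted a strictly larger subextension inside its total ring of fractions realizing a nontrivial pseudo-isomorphism, this would contradict the seminormality of $X$ via the seminormalization $\sigma$. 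Concretely, since $\sigma : X^+ \to X$ is a pseudo-isomorphism and $X$ is seminormal, $\sigma$ is an isomorphism, hence $\mathcal{O}_{X,x} \simeq \mathcal{O}_{X^+,x}$ is the stalk of a seminormal scheme, and by the affine construction $X^+ = \Spec A^+$ over an affine chart, these stalks are seminormal rings.

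For the converse direction $iii) \Rightarrow i)$, I would argue via the seminormalization again. If every stalk $\mathcal{O}_{X,x}$ is a seminormal ring, then on each affine chart $\Spec A_i$ the ring $A_i$ is seminormal by the ring-theoretic locality lemma, so $A_i = A_i^+$ and the local seminormalization $\Spec A_i^+ \to \Spec A_i$ is an isomorphism; gluing shows $\sigma : X^+ \to X$ is an isomorphism. Now let $f : Y \to X$ be any pseudo-isomorphism with $Y$ reduced; since $Y$ is trivially seminormal onto itself only after knowing $Y$ is seminormal, the correct move is instead to invoke the universal property: a pseudo-isomorphism $f : Y \to X$ factors uniquely through $\sigma$, and because $\sigma$ is an isomorphism while $f$ is itself a pseudo-isomorphism with $Y$ reduced, one deduces $f$ is an isomorphism (the factoring morphism $Y \to X^+ \simeq X$ composed appropriately forces bijectivity and isomorphism on residue fields and structure sheaves to be realized by $f$ itself). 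Finally, $iv)$ and $v)$ are bridged to the affine-local statement purely formally: $i) \Rightarrow iv)$ since an open subscheme of $X$ inherits pseudo-isomorphisms back onto $X$ (seminormality passes to opens because the stalks are unchanged); $iv) \Rightarrow v)$ is immediate by taking $U = X$ together with any open covering; and $v) \Rightarrow iii)$ follows since stalks are computed locally, so a covering by seminormal schemes forces every stalk to be a seminormal ring.

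The main obstacle I anticipate is the implication tying the \emph{scheme-theoretic} definition $i)$ to the \emph{ring-theoretic} condition, because one must carefully match Swan's ring-level notion of seminormality (defined through $A^+ \subseteq \overline A$ inside the total ring of fractions) with the functorial ``every pseudo-isomorphism is an isomorphism'' definition used at the scheme level; the cleanest bridge is precisely the seminormalization $\sigma$ and its universal property, which reduces everything to checking that $\sigma$ is an isomorphism if and only if the defining inclusions $A_i \subseteq A_i^+$ are equalities on an affine cover. Once that correspondence is set up, all five assertions collapse onto the single stalk-level condition $iii)$, and the remaining implications are routine glueing and localization arguments.
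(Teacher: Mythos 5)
The paper states this corollary without proof, as an immediate consequence of the two results quoted just before it (Swan's locality lemma for seminormal rings and the construction of the seminormalization), so your proposal has to be judged on its own merits. Most of it is sound and follows the intended route: the equivalence of ii) and iii) via Swan's \cite[Cor.~2.10]{SWAN}; the implication i)~$\Rightarrow$~iii) by noting that $\sigma \colon X^+ \to X$ is a pseudo-isomorphism, hence an isomorphism once $X$ is seminormal, so that over each chart $A_i = A_i^+$ is a seminormal ring with seminormal localizations; and the formal reductions involving iv) and v).

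The genuine gap is in iii)~$\Rightarrow$~i). Having shown that $\sigma$ is an isomorphism, you argue that an arbitrary pseudo-isomorphism $f \colon Y \to X$ ``factors uniquely through $\sigma$'' by the universal property and is therefore an isomorphism. This is vacuous twice over: the universal property you invoke applies to morphisms from \emph{seminormal} schemes, and $Y$ is only known to be reduced; and once $\sigma$ is an isomorphism, \emph{every} morphism to $X$ factors through it (compose with $\sigma^{-1}$), so the factorization imposes no constraint on $f$ at all. The missing ingredient is the statement that every pseudo-isomorphism onto $X$ is dominated by the seminormalization, and this must be checked at the ring level: $f$ is integral, hence affine, and being a pseudo-isomorphism is local on the target, so one reduces to $f^\sharp \colon A \to B$ with $U = \Spec A \subseteq X$ and $f^{-1}(U) = \Spec B$. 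Then $f^\sharp$ is injective (its kernel lies in every prime of $A$ because $\Spec B \to \Spec A$ is surjective, and $A$ is reduced); the bijection on spectra with isomorphisms on residue fields matches minimal primes $\q_i \leftrightarrow \p_i$ with $\kappa(\q_i) \simeq \kappa(\p_i)$, so the reduced ring $B$ embeds into $\prod_i \kappa(\q_i) \simeq \prod_i \kappa(\p_i) = Q(A)$, the total ring of fractions of $A$, compatibly with $A \hookrightarrow Q(A)$; integrality then yields $A \subseteq B \subseteq \overline{A}$, whence $B \subseteq A^+$ by the maximality in Swan's Lemma 2.2. Your hypothesis iii) together with the locality lemma gives $A = A^+$, forcing $B = A$, so $f$ is an isomorphism chart by chart and hence globally. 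Without this embedding step your cycle of implications does not close; note also that the gap propagates to your i)~$\Rightarrow$~iv), since the justification ``seminormality passes to opens because the stalks are unchanged'' applies iii)~$\Rightarrow$~i) to the open subscheme, while your alternative suggestion of extending a pseudo-isomorphism $V \to U$ to one over all of $X$ is unjustified (there is no evident way to glue it with the identity outside $U$, as $f$ need not be an isomorphism over any overlap).
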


Swan proved in \cite[Th. 1]{SWAN} that if $X$ is an affine noetherian reduced scheme then the pullback morphism $\Pic X \to \Pic(X \times \A^1)$ is an isomorphism if and only if $X$ is seminormal, generalizing a result of Carlo Traverso (\cite[Th. 3.6]{Traverso}). For non-affine schemes, we have the following special case of \cite[Lemma 3.6]{BRI_lin}.

\begin{lemma}
Let \label{lemma_Pic_homotopy}$X$ be a separated seminormal scheme. The pullback morphism $\Pic X \to \Pic(X \times \A^1)$ is an isomorphism.
\end{lemma}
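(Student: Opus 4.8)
The plan is to reduce the statement about the arbitrary separated seminormal scheme $X$ to the affine case, where Swan's theorem (the cited \cite[Th. 1]{SWAN}) directly applies, and then glue. First I would fix notation: let $\pr : X \times \A^1 \to X$ be the projection and $\pr^* : \Pic X \to \Pic(X \times \A^1)$ the pullback. I want to show this map is both injective and surjective. The key local input is that, by Corollary \ref{cor_SN_is_local}, seminormality is local on $X$, so every affine open $U = \Spec A \subseteq X$ has $A$ seminormal; hence by Swan's theorem the pullback $\Pic U \to \Pic(U \times \A^1)$ is an isomorphism. The strategy is to upgrade this collection of local isomorphisms to a global one via a Mayer--Vietoris / cohomological gluing argument, exactly as one proves homotopy invariance statements for $\Pic$ on separated schemes.

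The cleanest way to carry this out is to work with the low-degree exact sequence coming from the Leray spectral sequence (or more concretely with $R^i\pr_* \mathcal{O}^\times$) for the projection $\pr$. Since $\A^1$ is geometrically connected with $\mathcal{O}(\A^1)^\times = \mathcal{O}(\Spec k)^\times$-type behaviour, the sheaf pushforward $\pr_* \mathcal{O}_{X \times \A^1}^\times$ should be identified with $\mathcal{O}_X^\times$, and the obstruction to $\pr^*$ being an isomorphism is controlled by $R^1\pr_* \mathcal{O}_{X \times \A^1}^\times$. The whole point of seminormality, via Swan, is that this higher pushforward sheaf vanishes: indeed its stalks are computed on the local rings $\mathcal{O}_{X,x}$, which are seminormal by Corollary \ref{cor_SN_is_local}, so the local Picard-homotopy-invariance statement forces the relevant relative Picard groups to be trivial. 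Concretely I would argue that for each point $x$ the natural map $\Pic \Spec \mathcal{O}_{X,x} \to \Pic(\Spec \mathcal{O}_{X,x} \times \A^1)$ is an isomorphism, which gives the stalkwise vanishing and hence the sheaf-theoretic statement. Plugging the vanishing of $R^1\pr_* \mathcal{O}^\times$ into the five-term exact sequence
\[
0 \to H^1(X, \pr_* \mathcal{O}^\times) \to H^1(X \times \A^1, \mathcal{O}^\times) \to H^0(X, R^1\pr_* \mathcal{O}^\times)
\]
together with the identification $\pr_*\mathcal{O}^\times \simeq \mathcal{O}_X^\times$ then yields that $\pr^* : \Pic X \to \Pic(X \times \A^1)$ is an isomorphism.

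Alternatively, and perhaps more elementarily since this is stated as a cited special case of \cite[Lemma 3.6]{BRI_lin}, one can avoid spectral sequences and argue directly by a patching induction on a finite affine cover. Choose a finite affine open cover $X = \bigcup_i U_i$ (using that $X$ is quasi-compact, or reducing to that case), with $U_i = \Spec A_i$ and each $A_i$ seminormal. A line bundle on $X \times \A^1$ is given by line bundles on the $U_i \times \A^1$ together with gluing data on overlaps; Swan's theorem descends each piece to a line bundle on $U_i$, and the compatibility of the descended trivializations on the intersections $U_{ij} \times \A^1$ (which are again of the form $\Spec(\text{seminormal}) \times \A^1$, since open subschemes of seminormal schemes are seminormal by Corollary \ref{cor_SN_is_local}) lets one glue to a line bundle on $X$ mapping to the original one. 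Injectivity is handled symmetrically: a line bundle on $X$ whose pullback is trivial restricts to something trivial on each $U_i \times \A^1$, hence trivial on $U_i$ by Swan, and these trivializations patch.

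I expect the main obstacle to be the bookkeeping in the gluing step rather than any deep new idea: one must be careful that the unit groups match up, i.e. that $\mathcal{O}(U_{ij} \times \A^1)^\times = \mathcal{O}(U_{ij})^\times$ (no new units appear after crossing with $\A^1$ over a reduced base), since this is exactly what guarantees that a transition cocycle over $X \times \A^1$ is the pullback of a cocycle over $X$ and that the descent is unambiguous. This is where separatedness and reducedness of $X$ enter, ensuring the cover and its overlaps are affine and that the relevant $H^0$ of units is controlled. Granting this unit computation, everything else is a formal consequence of Swan's theorem applied locally together with the locality of seminormality established in Corollary \ref{cor_SN_is_local}.
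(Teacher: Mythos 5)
The paper contains no internal proof to compare against: the lemma is quoted verbatim as a special case of \cite[Lemma 3.6]{BRI_lin}, so your blind attempt should be judged on its own merits, and it is essentially correct — indeed your second, \v{C}ech-style route is the standard proof of the cited statement. The three ingredients you isolate are exactly the right ones: seminormality is local (Corollary \ref{cor_SN_is_local}), Swan's theorem \cite[Th. 1]{SWAN} settles the affine case, and the only genuinely global input is the unit computation $\mathcal{O}(Y \times \A^1)^\times = \mathcal{O}(Y)^\times$ for $Y$ reduced (over a reduced ring $A$, a polynomial $\sum a_i T^i$ is a unit iff $a_0 \in A^\times$ and $a_i = 0$ for $i \geq 1$), which forces the transition cocycle of a line bundle on $X \times \A^1$, trivialized on a refined cover $\{U_i \times \A^1\}$ via Swan, to be constant in the $\A^1$-direction and hence pulled back from $X$. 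Note that noetherianity (hence quasi-compactness) is implicit in the paper's definition of seminormality, so your finite cover exists, and separatedness makes the overlaps $U_{ij}$ affine as you use. Your Leray variant also goes through, provided you justify the stalk identification $(R^1\pr_*\mathcal{O}^\times)_x \simeq \Pic\left(\mathcal{O}_{X,x}[T]\right)$ by the usual filtered-colimit argument (an invertible module over $\varinjlim A_i[T]$, together with its trivializing data, descends to some $A_i[T]$); Swan applied to the seminormal noetherian local ring $\mathcal{O}_{X,x}$ then kills the stalk.

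Two small corrections. First, injectivity needs neither seminormality nor any patching: the zero section $s : X \to X \times \A^1$ satisfies $\pr \circ s = \id_X$, so $s^* \circ \pr^* = \id$ and $\pr^*$ is split injective from the start. Second, your ``symmetric'' injectivity argument is incomplete as stated: trivializing $M|_{U_i}$ by Swan only shows $M$ is locally trivial, which every line bundle is, and local trivializations of a line bundle do not patch in general. To conclude $M \simeq \mathcal{O}_X$ one must compare $\pr^*\tau_i$ with the given global trivialization of $\pr^*M$, obtaining units $u_i \in \mathcal{O}(U_i \times \A^1)^\times = \mathcal{O}(U_i)^\times$ which exhibit the transition cocycle of $M$ as a coboundary — i.e.\ the same unit computation once more, or simply the retraction above. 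With these repairs your proof is complete and matches the argument behind the reference the paper invokes.
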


The seminormalization commutes with smooth base changes.

\begin{lemma}
Let \label{lemma_SN_base_change}$f : Y \to X$ be a smooth morphism between noetherian reduced schemes and $\sigma : X^+ \to X$ the seminormalization. 
Then $Y \times_X X^+$ is seminormal and the projection $Y \times_X X^+ \to Y$ is the seminormalization.
\end{lemma}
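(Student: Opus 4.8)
The plan is to set $Y' = Y \times_X X^+$ and let $\sigma' \colon Y' \to Y$ be the projection, and to prove the two things that matter: that $\sigma'$ is a pseudo-isomorphism, and that $Y'$ is seminormal. An abstract uniqueness statement then identifies $\sigma'$ with the seminormalization of $Y$, so I would establish this uniqueness first. The claim is that if $\tau \colon W \to Y$ is a pseudo-isomorphism with $W$ seminormal, then $\tau$ is the seminormalization. By Corollary \ref{cor_SN_is_local} and the local nature of pseudo-isomorphisms I may work affine-locally, say $Y = \Spec B$ and $W = \Spec B'$. A surjective integral morphism of reduced schemes is injective on rings and identifies the total rings of fractions (the residue fields agree at the minimal primes), so $B \subseteq B' \subseteq \overline B$ exhibits $B'$ as a pseudo-isomorphic subextension; by maximality of $B^+$ it lies in $B^+$, and then $\Spec B^+ \to \Spec B'$ is again a pseudo-isomorphism, hence an isomorphism since $B'$ is seminormal. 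Thus $B' = B^+$.

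Next I would verify that $\sigma'$ is a pseudo-isomorphism. Integrality and the property of being a universal homeomorphism (hence bijectivity) are preserved under the base change of $\sigma$ along $f$, using that $\sigma$ is a universal homeomorphism. For the residue fields, take $y' \in Y'$ over $y \in Y$ and put $x = f(y)$; the scheme-theoretic fibre $(X^+)_x$ is a local ring with residue field $\kappa(x)$ whose maximal ideal is its nilradical (because $\sigma$ is bijective with trivial residue extensions), so $\kappa(y) \otimes_{\kappa(x)} (X^+)_x$ is again local with residue field $\kappa(y)$, which gives $\kappa(y') = \kappa(y)$ and the required isomorphisms on residue fields. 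This part uses nothing about smoothness; smoothness enters only to guarantee that $Y'$ is reduced, via the fact that a scheme smooth over the reduced scheme $X^+$ is reduced. Reducedness of $Y'$ is exactly what lets us regard $\sigma'$ as a morphism of reduced schemes, hence as a pseudo-isomorphism.

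The hard part will be the seminormality of $Y'$. Since $Y' \to X^+$ is the base change of the smooth morphism $f$, it is smooth, so it suffices to prove the general assertion that a scheme smooth over a seminormal scheme is seminormal. By Corollary \ref{cor_SN_is_local} I may reduce to the affine local case. The route I would take is to factor the smooth morphism, étale-locally on the source, as an étale morphism over an affine space $\A^n_{X^+}$, and then combine two stability properties of seminormality: invariance under adjunction of a polynomial variable (if $A$ is seminormal then so is $A[t]$) and invariance under étale extensions. Each of these is standard, but this is where the genuine content sits; an alternative is to run Swan's computational criterion (solvability of $a^2 = b$, $a^3 = c$ whenever $b^3 = c^2$) directly on a standard-smooth presentation. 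Granting the seminormality of $Y'$, the uniqueness statement of the first paragraph applies to the pseudo-isomorphism $\sigma'$ and concludes that $\sigma'$ is the seminormalization of $Y$.
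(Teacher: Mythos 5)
Your argument is correct, but it takes a genuinely different route from the paper's. The paper's proof is essentially a citation: by Corollary \ref{cor_SN_is_local} one reduces to the affine case, and then the statement is exactly \cite[Prop. 5.1]{GRETRA}. You instead unpack that citation into its components: (i) a uniqueness lemma --- any pseudo-isomorphism from a seminormal scheme is the seminormalization --- which you correctly deduce from the maximality of $B^+$ among subintegral subextensions $B \subseteq B' \subseteq \overline{B}$ (your reductions are sound: surjectivity of an integral morphism plus reducedness of the target gives injectivity on rings, and the homeomorphism property of pseudo-isomorphisms matches up minimal primes, hence total quotient rings); (ii) stability of pseudo-isomorphisms under this base change, where the fibrewise computation is right: the fibre of $\sigma$ over $x$ is a one-point scheme whose unique prime is its nilradical with residue field $\kappa(x)$, and since nilpotents remain nilpotent after $\otimes_{\kappa(x)} \kappa(y)$ the fibre of $\sigma'$ over $y$ is again a one-point scheme with residue field $\kappa(y)$; (iii) reducedness of $Y' = Y \times_X X^+$ from smoothness over the reduced scheme $X^+$; and (iv) ascent of seminormality along smooth morphisms, via the factorization through an \'etale map to $\A^n_{X^+}$, Swan's invariance of seminormality under adjoining a polynomial variable \cite{SWAN}, and \'etale ascent. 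Two remarks on (iv): the factorization exists Zariski-locally on the source (\cite[IV4, Cor. 17.11.4]{EGA}), so your ``\'etale-locally'' can be upgraded and you need no \'etale \emph{descent} of seminormality, only ascent; and the two ``standard'' stability facts you invoke are precisely the ingredients of the Greco--Traverso proposition that the paper cites, so at bottom both proofs rest on the same literature --- yours buys a transparent reduction showing exactly where the content sits, while the paper's buys brevity. A shared bookkeeping point, affecting the paper equally: pseudo-isomorphisms and seminormality are defined here for noetherian reduced schemes, so one should note that $X^+$, hence $Y'$, is noetherian in the situations where the lemma is used (there $X$ is a variety, so $\overline{A}$, and a fortiori $A^+$, is module-finite over $A$).
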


\begin{proof}
By Corollary \ref{cor_SN_is_local}, we can assume that $X$ is affine and write $X = \Spec A$. Let $V = \Spec C$ be an affine open subscheme of $Y$ contained in $f^{-1}(U)$. Then the result follows from \cite[Prop. 5.1]{GRETRA}.
\end{proof}

We now state an analogue of Lemma \ref{lemma_lift_action} (see \cite[Lemma 3.5]{BRI_lin}).

\begin{corollary}
Let \label{cor_SN_lift_action}$G$ be a smooth algebraic group, $X$ a variety, $\alpha : G \times X \to X$ an action and $\sigma : X^+ \to X$ the seminormalization. There exists a unique action $\alpha^+ : G \times X^+ \to X^+$ such that $\sigma$ is equivariant. Moreover the square 
\begin{tikzpicture}[baseline=(m.center)]
\matrix(m)[matrix of math nodes,
row sep=2.5em, column sep=2.5em,
text height=1.5ex, text depth=0.25ex,ampersand replacement=\&]
{G\times X^+ \& X^+ \\
G \times X \& X \\};
\path[->] (m-1-1) edge node[above] {$\alpha^+$} (m-1-2);
\path[->](m-1-1) edge node[left] {$\id\times\sigma$} (m-2-1);
\path[->] (m-2-1) edge node[below] {$\alpha$} (m-2-2);
\path[->] (m-1-2) edge node[right] {$\sigma$} (m-2-2);
\end{tikzpicture}
is cartesian and the actions $\alpha$ and $\alpha^+$ have the same kernel. 
\end{corollary}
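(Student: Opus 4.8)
The plan is to mimic the proof of Lemma \ref{lemma_lift_action}, replacing the normalization by the seminormalization and Ferrand's pinching property by the universal property of the seminormalization. The crucial input is Lemma \ref{lemma_SN_base_change}, which lets me move the seminormalization across the smooth factor $G$.

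First I would observe that $\id \times \sigma : G \times X^+ \to G \times X$ is the seminormalization of $G \times X$. Indeed, the projection $\pr : G \times X \to X$ is smooth because $G$ is smooth over $k$, and there is a canonical identification $(G \times X) \times_X X^+ \simeq G \times X^+$; so Lemma \ref{lemma_SN_base_change} shows that $G \times X^+$ is seminormal and that $\id \times \sigma$ is its seminormalization. The same argument applied to $\pr : G \times G \times X \to X$ shows that $G \times G \times X^+$ is seminormal. Now the composite $\alpha \circ (\id \times \sigma) : G \times X^+ \to X$ has seminormal source, so by the universal property of $\sigma$ it factors uniquely as $\sigma \circ \alpha^+$ for a unique morphism $\alpha^+ : G \times X^+ \to X^+$. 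The identity $\sigma \circ \alpha^+ = \alpha \circ (\id \times \sigma)$ is precisely the commutativity of the square, i.e. the equivariance of $\sigma$, and the uniqueness of the factorization yields at once the uniqueness of the equivariant lift.

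Next I would check that $\alpha^+$ is an action. For associativity, the two morphisms $\alpha^+ \circ (\id \times \alpha^+)$ and $\alpha^+ \circ (\mu \times \id)$ from $G \times G \times X^+$ to $X^+$, where $\mu$ is the multiplication of $G$, become equal after post-composition with $\sigma$, since $\alpha$ is an action and $\sigma$ is equivariant; as $G \times G \times X^+$ is seminormal, the universal property forces the two morphisms themselves to coincide. The unit axiom is obtained identically, comparing $\alpha^+ \circ (e \times \id)$ with $\id_{X^+}$ on the seminormal scheme $X^+$. To see that the square is cartesian, I would introduce the shear automorphism $u^+ : (g,y) \mapsto (g,\alpha^+(g,y))$ of $G \times X^+$ and its analogue $u$ on $G \times X$; one has $\alpha^+ = \pr_2 \circ u^+$ and $\alpha = \pr_2 \circ u$, while equivariance of $\sigma$ gives $(\id \times \sigma) \circ u^+ = u \circ (\id \times \sigma)$. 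Thus $u$ and $u^+$ identify the given square with the manifestly cartesian square whose horizontal arrows are the second projections $G \times X^+ \to X^+$ and $G \times X \to X$, and the claim follows.

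Finally, for the equality of kernels, let $U$ be the smooth locus of $X$. Since $U$ is smooth it is normal, hence seminormal, so applying Lemma \ref{lemma_SN_base_change} to the open immersion $U \hookrightarrow X$ (which is smooth) shows that $\sigma$ restricts to an isomorphism $\sigma^{-1}(U) \simeq U$. As $X$ is a variety, $U$ is schematically dense in $X$ and $\sigma^{-1}(U)$ is schematically dense in $X^+$, and this persists after any base change $S \to \Spec k$; these opens are $G_S$-stable and $\id \times \sigma$ restricts over them to a $G_S$-equivariant isomorphism. Hence an element $g \in G(S)$ acts trivially on $X_S$ if and only if it does so on $U_S$, if and only if on $\sigma^{-1}(U)_S$, if and only if on $X^+_S$, so $\alpha$ and $\alpha^+$ have the same kernel. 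The only point demanding genuine care is the repeated use of Lemma \ref{lemma_SN_base_change} to guarantee that the products $G \times X^+$ and $G \times G \times X^+$ are seminormal, which is exactly what makes the universal property applicable; once this is in place every remaining step is a formal diagram chase parallel to Lemma \ref{lemma_lift_action}.
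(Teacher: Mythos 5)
Your proof is correct. The paper gives no proof of this corollary (it presents it as the seminormal analogue of Lemma \ref{lemma_lift_action} and cites \cite[Lemma 3.5]{BRI_lin}), and your argument --- using Lemma \ref{lemma_SN_base_change} to identify $\id \times \sigma$ as the seminormalization of $G \times X$ (and to get seminormality of $G \times G \times X^+$), constructing $\alpha^+$ and verifying the action axioms via the universal property of the seminormalization, then repeating the shear-map argument for the cartesian square and the schematically-dense-smooth-locus argument for the kernels exactly as in the paper's proof of Lemma \ref{lemma_lift_action} --- is precisely the intended route.
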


\subsection{Classification of seminormal almost homogeneous curves}

We have the following characterization of seminormal curves, which is a consequence of the well-known characterization \cite[Cor. 2.7]{GRETRA} of seminormal Japanese rings satisfying Serre's condition $S_2$ (a reduced noetherian ring of Krull dimension $1$ satisfies this condition).

\begin{lemma}
Let $C$ be a curve and 
\begin{tikzpicture}[baseline=(m.center)]
\matrix(m)[matrix of math nodes,
row sep=2.5em, column sep=2.5em,
text height=1.5ex, text depth=0.25ex,ampersand replacement=\&]
{\widetilde{Z} \& \widetilde{C} \\
Z \& C\\};
\path[->] (m-1-1) edge node[above] {$j$}(m-1-2);
\path[->] (m-1-1) edge node[left] {$\lambda$} (m-2-1);
\path[->] (m-2-1) edge node[below] {$i$} (m-2-2);
\path[->] (m-1-2) edge node[right] {$\nu$} (m-2-2);
\end{tikzpicture}
the conductor square. The curve $C$ is seminormal if and only if $\widetilde{Z}$ is reduced.
\end{lemma}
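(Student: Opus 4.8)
The plan is to reduce the statement to a purely local, algebraic criterion about one-dimensional reduced noetherian rings and then invoke the cited characterization from \cite[Cor. 2.7]{GRETRA}. By Corollary \ref{cor_SN_is_local}, seminormality of $C$ is a local property, so it suffices to work at each non-normal closed point. By Lemma \ref{lemma_conductor_base_change}, the non-normal locus of $C$ is precisely the (finite) underlying space of $Z$, so only finitely many points are in play; away from $Z$ the curve is regular, hence certainly seminormal, and $\widetilde{Z}$ is empty there. The reducedness of $\widetilde{Z}$ is likewise a local condition at these finitely many points. Thus the whole equivalence is equivalent to the collection of local equivalences, one for each non-normal point $x$ of $C$.

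Next I would pass to the completed or ordinary local ring and set up the algebra. Writing $A = \mathcal{O}_{C,x}$, a reduced noetherian one-dimensional local ring, its integral closure $\overline{A}$ is a semilocal Dedekind-type ring (a finite $A$-module, as $C$ is a variety over $k$ and hence Japanese), and the conductor ideal $\mathfrak{c} = \{a \in A \mid a\overline{A} \subseteq A\}$ cuts out $Z$ locally as $\Spec A/\mathfrak{c}$ and $\widetilde{Z}$ locally as $\Spec \overline{A}/\mathfrak{c}$. The scheme $\widetilde{Z}$ is reduced at $x$ precisely when $\mathfrak{c}$ is a radical ideal of $\overline{A}$, i.e.\ when $\mathfrak{c}$ equals the Jacobson radical $R(\overline{A})$ (the intersection of the finitely many maximal ideals of the semilocal ring $\overline{A}$ lying over $x$). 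The key point is then to match this with the Gréco--Traverso criterion \cite[Cor. 2.7]{GRETRA}: for a reduced noetherian ring of dimension $1$ satisfying $S_2$ and the Japanese condition, seminormality is equivalent to the conductor being a radical ideal of the normalization, equivalently $\mathfrak{c} = R(\overline{A})$. Since $A$ has Krull dimension $1$ it automatically satisfies $S_2$, and since $C$ is a variety over a field $A$ is Japanese, so all the hypotheses of the cited corollary hold.

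The main obstacle, and the step deserving the most care, is verifying that ``$\widetilde{Z}$ reduced'' translates exactly into the algebraic condition appearing in \cite[Cor. 2.7]{GRETRA}, rather than a superficially similar but inequivalent one. Concretely, I must check that the conductor ideal $\mathfrak{c}$, viewed inside $\overline{A}$, is radical if and only if $\overline{A}/\mathfrak{c}$ is reduced, and that this reducedness of the scheme-theoretic fibre $\widetilde{Z} = \Spec \overline{A}/\mathfrak{c}$ is the literal hypothesis the corollary uses. The subtlety is that $\overline{A}$ may have several maximal ideals over the single point $x$ (the several branches of the curve), so $R(\overline{A})$ is an intersection of maximal ideals and $\overline{A}/R(\overline{A})$ is a finite product of residue fields; I would confirm that $\mathfrak{c}$ being radical forces $\mathfrak{c} = R(\overline{A})$ and that this is exactly the transversality-of-branches condition. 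One should also confirm that the criterion is insensitive to the separability issues that appear elsewhere in the paper, since here we work over the single field $k$ and do not base change.

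Once this local dictionary is in place, the global statement follows formally: $C$ is seminormal $\iff$ $\mathcal{O}_{C,x}$ is seminormal for all $x$ (Corollary \ref{cor_SN_is_local}) $\iff$ $\mathfrak{c}_x = R(\overline{\mathcal{O}_{C,x}})$ for all non-normal $x$ (by \cite[Cor. 2.7]{GRETRA}) $\iff$ $\widetilde{Z}$ is reduced at each of its (finitely many) points $\iff$ $\widetilde{Z}$ is reduced. This assembles the two implications of the desired equivalence.
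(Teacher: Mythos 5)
Your proof is correct and takes essentially the same route as the paper's: reduce via Corollary \ref{cor_SN_is_local} to the affine (or local) case, where the conductor square restricts to the conductor square, and then invoke the characterization of \cite[Cor. 2.7]{GRETRA} for Japanese rings of dimension $1$ (which automatically satisfy $S_2$). Your additional local bookkeeping --- that $\widetilde{Z}$ reduced means $\mathfrak{c}$ is radical in $\overline{A}$, and that a radical conductor at a non-normal point equals the Jacobson radical $R(\overline{A})$ since every maximal ideal of $\overline{A}$ lies over $\m$ and hence contains $\mathfrak{c}$ --- is sound but only makes explicit what the citation already delivers.
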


\begin{proof}
Let $U$ be an affine open subscheme of $C$. Then 
\begin{tikzpicture}[baseline=(m.center)]
\matrix(m)[matrix of math nodes,
row sep=2.5em, column sep=2.5em,
text height=1.5ex, text depth=0.25ex,ampersand replacement=\&]
{\lambda^{-1}i^{-1}(U) \& \nu^{-1}(U) \\
i^{-1}(U) \& U\\};
\path[->] (m-1-1) edge node[above] {$j$}(m-1-2);
\path[->] (m-1-1) edge node[left] {$\lambda$} (m-2-1);
\path[->] (m-2-1) edge node[below] {$i$} (m-2-2);
\path[->] (m-1-2) edge node[right] {$\nu$} (m-2-2);
\end{tikzpicture}
is the conductor square. So by Corollary \ref{cor_SN_is_local}, we can assume than $C$ is affine. Then the result follows from \cite[Cor. 2.7]{GRETRA}.
\end{proof}

In this case $Z$ is reduced too. Therefore if a smooth connected algebraic group $G$ acts on $C$ then by Lemmas \ref{lemma_action_trivial_dim0} and \ref{lemma_lift_action}, $G$ acts trivially on $Z$ and $\widetilde{Z}$. The following lemma shows that the converse is true, even when we consider a pinching diagram which is not the conductor square.

\begin{lemma}
Let \label{lemma_SN_curve_pinching}$C$ be a curve obtained by a pinching diagram 
\begin{tikzpicture}[baseline=(m.center)]
\matrix(m)[matrix of math nodes,
row sep=2.5em, column sep=2.5em,
text height=1.5ex, text depth=0.25ex,ampersand replacement=\&]
{\widetilde{Z} \& \widetilde{C} \\
Z \& C\\};
\path[->] (m-1-1) edge node[above] {$j$}(m-1-2);
\path[->] (m-1-1) edge node[left] {$\lambda$} (m-2-1);
\path[->] (m-2-1) edge node[below] {$i$} (m-2-2);
\path[->] (m-1-2) edge node[right] {$\nu$} (m-2-2);
\end{tikzpicture} 
where $\widetilde{C}$ is a regular curve and $\widetilde{Z}$ is a reduced finite scheme. Then $C$ is seminormal. Moreover, if a smooth connected algebraic group $G$ acts on $\widetilde{C}$ so that $\widetilde{Z}$ is $G$-stable then the action on $\widetilde{C}$ descends to an action on $C$ such that $Z$ is $G$-stable. 
\end{lemma}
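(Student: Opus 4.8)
The plan is to prove the two assertions separately. For seminormality I would reduce to the characterization of seminormal curves proved above—namely that a curve is seminormal precisely when the scheme in the top-left corner of its \emph{conductor} square is reduced—and for the descent of the action I would apply Lemma \ref{lemma_descent_action}, after observing that a smooth connected group has no choice but to act trivially on the finite scheme $\widetilde Z$.

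First I would establish that $C$ is seminormal. Since $\widetilde C$ is regular it is normal, so by Proposition \ref{prop_existence_pinching} the morphism $\nu : \widetilde C \to C$ is the normalization of $C$. The given pinching diagram need not be the conductor square, so I would compare the two. Working locally, write $C = \Spec A$ with $A = B \times_{B'} \overline A$, where $\overline A = \mathcal O(\widetilde C)$, $B' = \mathcal O(\widetilde Z)$ and $B = \mathcal O(Z)$ embeds in $B'$ via $\lambda^\sharp$. The ideal $J = \ker(\overline A \to B')$ defining $\widetilde Z$ is an ideal of $\overline A$ contained in $A$ (because $0 \in B$), hence it is contained in the conductor ideal $\mathfrak c$ of $\nu$ by maximality of the latter. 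Therefore the top-left corner $V(\mathfrak c)$ of the conductor square is a closed subscheme of $\widetilde Z = V(J)$. As $\widetilde Z$ is a reduced finite scheme, that is, a finite disjoint union of spectra of fields, every closed subscheme of it is reduced; in particular the conductor-square scheme is reduced, and the characterization recalled above yields that $C$ is seminormal.

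For the descent, assume $G$ is smooth connected and acts on $\widetilde C$ with $\widetilde Z$ being $G$-stable. Restricting the action gives an action of $G$ on the zero-dimensional reduced scheme $\widetilde Z$, which is trivial by Lemma \ref{lemma_action_trivial_dim0}. Endowing $Z$ with the trivial action $\beta$ then makes the finite morphism $\lambda : \widetilde Z \to Z$ equivariant, so Lemma \ref{lemma_descent_action} applies and produces a unique action of $G$ on $C$ under which $Z$ is $G$-stable, restricting to $\beta$ on $Z$, and for which $\nu$ is equivariant. This is the desired action.

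I expect the only genuine difficulty to be the scheme-theoretic comparison $V(\mathfrak c) \subseteq \widetilde Z$ in the first part, which rests on the fiber-product description of $A$ and the maximality of the conductor; once this is in place, the reducedness of $\widetilde Z$ does the rest. The second part is then formal, the key observation being that $\widetilde Z$ (and also $Z$, which is reduced since $\lambda$ is schematically dominant) is too small for a connected group to act non-trivially.
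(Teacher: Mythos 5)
Your proof is correct, and the two halves fare differently when compared with the paper. The descent part coincides with the paper's argument: triviality of the $G$-action on the zero-dimensional reduced scheme $\widetilde{Z}$ via Lemma \ref{lemma_action_trivial_dim0}, then Lemma \ref{lemma_descent_action} applied with the trivial action on $Z$ (for which $\lambda$ is automatically equivariant). For seminormality, however, you take a genuinely different route. The paper shrinks to an affine open of $C$ meeting $Z$ in a single point $P$, writes $Z = \Spec A/\mathfrak{m}$ and $\widetilde{Z} = \Spec \overline{A}/(\overline{\mathfrak{m}}_1\cdots\overline{\mathfrak{m}}_r)$, and uses Ferrand's Lemma 1.3 to obtain $\overline{\mathfrak{m}}_1\cdots\overline{\mathfrak{m}}_r = \mathfrak{m}$, so that $\mathfrak{m}A_\mathfrak{m}$ is the Jacobson radical of the integral closure of $\mathcal{O}_{C,P}$ and Swan's definition of $A^+$ gives seminormality directly. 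You instead route through the characterization lemma stated just before (seminormal if and only if the top-left corner of the \emph{conductor} square is reduced), by comparing the given square with the conductor square: your key inclusion $J \subseteq \mathfrak{c}$ is sound, since $J$ maps to $0 \in \lambda^\sharp(\mathcal{O}(Z))$ in the fiber-product description of $A$, hence is a common ideal of $A$ and $\overline{A}$, and the conductor is the largest such ideal; then $V(\mathfrak{c})$ is a closed subscheme of the reduced finite scheme $\widetilde{Z}$, and closed subschemes of finite products of fields are reduced, so the characterization applies. (The implicit reductions — restricting the pinching diagram to affine opens of $C$, seminormality being local by Corollary \ref{cor_SN_is_local}, and $\nu$ being the normalization by Proposition \ref{prop_existence_pinching} since $\widetilde{C}$ is regular — are all routine and match what the paper does.) What each approach buys: yours is shorter and conceptually cleaner, reusing the characterization lemma (ultimately \cite[Cor.~2.7]{GRETRA}) instead of reproving a local statement; the paper's direct computation is self-contained from Swan's definition and, as a by-product, pins down the conductor exactly ($\mathfrak{c} = \mathfrak{m}$ locally at the pinched, non-normal points), i.e., it shows that such pinching squares are conductor squares, a fact the paper uses later in Remark \ref{rem_continuous_family}, whereas your argument only yields the inclusion $V(\mathfrak{c}) \subseteq \widetilde{Z}$.
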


\begin{proof}
Let us show that $C$ is seminormal. The morphism $\nu$ induces an isomorphism from $\widetilde{C} \setminus \widetilde{Z}$ to $C \setminus Z$ so it suffices to show that for every point $P \in Z$, the ring $\mathcal{O}_{C,P}$ is seminormal. Since $\lambda$ is schematically dominant and $\widetilde{Z}$ is reduced, $Z$ is a reduced finite scheme. Let $U$ be an affine open subscheme of $C$ containing $P$ but no other point of $Z$. The square
\begin{tikzpicture}[baseline=(m.center)]
\matrix(m)[matrix of math nodes,
row sep=2.5em, column sep=2em,
text height=1.5ex, text depth=0.25ex,ampersand replacement=\&]
{\lambda^{-1}(\Spec \kappa(P)) \& \nu^{-1}(U)\\
\Spec \kappa(P) \& U\\};
\path[->] (m-1-1) edge node[above] {$j$} (m-1-2);
\path[->] (m-1-1) edge node[left] {$\lambda$} (m-2-1);
\path[->] (m-2-1) edge node[below] {$i$} (m-2-2);
\path[->] (m-1-2) edge node[right] {$\nu$} (m-2-2);
\end{tikzpicture}
is still a pinching diagram. Thus we can assume that $C$ is affine and $Z = \Spec \kappa(P)$.

We write $C = \Spec A$ and $\widetilde{C} = \Spec \overline{A}$ where $\overline{A}$ is the integral closure of $A$. The point $P$ corresponds to a maximal ideal $\m$ of $A$ and the points of $\widetilde{Z}$ to the maximal ideals $\overline{\m}_1,\ldots,\overline{\m}_r$ of $\overline{A}$ above $\m$. Then $Z = \Spec A / \m$ and $\widetilde{Z} = \displaystyle \coprod_{i=1}^r \Spec \overline{A}/\overline{\m}_i = \Spec \overline{A}/(\overline{\m}_1\cdots\overline{\m}_r)$. By \cite[Lemma 1.3]{FER}, we have $\overline{\m}_1\cdots\overline{\m}_r=\m$. In other words, the maximal ideal $\m A_\m$ of $\mathcal{O}_{C,P}$ is the Jacobson radical of its integral closure $\overline{\mathcal{O}_{C,P}}$. Since the only prime ideals of $\mathcal{O}_{C,P}$ are $(0)$ and $\m A_\m$ (because $\mathcal{O}_{C,P}$ is an integral local ring of Krull dimension $1$), it follows immediately from the definition of $\mathcal{O}_{C,P}^+$ that $\mathcal{O}_{C,P}$ is seminormal.

By Lemma \ref{lemma_action_trivial_dim0}, $G$ acts trivially on $Z$ and $\widetilde{Z}$. Hence by Lemma \ref{lemma_descent_action}, the action of $G$ on $\widetilde{C}$ descends to an action on $C$.
\end{proof}

A consequence of Lemma \ref{lemma_SN_curve_pinching} is that the property of being a seminormal almost homogeneous curve descends by separable field extensions. To see this, we first need the following particular case of \cite[Lemma C.4.1 p.649]{CGP}.

\begin{lemma}\label{lemma_largest_smooth_subgroup}
Let $G$ be an algebraic group. There exists a largest smooth subgroup $\sm{G}$ of $G$. Moreover, for every separable field extension $K/k$, we have $\sm{G}(K) = G(K)$ and $\sm{(G_K)} = (\sm{G})_K$.
\end{lemma}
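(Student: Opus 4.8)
The plan is to realise $\sm{G}$ as the largest geometrically reduced closed subscheme of $G$ and to exploit Cartier's theorem, which asserts that a group scheme of finite type over a field is smooth if and only if it is geometrically reduced. Granting such a subscheme, any smooth subgroup $H \subseteq G$ is geometrically reduced (smoothness over a field implies geometric reducedness), hence $H \subseteq \sm{G}$; conversely, once $\sm{G}$ is shown to be a subgroup it is geometrically reduced, so smooth by Cartier. Thus ``largest smooth subgroup'' and ``largest geometrically reduced closed subscheme that happens to be a subgroup'' coincide, and it suffices to build the latter.

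First I would construct the largest geometrically reduced closed subscheme. The key observation is that the scheme-theoretic union of two geometrically reduced closed subschemes $Y_1,Y_2$ is again geometrically reduced: locally it corresponds to $A/(I_1 \cap I_2)$, which embeds into $A/I_1 \times A/I_2$, and since intersection of ideals commutes with the flat base change $-\otimes_k \overline{k}$, reducedness of $(Y_1)_{\overline{k}}$ and $(Y_2)_{\overline{k}}$ forces reducedness of $(Y_1 \cup Y_2)_{\overline{k}}$. Hence the collection of geometrically reduced closed subschemes is directed under scheme-theoretic union; as its members are closed subschemes of the noetherian scheme $G$, it admits a greatest element, which I call $\sm{G}$.

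Next I would verify that $\sm{G}$ is a subgroup. For any finite-type morphism $f : W \to G$ with $W$ geometrically reduced, the scheme-theoretic image is geometrically reduced, because scheme-theoretic image commutes with the flat base change to $\overline{k}$ and the image of a reduced scheme is reduced. Applying this to the restrictions of the multiplication $m : \sm{G} \times \sm{G} \to G$ (noting that $\sm{G} \times_k \sm{G}$ is geometrically reduced, being a product of such over a field), to the inversion, and to the unit $\Spec k \to G$, maximality of $\sm{G}$ forces all three morphisms to factor through $\sm{G}$. Therefore $\sm{G}$ is a closed subgroup, and Cartier's theorem yields its smoothness.

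It remains to treat the points and the separable base change. For a separable extension $K/k$ and $x \in G(K)$, the scheme-theoretic image of $\Spec K \to G$ is integral with function field a subextension of the separable extension $K/k$, hence geometrically reduced; by maximality it lies in $\sm{G}$, so $x \in \sm{G}(K)$, giving $\sm{G}(K) = G(K)$. For the base change, $(\sm{G})_K$ is a geometrically reduced closed subgroup of $G_K$, whence $(\sm{G})_K \subseteq \sm{(G_K)}$ by maximality over $K$. For the reverse inclusion I would compare $K_s$-points: writing $K_s$ for a separable closure of $K$, both $(\sm{G})_K$ and $\sm{(G_K)}$ are smooth and, by the points computation applied respectively over $k$ and over $K$, both have $K_s$-points equal to $G(K_s)$. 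Over the separably closed field $K_s$ the rational points of a smooth scheme are schematically dense, so the two reduced subschemes agree after base change to $K_s$, and faithfully flat descent along $K_s/K$ gives $(\sm{G})_K = \sm{(G_K)}$. The main obstacle is this last step: one must marshal Cartier's theorem together with the density of rational points over a separably closed field and descent, rather than rely on any single elementary computation.
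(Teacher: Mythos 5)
Your argument breaks down at its very first step, the existence of a largest geometrically reduced closed subscheme. Noetherianness of $G$ gives the \emph{descending} chain condition on closed subschemes (ascending chains of quasi-coherent ideal sheaves stabilize), not the ascending one, so a directed family of closed subschemes of a noetherian scheme need not have a maximal, let alone a greatest, element. Since a reduced closed subscheme is the reduced induced structure on its support, an ascending chain of geometrically reduced closed subschemes is just an ascending chain of closed subsets, and these do not stabilize: in $\A^1_k$ the finite unions of $k$-rational points form a directed family with strictly increasing infinite chains. (There a greatest element of the full family exists only because $\A^1_k$ itself happens to be geometrically reduced --- not by any chain condition.) So your Zorn-type step proves nothing, and everything downstream (the subgroup property via scheme-theoretic images, the computation of $K$-points, the base-change comparison) rests on an unestablished object. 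The statement you want is true, but it needs a genuine idea that is missing from your proposal.

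The paper does not prove the lemma itself; it quotes \cite[Lemma C.4.1]{CGP}, whose proof supplies precisely the missing ingredient and otherwise runs parallel to yours. One defines $\sm{G}$ by Galois descent of the Zariski closure of $G(k_s)$ in $G_{k_s}$, endowed with its reduced structure (this closure is $\Gal(k_s/k)$-stable). Two facts then replace your chain argument: (i) a reduced scheme of finite type over a separably closed field with dense rational points is geometrically reduced --- writing a $p$-nilpotent as $f=\sum_j a_j\otimes e_j$ with $(e_j)$ a $k$-basis of a finite purely inseparable extension, evaluation at the dense set of rational points forces each $a_j=0$; and (ii) any geometrically reduced closed subscheme $Z\subseteq G$ has dense smooth locus, hence $Z(k_s)$ dense in $Z_{k_s}$, so $Z$ is contained in the descended closure. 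Together these show the descended closure \emph{is} the largest geometrically reduced closed subscheme, which is what your construction was meant to produce. With existence secured this way, the rest of your proposal is correct and essentially reproduces the cited argument: scheme-theoretic images of geometrically reduced schemes are geometrically reduced (image formation commutes with the flat base change to $\overline{k}$), so maximality makes $\sm{G}$ a closed subgroup, smooth by the geometric-reducedness criterion; the scheme-theoretic image of $\Spec K\to G$ for $K/k$ separable is integral with function field a subextension of $K/k$, hence geometrically reduced, giving $\sm{G}(K)=G(K)$; and the identification $\sm{(G_K)}=(\sm{G})_K$ via dense $K_s$-points of two smooth closed subschemes plus faithfully flat descent along $K_s/K$ is sound.
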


\begin{proposition}\label{prop_SN_qhom_descent}
Let $C$ be a seminormal curve and $K/k$ a separable field extension. If $C_K$ is almost homogeneous under the action of a smooth connected algebraic group then so is $C$.
\end{proposition}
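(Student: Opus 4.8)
The plan is to transport the problem to the normalization, manufacture an algebraic group over $k$ out of the automorphism scheme of a projective model, and then descend almost homogeneity by means of the field-extension criterion of Lemma \ref{lemma_almost_hom_field_extension}.

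First I would reduce to the normalization. Write $\nu : \widetilde{C} \to C$ for the normalization and set $D := \widetilde{C}$, a regular curve over $k$. Since $K/k$ is separable, Lemma \ref{lemma_conductor_base_change} gives $(\widetilde{C})_K = \widetilde{C_K}$ and identifies the conductor square of $C_K$ with the base change of that of $C$; in particular the conductor subscheme $\widetilde{Z} \subseteq D$ is reduced (as $C$ is seminormal) and $\widetilde{Z}_K$ is the conductor of $C_K$. Let $H$ be the smooth connected $K$-group acting on $C_K$ with a dense orbit. By Lemma \ref{lemma_lift_action} the action lifts to $D_K$, and by Lemma \ref{lemma_almost_homogeneous_lift} the curve $D_K$ is almost homogeneous, with $\widetilde{Z}_K$ contained in its fixed locus. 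Letting $\widehat{D}$ be the regular completion of $D$, Lemma \ref{lemma_action_regular_completion} extends the action to $(\widehat{D})_K$ (the regular completion commutes with the separable extension $K/k$, so $(\widehat{D})_K$ is the regular completion of $D_K$) and shows that $H$ fixes the points at infinity $(\widehat{D}\setminus D)_K$; by Lemma \ref{lemma_action_trivial_dim0} it also fixes $\widetilde{Z}_K$ pointwise.

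Next I would manufacture the $k$-group. Consider the reduced finite subscheme $W = (\widehat{D}\setminus D) \sqcup \widetilde{Z}$ of $\widehat{D}$, which is defined over $k$, and let $A = C_{\Autgp{\widehat{D}}}(W)$ be its centralizer in the automorphism group scheme of the projective curve $\widehat{D}$. Since $\widehat{D}$ is projective, $\Autgp{\widehat{D}}$ is representable and its formation commutes with base change, and so does that of the centralizer, whence $A_K = C_{\Autgp{(\widehat{D})_K}}(W_K)$. The action of $H$ yields a homomorphism $H \to (\Autgp{\widehat{D}})_K$ whose image lies in $A_K$ because $H$ fixes $W_K$; as $H$ is smooth, this image is contained in the largest smooth subgroup, which by Lemma \ref{lemma_largest_smooth_subgroup} satisfies $\sm{(A_K)} = (\sm{A})_K$. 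Since the $H$-orbit is dense in $(\widehat{D})_K$ and the $H$-action factors through $(\sm{A})_K$, the larger group $(\sm{A})_K$ also has a dense orbit, that is, $(\widehat{D})_K$ is almost homogeneous under $(\sm{A})_K$.

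Finally I would descend and return to $C$. By Lemma \ref{lemma_almost_hom_field_extension}, almost homogeneity of $(\widehat{D})_K$ under $(\sm{A})_K$ forces $\widehat{D}$ to be almost homogeneous under $\sm{A}$, hence under its neutral component $(\sm{A})^\circ$ by Lemma \ref{lemma_connected_homogeneous}; this is a smooth connected $k$-group centralizing $W$. It therefore fixes $\widehat{D}\setminus D$, so stabilizes the open subcurve $D$, and fixes $\widetilde{Z}$; since the open orbit avoids the fixed locus it lies in $D$, so $D = \widetilde{C}$ is almost homogeneous under $(\sm{A})^\circ$ with $\widetilde{Z}$ stable. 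By Lemma \ref{lemma_SN_curve_pinching} this action descends to an action of $(\sm{A})^\circ$ on $C$, and by the uniqueness in Lemma \ref{lemma_lift_action} the lift of this descended action is exactly the action on $\widetilde{C}$; Lemma \ref{lemma_almost_homogeneous_lift} then transfers the almost homogeneity of $\widetilde{C}$ back to $C$. The main obstacle is the middle step: the given group $H$ lives only over $K$ and need not descend to $k$, so the whole argument hinges on replacing it by a $k$-group. This is made possible precisely by using the automorphism scheme of the \emph{projective} completion $\widehat{D}$, where representability and base-change compatibility hold, together with the identity $\sm{(A_K)} = (\sm{A})_K$ expressing that the largest smooth subgroup commutes with separable base change.
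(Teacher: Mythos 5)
Your proof is correct and follows essentially the same route as the paper's own argument: lift the $K$-action to the normalization and regular completion, replace the given $K$-group by the centralizer of the reduced subscheme $(\widehat{D}\setminus D)\sqcup\widetilde{Z}$ in $\Autgp{\widehat{D}}$ (the paper's $Z'$), use Lemma \ref{lemma_largest_smooth_subgroup} to descend via the largest smooth subgroup, and transfer almost homogeneity back to $C$ through the pinching. Your only deviations are cosmetic: you descend the action with Lemma \ref{lemma_SN_curve_pinching} (which itself rests on Lemma \ref{lemma_descent_action}, the paper's direct tool), and you make explicit the base-change compatibility of $\Autgp{\widehat{D}}$ and of centralizers that the paper uses implicitly.
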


\begin{proof}
Let $G$ be a smooth connected algebraic group over $K$ acting on $C_K$ such that $C_K$ is almost homogeneous. We can assume that the action is faithful. Let 
\begin{tikzpicture}[baseline=(m.center)]
\matrix(m)[matrix of math nodes,
row sep=2.5em, column sep=2.5em,
text height=1.5ex, text depth=0.25ex,ampersand replacement=\&]
{\widetilde{Z} \& \widetilde{C} \\
Z \& C\\};
\path[->] (m-1-1) edge node[above] {$j$}(m-1-2);
\path[->] (m-1-1) edge node[left] {$\lambda$} (m-2-1);
\path[->] (m-2-1) edge node[below] {$i$} (m-2-2);
\path[->] (m-1-2) edge node[right] {$\nu$} (m-2-2);
\end{tikzpicture} 
be the conductor square, $\widehat{C}$ the regular completion of $\widetilde{C}$ and $Z' = (\widehat{C} \setminus \widetilde{C}) \sqcup \widetilde{Z}$ endowed with its structure of a reduced closed subscheme of $\widehat{C}$. Since the extension $K/k$ is separable, by Lemma \ref{lemma_conductor_base_change} the diagram
\begin{tikzpicture}[baseline=(m.center)]
\matrix(m)[matrix of math nodes,
row sep=2.5em, column sep=2.5em,
text height=1.5ex, text depth=0.25ex,ampersand replacement=\&]
{\widetilde{Z}_K \& \widetilde{C}_K\\
Z_K \& C_K \\};
\path[->] (m-1-1) edge (m-1-2);
\path[->] (m-1-1) edge (m-2-1);
\path[->] (m-2-1) edge (m-2-2);
\path[->] (m-1-2) edge (m-2-2);
\end{tikzpicture}
is still the conductor square. Moreover $\widehat{C}_K$ is the regular completion of $\widetilde{C}_K$ and $Z'_K = (\widehat{C}_K \setminus \widetilde{C}_K) \sqcup \widetilde{Z}_K$ is reduced. By Lemmas \ref{lemma_action_regular_completion} and \ref{lemma_lift_action}, the action of $G$ on $C_K$ lifts to an action on $\widehat{C}_K$ such that $Z'_K$ is fixed. In other words, up to isomorphism, $G$ is a subgroup of the centralizer $\Autgp{\widehat{C}_K,Z'_K}$ of $Z'_K$ in $\Autgp{\widehat{C}_K}$. In addition $G$ is smooth, so by Lemma \ref{lemma_largest_smooth_subgroup}, it is a subgroup of $\sm{(\Autgp{\widehat{C}_K,Z'_K})}$. Then $\widetilde{C}_K$ is almost homogeneous under the action of $\sm{(\Autgp{\widehat{C}_K,Z'_K})}$. We have $\sm{(\Autgp{\widehat{C}_K,Z'_K})} = (\sm{(\Autgp{\widehat{C},Z'})})_K$ so $\widetilde{C}$ is almost homogeneous under the action of $\sm{(\Autgp{\widehat{C},Z'})}$. By definition, this group acts trivially on $\widetilde{Z}$ so, by Lemma \ref{lemma_descent_action}, the action on $\widetilde{C}$ descends to an action on $C$ and $C$ is almost homogeneous. Finally, by Lemma \ref{lemma_connected_homogeneous}, $C$ is almost homogeneous under the action of the neutral component of $\sm{(\Autgp{\widehat{C},Z'})}$.
\end{proof}

\begin{remark}
\begin{enumerate}[wide, labelwidth=!, labelindent=0pt, label=\roman*)] 
 \item Proposition \ref{prop_SN_qhom_descent} does not extend to inseparable extensions, even for smooth curves. Indeed by Lemma \ref{lemma_form_A1_not_almost_hom} there exists a curve $C$ and a purely inseparable extension $K/k$ such that $C_K \simeq \A^1_K$ and every action of a smooth connected algebraic group on $C$ is trivial.
 \item With the notations of the proof, the largest smooth connected group acting faithfully on $C$ is the neutral component of $\sm{(\Autgp{\widehat{C},Z'})}$.
\end{enumerate}
\end{remark}

We deduce the classification of seminormal almost homogeneous curves, which is the case \ref{case_SN_qhomogeneous} of Theorem \ref{th_general_classification}.

\begin{theorem}
Let \label{th_classification_SN_almost_homogeneous}$C$ be a singular seminormal curve, $G$ is a smooth connected algebraic group and $\alpha : G \times C \to C$ an action. The action is faithful and $C$ is almost homogeneous if and only if one of the following cases holds: 
\begin{enumerate}[label=(\alph*)]
 \item $G$ is a non-trivial form of $\G_{a,k}$ and $C$ is obtained by pinching the point at infinity $\widetilde{P}$ of the regular completion of a $G$-torsor on a point $P$ whose residue field $\kappa(P)$ is a strict subextension of $\kappa(\widetilde{P})/k$;
 \item $C$ is obtained by pinching two $k$-rational points of $\PP^1_k$ on a $k$-rational point and $G \simeq \G_{m,k}$;
 \item $C$ is obtained by pinching a separable point $\widetilde{P}$ of degree $2$ of a smooth projective conic $\widetilde{C}$ on a $k$-rational point, and $G$ is the centralizer of $\widetilde{P}$ in $\Autgp{\widetilde{C}}$.
\end{enumerate} 
\end{theorem}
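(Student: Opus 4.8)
The plan is to reduce the statement to the classification of regular almost homogeneous curves in Corollary~\ref{cor_classification_regular_almost_homogeneous} by passing to the normalization. For the direct implication, suppose that $C$ is singular and seminormal, that $G$ acts faithfully, and that $C$ is almost homogeneous; let $\nu : \widetilde{C} \to C$ be the normalization, with associated conductor square carrying the reduced subschemes $Z \subseteq C$ and $\widetilde{Z} \subseteq \widetilde{C}$. By Lemma~\ref{lemma_lift_action} the action lifts to $\widetilde{C}$ with the same kernel, hence faithfully, and by Lemma~\ref{lemma_almost_homogeneous_lift} the regular curve $\widetilde{C}$ is again almost homogeneous with $\widetilde{Z}$ contained in its fixed-point subscheme $\widetilde{C}^G$. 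Since $C$ is singular, $Z$ and therefore $\widetilde{Z}$ are nonempty, so $\widetilde{C}^G \neq \emptyset$ and $\widetilde{C}$ is not homogeneous; thus $(\widetilde{C},G)$ is one of the four cases of Corollary~\ref{cor_classification_regular_almost_homogeneous}.

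The core of the proof is then a case-by-case analysis of these four possibilities: in each I would compute the fixed locus $\widetilde{C}^G$ (which equals the complement of the open orbit by Lemma~\ref{lemma_existence_open_orbit}) and decide which pinchings of a reduced subscheme of it yield a singular seminormal curve. Seminormality forces $\widetilde{Z}$ to be reduced, so it is a union of reduced closed points of $\widetilde{C}^G$, and the pinching is encoded by the injection $\mathcal{O}(Z) \hookrightarrow \mathcal{O}(\widetilde{Z})$ coming from the schematically dominant morphism $\lambda$. When $\widetilde{C} \simeq \PP^1_k$ with $G \simeq \G_{a,k} \rtimes \G_{m,k}$, the fixed locus is the single $k$-rational point at infinity, which has no proper subextension, so no singular pinching exists; the same occurs for $\widetilde{C} \simeq \A^1_k$ with $G \simeq \G_{m,k}$, whose only fixed point is $k$-rational. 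The three remaining configurations give exactly the three cases of the statement. For a form of $\G_{a,k}$ the fixed locus is the point at infinity $\widetilde{P}$ of the regular completion of the torsor; a singular pinching requires $\kappa(\widetilde{P}) \neq k$, which by Proposition~\ref{prop_Russell} forces the form to be non-trivial, and then $\kappa(P)$ must be a strict subextension of $\kappa(\widetilde{P})/k$, yielding case (a). For $\widetilde{C} \simeq \PP^1_k$ with $G \simeq \G_{m,k}$ the fixed locus consists of the two $k$-rational points $0$ and $\infty$, so $\mathcal{O}(\widetilde{Z}) = k \times k$; the only unital $k$-subalgebra giving a genuine singularity is the diagonal, whence $Z$ is a single $k$-rational point, yielding case (b). For a smooth projective conic with $G$ the centralizer of a separable point $\widetilde{P}$ of degree $2$, the fixed locus is $\widetilde{P}$, whose only proper subextension is $k$, so $P$ is $k$-rational, yielding case (c).

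For the converse I would run the construction in reverse from each of the three configurations. In all cases $\widetilde{C}$ is a regular projective curve and $\widetilde{Z}$ is reduced, so Lemma~\ref{lemma_SN_curve_pinching} shows that the pinched curve $C$ is seminormal and that the $G$-action on $\widetilde{C}$, which fixes $\widetilde{Z}$ by construction, descends to an action on $C$ for which $Z$ is stable. The pinching is nontrivial in each case---a strict drop of residue field in (a), an identification of two rational points in (b), and of two conjugate points in (c)---so $C$ is genuinely singular. The descended action, lifted back to the normalization, recovers the given action on $\widetilde{C}$ by the uniqueness in Lemma~\ref{lemma_lift_action}, hence has the same (trivial) kernel and is faithful; and since $\nu$ restricts to an isomorphism from the open orbit $\widetilde{C} \setminus \widetilde{Z}$ onto $C \setminus Z$, Lemma~\ref{lemma_almost_homogeneous_lift} shows that $C$ is almost homogeneous.

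The main obstacle is the case analysis of the second paragraph, and in particular extracting the residue-field constraints imposed by seminormality. The delicate points are the ring-theoretic facts that the only unital $k$-subalgebras of $k \times k$ are the diagonal and the whole ring, and that a separable quadratic extension has no intermediate field, which together pin down $Z$ in cases (b) and (c); and the verification that a non-trivial form of $\G_{a,k}$ has point at infinity with residue field strictly larger than $k$, which is what makes case (a) nonempty. One must also check that, $\widetilde{C}$ being the normalization and $\widetilde{Z}$ reduced, the explicit pinching diagram in the statement coincides with the conductor square attached to $C$, so that the classification is genuinely up to isomorphism.
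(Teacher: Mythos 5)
Your proposal is correct and takes essentially the same approach as the paper: the paper's proof likewise combines Lemmas \ref{lemma_lift_action}, \ref{lemma_almost_homogeneous_lift} and \ref{lemma_SN_curve_pinching} to reduce, via the conductor square, to the list of Corollary \ref{cor_classification_regular_almost_homogeneous}, and then excludes the pinchings of $k$-rational (or residue-field-preserving) fixed points by noting that $\nu$ would then be a pseudo-isomorphism, hence an isomorphism by seminormality. Your explicit case-by-case analysis of the subalgebras of $\mathcal{O}(\widetilde{Z})$, including the residue-field constraints from Proposition \ref{prop_Russell} in case (a), is precisely the content the paper compresses into ``the other cases are similar.''
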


\begin{proof}
Let $\nu : \widetilde{C} \to C$ be the normalization. By Lemmas \ref{lemma_almost_homogeneous_lift} and \ref{lemma_SN_curve_pinching}, $C$ is almost homogeneous if and only if $\widetilde{C}$ is almost homogeneous, and $C$ is obtained by pinching a set of fixed points of $\widetilde{C}$ endowed with its structure of a reduced closed subscheme. The different possible pairs $(\widetilde{C},G)$ are given by Corollary \ref{cor_classification_regular_almost_homogeneous}. 

We cannot have $\widetilde{C} = \PP^1_k$ and $G = \G_{a,k} \rtimes \G_{m,k}$ or $G = \G_{a,k}$ because we would have to pinch the $k$-rational point $\infty$ onto a $k$-rational point, then $\nu$ would be a pseudo-isomorphism, so $\nu$ would be an isomorphism and $C$ would be regular. The other cases are similar.
\end{proof}

\begin{remark}\label{rem_continuous_family}
\begin{enumerate}[wide, labelwidth=!, labelindent=0pt, label=\roman*)]
\item Let $G$ be a non-trivial form of $\G_{a,k}$, $\widetilde{C}$ the regular completion of a $G$-torsor, $\widetilde{P}$ the point at infinity, $K_1$ and $K_2$ strict subextensions of $\kappa(\widetilde{P}) / k$, and $C_1$ and $C_2$ the curves obtained by the pinching diagrams 
\begin{tikzpicture}[baseline=(m.center)]
\matrix(m)[matrix of math nodes,
row sep=2.5em, column sep=2.5em,
text height=1.5ex, text depth=0.25ex,ampersand replacement=\&]
{\Spec \kappa(\widetilde{P}) \& \widetilde{C} \\
\Spec K_1 \& C_1\\};
\path[->] (m-1-1) edge node[above] {$j$}(m-1-2);
\path[->] (m-1-1) edge node[left] {$\lambda_1$} (m-2-1);
\path[->] (m-2-1) edge node[below] {$i_1$} (m-2-2);
\path[->] (m-1-2) edge node[right] {$\nu_1$} (m-2-2);
\end{tikzpicture}
and
\begin{tikzpicture}[baseline=(m.center)]
\matrix(m)[matrix of math nodes,
row sep=2.5em, column sep=2.5em,
text height=1.5ex, text depth=0.25ex,ampersand replacement=\&]
{\Spec \kappa(\widetilde{P}) \& \widetilde{C} \\
\Spec K_2 \& C_2\\};
\path[->] (m-1-1) edge node[above] {$j$}(m-1-2);
\path[->] (m-1-1) edge node[left] {$\lambda_2$} (m-2-1);
\path[->] (m-2-1) edge node[below] {$i_2$} (m-2-2);
\path[->] (m-1-2) edge node[right] {$\nu_2$} (m-2-2);
\end{tikzpicture}. 
These diagrams are the conductor squares. If there exists an isomorphism $\psi : C_1 \to C_2$ then, by the universal property of normalization, $\psi$ can be lifted to an automorphism $\widetilde{\psi}$ of $\widetilde{C}$. Then $\widetilde{\psi}$ induces an automorphism of $\kappa(\widetilde{P})$ mapping $K_1$ to $K_2$. But the extension $\kappa(\widetilde{P}) / k$ is purely inseparable, so the unique automorphism of $\kappa(\widetilde{P})$ is the identity and we must have $K_1 = K_2$. Therefore, distinct subextensions yield distinct curves. 
\item The family of seminormal curves which are almost homogeneous under the action of a non-trivial form of $\G_{a,k}$ can be very large, as shown by the following example.
\end{enumerate}
\end{remark}


\begin{example}
Set $p=2$, $k= \F_2(a,b)$ and consider the form $G$ of $\G_{a,k}$ given by the equation $y^4 = x + ax^2 + b^2x^4$ (see Proposition \ref{prop_Russell}). We claim that the residue field of the point at infinity of $G$ is $\F_2(a^{1/2},b^{1/2})$. For $c \in k$, $k(a^{1/2}+cb^{1/2})$ is a subextension of $\F_2(a^{1/2},b^{1/2})/k$, and different values of $c$ yield pairwise non-isomorphic subextensions. As a consequence, we have a family parameterized by $k$ of almost homogeneous curves under the action of $G$ which have the same normalization.

We now prove our claim on the residue field. The scheme-theoretic closure $\overline{G}$ of $G$ in $\PP^2_k$ has homogeneous equation $y^4 = xz^3+ax^2z^2+b^2x^4$ and the regular completion of $G$ is the normalization of $\overline{G}$. In the chart $(x=1)$, the equation reads $y^4 = z^3 + az^2 + b^2$. Set $A=\dfrac{k[y,z]}{(y^4 - z^3 - az^2 - b^2)}$ and $B=\dfrac{k[y,w]}{(y^2-w^3-aw-b)}$. Let us show that the normalization in this chart is given by the ring morphism $A \to B$ sending $z$ to $w^2-b$. First this morphism in injective and finite. The ring $B$ is an integral domain and has the same fraction field than $A$ because $w = \dfrac{y^2-a}{z}$. By the Jacobian criterion, every point of $\Spec B$ is smooth except the point given by the ideal $\m=(w^2-b)$ (which besides corresponds to the point at infinity of $G$). This point is nonetheless regular because $\m$ is principal. Hence $B$ is integrally closed and is the integral closure of $A$. Thus the residue field of the point at infinity is $\dfrac{k[y,w]/(y^2-w^3-aw-b)}{(w^2-b)} \simeq \F_2(a^{1/2},b^{1/2})$.
\end{example}

\section{Arbitrary curves}

Popov gave a classification of almost homogeneous curves over an algebraically closed field in \cite[Ch. 7]{POP} and used the language of Serre (\cite[Ch. IV]{SerreGrpAlg}) to describe them. We extend the result to an arbitrary field (except for almost homogeneous curves under the action of $\G_{a,k}$ or $\G_{a,k}\rtimes \G_{m,k}$) by using the language of pinchings.

\subsection{\texorpdfstring{Almost homogeneous curves under the action of $\G_{a,k}$ in characteristic zero}{Almost homogeneous curves under the action of Ga in characteristic zero}}\label{section_arbitrary_Ga}

We consider the projective coordinates $[t:u]$ on $\PP^1_k$. For $n \geq 0$, we denote by $\PP^1_{k,n}$ the curve defined by the pinching diagram 
\begin{center}
\begin{tikzpicture}[baseline=(m.center)]
\matrix(m)[matrix of math nodes,
row sep=3.5em, column sep=4.5em,
text height=3.5ex, text depth=2ex,ampersand replacement=\&]
{\Spec \dfrac{k[u]}{(u^{n})} \& \PP^1_k\\
\Spec k \& \PP^1_{k,n} \\};
\path[->] (m-1-1) edge node[above] {$j$} (m-1-2);
\path[->] (m-1-1) edge node[left] {$\lambda$} (m-2-1);
\path[->] (m-2-1) edge node[below] {$i$} (m-2-2);
\path[->] (m-1-2) edge node[right] {$\nu$} (m-2-2);
\end{tikzpicture}
\end{center}
where $\Spec \dfrac{k[u]}{(u^{n})}$ is the $n$th infinitesimal neighborhood of the point $\infty$. By Lemma \ref{lemma_descent_action} the natural actions of $\G_{a,k}$ and $\G_{a,k} \rtimes \G_{m,k}$ on $\PP^1_k$ descend to faithful actions on $\PP^1_{k,n}$.

\begin{theorem}
We \label{th_classification_Popov_Ga}assume that $k$ has characteristic zero. Let $C$ be a curve and $\alpha : \G_{a,k} \times C \to C$ an action. The action is faithful (and $C$ is almost homogeneous) if and only if $C \simeq \A^1_k$ or if there exists $n \geq 0$ such that $C \simeq \PP^1_{k,n}$, and the action is the natural one.
\end{theorem}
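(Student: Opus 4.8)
The plan is to push everything down to the normalization $\nu : \widetilde{C} \to C$ and then recover $C$ from the way an infinitesimal neighborhood of the fixed point is pinched. First I would note, via Lemma~\ref{lemma_existence_open_orbit}, that a faithful action of $\G_{a,k}$ is automatically non-trivial and hence almost homogeneous, which is why the parenthetical in the statement costs nothing. By Lemmas~\ref{lemma_lift_action} and~\ref{lemma_almost_homogeneous_lift} the action lifts to $\widetilde{C}$ with the same kernel, so $\G_{a,k}$ acts faithfully on the regular curve $\widetilde{C}$, which is again almost homogeneous. The open orbit $U \subseteq \widetilde{C}$ from Lemma~\ref{lemma_almost_hom_field_extension} is a homogeneous $\G_{a,k}$-curve; since $\dim U = 1$ its geometric stabilizers are finite, and in characteristic zero $\G_{a,k}$ has no non-trivial finite subgroup scheme, so the stabilizers are trivial and $U$ is a $\G_{a,k}$-torsor. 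As $H^1(k,\G_{a,k}) = 0$ this torsor is trivial, giving $U \simeq \A^1_k$ with the translation action. Thus $\widetilde{C}$ is a regular curve containing $\A^1_k$ as a dense $\G_{a,k}$-stable open subscheme, hence embeds into the regular completion $\PP^1_k$ of $\A^1_k$, forcing $\widetilde{C} \simeq \A^1_k$ or $\widetilde{C} \simeq \PP^1_k$ with the natural action fixing $\infty$.

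Next I would split into these two cases. If $\widetilde{C} \simeq \A^1_k$, the translation action has empty fixed locus, so by Lemma~\ref{lemma_almost_homogeneous_lift} the conductor locus $\widetilde{Z}$ is empty, $\nu$ is an isomorphism, and $C \simeq \A^1_k$. If $\widetilde{C} \simeq \PP^1_k$, then $\widetilde{Z}$ is set-theoretically supported on the unique fixed point $\infty$, which is $k$-rational; writing $u$ for a local uniformizer at $\infty$ and $A = \mathcal{O}_{C,P} \subseteq R = \mathcal{O}_{\PP^1_k, \infty}$, the conductor square reads $\widetilde{Z} = \Spec k[u]/(u^n) \to Z = \Spec \overline{A}$, where $n$ is the conductor exponent and $\overline{A}$ is a $k$-subalgebra of $k[u]/(u^n)$ stable under the induced $\G_{a,k}$-action. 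The whole point is then to prove $\overline{A} = k$, since by the uniqueness in Proposition~\ref{prop_existence_pinching} this identifies the conductor square of $C$ with the defining pinching of $\PP^1_{k,n}$ and hence gives $C \simeq \PP^1_{k,n}$ with the natural action.

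The hard part will be this last algebraic step, and it is exactly where characteristic zero enters. The action on $\PP^1_k$ near $\infty$ is $u \mapsto u/(1+su)$, whose restriction to $k[u]/(u^n)$ is locally nilpotent and corresponds to the derivation $D = -u^2\,\mathrm{d}/\mathrm{d}u$; over a field of characteristic zero the condition that $\overline{A}$ be $\G_{a,k}$-stable is equivalent to $D(\overline{A}) \subseteq \overline{A}$. I would then argue that any non-constant $f = \sum_{j \geq m} c_j u^j \in \overline{A}$ with $c_m \neq 0$ and $m \geq 1$ satisfies $D^{\,n-1-m} f = \lambda\, u^{n-1}$ with $\lambda = \pm\, m(m+1)\cdots(n-2)\, c_m$, and the non-vanishing of $\lambda$ in characteristic zero is the crux of the whole proof (it fails precisely when some factor is divisible by $p$, which is why the positive-characteristic case is left open). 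Since $\overline{A}$ is $D$-stable this forces $u^{n-1} \in \overline{A}$, hence $u^{n-1} \in A$ and $u^{n-1}R \subseteq A$, contradicting the minimality of the conductor exponent $n$. Therefore $\overline{A}$ contains no non-constant element and $\overline{A} = k$, as required.

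Finally, for the converse I would check that the listed curves do carry faithful almost homogeneous natural actions. On $\A^1_k$ this is the translation action, which is visibly faithful and homogeneous. On $\PP^1_{k,n}$ the action descends from $\PP^1_k$ by Lemma~\ref{lemma_descent_action}, is faithful because it has the same kernel as the faithful action on the normalization $\PP^1_k$ (Lemma~\ref{lemma_lift_action}), and is almost homogeneous because the dense open subscheme $\A^1_k$ sits inside it as a single homogeneous orbit.
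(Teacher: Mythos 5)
Your proof is correct, but the central algebraic step takes a genuinely different route from the paper. The paper works at the level of the group: it keeps the conductor square with a possibly non-reduced $Z=\Spec A$, computes the triangular matrix of $a\in\G_{a,k}(k)$ on $k[u]/(u^{N})$, invokes Lie--Kolchin to see that the unique stable line in $\Vect(u,\ldots,u^{N-1})$ is $\Vect(u^{N-1})$ (this is where characteristic zero enters, via the subdiagonal entries $-(n_i+1)a$), and then classifies \emph{all} $\G_{a,k}$-stable subalgebras as $A=k[u^{n_1},\ldots,u^{N}]/(u^{N})$; since such an $A$ need not equal $k$, the paper needs a final step, concatenating cocartesian squares via Ferrand's Lemme~1.3, to replace the pinching $\bigl(\Spec k[u]/(u^{N})\to\Spec A\bigr)$ by the standard one $\bigl(\Spec k[u]/(u^{n_1})\to\Spec k\bigr)$. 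You instead pass to the Lie algebra (legitimate in characteristic zero for a connected group), and combine the derivation computation $D^{\,n-1-m}f=\lambda u^{n-1}$, $\lambda\neq 0$, with the \emph{maximality of the conductor} to show that the conductor square itself already has $Z=\Spec k$: if $u^{n-1}\in\mathcal{O}(Z)$ then, writing $a=u^{n-1}v$ with $v$ a unit of $R=\mathcal{O}_{\PP^1_k,\infty}$ and $v^{-1}s\in k+uR$, one gets $u^{n-1}R\subseteq A$, contradicting $\mathfrak{c}=u^{n}R$ --- a one-line verification you should make explicit, since $A$ is not an $R$-ideal and ``$u^{n-1}\in A$ hence $u^{n-1}R\subseteq A$'' is not formally immediate. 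Your shortcut buys a cleaner endgame (no classification of stable subalgebras, no concatenation of pinchings, and uniqueness in Proposition~\ref{prop_existence_pinching} finishes immediately), at the cost of yielding less information: the paper's classification of all stable subalgebras is reused as a template in the $\G_{m,k}$ case (Theorem~\ref{th_classification_Popov_Gm}), where the conductor-minimality trick alone would not organize the answer. Two minor remarks: your first paragraph re-derives the possibilities for $\widetilde{C}$ via the torsor argument where the paper simply cites Theorem~\ref{th_general_classification} (both fine), and your notation $\overline{A}$ for $\mathcal{O}(Z)$ clashes with the paper's use of $\overline{A}$ for integral closures; also the degenerate cases $n\leq 1$ (where $C$ is already regular) should be set aside before invoking the contradiction.
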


\begin{proof}
We adapt Popov's proof of \cite[Th. 1.1 p.171]{POP}. Assume that the action is faithful (so $C$ is almost homogeneous). Let 
\begin{tikzpicture}[baseline=(m.center)]
\matrix(m)[matrix of math nodes,
row sep=2.5em, column sep=2.5em,
text height=1.5ex, text depth=0.25ex,ampersand replacement=\&]
{\widetilde{Z} \& \widetilde{C} \\
Z \& C\\};
\path[->] (m-1-1) edge node[above] {$j$}(m-1-2);
\path[->] (m-1-1) edge node[left] {$\lambda$} (m-2-1);
\path[->] (m-2-1) edge node[below] {$i$} (m-2-2);
\path[->] (m-1-2) edge node[right] {$\nu$} (m-2-2);
\end{tikzpicture}
be the conductor square. Every $\G_{a,k}$-torsor is trivial so, by Theorem \ref{th_general_classification}, we have $\widetilde{C} = \A^1_k$ or $\widetilde{C} = \PP^1_k$ (with the natural action of $\G_{a,k}$). In the first case $\widetilde{C}$ is homogeneous so $C$ is homogeneous too and $C = \A^1_k$. We assume henceforth that $\widetilde{C} = \PP^1_k$. By Lemma \ref{lemma_almost_homogeneous_lift}, $C$ is obtained by pinching a closed subscheme of $\PP^1_k$ supported by the point $\infty$. Thus we write $\widetilde{Z} = \Spec \dfrac{k[u]}{(u^{N})}$ for some $N \geq 0$. Then $A = \mathcal{O}(Z)$ is a subalgebra of $\dfrac{k[u]}{(u^{N})}$ which is $\G_{a,k}$-invariant.

\medskip
An element $a \in \G_{a,k}(k)$ acts on $\PP^1_k$ by the matrix $\begin{pmatrix} 1 & a \\ 0 & 1\end{pmatrix}$, that is, for a point with projective coordinates $[1 : u]$, we have $a \cdot [1 : u] = [1+au : u] = \left[1 : \dfrac{u}{1+au}\right]$. We still denote by $u$ the image of $u$ in $\dfrac{k[u]}{(u^{N})}$. In $\dfrac{k[u]}{(u^{N})}$, the element $\dfrac{u}{1+au}$ is invertible and we have $\dfrac{u}{1+au} = u-au^2 + \cdots + (-1)^{N-2}a^{N-2}u^{N-1}$. Moreover, for $1 \leq i \leq N-1$, we have $a \cdot u^i = (a \cdot u)^i$. Hence in the basis $(1,u,\ldots,u^{N-1})$, the endomorphism of $\dfrac{k[u]}{(u^{N})}$ induced by $a$ has a triangular matrix of the form 
\begin{center}
$\begin{pmatrix} 
1 & & & & & \\
0 & 1 & & & & \\
\vdots & -2a & \ddots & & & \\
\vdots & * & \ddots & \ddots & & \\
\vdots & \vdots & \ddots & \ddots & 1 & \\
0 & * & \cdots &  * & -(N-1)a & 1\\
\end{pmatrix}$
\end{center}

\medskip
The subspaces $k$ and $V = \Vect(u,\ldots,u^{N-1})$ of $\dfrac{k[u]}{(u^{N})}$ are $\G_{a,k}$-stable. We can write $A = k \oplus (A\cap V)$. Let us notice that $A \cap V$  is the unique maximal ideal $\m$ of $A$. Then $A$ is $\G_{a,k}$-stable if and only if $\m$ is stable. The image of $\G_{a,k}$ in $\GL(V)$ is a unipotent group so, by the Lie-Kolchin theorem, every nonzero stable subspace of $V$ contains a stable line. But, since $k$ has characteristic zero, the above matrix shows that the only stable line in $V$ in $\Vect(u^{N-1})$. If $\m = 0$ then $A = k$. We now assume that $\m$ is stable and nonzero. We extend $u^{N-1}$ to a basis $(e_1,\ldots,e_d,u^{N-1})$ of $\m$. By putting the coordinates of the $e_i$ in an echelon form, we can assume that there exist integers $n_1,\ldots,n_{d+1}$ and scalars $b_{i,j} \in k$ such that we have $$1 \leq n_1 < \ldots < n_d < n_{d+1}=N-1$$ and, for $1 \leq i \leq d$, $$e_i = u^{n_i}+b_{i,n_i+1}u^{n_i+1} + \cdots + b_{i,N-2}u^{N-2} + b_{i,N-1}u^{N-1}.$$ Then in the basis $(1,u,\ldots,u^{N-1})$ of $\dfrac{k[u]}{(u^{N})}$, the $n_i-1$ first coordinates of $a \cdot e_i$ equal zero, the $n_i$th one equals $1$ and the $(n_i+1)$th one equals $(-(n_i+1)a + b_{i,n_i+1}$ (which is therefore different from $b_{i,n_i+1}$ if $a \neq 0$). Thus for the images of $e_i$ to be in $\m$, we must have $n_{i+1}=n_i+1$. So $\m = \Vect(u^{n_1},u^{n_1+1},\ldots,u^{N-1})$ and $A= \dfrac{k[u^{n_1},u^{n_1+1},\ldots,u^N]}{(u^N)}$, which conversely is $\G_{a,k}$-stable.

\medskip
The ideal $u^{n_1}$ of $\dfrac{k[u]}{(u^{N})}$ is also an ideal of $\dfrac{k[u^{n_1},u^{n_1+1},\ldots,u^N]}{(u^N)}$. Hence by \cite[Lemme 1.3]{FER} the square of rings 
\begin{center}
\begin{tikzpicture}[baseline=(m.center)]
\matrix(m)[matrix of math nodes,
row sep=3.5em, column sep=3.5em,
text height=3.5ex, text depth=2ex,ampersand replacement=\&]
{\dfrac{k[u^{n_1},u^{n_1+1},\ldots,u^N]}{(u^N)} \& \dfrac{k[u]}{(u^N)} \\
k \simeq \dfrac{k[u^{n_1},u^{n_1+1},\ldots,u^N]/(u^N)}{(u^{n_1})} \& \dfrac{k[u]/(u^N)}{(u^{n_1})} \simeq \dfrac{k[u]}{(u^{n_1})} \\};
\path[right hook ->] (m-1-1) edge (m-1-2);
\path[->>] (m-1-1) edge (m-2-1);
\path[right hook ->] (m-2-1) edge (m-2-2);
\path[->>] (m-1-2) edge (m-2-2);
\end{tikzpicture}
\end{center}
is cartesian. Moreover the morphisms in this square are $\G_{a,k}$-equivariant. So the corresponding square of schemes 
\begin{tikzpicture}[baseline=(m.center)]
\matrix(m)[matrix of math nodes,
row sep=2.5em, column sep=2.5em,
text height=3ex, text depth=2ex,ampersand replacement=\&]
{\Spec \dfrac{k[u]}{(u^{n_1})} \& \widetilde{Z} \\
\Spec k \& Z \\};
\path[->] (m-1-1) edge (m-1-2);
\path[->] (m-1-1) edge node[left] {$\lambda'$} (m-2-1);
\path[->] (m-2-1) edge (m-2-2);
\path[->] (m-1-2) edge node[right] {$\lambda$} (m-2-2);
\end{tikzpicture}
is a pinching diagram where the morphisms are $\G_{a,k}$-equivariant. By concatenation, we get a square 
\begin{tikzpicture}[baseline=(m.center)]
\matrix(m)[matrix of math nodes,
row sep=2.5em, column sep=2.5em,
text height=3ex, text depth=2ex,ampersand replacement=\&]
{\Spec \dfrac{k[u]}{(u^{n_1})} \& \PP^1_k\\
\Spec k \& C \\};
\path[->] (m-1-1) edge node[above] {$j'$} (m-1-2);
\path[->] (m-1-1) edge node[left] {$\lambda'$} (m-2-1);
\path[->] (m-2-1) edge node[below] {$i'$} (m-2-2);
\path[->] (m-1-2) edge node[right] {$\nu$} (m-2-2);
\end{tikzpicture}
which is a pinching diagram where the morphism are $\G_{a,k}$-equivariant. 
\end{proof}

\begin{remark}
The set of the subsemigroups of $(\N,+)$ of the form $\mathfrak{z}_m(\underline{c})$ is countable (since $\mathfrak{z}_m(\underline{c})$ is determined by the tuple $\underline{c} = (c_0,\ldots,c_r)$ and the integer $m$). Hence, contrary to what happened in Remark \ref{rem_continuous_family}, if $k$ has characteristic zero then the family of curves which are almost homogeneous under the action of $\G_{a,k}$ is parameterized by a countable set.
\end{remark}

\begin{corollary}
We assume that $k$ has characteristic zero. Let $C$ be a curve and $\alpha : \G_{a,k} \rtimes \G_{m,k} \times C \to C$ an action. The action is faithful (and $C$ is almost homogeneous) if and only if $C \simeq \A^1_k$ or if there exists $n \geq 0$ such that $C \simeq \PP^1_{k,n}$, and the action is the natural one.
\end{corollary}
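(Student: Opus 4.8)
The plan is to deduce the corollary from Theorem~\ref{th_classification_Popov_Ga} by restricting the action to the normal subgroup $\G_{a,k} \subseteq \G_{a,k} \rtimes \G_{m,k}$. Write $G = \G_{a,k} \rtimes \G_{m,k}$ and suppose first that $\alpha$ is faithful. The kernel of the restricted action of $\G_{a,k}$ is exactly $\G_{a,k} \cap \ker(\alpha) = \{e\}$, so $\G_{a,k}$ already acts faithfully on $C$. Theorem~\ref{th_classification_Popov_Ga} then gives $C \simeq \A^1_k$, or $C \simeq \PP^1_{k,n}$ for some $n \geq 0$, and in either case the induced action of $\G_{a,k}$ is the natural one. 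It remains to upgrade this to the whole group, that is, to identify $\alpha$ with the natural $G$-action.

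For this I would lift $\alpha$ to the normalization. By Lemma~\ref{lemma_lift_action} the action lifts to an action of $G$ on $\widetilde{C} = \PP^1_k$ (or $\A^1_k$) with the same kernel, so the lifted action is still faithful, and the pinched subscheme $\widetilde{Z}$, supported at $\infty$, is $G$-stable; in particular $\infty$ is a $G$-fixed point. Hence the homomorphism $G \to \Autgp{\PP^1_k} = \PGL_{2,k}$ has image in the stabilizer of $\infty$, which is the Borel subgroup $B \simeq \G_{a,k} \rtimes \G_{m,k}$ of upper triangular classes. This homomorphism $\phi$ is injective, its source and target have dimension $2$, and $B$ is connected, so $\phi$ is an isomorphism $G \xrightarrow{\sim} B$ whose restriction to $\G_{a,k}$ is the standard unipotent subgroup of translations.

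The remaining point, which I expect to be the main obstacle, is to show that $\phi$ is the natural inclusion up to an automorphism of $C$. The image $\phi(\G_{m,k})$ is a torus complement of the unipotent radical of $B$; such complements correspond to homomorphic sections $\G_{m,k} \to B$, i.e. to $1$-cocycles, and since $\G_{m,k}$ is linearly reductive one has $H^1(\G_{m,k}, \G_{a,k}) = 0$. Concretely, every such section has the form $t \mapsto ((1-t)c, t)$, so $\phi(\G_{m,k})$ is conjugate to the standard diagonal torus by the translation $\tau_c$ for some $c \in \G_{a,k}(k)$. As $\tau_c$ fixes $\infty$, it preserves the infinitesimal neighbourhood $\widetilde{Z}$ and hence descends to an automorphism of $C$; composing the isomorphism $C \simeq \PP^1_{k,n}$ with it makes the $\G_{m,k}$-part standard while keeping the $\G_{a,k}$-part standard, so $\alpha$ becomes the natural action. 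The case $C \simeq \A^1_k$ is identical, with $\Autgp{\A^1_k}$ in place of $B$. The converse is immediate: by the discussion preceding Theorem~\ref{th_classification_Popov_Ga} (via Lemma~\ref{lemma_descent_action}) the natural action of $G$ on $\A^1_k$ and on $\PP^1_{k,n}$ descends from $\PP^1_k$ and is faithful. The only care needed throughout is that every conjugation is realized by a $k$-rational automorphism preserving the pinching, so that no field extension is required.
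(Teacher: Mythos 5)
Your argument is correct in substance, but it routes through the result differently from the paper. The paper's proof goes in the opposite order: it first applies the classification of regular almost homogeneous curves (Theorem~\ref{th_general_classification}, via Lemmas~\ref{lemma_lift_action} and~\ref{lemma_almost_homogeneous_lift}) to the lifted, still faithful, action of $\G_{a,k}\rtimes\G_{m,k}$ on the normalization, so that $\widetilde{C}\simeq \A^1_k$ or $\PP^1_k$ \emph{with the natural action of the full group} comes for free from the earlier results (over $k$ this uses only that all Borel subgroups of $\PGL_{2,k}$ fixing a $k$-rational point are conjugate); it then observes that the conductor subschemes $\widetilde{Z}$ and $Z$ are stable under $\G_{a,k}\rtimes\G_{m,k}$, hence under the subgroup $\G_{a,k}$, so the analysis of $\G_{a,k}$-stable pinching data in the proof of Theorem~\ref{th_classification_Popov_Ga} already forces $C\simeq\PP^1_{k,n}$. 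You instead restrict to $\G_{a,k}$ first (your faithfulness observation $\G_{a,k}\cap\ker\alpha=\{e\}$ is correct), obtain $C$ and the natural $\G_{a,k}$-action directly from Theorem~\ref{th_classification_Popov_Ga}, and then re-prove naturality of the torus part by hand, via the vanishing $H^1(\G_{m,k},\G_{a,k})=0$ and conjugation by a $k$-rational translation $\tau_c$, which indeed fixes $\infty$, preserves its infinitesimal neighbourhoods, and so descends through the pinching. What the paper's order buys is precisely that this torus-normalization step is unnecessary; what your order buys is a self-contained verification that the $\G_{m,k}$-part can be standardized without re-entering the pinching analysis.

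Two small patches are needed in your write-up. First, your parametrization of sections as $t\mapsto((1-t)c,t)$ silently assumes that $\phi|_{\G_{m,k}}$ followed by the projection $B\to\G_{m,k}$ is $t\mapsto t$ rather than $t\mapsto t^{-1}$; this is true, but only because $\phi|_{\G_{a,k}}$ is the standard inclusion and $\phi$ must satisfy $\phi(t)\,\tau_b\,\phi(t)^{-1}=\tau_{tb}$, so the compatibility with the semidirect product structure should be invoked explicitly. Second, for $C\simeq\A^1_k$ you should not write $\Autgp{\A^1_k}$: the paper only asserts representability of the automorphism functor for projective curves, and for $\A^1_k$ it is in fact not representable by an algebraic group (over a non-reduced base there are automorphisms such as $x\mapsto x+\varepsilon x^2$ with $\varepsilon^2=0$). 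The fix is immediate: either extend the action to the regular completion $\PP^1_k$ by Lemma~\ref{lemma_action_regular_completion} and run the same Borel argument there, or simply quote case~(b) of Theorem~\ref{th_classification_homogeneous}, which already says that a faithful homogeneous action of $\G_{a,k}\rtimes\G_{m,k}$ on $\A^1_k$ is by affine transformations.
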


\begin{proof}
We use the previous notations. It follows from Theorem \ref{th_general_classification} that if $\G_{a,k} \rtimes \G_{m,k}$ acts faithfully on $C$ then we have $\widetilde{C} \simeq \A^1_k$ or $\PP^1_k$ (with the natural action). The subschemes $\widetilde{Z}$ and $Z$ are stable under the action of $\G_{a,k} \rtimes \G_{m,k}$ so they are stable under the action of the subgroup $\G_{a,k}$. Thus $C$ is one of the curves given in Theorem \ref{th_classification_Popov_Ga}. Conversely, $\G_{a,k} \rtimes \G_{m,k}$ acts faithfully on these curves.
\end{proof}

\subsection{\texorpdfstring{Almost homogeneous curves under the action of a form of $\G_{m,k}$}{Almost homogeneous curves under the action of a form of Gm}}\label{section_arbitrary_Gm}

In this section we first classify the almost homogeneous curves under the action of $\G_{m,k}$. Then we use the link between the forms of $\G_{m,k}$ and the conics to deduce the classification of almost homogeneous curves under the action of a form of $\G_{m,k}$.

\bigskip
We consider again the projective coordinates $[t:u]$ on $\PP^1_k$. If $\mathfrak{z}$ is a subsemigroup of $(\N,+)$ containing $0$ and all the integers that are large enough then there exist a unique integer $m$ and a unique tuple $\underline{c} = (c_0,\ldots,c_p)$ such that $0 = c_0 < \ldots < c_p < m-1$ and $\mathfrak{z} = \{c_0,\ldots,c_p\} \cup \{ r \in \N \mid r \geq m\}$. The subsemigroup $\mathfrak{z}$ is determined by $m$ and $\underline{c}$, and we denote it by $\mathfrak{z}_m(\underline{c})$. 

For any subsemigroups $\mathfrak{z}_m(\underline{c})$ and $\mathfrak{z}_n(\underline{d})$ of $(\N,+)$, we denote respectively by $\A^1_{k,m}(\underline{c})$,  $\PP^1_{k,m,n}(\underline{c},\underline{d})$ and $\PP^1_{k,m,n}(\underline{c},\underline{d})'$ the curves defined by the pinching diagrams 
\begin{center}
\begin{tikzpicture}[baseline=(m.center)]
\matrix(m)[matrix of math nodes,
row sep=3.5em, column sep=2.5em,
text height=1.5ex, text depth=0.25ex,ampersand replacement=\&]
{Z_m \& \A^1_k\\
Z_m(\underline{c}) \& \A^1_{k,m}(\underline{c}) \\};
\path[->] (m-1-1) edge (m-1-2);
\path[->] (m-1-1) edge (m-2-1);
\path[->] (m-2-1) edge (m-2-2);
\path[->] (m-1-2) edge (m-2-2);
\end{tikzpicture}
\begin{tikzpicture}[baseline=(m.center)]  
\matrix(m)[matrix of math nodes,
row sep=3.5em, column sep=2.5em,
text height=1.5ex, text depth=0.25ex,ampersand replacement=\&]
{Z_m \sqcup Z_n \& \PP^1_k\\
Z_m(\underline{c}) \sqcup Z_n(\underline{d}) \& \PP^1_{k,m,n}(\underline{c},\underline{d}) \\};
\path[->] (m-1-1) edge (m-1-2);
\path[->] (m-1-1) edge (m-2-1);
\path[->] (m-2-1) edge (m-2-2);
\path[->] (m-1-2) edge (m-2-2);
\end{tikzpicture}
\begin{tikzpicture}[baseline=(m.center)]
\matrix(m)[matrix of math nodes,
row sep=3.5em, column sep=2.5em,
text height=1.5ex, text depth=0.25ex,ampersand replacement=\&]
{\Spec k \sqcup \Spec k \& \PP^1_{k,m,n}(\underline{c},\underline{d}) \\
\Spec k \& \PP^1_{k,m,n}(\underline{c},\underline{d})' \\};
\path[->] (m-1-1) edge (m-1-2);
\path[->] (m-1-1) edge (m-2-1);
\path[->] (m-2-1) edge (m-2-2);
\path[->] (m-1-2) edge (m-2-2);
\end{tikzpicture}
\end{center}
where $Z_m = \Spec \dfrac{k[t]}{(t^{m})}$ and $Z_n = \Spec \dfrac{k[u]}{(u^{n})}$ are the $m$th and $n$th infinitesimal neighborhoods of the points $0$ and $\infty$, $Z_m(\underline{c}) = \Spec \dfrac{k[t^{c_0},\ldots,t^{c_p},t^m]}{(t^m)}$ and $Z_n(\underline{d}) = \Spec \dfrac{k[u^{d_0},\ldots,u^{d_q},u^n]}{(u^n)}$, and $\Spec k \sqcup \Spec k$ is the reduced subscheme of $Z_m(\underline{c}) \sqcup Z_n(\underline{d})$.

\begin{remark}
For example, $\A^1_k = \A^1_{k,m}(\underline{c})$ and $\PP^1_k = \PP^1_{k,m,n}(\underline{c},\underline{d})$ for $m=n=0$ and $\underline{c} = \underline{d} = \emptyset$.
\end{remark}

\begin{theorem}\label{th_classification_Popov_Gm}
Let $C$ be a curve. The group $\G_{m,k}$ acts non-trivially on $C$ if and only if $C \simeq \A^1 \setminus \{0\}$ or if there exist subsemigroups $\mathfrak{z}_m(\underline{c})$ and $\mathfrak{z}_n(\underline{d})$ of $(\N,+)$ such that $C$ is isomorphic to $\A^1_{k,m}(\underline{c})$, $\PP^1_{k,m,n}(\underline{c},\underline{d})$ or $\PP^1_{k,m,n}(\underline{c},\underline{d})'$.
\end{theorem}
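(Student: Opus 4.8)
The plan is to adapt the proof of Theorem~\ref{th_classification_Popov_Ga}, replacing the Lie--Kolchin analysis of unipotent actions by the fact that $\G_{m,k}$ is linearly reductive. First I would reduce to a faithful action: by Lemma~\ref{lemma_existence_open_orbit} the action is non-trivial if and only if $C$ is almost homogeneous, and passing to the quotient by the kernel $H$ of the action (which preserves almost homogeneity) yields a faithful action of $\G_{m,k}/H$. Since $H$ is a subgroup scheme of $\G_{m,k}$ with $H \neq \G_{m,k}$, it is some $\mu_n$ and $\G_{m,k}/\mu_n \simeq \G_{m,k}$ via the $n$th power map, so the curve remains the same and we may assume the action faithful. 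Normalizing via $\nu : \widetilde{C} \to C$ and lifting the action by Lemma~\ref{lemma_lift_action}, the regular curve $\widetilde{C}$ is almost homogeneous under $\G_{m,k}$ by Lemma~\ref{lemma_almost_homogeneous_lift}. By Theorem~\ref{th_classification_homogeneous} (using the triviality of $\G_{m,k}$-torsors for the homogeneous case) and Corollary~\ref{cor_classification_regular_almost_homogeneous}, this leaves exactly three possibilities: $\widetilde{C} \simeq \A^1_k \setminus \{0\}$, $\A^1_k$ or $\PP^1_k$, with the standard action scaling the affine coordinate and fixing the $k$-rational points $0$ and, in the projective case, $\infty$.

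If $\widetilde{C} \simeq \A^1_k \setminus \{0\}$ then $\widetilde{C}$ is homogeneous, hence so is $C$ and $C \simeq \A^1_k \setminus \{0\}$. Otherwise Lemma~\ref{lemma_almost_homogeneous_lift} shows that $C$ is obtained by pinching a $\G_{m,k}$-stable closed subscheme $\widetilde{Z}$ supported on the fixed points, so the conductor square has $\mathcal{O}(\widetilde{Z})$ equal to $k[t]/(t^m)$ in the case of $\A^1_k$, and to $k[t]/(t^m) \times k[u]/(u^n)$ in the case of $\PP^1_k$, on which $\G_{m,k}$ acts by scaling $t$ and $u$ with weights $\pm 1$. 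The core step is to determine the $\G_{m,k}$-stable subalgebras $A = \mathcal{O}(Z)$ of these rings. Here I would invoke linear reductivity: as $\G_{m,k}$ is diagonalizable, every comodule is the direct sum of its weight spaces and every stable subspace is the sum of its weight components. Since the weight spaces of $k[t]/(t^m)$ are the lines $k\,t^i$, any stable subalgebra is spanned by monomials, i.e. is $\bigoplus_{i \in S} k\,t^i$ for a subset $S \subseteq \N$ containing $0$ and closed under truncated addition. Because $\widetilde{Z}$ is the scheme-theoretic fibre of the conductor, $A$ contains the image of the conductor ideal, so $S$ contains all large integers and $m$ is exactly the conductor exponent; this forces $m-1 \notin S$, whence $S$ extends to a subsemigroup $\mathfrak{z}_m(\underline{c})$ with $c_p < m-1$. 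Thus $Z \simeq Z_m(\underline{c})$ and $C \simeq \A^1_{k,m}(\underline{c})$, respectively $Z \simeq Z_m(\underline{c}) \sqcup Z_n(\underline{d})$.

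For $\widetilde{C} \simeq \PP^1_k$ there remains the possibility that the two branches are identified, i.e. that $A \subseteq k[t]/(t^m) \times k[u]/(u^n)$ is not a product. Its weight-zero part is a subalgebra of $k \times k$ containing $(1,1)$, hence is either all of $k \times k$ or the diagonal $k$; these give respectively $\PP^1_{k,m,n}(\underline{c},\underline{d})$ and the further pinching $\PP^1_{k,m,n}(\underline{c},\underline{d})'$ of its two fixed $k$-rational points. That this extra pinching is legitimate---and, conversely, that all the listed curves carry a non-trivial $\G_{m,k}$-action---follows from Lemma~\ref{lemma_action_trivial_dim0} and Lemma~\ref{lemma_descent_action}, since $\G_{m,k}$ acts trivially on the reduced zero-dimensional images and the gluing morphism $\Spec k \sqcup \Spec k \to \Spec k$ is finite, schematically dominant and equivariant. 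I expect the main obstacle to be the bookkeeping of the conductor square in the $\PP^1_k$ case: one must check that the correct exponents $m,n$ emerge so that the normalizations $c_p < m-1$ and $d_q < n-1$ hold, and carefully separate the genuinely distinct unprimed and primed curves. The representation-theoretic heart of the argument, by contrast, is immediate from reductivity.
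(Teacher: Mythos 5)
Your proposal follows the paper's architecture almost exactly: reduce to a faithful action, pass to the conductor square, use the weight decomposition of $k[t]/(t^{M})$ (resp.\ of the product at the two fixed points of $\PP^1_k$) to see that every $\G_{m,k}$-stable subalgebra is spanned by monomials, and conclude by equivariant pinching via Lemma~\ref{lemma_descent_action}; your dichotomy on the weight-zero part of $A$ inside $k \times k$ (all of $k\times k$ versus the diagonal) is exactly the paper's case division between $Z$ supported at two points and at one point, hence between $\PP^1_{k,m,n}(\underline{c},\underline{d})$ and $\PP^1_{k,m,n}(\underline{c},\underline{d})'$. Three deviations, all sound: you invoke diagonalizability of $\G_{m,k}$ over $k$ directly, where the paper passes to $k_s$ and descends stable subspaces by Galois descent (unnecessary for the split torus, so your version is slightly cleaner; the paper's detour is really only needed for the forms of $\G_{m,k}$ in Theorem~\ref{th_classification_Popov_forms_Gm}); you make the reduction to faithful actions explicit via $\G_{m,k}/\mu_n \simeq \G_{m,k}$, which the paper leaves implicit; and you normalize the exponents through the conductor, whereas the paper allows $m \leq M$ and then simplifies the pinching diagram by composing cocartesian squares (Ferrand's Lemme~1.3), as in the proof of Theorem~\ref{th_classification_Popov_Ga}.

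The one step that is wrong as written is your justification of the normalization $c_p < m-1$: ``because $\widetilde{Z}$ is the scheme-theoretic fibre of the conductor, $A$ contains the image of the conductor ideal, so $S$ contains all large integers.'' The image of the conductor ideal in $\mathcal{O}(\widetilde{Z}) = \overline{\mathcal{O}}_{C,P}/\mathfrak{c}$ is zero, so that sentence is vacuous, and $S \subseteq \{0,\dots,M-1\}$ contains no large integers at all. What your shortcut actually needs is the \emph{maximality} of the conductor: since $A$ is spanned by monomials ($\G_{m,k}$-stability), $\mathcal{O}_{C,P}$ is generated over $\mathfrak{c} = (t^{M})$ by the $t^i$ with $i \in S$; if $M-1 \in S$, then $t^{M-1}\,\overline{\mathcal{O}}_{C,P}$ would be an ideal of $\overline{\mathcal{O}}_{C,P}$ contained in $\mathcal{O}_{C,P}$ and strictly larger than $\mathfrak{c}$, a contradiction. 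Hence $M-1 \notin S$, so $S \cup \{M, M+1, \dots\}$ is in canonical form $\mathfrak{z}_M(\underline{c})$ with conductor exponent exactly $M = m$, which is the claim you asserted with the arrow of implication reversed; the same argument at both branches (the common ideals being of the form $(t^a)\times(u^b)$) settles the ``bookkeeping'' you flagged in the $\PP^1_k$ cases. This is a one-line repair; alternatively you could drop the normalization claim entirely and fall back on the paper's re-pinching reduction, which never needs $M-1 \notin S$.
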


\begin{proof}
The argument is essentially the same as in the proof of Popov's theorem \cite[Th. 1.2 p.171]{POP} for $\A^1_{k,m}(\underline{c})$ and $\PP^1_{k,m,n}(\underline{c},\underline{d})$. For $\PP^1_{k,m,n}(\underline{c},\underline{d})'$ the argument differs from \cite[Th. 1.3 p.175]{POP}; the classification of the curves in terms of conductor squares is obtained rather quickly and then we just need to simplify the diagrams, as in the proof of Theorem \ref{th_classification_Popov_Ga}.

\medskip
It follows from Lemmas \ref{lemma_descent_action} and \ref{lemma_almost_homogeneous_lift} that $\G_{m,k}$ acts on $\A^1_{k,m}(\underline{c})$, $\PP^1_{k,m,n}(\underline{c},\underline{d})$ and $\PP^1_{k,m,n}(\underline{c},\underline{d})'$. Conversely, assume that $\G_{m,k}$ acts non-trivially on $C$. Let 
\begin{tikzpicture}[baseline=(m.center)]
\matrix(m)[matrix of math nodes,
row sep=2.5em, column sep=2.5em,
text height=1.5ex, text depth=0.25ex,ampersand replacement=\&]
{\widetilde{Z} \& \widetilde{C}\\
Z \& C \\};
\path[->] (m-1-1) edge (m-1-2);
\path[->] (m-1-1) edge (m-2-1);
\path[->] (m-2-1) edge (m-2-2);
\path[->] (m-1-2) edge (m-2-2);
\end{tikzpicture}
be the conductor square. By Theorem \ref{th_general_classification} and Lemma \ref{lemma_almost_homogeneous_lift}, $\widetilde{C}$ is isomorphic either to $\A^1_k$ or to $\PP^1_k$.

\medskip
We first treat the case $\widetilde{C} \simeq \A^1_k$. The closed subscheme $\widetilde{Z}$ of $\A^1_k$ is supported by $0$ so we can write $\widetilde{Z} = Z_M = \Spec \dfrac{k[t]}{(t^{M})}$ for some integer $M$. Then $Z = \Spec A$ for some $\G_{m,k}$-invariant subalgebra $A$ of $\dfrac{k[t]}{(t^{M})}$. We still denote by $t$ the image of $t$ in $\dfrac{k[t]}{(t^{M})}$. For $a \in \G_{m,k}(k_s)$ and $0 \leq i \leq M-1$, we have $a \cdot t^i = a^it^i$ so $t^i$ is an eigenvector of weight $i$. Thus the $k_s$-vector space $\dfrac{k_s[t]}{(t^{M})}$ is the direct sum of stable lines for pairwise distinct weights. Hence the subspace $A \otimes_k k_s$ of $\dfrac{k_s[t]}{(t^{M})}$ is spanned by some $t^i$'s. Since it is a subring, if it contains $t^i$ and $t^j$ then it contains $t^{i+j}$ too. Thus there exists a subsemigroup $\mathfrak{z}_m(\underline{c})$ of $(\N,+)$ such that $m \leq M$ and $A \otimes_k k_s$ is spanned by the $t^i$'s for $i \in \mathfrak{z}_m(\underline{c})$, that is, $A \otimes_k k_s = \dfrac{k_s[t^{c_0},\ldots,t^{c_p},t^m,t^{m+1},\ldots]}{(t^M)}$. By Galois descent for vector subspaces, we consequently have $A = \dfrac{k[t^{c_0},\ldots,t^{c_p},t^m,t^{m+1},\ldots]}{(t^M)}$, which conversely is a $\G_{m,k}$-stable subalgebra of $\dfrac{k[t]}{(t^{M})}$. Finally, as in the proof of Theorem \ref{th_classification_Popov_Ga}, we can replace $\dfrac{k[t]}{(t^{M})}$ and $\dfrac{k[t^{c_0},\ldots,t^{c_p},t^m,t^{m+1},\ldots]}{(t^M)}$ with $\dfrac{k[t]}{(t^{m})}$ and $\dfrac{k[t^{c_0},\ldots,t^{c_p},t^m]}{(t^m)}$. Therefore $C \simeq \A^1_{k,m}(\underline{c})$.

\medskip
We now assume $\widetilde{C} \simeq \PP^1_k$. Similarly, the closed subscheme $\widetilde{Z}$ of $\PP^1_k$ is supported by $\{0,\infty\}$ so we can write $\widetilde{Z} = Z_M \sqcup Z_N$ for some integers $M$ and $N$. Then $Z$ is supported by one or two $k$-rational points. 

In the latter case there exist $\G_{m,k}$-invariant subalgebras $A$ and $B$ of $\dfrac{k[t]}{(t^{M})}$ and $\dfrac{k[u]}{(u^{N})}$ such that $Z = \Spec A \sqcup \Spec B$, and by the same arguments as before we get an isomorphism $C \simeq \PP^1_{k,m,n}(\underline{c},\underline{d})$. 

In the former case there exists a $\G_{m,k}$-invariant subalgebra $A$ of $\dfrac{k[t]}{(t^{M})} \times \dfrac{k[u]}{(u^{N})}$ such that $Z = \Spec A$. Since $(t^i,0)$ and $(0,u^j)$ are eigenvectors of respective weights $i$ and $-j$, the subspace $A \otimes_k k_s$ of $\dfrac{k_s[t]}{(t^{M})} \times \dfrac{k_s[u]}{(u^{N})}$ is spanned by some $(t^i,0)$'s and $(0,u^j)$'s with $i \geq 1$ and $j \geq 1$, and a subspace of $k_s \times k_s$ containing $k_s \cdot (1,1)$ (which is $k_s \cdot (1,1)$ or the whole $k_s \times k_s$). As above and by Galois descent for vector subspaces, there exist two subsemigroups $\mathfrak{z}_m(\underline{c})$ and $\mathfrak{z}_n(\underline{d})$ of $(\N,+)$ such that $A$ is equal to 
\begin{center}
$\dfrac{\Vect(t^{c_1},\ldots,t^{c_p},t^m, t^{m+1},\ldots)}{(t^M)} \times \dfrac{\Vect(u^{d_1},\ldots,u^{d_q},u^n,u^{n+1},\ldots)}{(u^N)} \oplus k \cdot (1,1)$

or $\dfrac{k[t^{c_0},\ldots,t^{c_p},t^m, t^{m+1},\ldots]}{(t^M)} \times \dfrac{k[u^{d_0},\ldots,u^{d_q},u^n,u^{n+1},\ldots]}{(u^N)}$.
\end{center}
Moreover $A$ has a unique maximal ideal, so we must have $$A = \dfrac{\Vect(t^{c_1},\ldots,t^{c_p},t^m, t^{m+1},\ldots)}{(t^M)} \times \dfrac{\Vect(u^{d_1},\ldots,u^{d_q},u^n,u^{n+1},\ldots)}{(u^N)} \oplus k \cdot (1,1).$$ By the same argument as above again, we can replace $\dfrac{k[t]}{(t^{M})} \times \dfrac{k[u]}{(u^{N})}$ and $A$ with $\dfrac{k[t]}{(t^{m})} \times \dfrac{k[u]}{(u^{n})}$ and $$A'=\dfrac{\Vect(t^{c_1},\ldots,t^{c_p},t^m)}{(t^m)} \times \dfrac{\Vect(u^{d_1},\ldots,u^{d_q},u^n)}{(u^n)} \oplus k \cdot (1,1).$$ We have a pinching diagram 
\begin{tikzpicture}[baseline=(m.center)]
\matrix(m)[matrix of math nodes,
row sep=2.5em, column sep=2.5em,
text height=1.5ex, text depth=0.25ex,ampersand replacement=\&]
{Z_m \sqcup Z_n \& \PP^1_k\\
\Spec A' \& C \\};
\path[->] (m-1-1) edge (m-1-2);
\path[->] (m-1-1) edge (m-2-1);
\path[->] (m-2-1) edge (m-2-2);
\path[->] (m-1-2) edge (m-2-2);
\end{tikzpicture}.
Let $C'$ be the curve defined by the pinching diagram
\begin{tikzpicture}[baseline=(m.center)]
\matrix(m)[matrix of math nodes,
row sep=2.5em, column sep=2em,
text height=1.5ex, text depth=0.25ex,ampersand replacement=\&]
{Z_m(\underline{c}) \sqcup Z_n(\underline{d}) \& \PP^1_{k,m,n}(\underline{c},\underline{d})\\
\Spec A' \& C' \\};
\path[->] (m-1-1) edge (m-1-2);
\path[->] (m-1-1) edge (m-2-1);
\path[->] (m-2-1) edge (m-2-2);
\path[->] (m-1-2) edge (m-2-2);
\end{tikzpicture}. 
The diagram
\begin{tikzpicture}[baseline=(m.center)]
\matrix(m)[matrix of math nodes,
row sep=2.5em, column sep=2.5em,
text height=1.5ex, text depth=0.25ex,ampersand replacement=\&]
{Z_m \sqcup Z_n \& \PP^1_k \\
Z_m(\underline{c}) \sqcup Z_n(\underline{d}) \& \PP^1_{k,m,n}(\underline{c},\underline{d})\\
\Spec A' \& C' \\};
\path[->] (m-1-1) edge (m-1-2);
\path[->] (m-1-1) edge (m-2-1);
\path[->] (m-2-1) edge (m-2-2);
\path[->] (m-1-2) edge (m-2-2);
\path[->] (m-2-1) edge (m-3-1);
\path[->] (m-3-1) edge (m-3-2);
\path[->] (m-2-2) edge (m-3-2);
\end{tikzpicture}
is commutative and the two squares are cocartesian, so the entire composed square is cocartesian. By unicity of the scheme obtained by pinching, we have $C' \simeq C$. By \cite[Lemme 1.3]{FER}, the square of rings
\begin{center}
\begin{tikzpicture}[baseline=(m.center)]
\matrix(m)[matrix of math nodes,
row sep=3.5em, column sep=3.5em,
text height=3.5ex, text depth=2ex,ampersand replacement=\&]
{A' \& \dfrac{k[t^{c_0},\ldots,t^{c_p},t^m]}{(t^m)} \times \dfrac{k[u^{d_0},\ldots,u^{d_q},u^n]}{(u^n)} \\
k \& k \times k \\};
\path[right hook ->] (m-1-1) edge (m-1-2);
\path[->>] (m-1-1) edge (m-2-1);
\path[right hook ->] (m-2-1) edge (m-2-2);
\path[->>] (m-1-2) edge (m-2-2);
\end{tikzpicture}
\end{center}
is cartesian. Moreover the morphisms in this square are $\G_{m,k}$-equivariant. Therefore we have a pinching diagram
\begin{tikzpicture}[baseline=(m.center)]
\matrix(m)[matrix of math nodes,
row sep=2.5em, column sep=2.5em,
text height=1.5ex, text depth=0.25ex,ampersand replacement=\&]
{\Spec k \sqcup \Spec k \& \PP^1_{k,m,n}(\underline{c},\underline{d})\\
\Spec k \& C \\};
\path[->] (m-1-1) edge (m-1-2);
\path[->] (m-1-1) edge (m-2-1);
\path[->] (m-2-1) edge (m-2-2);
\path[->] (m-1-2) edge (m-2-2);
\end{tikzpicture}.
\end{proof}

Let $G$ be a form of $\G_{m,k}$. Let $\widetilde{C}$ be a smooth projective conic and $\widetilde{P}$ a separable point of degree $2$ of $\widetilde{C}$ such that $G$ is the centralizer of $\widetilde{P}$ in $\Autgp{\widetilde{C}}$ (see Lemma \ref{lemma_conic}). Let $K=\kappa(\widetilde{P})$, $\Gamma = \Gal(K/k) \simeq \Z/2\Z$, $\mathfrak{z}_m(\underline{c})$ a subsemigroup of $(\N,+)$ and $\widetilde{Y}$ the $m$th infinitesimal neighborhood of $\widetilde{P}$ (which is a $G$-stable closed subscheme of $\widetilde{C}$). There exists an isomorphism $\widetilde{C}_K \simeq \PP^1_K$ such that $\widetilde{Y}_K$ is the $m$th infinitesimal neighborhood of $\{0,\infty\}$, that is, $\widetilde{Y}_K = (Z_m)_K \sqcup (Z_m)_K$ (with the previous notations). Set $Y' = (Z_m(\underline{c}))_K \sqcup (Z_m(\underline{c}))_K$. The Galois action of $\Gamma$ on $\PP^1_K$ exchanges the two components of $\widetilde{Y}_K$, as well as the two components of $Y'$, and the natural morphism $\lambda' : (\widetilde{Y})_K \to Y'$ is $\Gamma$-equivariant. Thus, by Galois descent, there exists a unique subalgebra $A$ of $\mathcal{O}(\widetilde{Y})$ such that the scheme $Y = \Spec A$ satisfies $Y_K = Y'$ and that $\lambda'$ is the morphism deduced from the natural morphism $\lambda : \widetilde{Y} \to Y$. Let $\widetilde{C}_m(\widetilde{P},\underline{c})$ and $\widetilde{C}_m(\widetilde{P},\underline{c})'$ be the curves defined by the pinching diagrams 
\begin{center}
\begin{tikzpicture}[baseline=(m.center)]
\matrix(m)[matrix of math nodes,
row sep=2.5em, column sep=2.5em,
text height=1.5ex, text depth=0.25ex,ampersand replacement=\&]
{\widetilde{Y} \& \widetilde{C}\\
Y \& \widetilde{C}_m(\widetilde{P},\underline{c}) \\};
\path[->] (m-1-1) edge (m-1-2);
\path[->] (m-1-1) edge (m-2-1);
\path[->] (m-2-1) edge (m-2-2);
\path[->] (m-1-2) edge (m-2-2);
\end{tikzpicture}
and
\begin{tikzpicture}[baseline=(m.center)]
\matrix(m)[matrix of math nodes,
row sep=2.5em, column sep=2.5em,
text height=1.5ex, text depth=0.25ex,ampersand replacement=\&]
{\Spec K \& \widetilde{C}_m(\widetilde{P},\underline{c}) \\
\Spec k \& \widetilde{C}_m(\widetilde{P},\underline{c})' \\};
\path[->] (m-1-1) edge (m-1-2);
\path[->] (m-1-1) edge (m-2-1);
\path[->] (m-2-1) edge (m-2-2);
\path[->] (m-1-2) edge (m-2-2);
\end{tikzpicture}
\end{center}
Since after field extension we still have pinching diagrams, by unicity we have $\widetilde{C}_m(\widetilde{P},\underline{c})_K \simeq \PP^1_{K,m,m}(\underline{c},\underline{c})$ and $\widetilde{C}_m(\widetilde{P},\underline{c})' \simeq \PP^1_{K,m,m}(\underline{c},\underline{c})'$. We have $G_K \simeq \G_{m,K}$ and the action $\G_{m,K} \times Y' \to Y'$ is $\Gamma$-equivariant so, by Galois descent again, we have an action of $G$ on $Y$ such that $\lambda$ is equivariant. Hence $G$ acts faithfully on $\widetilde{C}_m(\widetilde{P},\underline{c})$ and $\widetilde{C}_m(\widetilde{P},\underline{c})'$.

\begin{theorem}\label{th_classification_Popov_forms_Gm}
Let $C$ be a curve and $\alpha : G \times C \to C$ an action. The group $G$ acts faithfully on $C$ (and $C$ is almost homogeneous) if and only if $C \simeq \widetilde{C} \setminus \{\widetilde{P}\}$ or if there exists a subsemigroup $\mathfrak{z}_m(\underline{c})$ of $(\N,+)$ such that $C$ is isomorphic to $\widetilde{C}_m(\widetilde{P},\underline{c})$ or $\widetilde{C}_m(\widetilde{P},\underline{c})'$, and the action is the natural one.
\end{theorem}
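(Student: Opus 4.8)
The plan is to reduce to the split case already settled in Theorem \ref{th_classification_Popov_Gm} by base change to $K = \kappa(\widetilde{P})$, and then to reconstruct the descent datum over $k$ from the fact that $G$ is the \emph{nonsplit} form of $\G_{m,k}$: the group $\Gamma = \Gal(K/k) \simeq \Z/2\Z$ acts on $\widehat{G}(K) \simeq \Z$ by inversion. Geometrically this means that $\Gamma$ interchanges the two fixed points of $G_K \simeq \G_{m,K}$ on the normalization, which is precisely what fuses them into the single separable degree-$2$ point $\widetilde{P}$ and what forces the pinching data at the two points to coincide. For the \emph{if} direction there is almost nothing to do: the torsor $\widetilde{C} \setminus \{\widetilde{P}\}$ is homogeneous with faithful $G$-action by Lemma \ref{lemma_conic}, the faithfulness of the action on $\widetilde{C}_m(\widetilde{P},\underline{c})$ and $\widetilde{C}_m(\widetilde{P},\underline{c})'$ was checked in the construction preceding the statement, and their almost homogeneity follows from Lemma \ref{lemma_almost_homogeneous_lift}, since their normalization is $\widetilde{C}$, which is almost homogeneous under $G$ by Corollary \ref{cor_classification_regular_almost_homogeneous}.

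For the \emph{only if} direction, suppose $G$ acts faithfully on $C$ with $C$ almost homogeneous, and let $\nu : \widetilde{C}' \to C$ be the normalization. By Lemmas \ref{lemma_lift_action} and \ref{lemma_almost_homogeneous_lift} the action lifts to the regular almost homogeneous curve $\widetilde{C}'$, and $C$ is obtained from $\widetilde{C}'$ by pinching a reduced $G$-stable subscheme supported on the fixed locus. Since $G$ is a nontrivial form of $\G_{m,k}$, Corollary \ref{cor_classification_regular_almost_homogeneous} together with Lemma \ref{lemma_conic} shows that the regular completion of $\widetilde{C}'$ is a smooth projective conic on which $G$ is the centralizer of a separable degree-$2$ point; we identify this pair with $(\widetilde{C},\widetilde{P})$, so that $\widetilde{C}_K \simeq \PP^1_K$ and $\widetilde{P}_K$ is the reduced pair $\{0,\infty\}$ of fixed points of $G_K$. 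Base changing to $K$ (which is separable, so $C_K$ is almost homogeneous under $\G_{m,K}$ by Lemma \ref{lemma_almost_hom_field_extension}), Theorem \ref{th_classification_Popov_Gm} describes $C_K$ as a pinching of $\PP^1_K$ along the infinitesimal neighborhoods of $0$ and $\infty$, governed by two subsemigroups $\mathfrak{z}_m(\underline{c})$ and $\mathfrak{z}_n(\underline{d})$; the affine model $\A^1_{K,m}(\underline{c})$ cannot occur, since the completion of the normalization of $C_K$ is $\PP^1_K$ and not $\A^1_K$.

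It remains to descend, which is the main step and the only real obstacle. The nontrivial element of $\Gamma$ acts on $\widetilde{C}_K \simeq \PP^1_K$ as a semilinear automorphism intertwining the $\G_{m,K}$-action with its inverse, so it interchanges the fixed points $0$ and $\infty$ and hence carries the pinching datum at $0$ to that at $\infty$. This forces $n = m$ and $\underline{d} = \underline{c}$, so that $C_K$ is isomorphic to $\PP^1_{K,m,m}(\underline{c},\underline{c})$ or to $\PP^1_{K,m,m}(\underline{c},\underline{c})'$, the latter occurring exactly when the reduced image of the pinched locus is a single $k$-rational point. These are precisely the base changes to $K$ of $\widetilde{C}_m(\widetilde{P},\underline{c})$ and $\widetilde{C}_m(\widetilde{P},\underline{c})'$, so the uniqueness of the pinching in Proposition \ref{prop_existence_pinching}, applied over $k$, identifies $C$ with one of these two curves. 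Finally, the remaining possibility $C_K \simeq \A^1_K \setminus \{0\}$ is the unpinched regular case: there $C$ is an affine regular almost homogeneous curve, hence the $G$-torsor $\widetilde{C}\setminus\{\widetilde{P}\}$ by Corollary \ref{cor_classification_regular_almost_homogeneous} and Lemma \ref{lemma_conic}.

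The two delicate points that I expect to require care are, first, justifying rigorously that the Galois involution genuinely swaps the two fixed points — this is exactly where the nonsplitness of $G$, i.e. the inversion action on $\widehat{G}(K) \simeq \Z$, is used, and where one must rule out that $\Gamma$ could instead fix each point while conjugating the coordinate — and second, checking that the descent datum produced this way coincides with the one built into the very definition of $\widetilde{C}_m(\widetilde{P},\underline{c})$ and its primed variant, so that the abstract identification over $K$ promotes to an isomorphism of $G$-curves over $k$.
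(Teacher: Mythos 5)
Your proposal follows essentially the same route as the paper's proof: base change the conductor square to $K = \kappa(\widetilde{P})$ (justified by Lemma \ref{lemma_conductor_base_change}), apply the split classification of Theorem \ref{th_classification_Popov_Gm} to $C_K$, use the Galois involution exchanging the two points above $\widetilde{P}$ to force $M = N$ and $\mathfrak{z}_m(\underline{c}) = \mathfrak{z}_n(\underline{d})$, and conclude by $\Gamma$-equivariance of the composed pinching diagram together with uniqueness of the pinching, splitting into the cases $\kappa(P) = K$ and $\kappa(P) = k$ exactly as you do. Your two ``delicate points'' are handled in the paper more directly than you anticipate: the swap of $0$ and $\infty$ is immediate from $\widetilde{P}$ being a single closed point with residue field $K$ (Galois acts transitively on the fiber), with no need to invoke the inversion action on $\widehat{G}(K)$, and the descent-datum issue is avoided because everything is derived from the conductor square of $C$, which is already defined over $k$.
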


\begin{proof}
Assume that $G$ acts faithfully on $C$. It follows from Theorem \ref{th_general_classification} and Lemmas \ref{lemma_action_regular_completion} and \ref{lemma_almost_homogeneous_lift} that $\widetilde{C}$ is the normalization of $C$ (with the natural action of $G$). Let
\begin{tikzpicture}[baseline=(m.center)]
\matrix(m)[matrix of math nodes,
row sep=2.5em, column sep=2.5em,
text height=1.5ex, text depth=0.25ex,ampersand replacement=\&]
{\widetilde{Z} \& \widetilde{C}\\
Z \& C \\};
\path[->] (m-1-1) edge (m-1-2);
\path[->] (m-1-1) edge (m-2-1);
\path[->] (m-2-1) edge (m-2-2);
\path[->] (m-1-2) edge (m-2-2);
\end{tikzpicture}
be the conductor square. Then $\widetilde{Z}$ is supported by $\widetilde{P}$ and $Z$ is supported by a point $P$ such that $\kappa(P) = k$ or $K$. By Lemma \ref{lemma_conductor_base_change}, the diagram
\begin{tikzpicture}[baseline=(m.center)]
\matrix(m)[matrix of math nodes,
row sep=2.5em, column sep=2.5em,
text height=1.5ex, text depth=0.25ex,ampersand replacement=\&]
{\widetilde{Z}_K \& \PP^1_K\\
Z_K \& C_K \\};
\path[->] (m-1-1) edge (m-1-2);
\path[->] (m-1-1) edge (m-2-1);
\path[->] (m-2-1) edge (m-2-2);
\path[->] (m-1-2) edge (m-2-2);
\end{tikzpicture}
is still the conductor square. 

\medskip
If $\kappa(P) = K$ then $Z_K$ is supported by two $K$-rational points. As in Theorem \ref{th_classification_Popov_Gm}, there exist integers $M$ and $N$ such that $\widetilde{Z}_K = \Spec\left(\dfrac{K[t]}{(t^M)} \times \dfrac{K[u]}{(u^N)}\right)$ and subsemigroups $\mathfrak{z}_m(\underline{c})$ and $\mathfrak{z}_n(\underline{d})$ of $(\N,+)$ such that $Z_K = \Spec\left(\dfrac{K[t^{c_0},\ldots,t^{c_p},t^m, t^{m+1},\ldots]}{(t^M)} \times \dfrac{K[u^{d_0},\ldots,u^{d_q},u^n,u^{n+1},\ldots]}{(u^N)}\right)$. Moreover the group $\Gamma$ exchanges the two components of $\widetilde{Z}_K$, as well as the two components of $Z_K$. Thus we must have $M=N$ and $\mathfrak{z}_m(\underline{c}) = \mathfrak{z}_n(\underline{d})$. As before, the squares in
\begin{center}
\begin{tikzpicture}[baseline=(m.center)]
\matrix(m)[matrix of math nodes,
row sep=2.5em, column sep=2.5em,
text height=1.5ex, text depth=0.25ex,ampersand replacement=\&]
{\widetilde{Y}_K \& \widetilde{Z}_K \& \PP^1_K\\
Y_K \& Z_K \& C_K \\};
\path[->] (m-1-1) edge (m-1-2);
\path[->] (m-1-1) edge (m-2-1);
\path[->] (m-2-1) edge (m-2-2);
\path[->] (m-1-2) edge (m-2-2);
\path[->] (m-1-2) edge (m-1-3);
\path[->] (m-2-2) edge (m-2-3);
\path[->] (m-1-3) edge (m-2-3);
\end{tikzpicture}
\end{center}
are cocartesian, so the entire composed diagram is a pinching diagram. Moreover, the morphisms are $\Gamma$-equivariant. Therefore, by Galois descent, we have $C \simeq \widetilde{C}_m(\widetilde{P},\underline{c})$.

\medskip
If $\kappa(P)=k$ then $Z_K$ is supported by a $K$-rational point. By a similar argument, there exists a subsemigroup $\mathfrak{z}_m(\underline{c})$ of $(\N,+)$ such that $C_K \simeq \PP^1_{K,m,m}(\underline{c},\underline{c})'$ and the morphisms in the pinching diagram defining $\PP^1_{K,m,m}(\underline{c},\underline{c})'$ are $\Gamma$-equivariant. Therefore, we have $C \simeq \widetilde{C}_m(\widetilde{P},\underline{c})'$.
\end{proof}


\section{Equivariant embeddings in a projective space}

\subsection{Linearized line bundles and pinchings}

In this section we describe the linearized line bundles on a scheme obtained by a pinching diagram. We first recall the definition and basic properties of linearized line bundles and the equivariant Picard group. We use \cite{Mumford_GIT} and \cite[Exp. I, \textsection 6]{SGA3-1} as general references.

\bigskip
Let $X$ be a separated scheme of finite type over $k$ and $\pi : L \to X$ a line bundle. Then $\G_{m,k}$ acts on $L$ by multiplication on the fibers.

\begin{definition}
Let $G$ be an algebraic group and $\alpha : G \times X \to X$ an action. We say that $L$ is linearizable if there exists an action $\beta : G \times L \to L$ which commutes with the action on $\G_{m,k}$ and such that $\pi$ is $G$-equivariant. The couple $(L,\beta)$ is called a linearized line bundle. 

A morphism of linearized line bundles is a morphism of line bundles which is $G$-equivariant.
\end{definition}

For example, if $X$ is a smooth curve then its canonical bundle $\omega_X$ is naturally linearized by the action of $G$ on differential forms.

\bigskip

The linearized line bundles $(L,\beta)$ can be characterized in terms of a cocycle condition for an isomorphism between the pullback line bundles $\alpha^*L$ and $\pr_2^*L$ over $G \times X$ (see \cite[pp.30--31]{Mumford_GIT} or \cite[Exp. I, Def. 6.5.1]{SGA3-1}).

\begin{lemma}
The datum of a linearized line bundle $(L,\beta)$ is equivalent to the datum of a line bundle isomorphism $\Phi : \alpha^* L \to \pr_2^* L$ making the following diagram of line bundles over $G \times G \times X$ commute: 
\begin{center}
\begin{tikzpicture}[baseline=(m.center)]
\matrix(m)[matrix of math nodes,
row sep=3.5em, column sep=4.5em,
text height=1.5ex, text depth=0.25ex,ampersand replacement=\&]
{(\id_G \times \alpha))^* \alpha^* L  \& (\mu \times \id_X)^* \alpha^* L  \& (\mu \times \id_X)^* \pr_2^* L \\
(\id_G \times \alpha)^* \pr_2^*L \& \pr_{23}^* \alpha^* L  \& \pr_{23}^* \pr_2^* L \\};
\path[->] (m-1-1) edge node[above] {can} (m-1-2);
\path[->] (m-1-2) edge node[above] {$(\mu \times \id_X)^* \Phi$} (m-1-3);
\path[->] (m-1-1) edge node[left] {$(\id_G \times \alpha)^*\Phi$} (m-2-1);
\path[->] (m-2-1) edge node[below] {can} (m-2-2);
\path[->] (m-2-2) edge node[below] {$\pr_{23}^*\Phi$} (m-2-3);
\path[->] (m-1-3) edge node[right] {can} (m-2-3);
\end{tikzpicture}
\end{center} 
\end{lemma}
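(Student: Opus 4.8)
The plan is to translate the geometric datum of the action $\beta$ on the total space $L$ into the line bundle isomorphism $\Phi$ and back, using the elementary dictionary between fibrewise-linear morphisms of total spaces and morphisms of pullback line bundles, and then to match the action axioms for $\beta$ with the commutativity of the diagram over $G \times G \times X$.

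First I would set up the dictionary. Viewing $G \times L$ as a line bundle over $G \times X$ through $\id_G \times \pi$, there is a canonical identification $G \times L \simeq \pr_2^* L$, where $\pr_2 : G \times X \to X$. The morphism $\beta : G \times L \to L$ is fibrewise linear (it commutes with the $\G_{m,k}$-action) and covers $\alpha$, since the $G$-equivariance of $\pi$ means exactly $\pi \circ \beta = \alpha \circ (\id_G \times \pi)$; by the universal property of the pullback it therefore corresponds to a morphism of line bundles over $G \times X$ from $\pr_2^* L$ to $\alpha^* L$. Because each $\beta(g,-)$ is invertible with inverse $\beta(g^{-1},-)$, this morphism is an isomorphism, and I define $\Phi : \alpha^* L \to \pr_2^* L$ to be its inverse. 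The construction is manifestly reversible: an isomorphism $\Phi$ yields a fibrewise-linear $(G \times X)$-isomorphism $\pr_2^* L \to \alpha^* L$, hence a morphism $\beta : G \times L \to L$ covering $\alpha$ and linear on fibres.

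Second I would match the axioms. The associativity of $\beta$, namely $\beta \circ (\id_G \times \beta) = \beta \circ (\mu \times \id_L)$ over $G \times G \times L$, is a fibrewise-linear identity lying over the associativity $\alpha \circ (\id_G \times \alpha) = \alpha \circ (\mu \times \id_X)$ of $\alpha$. Pulling $\Phi$ back along the three maps $\id_G \times \alpha$, $\mu \times \id_X$ and $\pr_{23}$ from $G \times G \times X$ to $G \times X$ and inserting the canonical comparison isomorphisms produces exactly the hexagonal diagram: the top "can" arrow is the identification of $(\id_G \times \alpha)^* \alpha^* L$ with $(\mu \times \id_X)^* \alpha^* L$ coming from the associativity $\alpha \circ (\id_G \times \alpha) = \alpha \circ (\mu \times \id_X)$, while the bottom and right "can" arrows come from the projection identities $\pr_2 \circ (\id_G \times \alpha) = \alpha \circ \pr_{23}$ and $\pr_2 \circ (\mu \times \id_X) = \pr_2 \circ \pr_{23}$. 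Thus the associativity of $\beta$ is equivalent to the commutativity of this diagram, and conversely the commutativity yields the associativity of the reconstructed $\beta$.

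Finally, the unit axiom $\beta \circ (e \times \id_L) = \id_L$ corresponds to the normalization that $(e \times \id_X)^* \Phi$ is the identity automorphism of $L$ (noting $(e \times \id_X)^*\alpha^* L = L = (e \times \id_X)^* \pr_2^* L$); I would show this is a formal consequence of the cocycle condition by restricting the diagram to $\{e\} \times \{e\} \times X$, where every "can" arrow becomes the identity and the commutativity collapses to $\phi_e = \phi_e \circ \phi_e$ for $\phi_e := (e \times \id_X)^* \Phi$, forcing $\phi_e = \id$. The equivalence of categories then follows once one checks that a morphism of linearized line bundles, being a $G$-equivariant morphism of total spaces, corresponds under the dictionary to a morphism of line bundles intertwining the two isomorphisms $\Phi$. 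The main obstacle is the careful bookkeeping in the second step: one must verify that the "can" arrows in the diagram are precisely the comparison isomorphisms dictated by the associativity and the projection identities for $\alpha$, so that the diagram faithfully encodes the associativity of $\beta$ and nothing more.
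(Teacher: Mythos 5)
Your proof is correct and follows essentially the same route as the paper, which offers no proof of this lemma but defers to \cite[pp.30--31]{Mumford_GIT} and \cite[Exp. I, Def. 6.5.1]{SGA3-1}, where exactly this dictionary between fibrewise-linear actions $\beta$ on the total space and cocycle isomorphisms $\Phi$ is set up, including your (standard) observation that the unit axiom is forced by restricting the cocycle diagram to $(e,e)$. The only point to tighten is the invertibility of the induced map $\pr_2^*L \to \alpha^*L$: rather than a pointwise appeal to $\beta(g,-)$, give the scheme-theoretic inverse directly, e.g.\ $(g,l') \mapsto \bigl(g,\beta(g^{-1},l')\bigr)$, which the action axioms show is a two-sided inverse.
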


\begin{remark}\label{rem_linearizable_reduced}
\begin{enumerate}[wide, labelwidth=!, labelindent=0pt, label=\roman*)]
\item In this setting, a morphism of linearized line bundles $(L_1,\Phi_{L_1}) \to (L_2,\Phi_{L_2})$ is a morphism of line bundles $\phi : L_1 \to L_2$ making the diagram
\begin{tikzpicture}[baseline=(m.center)]
\matrix(m)[matrix of math nodes,
row sep=2.5em, column sep=2.5em,
text height=1.5ex, text depth=0.25ex,ampersand replacement=\&]
{\alpha^*L_1 \& \pr_2^*L_1\\
\alpha^*L_2 \& \pr_2^*L_2\\};
\path[->] (m-1-1) edge node[above] {$\Phi_{L_1}$} (m-1-2);
\path[->] (m-1-1) edge node[left] {$\alpha^*\phi$} (m-2-1);
\path[->] (m-2-1) edge node[below] {$\Phi_{L_2}$} (m-2-2);
\path[->] (m-1-2) edge node[right] {$\pr_2^*\phi$} (m-2-2);
\end{tikzpicture}
commute.
\item By \cite[lemma 2.9]{BRI_lin}, if $X$ is reduced then $L$ is linearizable if and only if $\alpha^*L$ and $\pr_2^*L$ are isomorphic (the isomorphism need not satisfy the cocycle condition).
\item More generally, an arbitrary $\mathcal{O}_X$-module $\mathcal{L}$ is said to be linearizable if there exists an isomorphism $\Phi : \alpha^*\mathcal{L} \to \pr_2^*\mathcal{L}$ satisfying the above cocycle condition (see \cite[Exp. I, Def. 6.5.1]{SGA3-1}).
\end{enumerate}
\end{remark}

If $L$ and $L'$ are linearizable line bundles over $X$ then we have isomorphisms $\Phi : \alpha^* L \to \pr_2^* L$ and $\Phi' : \alpha^* L' \to \pr_2^* L$ satisfying the cocycle condition. Then $\Phi \otimes \Phi' : \alpha^* (L \otimes L') = \alpha^* L \otimes \alpha^* L' \to \pr_2^* L \otimes \pr_2^* L' = \pr_2^*(L \otimes L')$ is an isomorphism satisfying the cocycle condition too, so $L \otimes L'$ is linearizable. Similarly $L^{-1}$ is linearizable. 

\begin{definition}
We denote by $\Pic^G(X)$ the abelian group of isomorphism classes of linearized line bundles over $X$. It is called the equivariant Picard group.
\end{definition}

The equivariant Picard group of a $G$-torsor over $k$ is trivial. More generally, we have the following result.

\begin{lemma}
\cite[p.32]{Mumford_GIT}\label{lemma_PicG_torsor} Let $\pi : X \to Y$ be a $G$-torsor. The pullback by $f$ induces an isomorphism $\Pic Y \simeq \Pic^G(X)$.
\end{lemma}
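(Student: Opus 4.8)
The plan is to recognize a $G$-linearization of a line bundle on $X$ as precisely a descent datum for the faithfully flat morphism $\pi$, and then to apply fpqc descent for line bundles. First I would record the structural isomorphism of the torsor: the morphism $\theta : G \times X \to X \times_Y X$, $(g,x) \mapsto (gx,x)$, is an isomorphism satisfying $\pr_1 \circ \theta = \alpha$ and $\pr_2 \circ \theta = \pr_2$, where $\pr_1,\pr_2 : X \times_Y X \to X$ denote the two projections. Since $\pi$ is faithfully flat and quasi-compact (a torsor morphism is flat and surjective, and quasi-compact as all our schemes are of finite type over $k$), Grothendieck's fpqc descent applies: a line bundle on $Y$ is the same datum as a line bundle $L$ on $X$ together with a descent datum, that is, an isomorphism $\psi : \pr_1^* L \xrightarrow{\sim} \pr_2^* L$ over $X \times_Y X$ satisfying the cocycle condition $\pr_{13}^* \psi = \pr_{23}^* \psi \circ \pr_{12}^* \psi$ over $X \times_Y X \times_Y X$.

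Next I would transport these data through $\theta$. As $\theta^* \pr_1^* L = \alpha^* L$ and $\theta^* \pr_2^* L = \pr_2^* L$, pulling $\psi$ back along $\theta$ yields an isomorphism $\Phi = \theta^* \psi : \alpha^* L \to \pr_2^* L$, which is exactly the datum of a linearization in the sense of the lemma preceding this statement. To match the two cocycle conditions I would use the analogous isomorphism $\theta_2 : G \times G \times X \to X \times_Y X \times_Y X$, $(g,h,x) \mapsto (ghx, hx, x)$. A direct verification gives $\pr_{23} \circ \theta_2 = \theta \circ \pr_{23}$, $\pr_{13} \circ \theta_2 = \theta \circ (\mu \times \id_X)$ and $\pr_{12} \circ \theta_2 = \theta \circ (\id_G \times \alpha)$, so that the descent cocycle condition for $\psi$ pulls back precisely to the commutativity of the diagram characterizing a $G$-linearization. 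This establishes a bijection between line bundles on $Y$ and $G$-linearized line bundles on $X$.

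Finally I would check that this bijection is the one induced by $\pi^*$ and that it respects the group structures. Given a line bundle $M$ on $Y$, the canonical identification $\pr_1^* \pi^* M \simeq \pr_2^* \pi^* M$ coming from $\pi \circ \pr_1 = \pi \circ \pr_2$ is the descent datum recovering $M$, and its pullback along $\theta$ is the canonical linearization of $\pi^* M$; hence the correspondence sends $M$ to $(\pi^* M, \Phi)$ with $\Phi$ canonical, so it agrees with $\pi^*$. Compatibility with tensor products and inverses is immediate, since all the pullbacks and the transport through $\theta$ commute with $\otimes$ and $(-)^{-1}$; thus $\pi^*$ is a group homomorphism, and the descent bijection shows it is an isomorphism.

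The only genuinely delicate point, and the main obstacle, is the bookkeeping identifying the descent cocycle condition over the triple fiber product with the linearization cocycle diagram. Once the three face maps of the Čech nerve of $\pi$ are matched with the maps $\pr_{23}$, $\mu \times \id_X$ and $\id_G \times \alpha$ via $\theta_2$ as above, this reduces to a formal diagram chase; everything else is the standard machinery of faithfully flat descent.
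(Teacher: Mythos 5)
Your proposal is correct and follows essentially the same route as the paper's source for this statement: the paper gives no proof of its own but cites Mumford's GIT (p.~32), where the isomorphism $\Pic Y \simeq \Pic^G(X)$ is obtained exactly by identifying a $G$-linearization with a descent datum along the fppf morphism $\pi$ via the torsor isomorphism $G \times X \simeq X \times_Y X$ and invoking faithfully flat descent for line bundles. Your verification that $\theta_2$ matches the three face maps of the \v{C}ech nerve with $\pr_{23}$, $\mu \times \id_X$ and $\id_G \times \alpha$ is precisely the bookkeeping underlying that argument, so there is nothing to add.
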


The linearized line bundles give equivariant embeddings in a projective space.

\begin{lemma}
\cite[Prop. 1.7 p.35]{Mumford_GIT}Let \label{lemma_linearization_embedding}$G$ be an algebraic group, $X$ a quasi-projective scheme over $k$, $L$ a very ample line bundle over $X$ and $\alpha : G \times X \to X$ an action. The following assertions are equivalent:
\begin{enumerate}[label=\roman*)]
 \item The line bundle $L$ is linearizable.
 \item There exists an equivariant immersion of $X$ in the projectivization of a finite-dimensional $G$-module.
 \item There exists an action of $G$ on a projective space $\PP^n_k$ and an equivariant immersion $i : X \to \PP^n_k$ such that $\mathcal{O}_{\PP^n}(1)$ is linearizable and $L = i^* \mathcal{O}_{\PP^n}(1)$.
\end{enumerate}
\end{lemma}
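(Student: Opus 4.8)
The plan is to prove the cyclic chain of implications $i) \Rightarrow ii) \Rightarrow iii) \Rightarrow i)$, where in $ii)$ and $iii)$ the equivariant immersion is understood to realize $L$ as the pullback of $\mathcal{O}(1)$. Two of these implications are essentially formal manipulations with the cocycle isomorphism $\Phi : \alpha^* L \to \pr_2^* L$ of the previous lemma, whereas the passage $i) \Rightarrow ii)$ from linearizability to an actual equivariant immersion is the substantial step and rests on the local finiteness of the induced representation of $G$ on the global sections of $L$.

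First I would dispose of the formal parts. For $iii) \Rightarrow i)$, let $\beta : G \times \PP^n_k \to \PP^n_k$ denote the action and $\Psi : \beta^* \mathcal{O}_{\PP^n}(1) \to \pr_2^* \mathcal{O}_{\PP^n}(1)$ a linearization; equivariance of $i$ gives $i \circ \alpha = \beta \circ (\id_G \times i)$ and $i \circ \pr_2 = \pr_2 \circ (\id_G \times i)$, so pulling $\Psi$ back along $\id_G \times i$ produces an isomorphism $\alpha^* L \to \pr_2^* L$ for $L = i^* \mathcal{O}_{\PP^n}(1)$, and the cocycle identity for $\Psi$ over $G \times G \times \PP^n_k$ pulls back to the cocycle identity over $G \times G \times X$, so $L$ is linearized. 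For $ii) \Rightarrow iii)$ one matches the two descriptions of the same datum: a finite-dimensional $G$-module $V$ induces an action of $G$ on $\PP(V) = \PP^n_k$ together with a linearization of $\mathcal{O}_{\PP(V)}(1)$, because the space of its sections is a $G$-module; then the immersion $i : X \to \PP(V)$ realizing $L$ exhibits exactly the datum required in $iii)$.

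The core step is $i) \Rightarrow ii)$. Starting from $\Phi : \alpha^* L \to \pr_2^* L$ satisfying the cocycle condition, I would first produce the induced linear action of $G$ on the $k$-vector space $H^0(X, L)$ and observe that this representation is \emph{locally finite}: every finite set of sections lies in a finite-dimensional $G$-stable subspace. Since $L$ is very ample, choose sections $s_0, \ldots, s_m \in H^0(X, L)$ generating $L$ and defining an immersion $X \hookrightarrow \PP^m_k$, and let $W \subseteq H^0(X, L)$ be a finite-dimensional $G$-stable subspace containing them. The sections in $W$ still generate $L$ and, a fortiori, still separate points and tangent vectors, so they define an immersion $\phi_W : X \to \PP(W)$ with $\phi_W^* \mathcal{O}(1) \simeq L$; it is $G$-equivariant because $W$ is a $G$-submodule and $\Phi$ is compatible with evaluation of sections. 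This yields $ii)$ with the $G$-module $W$, and $L \simeq \phi_W^* \mathcal{O}(1)$ as required to close the cycle.

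The main obstacle is the local finiteness of the representation on $H^0(X, L)$, since this is where the structure of $G$ really enters. When $X$ is proper it is automatic, as $H^0(X, L)$ is then finite-dimensional and hence itself a $G$-stable subspace; when $G$ is affine it follows from the comodule structure over $\mathcal{O}(G)$ obtained by pushing a section through the coaction $H^0(X, L) \to H^0(G \times X, \pr_2^* L)$ induced by $\Phi$, together with the fact that comodules are unions of their finite-dimensional subcomodules. These two cases cover all the groups and curves appearing in the classification, so I would treat them separately rather than seek a single uniform argument. A secondary point to verify is that enlarging the linear system from $\langle s_0, \ldots, s_m \rangle$ to $W$ preserves the immersion property, which is immediate from the separation criterion for very ampleness once one knows $W$ contains a system already defining an immersion.
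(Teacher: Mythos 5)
The paper does not prove this lemma at all: it is quoted verbatim from Mumford (GIT, Prop.~1.7, p.~35), so the only meaningful comparison is with Mumford's own argument, which your proposal reconstructs faithfully in outline. The two formal implications are handled exactly as in the source (pulling the cocycle isomorphism back along $\id_G \times i$, resp.\ observing that a linear $G$-action on $V$ linearizes $\mathcal{O}_{\PP(V)}(1)$), and your reading of $ii)$ as requiring the immersion to realize $L$ as the pullback of $\mathcal{O}(1)$ is not pedantry but necessary: without it $ii)$ does not imply $i)$, as one sees for $G = \PGL_{2,k}$ acting on $X = \PP^1_k$ with $L = \mathcal{O}(1)$, where the conic embedding $\PP^1_k \hookrightarrow \PP(\mathfrak{sl}_2)$ is an equivariant immersion into the projectivization of a $G$-module although $\mathcal{O}(1)$ is not linearizable (only its even powers are, as the paper itself recalls in the proof of Theorem \ref{th_general_PicG}). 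Your treatment of the core implication $i) \Rightarrow ii)$ — local finiteness of the representation on $H^0(X,L)$, enlargement of an immersive linear system to a finite-dimensional $G$-stable $W$, and the remark that enlarging a linear system preserves the immersion property (cleanest via the linear projection $\PP(W) \dashrightarrow \PP(\langle s_0,\ldots,s_m\rangle)$ compatible with the two morphisms) — is precisely Mumford's proof.

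The one genuine blemish is your case split for local finiteness, which as written leaves a gap against the lemma's stated generality: a non-proper quasi-projective $X$ acted on by a non-affine $G$ is covered by neither of your two cases, whereas the statement allows an arbitrary algebraic group. The repair is that affineness of $G$ is never actually used. By flat base change along $G \to \Spec k$ one has $H^0(G \times X, \pr_2^* L) \simeq \mathcal{O}(G) \otimes_k H^0(X,L)$ for any algebraic group $G$ (write $H^0(X,L)$ as a filtered union of finite-dimensional subspaces and use that cohomology commutes with such colimits), so $\Phi \circ \alpha^*$ always defines a coaction $H^0(X,L) \to \mathcal{O}(G) \otimes_k H^0(X,L)$; the usual counit argument (evaluation at $e \in G(k)$) and coassociativity argument (using $\mathcal{O}(G\times G) \simeq \mathcal{O}(G) \otimes_k \mathcal{O}(G)$, again by flat base change) then yield, for each section $s$, a finite-dimensional $G$-stable subspace containing $s$, with no hypothesis on $G$ beyond being an algebraic group. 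Substituting this uniform argument for your dichotomy, your proof establishes the lemma in its full stated generality rather than only in the two cases (proper $X$, affine $G$) that happen to suffice for the curves classified in this paper; as you note, those two cases do cover every application here, so the gap is against the statement, not against its use.
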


The following result on normal schemes is well-known.

\begin{proposition}
\cite[Th. 2.14]{BRI_lin} Let $G$ be a smooth connected linear algebraic group, $X$ a variety and $\alpha : G \times X \to X$ an action. If $X$ is normal then there exists an integer $n \geq 1$ such that for every line bundle $L$ over $X$, $L^{\otimes n}$ is linearizable.
\end{proposition}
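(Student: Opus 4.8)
The plan is to reduce the statement to a computation in $\Pic(G \times X)$, using the criterion of Remark \ref{rem_linearizable_reduced}: since $X$ is a variety, hence reduced, a line bundle $\mathcal{L}$ on $X$ is linearizable if and only if $\alpha^*\mathcal{L}$ and $\pr_2^*\mathcal{L}$ are isomorphic on $G \times X$, the cocycle condition being automatic. Thus for a given line bundle $L$ it suffices to produce an integer $n \geq 1$ (which I will take \emph{independent} of $L$) such that the class of $M := \alpha^* L \otimes \pr_2^* L^{-1}$ in $\Pic(G \times X)$ is killed by $n$, for then $\alpha^*(L^{\otimes n}) \simeq \pr_2^*(L^{\otimes n})$ and $L^{\otimes n}$ is linearizable.

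First I would record that $M$ is trivial on the slice $\{e\} \times X$: the restrictions of both $\alpha$ and $\pr_2$ to $\{e\} \times X \simeq X$ equal $\id_X$ (because $\alpha$ is an action with neutral element $e$), so $M|_{\{e\} \times X} \simeq L \otimes L^{-1} \simeq \mathcal{O}_X$. The key structural input is then the shape of $\Pic(G \times X)$. Since $G$ is smooth and $X$ is normal, $G \times X$ is normal and geometrically integral, and $e \in G(k)$ furnishes a rational point on the factor $G$. The Künneth-type formula for Picard groups yields a decomposition
$$\Pic(G \times X) \;=\; \pr_1^* \Pic(G) \,\oplus\, \pr_2^* \Pic(X),$$
where $\pr_1 : G \times X \to G$ is the first projection. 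Granting this, I write $M = \pr_1^* a \otimes \pr_2^* b$ with $a \in \Pic(G)$ and $b \in \Pic(X)$. Restricting to $\{e\} \times X$ trivializes the factor $\pr_1^* a$ and identifies $b$ with $M|_{\{e\} \times X}$, which is trivial; hence $b = 0$ and $M = \pr_1^* a$ comes from the factor $G$ alone.

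Finally, because $G$ is connected and linear, the group $\Pic(G)$ is finite; let $N$ be its exponent. Then $a^{\otimes N}$ is trivial on $G$, so $M^{\otimes N} = \pr_1^*(a^{\otimes N})$ is trivial on $G \times X$, that is, $\alpha^*(L^{\otimes N}) \simeq \pr_2^*(L^{\otimes N})$. By the reduced criterion $L^{\otimes N}$ is linearizable, and the integer $N = \exp \Pic(G)$ does not depend on $L$, which is exactly the assertion.

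The main obstacle is the Picard decomposition, and more precisely the vanishing of nontrivial \emph{divisorial correspondences} between $G$ and $X$: a priori $\Pic(G \times X)$ could contain classes trivial on both $\{e\} \times X$ and $G \times \{x_0\}$ without being trivial. Ruling these out is where normality of $X$ and, crucially, the linearity of $G$ enter — a linear group is affine, hence has trivial Albanese variety and finite Picard group, so no Poincaré-type bundle can occur; this fails for groups with a nontrivial abelian part, which is precisely why the hypothesis that $G$ be linear cannot be dropped. I would establish the decomposition by base changing to $\overline{k}$ (or $k_s$), invoking the known Künneth formula for Picard groups over an algebraically closed field together with the finiteness of $\Pic(G_{\overline{k}})$ and the triviality of $\mathrm{Pic}^0(G_{\overline{k}})$, and then descending along the Galois action, the descent terms being governed by the character group of $G$ and hence again finite.
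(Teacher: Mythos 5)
Two preliminary points. First, the paper contains no internal proof of this proposition: it is quoted from \cite[Th. 2.14]{BRI_lin}, so your attempt can only be measured against the standard argument behind that citation. Second, the skeleton of your proof is the right one and matches how the literature organizes the question: the reduction via Remark \ref{rem_linearizable_reduced} (cocycle condition for free since $X$ is reduced), the normalization along the slice $\{e\} \times X$, and the observation that everything comes down to bounding the torsion of the class of $\alpha^*L \otimes \pr_2^*L^{-1}$ in $\Pic(G \times X)/\pr_2^*\Pic(X)$ uniformly in $L$ — this is exactly the role of the exact sequence of Proposition \ref{prop_PicG_exact_seq}. Over an algebraically closed (or perfect) field your argument is essentially complete and correct, since there $G_{\overline{k}}$ is a rational variety, the K\"unneth decomposition $\Pic(G \times X) = \pr_1^*\Pic(G) \oplus \pr_2^*\Pic(X)$ holds for normal $X$, and $\Pic(G)$ is finite by classical arguments.

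The genuine gap is that the paper works over an arbitrary field, and there your key step — the K\"unneth decomposition over $k$, or its proposed derivation by base change to $\overline{k}$ followed by Galois descent — breaks down. Over an imperfect field, normal does not imply geometrically normal: for instance the regular completion of a torsor under a nontrivial form of $\G_{a,k}$ is normal over $k$ while its base change to $\overline{k}$ is a cuspidal, non-normal curve; so after extension to $\overline{k}$ the hypothesis your K\"unneth formula needs is destroyed, and for non-normal varieties the conclusion genuinely fails (see the remark following Proposition \ref{prop_form_Ga_linearizable_power}: on the cuspidal cubic in characteristic zero only the degree-zero line bundles are linearizable, so no uniform $n$ exists). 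Retreating to $k_s$ preserves normality but does not save the argument: over an imperfect separably closed field a smooth connected linear group need not be rational (nontrivial forms of $\G_{a,k}$ persist), which undercuts both the K\"unneth decomposition and your finiteness claim for $\Pic(G)$ — finiteness of the Picard group of a wound unipotent group is a delicate result (forms of $\A^1_k$ can have nontrivial finite $p$-group Picard groups), not a formal consequence of affineness and trivial Albanese as you suggest. Finally, your closing sentence invokes ``Galois descent'' from $\overline{k}$, but Galois descent only controls the separable part of the extension; the purely inseparable part must be handled by norm and seminormalization arguments with \emph{uniform} $p$-power bounds, which is precisely the machinery the paper deploys in the proof of Proposition \ref{prop_form_Ga_linearizable_power} for the companion non-normal statement, and your sketch does not secure that uniformity in $L$ — yet the uniformity of $n$ is the entire content of the theorem. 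In short: your proof is the correct classical one over perfect fields, but over arbitrary $k$ the central decomposition is unjustified (and in the stated direct-sum form likely false), so the argument as written has a real gap.
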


We want an analogous result without the hypothesis of normality in case $G$ is a form of $\G_{a,k}$ in prime characteristic. We first need to understand the obstruction for a line bundle (over a separated reduced scheme of finite type $X$) to be linearizable.

\begin{proposition}
\cite[Prop. 2.10]{BRI_lin} Let \label{prop_PicG_exact_seq}$G$ be a smooth linear algebraic group, $X$ a separated reduced scheme of finite type over $k$ and $\alpha : G \times X \to X$ an action. There is an exact sequence of abstract groups $$1 \to \mathcal{O}(X)^{\times^G} \to \mathcal{O}(X)^\times \to \widehat{G}(X) \to \Pic^G(X) \to \Pic(X) \xrightarrow{\alpha^*} \Pic(G \times X)/\pr_2^*\Pic(X)$$ is exact, where $\Pic^G(X) \to \Pic(X)$ is the forgetful morphism which associates with a linearized line bundle $(L,\Phi)$ the line bundle $L$, and $\mathcal{O}(X)^{\times^G}$ denotes the subgroup of $\mathcal{O}(X)^\times$ consisting of $G$-invariant elements. In particular, if the group $X(G)$ of characters of $G_{k_s}$ is trivial then the forgetful morphism $\Pic^G(X) \to \Pic(X)$ is injective (that is, a line bundle has at most one linearization).
\end{proposition}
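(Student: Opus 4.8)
The plan is to build each arrow explicitly and then check exactness one joint at a time, the geometric heart being the cocycle description of linearisations recalled just above together with the key input of Remark \ref{rem_linearizable_reduced}(ii): over a reduced base a line bundle is linearisable as soon as $\alpha^*L \simeq \pr_2^*L$. First I would pin down the maps. The leftmost arrow is the inclusion of $G$-invariant units, and the rightmost is $[L]\mapsto[\alpha^*L]$. The forgetful arrow $\Pic^G(X)\to\Pic(X)$ sends $(L,\Phi)$ to $L$. The arrow $\widehat{G}(X)\to\Pic^G(X)$ sends a character $\chi$ to the trivial bundle $\mathcal{O}_X$ carrying the linearisation obtained by twisting the canonical one by $\chi$. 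The subtle arrow is $\mathcal{O}(X)^\times\to\widehat{G}(X)$: given a unit $f$, the function $(g,x)\mapsto f(gx)f(x)^{-1}$ is a unit on $G\times X$ restricting to $1$ on $\{e\}\times X$, and I would invoke Rosenlicht's theorem on units of a product (here $G$ linear and $X$ reduced are used) to see that it depends only on $g$ and is a character $\chi_f\in X(G)$, compatible with the Galois action, hence yields an element $b(f)\in\widehat{G}(X)$. Equivalently, every unit $f$ is a semi-invariant of weight $\chi_f$, and this semi-invariance is exactly what lets the cocycle units of the trivial bundle be identified with $\widehat{G}(X)$.

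Exactness then follows formally. At $\mathcal{O}(X)^\times$: $b(f)=0$ means $f(gx)=f(x)$, i.e. $f\in\mathcal{O}(X)^{\times^G}$, which is also $\ker$ of the inclusion's cokernel, giving exactness on the left. At $\widehat{G}(X)$: the class $c(\chi)$ is trivial in $\Pic^G(X)$ iff $(\mathcal{O}_X,\Phi_\chi)$ admits an isomorphism to the canonically linearised trivial bundle, and such an isomorphism is multiplication by some $f\in\mathcal{O}(X)^\times$ satisfying $\chi=\chi_f$; hence $\ker c=\operatorname{im}b$. At $\Pic^G(X)$: if $(L,\Phi)$ forgets to a trivial $L$, then after choosing $L\simeq\mathcal{O}_X$ the linearisation $\Phi$ becomes a cocycle unit, so $(L,\Phi)$ lies in $\operatorname{im}c$; conversely the bundles in $\operatorname{im}c$ are trivial. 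Finally, at $\Pic(X)$: if $\alpha^*[L]=0$ in $\Pic(G\times X)/\pr_2^*\Pic(X)$ then $\alpha^*L\simeq\pr_2^*M$ for some $M$ on $X$, and restricting along $e\times\id_X$ gives $M\simeq L$, whence $\alpha^*L\simeq\pr_2^*L$; since $X$ is reduced, Remark \ref{rem_linearizable_reduced}(ii) makes $L$ linearisable, so $L\in\operatorname{im}d$. The reverse inclusion is immediate because a linearisation already provides the isomorphism $\alpha^*L\simeq\pr_2^*L$.

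The main obstacle is the construction and analysis of the arrow into $\widehat{G}(X)$: one must establish that every unit of $X$ is a semi-invariant, which rests on the decomposition of the units of $G\times X$ (Rosenlicht), and then package the resulting characters with their Galois action so that they genuinely land in the étale sheaf $\widehat{G}$ rather than merely in $X(G)$; this is where connectedness and linearity of $G$ enter, and where the identification of trivial-bundle linearisations with $\widehat{G}(X)$ is delicate. The remaining steps are formal diagram-chasing, and the exactness at $\Pic(X)$ is the one place where the reducedness hypothesis on $X$ is indispensable. The concluding ``in particular'' is then immediate: when $X(G)$ is trivial the sheaf $\widehat{G}$ is trivial, so $\widehat{G}(X)=0$ and exactness at $\Pic^G(X)$ forces the forgetful morphism to be injective, i.e. a line bundle admits at most one linearisation.
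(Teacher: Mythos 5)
Your proposal is correct and follows essentially the same route as the paper's source: the paper gives no internal proof of this proposition but quotes it from \cite[Prop.~2.10]{BRI_lin}, and your argument reconstructs exactly that standard proof --- Rosenlicht's unit theorem to show every unit is a semi-invariant and thereby define $\mathcal{O}(X)^\times \to \widehat{G}(X)$ (with the Galois packaging into the \'etale sheaf $\widehat{G}$), the identification of linearizations of $\mathcal{O}_X$ with character sections for exactness at $\widehat{G}(X)$ and $\Pic^G(X)$, and Remark \ref{rem_linearizable_reduced}, i.e. \cite[Lemma 2.9]{BRI_lin}, where reducedness of $X$ is indispensable, for exactness at $\Pic(X)$. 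Your implicit strengthening of the hypotheses is in fact necessary and correctly flagged: the Rosenlicht step needs $G$ connected (for disconnected $G$, e.g. the constant group $\Z/2\Z$ acting on $\G_{m,k}$ by inversion, the unit $t$ is not a semi-invariant and the trivial bundle carries linearizations not given by characters, so exactness at $\Pic^G(X)$ fails), so reading ``smooth linear'' as ``smooth connected linear'' --- which is the only way the paper ever applies the proposition --- is the right interpretation of the statement.
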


\begin{proposition}\label{prop_form_Ga_linearizable_power}
We assume that $k$ has characteristic $p > 0$. Let $G$ be a form of $\G_{a,k}$, $X$ a variety and $\alpha : G \times X \to X$ an action. There exists an integer $r \geq 0$ such that for every line bundle $L$ over $X$, $L^{\otimes p^r}$ is linearizable.
\end{proposition}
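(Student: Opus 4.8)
The plan is to reformulate linearizability as a torsion statement and then exploit the absolute Frobenius, which is what makes the characteristic $p$ hypothesis essential. Write $\theta(L)=\alpha^*L\otimes\pr_2^*L^{\otimes-1}\in\Pic(G\times X)$. Since $\alpha^*$ and $\pr_2^*$ are group homomorphisms, $\theta$ is one too, and restriction to the identity section shows $\theta(L)$ lies in the kernel of $(e\times\id)^*\colon\Pic(G\times X)\to\Pic(X)$. As $X$ is a variety, hence reduced, Remark \ref{rem_linearizable_reduced} says a line bundle $M$ is linearizable if and only if $\alpha^*M\cong\pr_2^*M$, i.e. $\theta(M)=0$. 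Applying this to $M=L^{\otimes p^r}$ and using that $\theta$ is a homomorphism, the proposition becomes: there is a fixed $r\ge0$ with $p^r\,\theta(L)=0$ for every $L$, i.e. the image of $\theta$ is annihilated by a power of $p$.

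The key tool is an elementary observation valid in characteristic $p$: if $s\colon Y'\to Y$ is a finite universal homeomorphism of noetherian $\F_p$-schemes of \emph{height} $\le e$, meaning $\mathcal{O}_{Y'}^{\,p^e}\subseteq\mathcal{O}_Y$ inside $s_*\mathcal{O}_{Y'}$, then the $p^e$-power map factors as $s\circ\rho=F_Y^e$ for a morphism $\rho\colon Y\to Y'$, where $F_Y$ is the absolute Frobenius. Pulling back line bundles gives $\rho^*\circ s^*=(F_Y^e)^*=\text{(multiplication by }p^e)$ on $\Pic(Y)$, so $\ker\big(s^*\colon\Pic(Y)\to\Pic(Y')\big)$ is annihilated by $p^e$. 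I will use this twice. First, to split the group: by Proposition \ref{prop_Russell} there is a finite purely inseparable extension $K/k$ with $G_K\simeq\G_{a,K}$; fix $n$ with $K^{p^n}\subseteq k$. The projection $\pi\colon(G\times X)_K\to G\times X$ is a finite universal homeomorphism, and locally $(A\otimes_kK)^{p^n}\subseteq A$ since $\lambda^{p^n}\in k$ for $\lambda\in K$; thus $\pi$ has height $\le n$ and $\ker(\pi^*)$ is killed by $p^n$. Base change sends $\theta(L)$ to the analogous class $\theta_K(L_K)$ for the action of $G_K\simeq\G_{a,K}$ on $X_K$, so it suffices to find $m$, independent of the bundle, killing every $\theta_K$: then $p^m\theta(L)\in\ker(\pi^*)$ gives $p^{m+n}\theta(L)=0$ and $r=m+n$ works.

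It remains to treat the split case, an action of $\G_{a}$ on a variety $X$ over a field (I drop the subscript $K$). Let $\sigma\colon X^+\to X$ be the seminormalization, equivariant by Corollary \ref{cor_SN_lift_action}. Applying Lemma \ref{lemma_SN_base_change} to the smooth projection $\pr_2\colon\G_a\times X\to X$, the scheme $\G_a\times X^+$ is seminormal and $\id\times\sigma$ is its seminormalization, in particular a finite universal homeomorphism of some height $\le m$. Pulling $\theta(L')$ back along $\id\times\sigma$ yields the corresponding class for the lifted action on $X^+$; but $X^+$ is seminormal and $\G_a\simeq\A^1$, so Lemma \ref{lemma_Pic_homotopy} gives $\Pic(X^+)\xrightarrow{\ \sim\ }\Pic(\G_a\times X^+)$ via $\pr_2^*$, whence every line bundle on $X^+$ is linearizable and this pullback vanishes. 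Thus $\theta(L')\in\ker\big((\id\times\sigma)^*\big)$, which by the key observation is killed by $p^m$. This settles the split case and, combined with the previous paragraph, completes the proof.

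The hard part is the key observation together with checking that both maps really have bounded height. For the inseparable base change this is immediate; for the seminormalization one uses that it is finite and that each local generator of $\mathcal{O}_{X^+}$ over $\mathcal{O}_X$ has a $p$-power in $\mathcal{O}_X$ (for an elementary subintegral extension $A\subseteq A[b]$ with $b^2,b^3\in A$ one has $b^p\in A$, hence $A[b]^{\,p}\subseteq A$). This is precisely where characteristic $p$ enters, and why no analogous bound exists in characteristic zero. One must also verify that the Frobenius factorization $\rho$ and the height bound glue globally, which is ensured by quasi-compactness of the schemes involved.
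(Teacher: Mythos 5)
Your proof is correct, and it takes a genuinely different route from the paper's. The paper reduces the statement, via the exact sequence of Proposition \ref{prop_PicG_exact_seq}, to showing that $\Pic(G \times X)/\pr_2^*\Pic(X)$ is killed by a power of $p$; it descends from the splitting field $K$ to $k$ by taking norms $N_{K/k}$ of line bundles (using \cite[II, Prop. 6.5.8]{EGA}), and in the split case it applies the snake lemma to the seminormalization and quotes \cite[Lemma 4.11]{BRI_lin} for the $p$-power torsion of both $\ker(\id_G \times \sigma)^*$ and $\coker \sigma^*$. You instead work with the single class $\theta(L) = \alpha^*L \otimes \pr_2^*L^{\otimes -1}$ (legitimate by Remark \ref{rem_linearizable_reduced}, since $X$ is reduced) and prove the slightly stronger statement $\theta\bigl(L^{\otimes p^r}\bigr) = 0$, using one self-contained Frobenius lemma --- a finite, schematically dominant universal homeomorphism $s$ with $\mathcal{O}_{Y'}^{p^e} \subseteq \mathcal{O}_Y$ factors $F_Y^e$ through $s$, so $\ker s^*$ on $\Pic$ is $p^e$-torsion --- applied twice: to the purely inseparable base change $(G \times X)_K \to G \times X$ (replacing the norm argument) and to the seminormalization (replacing Brion's Lemma 4.11, with the height bound extracted from Swan's elementary subintegral extensions via $b^2, b^3 \in A \Rightarrow b^p \in A$, together with Noetherianness and module-finiteness of $A^+$ over $A$ to get a finite tower, hence a uniform bound on a finite affine cover). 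A further simplification in your argument: since $(e \times \id)^* \circ \pr_2^* = \id$ and $\theta$ takes values in $\ker(e \times \id)^*$, once Lemma \ref{lemma_Pic_homotopy} makes $\pr_2^* : \Pic(X^+) \to \Pic(\G_{a} \times X^+)$ an isomorphism the pullback of $\theta(L)$ to $\G_a \times X^+$ vanishes outright, so you never need the cokernel of $\sigma^*$ at all --- the paper does. What each approach buys: the paper's proof is shorter given its references, while yours is essentially self-contained, makes the role of Frobenius (and hence the failure in characteristic zero) transparent, and concludes the genuine isomorphism $\alpha^*L^{\otimes p^r} \simeq \pr_2^*L^{\otimes p^r}$ rather than an identity modulo $\pr_2^*\Pic(X)$. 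In a final write-up you should spell out two points you only sketch: the finite-tower decomposition of $A \subseteq A^+$ uses that $\overline{A}$ is module-finite over $A$ (true here since $X$ is of finite type over a field), and Lemma \ref{lemma_Pic_homotopy} requires $X^+$ separated and seminormal, both of which hold because $X^+ \to X$ is finite.
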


\begin{proof}
Because of the exact sequence of Proposition \ref{prop_PicG_exact_seq}, it suffices to show that the group $\Pic(G \times X)/\pr_2^*\Pic(X)$ is $p^r$-torsion.

Let us show that we can restrict to the case $G = \G_{a,k}$. Let $K/k$ be a finite purely inseparable extension such that $G_K \simeq \G_{a,K}$ (see Proposition \ref{prop_Russell}). The degree $d$ of this extension is a power of $p$. Let us assume that there exists an integer $s \geq 0$ such that the group $\Pic(G_K \times X_K)/\pr_2^*\Pic(X_K)$ is $p^s$-torsion. Let $L$ be a line bundle over $G \times X$. By hypothesis, there exists a line bundle $M$ over $X_K$ such that $(L_K)^{\otimes p^s} \simeq \pr_2^*M$. Taking norms, we get $L^{\otimes dp^s} \simeq N_{K/k}\left((L_K)^{\otimes p^s}\right) \simeq N_{K/k}\left(\pr_2^*M\right) \simeq \pr_2^*(N_{K/k}(M))$ (see \cite[II, Prop. 6.5.8]{EGA}). Thus the group $\Pic(G \times X)/\pr_2^*\Pic(X)$ is $dp^s$-torsion. 

\medskip
We now assume $G = \G_{a,k}$. Let $\sigma : X^+ \to X$ be the seminormalization. By Lemma \ref{lemma_Pic_homotopy}, the pullback morphism $\pr_2^* : \Pic(X^+) \to \Pic(G\times X^+)$ is an isomorphism. Thus the following diagram is commutative and exact in rows: 
\begin{center}
\begin{tikzpicture}[baseline=(m.center)]
\matrix(m)[matrix of math nodes,
row sep=2.5em, column sep=2.5em,
text height=1.5ex, text depth=0.25ex,ampersand replacement=\&]
{0 \& \Pic(X) \& \Pic(G \times X) \& \Pic(G \times X)/\pr_2^*\Pic(X) \& 0 \\
0 \& \Pic(X^+) \& \Pic(G \times X^+) \& 0 \& \\};
\path[->] (m-1-1) edge (m-1-2);
\path[->] (m-1-2) edge node[above] {$\pr_2^*$} (m-1-3);
\path[->] (m-1-3) edge (m-1-4);
\path[->] (m-1-4) edge (m-1-5);
\path[->] (m-2-1) edge (m-2-2);
\path[->] (m-2-2) edge node[below] {$\pr_2^*$} (m-2-3);
\path[->] (m-2-3) edge (m-2-4);
\path[->] (m-1-2) edge node[left] {$\sigma^*$} (m-2-2);
\path[->] (m-1-3) edge node[left] {$(\id_G \times \sigma)^*$} (m-2-3);
\path[->] (m-1-4) edge (m-2-4);
\end{tikzpicture}
\end{center}
Hence the snake lemma gives an exact sequence $$\ker(\id_G \times \sigma)^* \to \Pic(G \times X)/\pr_2^*\Pic(X) \to \coker \sigma^*.$$ 
Moreover the morphism $\alpha : G \times X \to X$ is smooth (because $G$ is smooth) so, by Lemma \ref{lemma_SN_base_change}, the scheme $G \times X^+$ is seminormal and $\id_G \times \sigma : G \times X^+ \to G \times X$ is the seminormalization. Thus it follows from \cite[Lemma 4.11]{BRI_lin} that there exists an integer $n \geq 0$ such that $\ker(\id_G \times \sigma)^*$ and $\coker \sigma^*$ are $p^n$-torsion. Hence $\Pic(G \times X)/\pr_2^*\Pic(X)$ is $p^{2n}$-torsion.
\end{proof}

\begin{remark}
\begin{enumerate}[wide, labelwidth=!, labelindent=0pt, label=\roman*)] 
 \item This result is not true if $k$ has characteristic zero. Indeed, $\G_{a,k}$ acts on the cuspidal curve $C$ obtained by pinching the point $\infty$ of $\PP^1_k$ onto $\Spec k[\varepsilon]/(\varepsilon^2)$ (which can be explicitly realized for example as the curve with homogeneous equation $y^3=x^2z$ in $\PP^2_k$). By \cite[Ex. 2.16]{BRI_lin}, a line bundle over $C$ is linearizable if and only if it has degree $0$.
 \item As a particular case of Proposition \ref{prop_form_Ga_linearizable_power}, if $G$ is a non-trivial form of $\G_{a,k}$ then its regular completion $C$ can be embedded equivariantly in the projectivization of a $G$-module, but $C$ is not smooth. On the opposite, if $k$ has characteristic zero then the closure of any orbit in the projectivization of a $\G_{a,k}$-module is smooth (see \cite[Lemma 2.4]{BRIFU}).
\end{enumerate} 
\end{remark}

If
\begin{tikzpicture}[baseline=(m.center)]
\matrix(m)[matrix of math nodes,
row sep=2.5em, column sep=2.5em,
text height=1.5ex, text depth=0.25ex,ampersand replacement=\&]
{\widetilde{Z} \& \widetilde{X} \\
Z \& X\\};
\path[->] (m-1-1) edge node[above] {$j$}(m-1-2);
\path[->] (m-1-1) edge node[left] {$\lambda$} (m-2-1);
\path[->] (m-2-1) edge node[below] {$i$} (m-2-2);
\path[->] (m-1-2) edge node[right] {$\nu$} (m-2-2);
\end{tikzpicture}
is a pinching diagram then Charles Weibel described the line bundles over $X$ in terms of line bundles over $\widetilde{X}$ and $Z$.

\begin{proposition}\cite[Prop. 7.8]{WEIB}
We have an exact sequence of abstract groups 
$$1 \to \mathcal{O}(X)^\times \to \mathcal{O}(\widetilde{X})^\times \times \mathcal{O}(Z)^\times \to \mathcal{O}(\widetilde{Z})^\times \to \Pic X \to \Pic \widetilde{X} \times \Pic Z \to \Pic \widetilde{Z}$$
induced by pullbacks and a connecting morphism $\mathcal{O}(\widetilde{Z})^\times \to \Pic X$. This sequence called the Units-Pic sequence.
\end{proposition}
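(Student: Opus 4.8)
The plan is to realize the Units-Pic sequence as the long exact cohomology sequence, on the Zariski site of $X$, attached to a short exact sequence of sheaves of abelian groups built from the sheaves of units; this is the sheaf-theoretic incarnation of Milnor patching. Since the pinching square is cocartesian in the category of locally ringed spaces, the structure sheaf $\mathcal{O}_X$ is the fibre product $\nu_*\mathcal{O}_{\widetilde{X}} \times_{\nu_* j_*\mathcal{O}_{\widetilde{Z}}} i_*\mathcal{O}_Z$ inside $\nu_*\mathcal{O}_{\widetilde{X}} \times i_*\mathcal{O}_Z$. Passing to unit groups, I would first write down the complex of sheaves on $X$
\begin{equation*}
1 \longrightarrow \mathcal{O}_X^\times \longrightarrow \nu_*\mathcal{O}_{\widetilde{X}}^\times \times i_*\mathcal{O}_Z^\times \longrightarrow \nu_* j_*\mathcal{O}_{\widetilde{Z}}^\times \longrightarrow 1,
\end{equation*}
where the second map sends a local section $(a,b)$ to $(a|_{\widetilde{Z}})(b|_{\widetilde{Z}})^{-1}$. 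Left exactness and exactness in the middle are immediate from the fibre product description: a pair $(a,b)$ of units has trivial image exactly when $a$ and $b$ agree on $\widetilde{Z}$, that is, when it comes from a section of $\mathcal{O}_X^\times$.

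The crux is to prove that the right-hand map is surjective as a map of sheaves, i.e. on stalks, and this is the step where the pinching hypotheses genuinely enter. Away from $Z$ the morphism $\nu$ is an isomorphism and $\widetilde{Z}$ is empty, so there is nothing to check. At a point $x$ of $Z$ the stalk of $\nu_*\mathcal{O}_{\widetilde{X}}^\times$ is the unit group of the semilocal ring $R$ obtained by localizing $\mathcal{O}_{\widetilde{X}}$ at the finitely many preimages of $x$, while the stalk of $\nu_* j_*\mathcal{O}_{\widetilde{Z}}^\times$ is $(R/I)^\times$, with $I$ the ideal defining the closed subscheme $\widetilde{Z}$. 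Because $j$ is a closed immersion and $\widetilde{Z} = V(I)$ contains the whole fibre $\nu^{-1}(x)$, the ideal $I$ lies in every maximal ideal of $R$, hence in its Jacobson radical. A surjection $R \to R/I$ with $I$ in the Jacobson radical induces a surjection $R^\times \to (R/I)^\times$, since the maximal ideals of $R$ and $R/I$ correspond and any lift of a unit avoids all of them. Thus the map is already surjective on the $\nu_*\mathcal{O}_{\widetilde{X}}^\times$ factor, and the displayed sequence is short exact.

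It then remains to take the long exact sequence of Zariski cohomology and to identify its terms. In degree $0$ one recovers $\mathcal{O}(X)^\times$, $\mathcal{O}(\widetilde{X})^\times \times \mathcal{O}(Z)^\times$ and $\mathcal{O}(\widetilde{Z})^\times$, since global sections commute with the affine pushforwards $\nu_*$, $i_*$ and $\nu_* j_*$. For the degree-$1$ terms I would invoke the Leray spectral sequence together with the vanishing of $R^1\nu_*\mathcal{O}_{\widetilde{X}}^\times$: the stalk of this higher pushforward is the Picard group of a semilocal ring, which is trivial, and the same holds for the finite morphisms $i$ and $\nu j$. Leray then yields $H^1(X,\nu_*\mathcal{O}_{\widetilde{X}}^\times) \simeq \Pic\widetilde{X}$, $H^1(X,i_*\mathcal{O}_Z^\times)\simeq \Pic Z$ and $H^1(X,\nu_* j_*\mathcal{O}_{\widetilde{Z}}^\times)\simeq \Pic\widetilde{Z}$, while $H^1(X,\mathcal{O}_X^\times) = \Pic X$ by definition. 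The first six terms of the long exact sequence are then precisely the Units-Pic sequence; the connecting homomorphism $\mathcal{O}(\widetilde{Z})^\times \to \Pic X$ is the patching map clutching the trivial bundles on $\widetilde{X}$ and $Z$ along a unit on $\widetilde{Z}$. The main obstacle is the stalkwise surjectivity established above: it is the only place where one uses that the square is a pinching diagram (closed immersion $j$ with $I$ in the Jacobson radical), and it is exactly what makes Milnor patching applicable. An equivalent route would be to prove Milnor's equivalence between line bundles on $X$ and gluing triples $(L_1,L_2,\phi\colon j^*L_1 \simeq \lambda^*L_2)$ over the square, and then read the sequence off from that equivalence.
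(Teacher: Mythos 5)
Your proof is correct, but it takes a genuinely different route from the one the paper is built on. The paper gives no proof of this proposition at all: it imports it from Weibel, and the argument there (like Howe's Theorem used in Proposition \ref{prop_pinching_line_bundles}) is Milnor patching --- one first proves the equivalence of categories between line bundles on $X$ and triples $(\widetilde{L},M,\sigma)$ over the square, then reads off the six-term sequence by tracking automorphisms and isomorphism classes of trivialized objects; this is exactly the template the author replays in the equivariant setting (Proposition \ref{prop_pinching_linearized_line_bundles} and the corollary following it, whose proof of exactness is precisely such a chase through the equivalence). You instead sheafify: the cartesian square of structure sheaves gives your short exact sequence of unit sheaves, and your stalkwise surjectivity argument is sound --- since $\nu$ is finite and the pinching square is cartesian, the fibre $\nu^{-1}(x)$ lies in $\widetilde{Z}$, so the ideal $I$ sits in the Jacobson radical of the semilocal stalk $R$ and $R^\times \to (R/I)^\times$ is onto; the Units-Pic sequence is then the long exact cohomology sequence, with the $H^1$-terms identified via Leray and the vanishing of $R^1$ of the affine pushforwards, which rests on triviality of $\Pic$ of semilocal rings plus the standard limit identification $\Pic(B_\p) \simeq \operatorname{colim}_f \Pic(B_f)$ (this limit step, and the fact that the Leray edge maps are the pullbacks, are standard but deserve a citation). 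It is worth noting that the same semilocal-units lemma is the engine of the patching proof too --- it is what makes clutching data liftable --- so both arguments consume the same geometric input, namely finiteness of $\nu$ and cartesianness of the square. What each buys: your cohomological derivation is shorter and self-contained at the level of abelian groups, and it produces the connecting morphism $\mathcal{O}(\widetilde{Z})^\times \to \Pic X$ together with exactness for free as a boundary map; the categorical equivalence is strictly stronger information --- it remembers the bundles and their morphisms, not merely isomorphism classes --- and that extra strength is what the paper actually needs downstream, since the equivariant Picard group, with its cocycle conditions on linearizations, is extracted from the equivalence of fibered categories rather than from an $H^1$ of a sheaf on $X$.
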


Loosely speaking, this means that, up to isomorphism, line bundles over $X$ correspond to line bundles over $\widetilde{X}$ equipped with trivializations along fibers of $\lambda$.

This can be reformulated in categorical terms. For any scheme $Y$, we denote by $\mathcal{D}_Y$ the category of line bundles over $Y$. With the functors $j^* : \mathcal{D}_{\widetilde{X}} \to \mathcal{D}_{\widetilde{Z}}$ et $\lambda^* : \mathcal{D}_{Z} \to \mathcal{D}_{\widetilde{Z}}$ we can form the fiber product category $\mathcal{D}_{\widetilde{X}} \times_{\mathcal{D}_{\widetilde{Z}}} \mathcal{D}_{Z}$. Its objects are the triples $(\widetilde{L},M,\sigma)$ where $\widetilde{L}$ and $M$ are line bundles over $\widetilde{X}$ and $Z$, and $\sigma$ is an isomorphism $j^*\widetilde{L} \simeq \lambda^* M$ of line bundles over $Z$. A morphism between two triples $(\widetilde{L}_1,M_1,\sigma_1)$ and $(\widetilde{L}_2,M_2,\sigma_2)$ is given by two morphisms $\widetilde{\phi} : \widetilde{L}_1 \to \widetilde{L}_2$ and $\psi : M_1 \to M_2$ such that $\sigma_2 \circ (j^*\widetilde{\phi}) = (\lambda^*\psi) \circ \sigma_1$.

\begin{proposition}
\cite[Th. 3.13]{HOW}\label{prop_pinching_line_bundles} The functor $$T : \fonction{\mathcal{D}_X}{\mathcal{D}_{\widetilde{X}} \times_{\mathcal{D}_{\widetilde{Z}}} \mathcal{D}_{Z}}{L}{(\nu^*L,i^*L,can)}$$ is a equivalence of categories (where $can$ is the canonical isomorphism $j^*\nu^*L \simeq \lambda^*i^*L$).
\end{proposition}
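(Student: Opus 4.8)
The plan is to recognize $T$ as an instance of Milnor's patching equivalence for finitely generated projective modules over a cartesian square of rings, carried out affine-locally and then globalized. By Proposition \ref{prop_existence_pinching} the pinching square is simultaneously cartesian and cocartesian, and on an affine open this translates into a presentation of rings $A = B \times_{\bar C} C$, where $A = \mathcal{O}(X)$, $B = \mathcal{O}(\widetilde X)$, $C = \mathcal{O}(Z)$ and $\bar C = \mathcal{O}(\widetilde Z)$ on the relevant opens. Since $j$ is a closed immersion, $j^\sharp : B \to \bar C$ is surjective and $\lambda$ finite makes $\bar C$ module-finite over $C$, so this is a \emph{Milnor square}. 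Milnor's theorem then asserts that the category of finitely generated projective $A$-modules is equivalent to the category of triples $(P, Q, h)$ with $P$ projective over $B$, $Q$ projective over $C$ and $h : P\otimes_B \bar C \xrightarrow{\sim} Q\otimes_C\bar C$, the equivalence sending $L$ to $(L\otimes_A B, L\otimes_A C, \mathrm{can})$. This is precisely $T$ on an affine open, once we restrict attention to rank-one objects.

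For essential surjectivity I would take a triple $(\widetilde L, M, \sigma)$ and define the candidate by the fiber product of sheaves $L := \nu_*\widetilde L \times_{(\nu j)_*(j^*\widetilde L)} i_* M$, where the two structure maps are the adjunction restriction $\nu_*\widetilde L \to (\nu j)_*(j^*\widetilde L)$ and the composite $i_*M \to (i\lambda)_*(\lambda^* M)\xrightarrow{\sigma^{-1}}(\nu j)_*(j^*\widetilde L)$ (using $\nu j = i\lambda$). Affine-locally this is Milnor's patched module, hence projective. To see that $L$ is a line bundle I would check its rank is $1$ at every point: over $X\setminus Z$ the morphism $\nu$ is an isomorphism, so $L$ agrees with $\widetilde L$, while at $x\in Z$ one has $L\otimes_{\mathcal{O}_X}\kappa(x) = i^*L\otimes\kappa(x) = M\otimes\kappa(x)$, which is one-dimensional. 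The canonical identifications then give $\nu^*L\simeq\widetilde L$ and $i^*L\simeq M$ recovering $\sigma$, so $T(L)\simeq(\widetilde L,M,\sigma)$.

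For full faithfulness I would pass to global sections of the internal Hom. Given line bundles $L_1,L_2$, set $N = \HOM(L_1,L_2)$; reconstructing $N$ as a patched sheaf (applying the quasi-inverse of $T$ to $N$) yields a left-exact sequence $0\to N\to \nu_*\nu^*N\oplus i_*i^*N\to (\nu j)_*(j^*\nu^*N)$. Applying $H^0$, which is left exact, presents $\Hom_X(L_1,L_2) = H^0(X,N)$ as the fiber product $H^0(\widetilde X,\nu^*N)\times_{H^0(\widetilde Z,\cdot)}H^0(Z,i^*N)$. But a morphism of triples $T(L_1)\to T(L_2)$ is exactly a pair $(\widetilde\phi,\psi)$ of morphisms over $\widetilde X$ and $Z$ agreeing over $\widetilde Z$, that is, an element of this same fiber product. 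Hence $T$ is bijective on Hom-sets.

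The main obstacle I anticipate is the globalization and the local-freeness bookkeeping rather than a single deep point. One must verify that the conductor/pinching square restricts to a genuine Milnor square on each affine open (this relies on the cocartesian-in-locally-ringed-spaces property of Proposition \ref{prop_existence_pinching} together with Ferrand's affine description, and on $\lambda$ finite so Milnor's finiteness hypothesis holds), and that the affine-local patched modules glue to a global line bundle compatibly with $T$. The genuinely delicate step is confirming that the patched sheaf is locally free of rank one exactly at the pinched points of $Z$, which is where the surjectivity of $j^\sharp$ is essential; away from $Z$ everything is transparent because $\nu$ is an isomorphism there.
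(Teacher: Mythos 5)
Your proof is correct, and it is essentially the paper's own argument: the paper gives no proof of this proposition, citing it verbatim from Howe's thesis \cite[Th. 3.13]{HOW}, and the Ferrand--Milnor patching you carry out --- affine-local Milnor squares $A = B \times_{\bar C} C$ with $B \to \bar C$ surjective obtained from the cocartesian-in-locally-ringed-spaces property, the global fiber-product sheaf $\nu_*\widetilde{L} \times_{(\nu j)_*(j^*\widetilde{L})} i_*M$ for essential surjectivity, and the left-exact sequence for full faithfulness --- is exactly the argument behind that citation (the finiteness of $\lambda$ you mention is not actually needed for Milnor's theorem; only the cartesianness and the surjectivity of one leg are). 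One wording fix: in the full-faithfulness step do not invoke ``the quasi-inverse of $T$'' (not yet available at that point); instead obtain the sequence $0 \to N \to \nu_*\nu^*N \oplus i_*i^*N \to (\nu j)_*(j^*\nu^*N)$ directly by tensoring the exact sequence $0 \to \mathcal{O}_X \to \nu_*\mathcal{O}_{\widetilde{X}} \oplus i_*\mathcal{O}_Z \to (\nu j)_*\mathcal{O}_{\widetilde{Z}}$ (exact by the affine-local cartesianness you already established) with the line bundle $N$ and applying the projection formula, which removes any circularity.
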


Let $S$ be the quasi-inverse of $T$. For $(\widetilde{L},M,\sigma)$ in $\mathcal{D}_{\widetilde{X}} \times_{\mathcal{D}_{\widetilde{Z}}} \mathcal{D}_{Z}$, the line bundle $L = S(\widetilde{L},M,\sigma)$ is determined (up to isomorphism) by the existence of isomorphisms $\phi_\nu : \nu^*L \to \widetilde{L}$ and $\phi_i : i^*L \to M$ making the diagram
\begin{tikzpicture}[baseline=(m.center)]
\matrix(m)[matrix of math nodes,
row sep=2.5em, column sep=2.5em,
text height=1.5ex, text depth=0.25ex,ampersand replacement=\&]
{j^*\nu^*L \& j^*\widetilde{L} \\
\lambda^*i^*L \& \lambda^*M\\};
\path[->] (m-1-1) edge node[above] {$j^*\phi_\nu$}(m-1-2);
\path[->] (m-1-1) edge node[left] {$\textrm{can}$} (m-2-1);
\path[->] (m-2-1) edge node[below] {$\lambda^*\phi_i$} (m-2-2);
\path[->] (m-1-2) edge node[right] {$\sigma$} (m-2-2);
\end{tikzpicture}
commute.

\bigskip
We can do the same for linearized line bundles. Let $G$ be a smooth connected algebraic group. For any scheme $Y$ which is separated and of finite type over $k$, and any action $\alpha : G \times Y \to Y$, we denote by $\mathcal{D}_Y^G$ the category of linearized line bundles over $Y$. Let $Y'$ be another such scheme with an action $\alpha' : G \times Y' \to Y'$ and $f : Y' \to Y$ an equivariant morphism. For any $(L,\Phi_L)$ in $\mathcal{D}_Y^G$, the line bundle $f^*L$ over $Y'$ is equipped with the linearization $\Phi_{f^*L}$ given by the diagram 
\begin{tikzpicture}[baseline=(m.center)]
\matrix(m)[matrix of math nodes,
row sep=2.5em, column sep=4.5em,
text height=1.5ex, text depth=0.25ex,ampersand replacement=\&]
{\alpha'^*f^*L \& \pr_2^*f^*L \\
(\id_G \times f)^*\alpha^*L \& (\id_G \times f)^*\pr_2^*L\\};
\path[->] (m-1-1) edge node[above] {$\Phi_{f^*L}$}(m-1-2);
\path[->] (m-1-1) edge node[left] {$\textrm{can}$} (m-2-1);
\path[->] (m-2-1) edge node[below] {$(\id_G \times f)^*\Phi_L$} (m-2-2);
\path[->] (m-1-2) edge node[right] {$\textrm{can}$} (m-2-2);
\end{tikzpicture}.
If $\phi : L_1 \to L_2$ is a morphism of linearized line bundles over $Y$ then $f^*\phi : f^*L_1 \to f^*L_2$ is a morphism of linearized line bundles over $Y'$. This yields a functor $f^* : \mathcal{D}_Y^G \to \mathcal{D}_{Y'}^G$.

We assume that $X$ is separated and of finite type over $k$ (thus so are $\widetilde{X}$, $Z$ and $\widetilde{Z}$). Let $\alpha : G \times X \to X$ and $\widetilde{\alpha} : G \times \widetilde{X} \to \widetilde{X}$ be two actions such that $Z$ and $\widetilde{Z}$ are $G$-stable and $\nu$ is $G$-equivariant (so $\lambda$, $i$ and $j$ are equivariant too). Let $\beta : G \times Z \to Z$ and $\widetilde{\beta} : G \times \widetilde{Z} \to \widetilde{Z}$ be the induced actions. With the functors $j^* : \mathcal{D}_{\widetilde{X}}^G \to \mathcal{D}_{\widetilde{Z}}^G$ et $\lambda^* : \mathcal{D}_{Z}^G \to \mathcal{D}_{\widetilde{Z}}^G$ we can form the fiber product category $\mathcal{D}_{\widetilde{X}}^G \times_{\mathcal{D}_{\widetilde{Z}}^G} \mathcal{D}_{Z}^G$. Its objects are the tuples $(\widetilde{L},\Phi_{\widetilde{L}},M,\Phi_M,\sigma)$ where $(\widetilde{L},\Phi_{\widetilde{L}})$ and $(M,\Phi_M)$ are linearized line bundles over $\widetilde{X}$ and $Z$, and $\sigma$ is an isomorphism $j^*\widetilde{L} \simeq \lambda^* M$ of linearized line bundles over $Z$.

\begin{proposition}
The \label{prop_pinching_linearized_line_bundles}functor $$T^G : \fonction{\mathcal{D}_X^G}{\mathcal{D}_{\widetilde{X}}^G \times_{\mathcal{D}_{\widetilde{Z}}^G} \mathcal{D}_{Z}^G}{(L,\Phi_L)}{(\nu^*L,\Phi_{\nu^*L},i^*L,\Phi_{i^*L},can)}$$ is a equivalence of categories (where $can$ is the canonical isomorphism $j^*\nu^*L \simeq \lambda^*i^*L$).
\end{proposition}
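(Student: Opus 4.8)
The plan is to bootstrap the equivariant statement from three applications of the non-equivariant equivalence of Proposition~\ref{prop_pinching_line_bundles}, at the levels of $X$, of $G \times X$ and of $G \times G \times X$. The crucial point is that each of these products again carries a pinching diagram: applying $G \times (-)$, respectively $(G \times G) \times (-)$, to the pinching square of $X$ yields a cocartesian square, exactly as recorded in the proof of Lemma~\ref{lemma_descent_action} via \cite[Th. 3.11]{HOW}. Writing $T$, $T_{G \times X}$ and $T_{G \times G \times X}$ for the three resulting equivalences, and using that $\nu$, $i$, $\lambda$, $j$ are equivariant, the proof reduces to checking that the linearization datum and the cocycle datum of a linearized line bundle over $X$ are transported by $T_{G \times X}$ and $T_{G \times G \times X}$ into the corresponding data on the three pieces.

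First I would translate the linearization. A linearization of a line bundle $L$ over $X$ is precisely an isomorphism $\Phi_L : \alpha^* L \to \pr_2^* L$ in $\mathcal{D}_{G \times X}$. Since $\nu$ is equivariant we have $\alpha \circ (\id \times \nu) = \nu \circ \widetilde{\alpha}$ and $\pr_2 \circ (\id \times \nu) = \nu \circ \pr_2$, and likewise for $i$; hence applying $T_{G \times X}$ to $\alpha^* L$ and to $\pr_2^* L$ yields the triples $(\widetilde{\alpha}^* \nu^* L, \beta^* i^* L, \mathrm{can})$ and $(\pr_2^* \nu^* L, \pr_2^* i^* L, \mathrm{can})$, using the canonical base-change identifications. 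By full faithfulness of $T_{G \times X}$, the datum of $\Phi_L$ is therefore equivalent to a pair of isomorphisms $\widetilde{\alpha}^* \nu^* L \to \pr_2^* \nu^* L$ and $\beta^* i^* L \to \pr_2^* i^* L$ agreeing over $G \times \widetilde{Z}$; these are exactly the candidate linearizations $\Phi_{\nu^* L}$ and $\Phi_{i^* L}$ together with the compatibility making $\mathrm{can} : j^* \nu^* L \simeq \lambda^* i^* L$ a morphism of linearized bundles over $\widetilde{Z}$.

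Next I would handle the cocycle condition, which is an equality of isomorphisms over $G \times G \times X$. By full faithfulness of $T_{G \times G \times X}$, this equality holds if and only if the two equalities obtained after applying $(\id \times \id \times \nu)^*$ and $(\id \times \id \times i)^*$ hold, that is, if and only if $\Phi_{\nu^* L}$ and $\Phi_{i^* L}$ both satisfy the cocycle condition. Combining the two translations shows that objects $(L, \Phi_L)$ of $\mathcal{D}_X^G$ correspond, via $T^G$, to objects of $\mathcal{D}_{\widetilde{X}}^G \times_{\mathcal{D}_{\widetilde{Z}}^G} \mathcal{D}_{Z}^G$. Essential surjectivity then follows by first producing $L$ from $(\widetilde{L}, M, \sigma)$ with the non-equivariant $T$, transporting $\Phi_{\widetilde{L}}$ and $\Phi_M$ to $\nu^* L$ and $i^* L$ along the resulting isomorphisms, gluing them over $G \times X$ by full faithfulness of $T_{G \times X}$, and checking the cocycle condition for the glued $\Phi_L$ through $T_{G \times G \times X}$ as above. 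For morphisms, full faithfulness of $T^G$ reduces to that of $T$ (which identifies $\Hom_{\mathcal{D}_X}(L_1, L_2)$ with compatible pairs over $\widetilde{X}$ and $Z$) together with the observation, again via $T_{G \times X}$, that a morphism is compatible with the linearizations over $X$ if and only if its two components are compatible with theirs.

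The main obstacle will be the bookkeeping of the canonical isomorphisms underlying the claim that $T_{G \times X}$ carries $\Phi_L$ precisely to the pair $(\Phi_{\nu^* L}, \Phi_{i^* L})$ of pullback linearizations — in other words, the naturality of Howe's equivalence with respect to the flat base changes by $G$ and $G \times G$ and to the pullbacks along $\alpha$, $\widetilde{\alpha}$, $\beta$ and $\pr_2$. This is routine but must be carried out carefully by chasing the commutative diagrams that define the pullback linearizations; once it is in place, the three equivalences $T$, $T_{G \times X}$ and $T_{G \times G \times X}$ assemble formally into the asserted equivalence $T^G$.
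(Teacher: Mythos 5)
Your proposal is correct and takes essentially the same route as the paper's proof: both bootstrap from the non-equivariant equivalence of Proposition \ref{prop_pinching_line_bundles} applied to the pinching diagrams for $X$, $G \times X$ and $G \times G \times X$ (the latter two being pinching diagrams by \cite[Th. 3.11]{HOW}), using full faithfulness of the functor over $G \times X$ to transport and glue the linearization data and faithfulness over $G \times G \times X$ to verify the cocycle condition. The bookkeeping of canonical base-change isomorphisms that you flag as the main obstacle is precisely what the paper's proof carries out explicitly in its fullness and essential surjectivity arguments.
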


\begin{proof}
It is immediate that the functor $T^G$ is well-defined on objects (and its definition on morphisms is the obvious one).

By proposition \ref{prop_pinching_line_bundles}, the functor $T^G$ is faithful. Let us show that it is full. Let $(L_1,\Phi_{L_1})$ and $(L_2,\Phi_{L_2})$ be two linearized line bundles over $X$. A morphism $T^G(L_1,\Phi_{L_1}) \to T^G(L_2,\Phi_{L_2})$ is given by two morphisms of linearized line bundles $\widetilde{\phi} : (\nu^*L_1,\Phi_{\nu^*L_1}) \to (\nu^*L_2,\Phi_{\nu^*L_2})$ and $\psi : (i^*L_1,\Phi_{i^*L_1}) \to (i^*L_2,\Phi_{i^*L_2})$ such that $\sigma_2 \circ (j^*\widetilde{\phi}) = (\lambda^*\psi) \circ \sigma_1$ (where $\sigma_1$ and $\sigma_2$ are the canonical isomorphisms $j^*\nu^*L_1 \simeq \lambda^*i^*L_1$ and $j^*\nu^*L_2 \simeq \lambda^*i^*L_2$). By proposition \ref{prop_pinching_line_bundles}, there exists a morphism of line bundles $\phi : L_1 \to L_2$ such that $\widetilde{\phi} = \nu^*\phi$ and $\psi = i^*\phi$. It remains to show that $\phi$ is a morphism of linearized line bundles. From the definition of the pullback linearizations $\Phi_{\nu^*L_1}$ and $\Phi_{\nu^*L_2}$, the equality $\widetilde{\phi} = \nu^*\phi$, the fact that $\widetilde{\phi}$ is a morphism of linearized line bundles and the different canonical isomorphisms, it follows readily that the diagram 
\begin{center}
\begin{tikzpicture}[baseline=(m.center)]
\matrix(m)[matrix of math nodes,
row sep=2.5em, column sep=5em,
text height=1.5ex, text depth=0.25ex,ampersand replacement=\&]
{(\id_G \times \nu)^*\alpha^*L_1 \& (\id_G \times \nu)^*\pr_2^*L_1\\
(\id_G \times \nu)^*\alpha^*L_2 \& (\id_G \times \nu)^*\pr_2^*L_2\\};
\path[->] (m-1-1) edge node[above] {$(\id_G \times \nu)^*\Phi_{L_1}$} (m-1-2);
\path[->] (m-1-1) edge node[left] {$(\id_G \times \nu)^*\alpha^*\phi$} (m-2-1);
\path[->] (m-2-1) edge node[below] {$(\id_G \times \nu)^*\Phi_{L_2}$} (m-2-2);
\path[->] (m-1-2) edge node[right] {$(\id_G \times \nu)^*\pr_2^*\phi$} (m-2-2);
\end{tikzpicture}
\end{center}
of line bundles over $G \times \widetilde{X}$ commutes (and similarly for the pullback by $\id_G \times i$). Moreover the square 
\begin{tikzpicture}[baseline=(m.center)]
\matrix(m)[matrix of math nodes,
row sep=2.5em, column sep=2.5em,
text height=1.5ex, text depth=0.25ex,ampersand replacement=\&]
{G\times \widetilde{Z} \& G\times \widetilde{Z} \\
G\times Z \& G\times X\\};
\path[->] (m-1-1) edge node[above] {$\id\times j$}(m-1-2);
\path[->] (m-1-1) edge node[left] {$\id\times \lambda$} (m-2-1);
\path[->] (m-2-1) edge node[below] {$\id\times i$} (m-2-2);
\path[->] (m-1-2) edge node[right] {$\id\times \nu$} (m-2-2);
\end{tikzpicture}
is a pinching diagram (see \cite[Th. 3.11]{HOW}) so we can apply Proposition \ref{prop_pinching_line_bundles} to it. Thus the corresponding functor $$T' : \mathcal{D}_{G\times X} \to \mathcal{D}_{G \times \widetilde{X}} \times_{\mathcal{D}_{G \times \widetilde{Z}}} \mathcal{D}_{G \times Z}$$ is (fully) faithful. Consequently the diagram
\begin{tikzpicture}[baseline=(m.center)]
\matrix(m)[matrix of math nodes,
row sep=2.5em, column sep=2.5em,
text height=1.5ex, text depth=0.25ex,ampersand replacement=\&]
{\alpha^*L_1 \& \pr_2^*L_1\\
\alpha^*L_2 \& \pr_2^*L_2\\};
\path[->] (m-1-1) edge node[above] {$\Phi_{L_1}$} (m-1-2);
\path[->] (m-1-1) edge node[left] {$\alpha^*\phi$} (m-2-1);
\path[->] (m-2-1) edge node[below] {$\Phi_{L_2}$} (m-2-2);
\path[->] (m-1-2) edge node[right] {$\pr_2^*\phi$} (m-2-2);
\end{tikzpicture}
commutes, that is, $\phi$ is a morphism of linearized line bundles.

\medskip
Finally, let us show that the functor $T^G$ is essentially surjective. Let $(\widetilde{L},\Phi_{\widetilde{L}},M,\Phi_M,\sigma)$ be an object of $\mathcal{D}_{\widetilde{X}}^G \times_{\mathcal{D}_{\widetilde{Z}}^G} \mathcal{D}_{Z}^G$. By Proposition \ref{prop_pinching_line_bundles} again, there exists a line bundle $L$ over $X$ and isomorphisms $\phi_\nu : \nu^*L \to \widetilde{L}$ and $\phi_i : i^*L \to M$ making the diagram
\begin{tikzpicture}[baseline=(m.center)]
\matrix(m)[matrix of math nodes,
row sep=2.5em, column sep=2.5em,
text height=1.5ex, text depth=0.25ex,ampersand replacement=\&]
{j^*\nu^*L \& j^*\widetilde{L} \\
\lambda^*i^*L \& \lambda^*M\\};
\path[->] (m-1-1) edge node[above] {$j^*\phi_\nu$}(m-1-2);
\path[->] (m-1-1) edge node[left] {$\textrm{can}$} (m-2-1);
\path[->] (m-2-1) edge node[below] {$\lambda^*\phi_i$} (m-2-2);
\path[->] (m-1-2) edge node[right] {$\sigma$} (m-2-2);
\end{tikzpicture}
commute. We have isomorphisms $$\sigma_1 : (\id\times j)^*\widetilde{\alpha}^*\widetilde{L} \xrightarrow{\textrm{can}} \widetilde{\beta}^*j^* \widetilde{L} \xrightarrow{\widetilde{\beta}^*\sigma} \widetilde{\beta}^*\lambda^*M \xrightarrow{\textrm{can}} (\id \times \lambda)^*\beta^* M$$  $$\sigma_2 : (\id\times j)^*\pr_2^*\widetilde{L} \xrightarrow{\textrm{can}} \pr_2^*j^*\widetilde{L} \xrightarrow{\pr_2^*\sigma} \pr_2^*\lambda^*M \xrightarrow{\textrm{can}} (\id \times \lambda)^*\pr_2^* M$$ of line bundles over $G \times \widetilde{Z}$. Since $\sigma : j^* \widetilde{L} \to \lambda^*M$ is an isomorphism of linearized line bundles, $\Phi_{\widetilde{L}}$ and $\Phi_M$ induce an isomorphism $(\widetilde{\alpha}^* \widetilde{L}, \beta^*M, \sigma_1) \to (\pr_2^*\widetilde{L},\pr_2^*M,\sigma_2)$ in the category $\mathcal{D}_{G \times \widetilde{X}} \times_{\mathcal{D}_{G \times \widetilde{Z}}} \mathcal{D}_{G \times Z}$. As the functor $T'$ defined above is fully faithful, there exists an isomorphism $\Phi_L : \alpha^*L \to \pr_2^*L$ such that $\Phi_{\widetilde{L}}$ is the composite $$\widetilde{\alpha}^*\widetilde{L} \xrightarrow{\widetilde{\alpha}^*\phi_\nu^{-1}} \widetilde{\alpha}^*\nu^*L \xrightarrow{\textrm{can}} (\id\times \nu)^* \alpha^*L \xrightarrow{(\id\times \nu)^*\Phi_L} (\id\times \nu)^*\pr_2^* L \xrightarrow{\textrm{can}} \pr_2^*\nu^*L \xrightarrow{\pr_2^*\phi_\nu} \pr_2^*\widetilde{L}$$ (and similarly for $\Phi_M$). It remains to show that $\Phi_L$ is a linearization, because then $\phi_\nu$ and $\phi_i$ give an isomorphism $T^G(L,\Phi_L) \to (\widetilde{L},\Phi_{\widetilde{L}},M,\Phi_M,\sigma)$. We have to show that $\Phi_L$ satisfies the cocycle condition, that is to say that the diagram
\begin{center}
\begin{tikzpicture}[baseline=(m.center)]
\matrix(m)[matrix of math nodes,
row sep=3.5em, column sep=4.5em,
text height=1.5ex, text depth=0.25ex,ampersand replacement=\&]
{(\id_G \times \alpha))^* \alpha^* L  \& (\mu \times \id_X)^* \alpha^* L  \& (\mu \times \id_X)^* \pr_2^* L \\
(\id_G \times \alpha)^* \pr_2^*L \& \pr_{23}^* \alpha^* L  \& \pr_{23}^* \pr_2^* L \\};
\path[->] (m-1-1) edge node[above] {can} (m-1-2);
\path[->] (m-1-2) edge node[above] {$(\mu \times \id_X)^* \Phi_L$} (m-1-3);
\path[->] (m-1-1) edge node[left] {$(\id_G \times \alpha)^*\Phi_L$} (m-2-1);
\path[->] (m-2-1) edge node[below] {can} (m-2-2);
\path[->] (m-2-2) edge node[below] {$\pr_{23}^*\Phi_L$} (m-2-3);
\path[->] (m-1-3) edge node[right] {can} (m-2-3);
\end{tikzpicture}
\end{center}
commutes. It is easily checked that the two diagrams obtained by pulling back by $\id_G \times \id_G \times \nu$ and by $\id_G \times \id_G \times i$ are commutative because $\Phi_{\widetilde{L}}$ and $\Phi_M$ are linearizations. But, by the same argument as before, the functor $$\mathcal{D}_{G\times G\times X} \to \mathcal{D}_{G\times G \times \widetilde{X}} \times_{\mathcal{D}_{G\times G \times \widetilde{Z}}} \mathcal{D}_{G\times G \times Z}$$ is faithful. Therefore the diagram for $\Phi_L$ is indeed commutative.
\end{proof}

In turn, this equivalence of categories yields an analogue of the Units-Pic sequence.

\begin{corollary}
We have an exact sequence of abstract groups
$$1 \to \mathcal{O}(X)^{\times^G} \to \mathcal{O}(\widetilde{X})^{\times^G} \times \mathcal{O}(Z)^{\times^G} \to \mathcal{O}(\widetilde{Z})^{\times^G} \to \Pic^G(X) \to \Pic^G(\widetilde{X}) \times \Pic^G(Z) \to \Pic^G(\widetilde{Z})$$
induced by pullbacks and a connecting morphism $\delta : \mathcal{O}(\widetilde{Z})^{\times^G} \to \Pic^G(X)$. We call it ``the equivariant Units-Pic sequence''.
\end{corollary}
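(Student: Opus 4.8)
The plan is to read off the sequence from the equivalence of categories $T^G$ of Proposition \ref{prop_pinching_linearized_line_bundles} by a Mayer--Vietoris argument, exactly as Weibel's non-equivariant Units--Pic sequence is obtained from the equivalence $T$ of Proposition \ref{prop_pinching_line_bundles}. First I would record the relevant invariants attached to a $G$-action on a separated scheme $Y$ of finite type over $k$. The category $\mathcal{D}_Y^G$ is a Picard category for the tensor product of linearized line bundles: the group law on objects (up to isomorphism) is the one recalled just before the definition of $\Pic^G$, where it is shown that $L \otimes L'$ and $L^{-1}$ are linearizable. Passing to its underlying groupoid of isomorphisms, the set of isomorphism classes is $\Pic^G(Y)$ by definition, and the automorphism group of the unit object, namely $\mathcal{O}_Y$ with its canonical linearization, is $\mathcal{O}(Y)^{\times^G}$: an automorphism of $\mathcal{O}_Y$ as a line bundle is multiplication by a unit $u \in \mathcal{O}(Y)^\times$, and the compatibility with the canonical linearization (the morphism condition of Remark \ref{rem_linearizable_reduced}) amounts to $\alpha^* u = \pr_2^* u$, that is, to the $G$-invariance of $u$. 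These two groups play the role of $\pi_0$ and $\pi_1$.

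Next, Proposition \ref{prop_pinching_linearized_line_bundles} identifies $\mathcal{D}_X^G$ with the $2$-fibre product $\mathcal{D}_{\widetilde{X}}^G \times_{\mathcal{D}_{\widetilde{Z}}^G} \mathcal{D}_Z^G$ formed from the pullback functors $j^*$ and $\lambda^*$. Since the gluing datum $\sigma$ of an object is an isomorphism, this iso-comma construction is the homotopy pullback of the underlying Picard groupoids, to which the standard six-term Mayer--Vietoris exact sequence of homotopy groups applies:
$$1 \to \pi_1(\mathcal{D}_X^G) \to \pi_1(\mathcal{D}_{\widetilde{X}}^G) \times \pi_1(\mathcal{D}_Z^G) \to \pi_1(\mathcal{D}_{\widetilde{Z}}^G) \to \pi_0(\mathcal{D}_X^G) \to \pi_0(\mathcal{D}_{\widetilde{X}}^G) \times \pi_0(\mathcal{D}_Z^G) \to \pi_0(\mathcal{D}_{\widetilde{Z}}^G).$$
Substituting the identifications $\pi_0(\mathcal{D}_Y^G) = \Pic^G(Y)$ and $\pi_1(\mathcal{D}_Y^G) = \mathcal{O}(Y)^{\times^G}$ yields exactly the announced sequence. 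All the outer maps are pullbacks: $u \mapsto (\nu^* u, i^* u)$, then $(a,b) \mapsto (j^* a)(\lambda^* b)^{-1}$, then $L \mapsto (\nu^* L, i^* L)$, and finally $(\widetilde{L}, M) \mapsto j^*\widetilde{L} \otimes (\lambda^* M)^{-1}$. The connecting morphism $\delta$ sends a $G$-invariant unit $c \in \mathcal{O}(\widetilde{Z})^{\times^G}$ to the class of the linearized line bundle obtained, through a quasi-inverse of $T^G$, from the triple $(\mathcal{O}_{\widetilde{X}}, \mathcal{O}_Z, c)$, that is, by using $c$ to glue the trivial linearized bundles along the fibres of $\lambda$.

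To keep the proof self-contained, I would alternatively verify exactness at each of the five spots by hand, mimicking the derivation of the non-equivariant sequence from Proposition \ref{prop_pinching_line_bundles}; the only inputs are the faithfulness, fullness and essential surjectivity of $T^G$ together with the description of the unit groups above. Exactness at the three unit terms and at $\Pic^G(\widetilde{X}) \times \Pic^G(Z)$ is a routine manipulation of gluing data. I expect the main obstacle to be exactness at $\Pic^G(X)$, i.e.\ the equality of the image of $\delta$ with the kernel of $L \mapsto (\nu^* L, i^* L)$: one must check that a linearized line bundle with $\nu^* L$ and $i^* L$ both trivial corresponds under $T^G$ to a triple $(\mathcal{O}_{\widetilde{X}}, \mathcal{O}_Z, \sigma)$ whose gluing datum $\sigma$ is a $G$-invariant unit on $\widetilde{Z}$, and that such an $L$ is equivariantly trivial precisely when $\sigma$ lies in the image of $\mathcal{O}(\widetilde{X})^{\times^G} \times \mathcal{O}(Z)^{\times^G}$. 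This is the point where the compatibility of the linearizations with $\sigma$ is genuinely used; it runs parallel to the non-equivariant case because $T^G$ shares all the formal properties of $T$.
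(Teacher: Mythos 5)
Your proposal is correct, and your fallback option (verifying exactness at each spot by hand from the equivalence $T^G$) is precisely the paper's proof; the genuine difference lies in your primary route through the Mayer--Vietoris sequence for a homotopy pullback of Picard groupoids. The paper argues concretely: it identifies $\mathcal{O}(\widetilde{Z})^{\times^G}$ with the automorphism group of the trivially linearized bundle $(\widetilde{Z}\times\A^1_k, triv)$, defines $\delta(\sigma)$ as the class corresponding to the tuple $(\widetilde{X}\times\A^1_k, triv, Z\times\A^1_k, triv, \sigma)$ --- exactly your gluing description --- and checks exactness at $\Pic^G(X)$, at $\mathcal{O}(\widetilde{Z})^{\times^G}$ and at $\Pic^G(\widetilde{X})\times\Pic^G(Z)$ directly from Proposition \ref{prop_pinching_linearized_line_bundles}. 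The one place the two arguments substantively diverge is exactness at $\mathcal{O}(\widetilde{X})^{\times^G}\times\mathcal{O}(Z)^{\times^G}$: the paper first invokes Weibel's non-equivariant Units--Pic sequence to produce a unit $\phi\in\mathcal{O}(X)^\times$ restricting to a given compatible pair $(\widetilde{\phi},\psi)$, and then needs an auxiliary argument (injectivity of $(\id_G\times\nu)^*$ on units, via schematic dominance of $\id_G\times\nu$) to see that $\phi$ is $G$-invariant; your categorical route gets this for free, since a compatible pair of invariant units is an automorphism of the unit object of the fibre product category, and full faithfulness of $T^G$ then yields an automorphism of $(\mathcal{O}_X, triv)$, which is an invariant unit by your $\pi_1$ computation. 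What your abstract route costs is having to justify (or cite) the six-term Mayer--Vietoris sequence itself, plus two small verifications worth making explicit: that the maximal subgroupoid of the fibre product category is the fibre product of the maximal subgroupoids (true because a morphism of tuples $(\widetilde{\phi},\psi)$ is invertible if and only if both components are), and that the pullback functors $j^*$, $\lambda^*$, $\nu^*$, $i^*$ are monoidal, so that all the maps in the sequence are group homomorphisms. With those remarks added, your argument is complete and arguably cleaner than the paper's.
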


\begin{proof}
We identify $\mathcal{O}(\widetilde{Z})^\times$ with the group of automorphisms of the trivial line bundle $\widetilde{Z} \times \A^1_k$. Then the automorphisms of $(\widetilde{Z} \times \A^1_k, triv)$ (where $triv$ is the trivial linearization) correspond to the elements of $\mathcal{O}(\widetilde{Z})^{\times^G}$. The morphism $\delta$ maps $\sigma \in \mathcal{O}(\widetilde{Z})^{\times^G}$ to the isomorphism class of linearized line bundles over $X$ corresponding (in view of Proposition \ref{prop_pinching_linearized_line_bundles}) to the isomorphism class of the tuple $(\widetilde{X} \times \A^1_k, triv, Z \times \A^1_k, triv, \sigma)$. By construction, $\delta(\sigma)$ is in the kernel of the morphism $\Pic^G(X) \to \Pic^G(\widetilde{X}) \times \Pic^G(Z)$. Conversely, an element $(L,\Phi_L) \in \ker \delta$ corresponds to a tuple $(\widetilde{X} \times \A^1_k, triv, Z \times \A^1_k, triv, \sigma)$ and $\sigma$ is an automorphism of $(\widetilde{Z} \times \A^1_k, triv)$, so $\sigma \in \mathcal{O}(\widetilde{Z})^{\times^G}$ and $(L,\Phi_L) = \delta(\sigma)$.

\medskip
Let $\sigma \in \mathcal{O}(\widetilde{Z})^\times$, viewed as an automorphism of $(\widetilde{Z} \times \A^1_k,triv)$. Then $\sigma$ is in $\ker \delta$ if an only if there exists an isomorphism $(\widetilde{X} \times \A^1_k, triv, Z \times \A^1_k, triv, \sigma) \to (\widetilde{X} \times \A^1_k, triv, Z \times \A^1_k, triv, can)$. Such an isomorphism is given by two automorphisms $\widetilde{\phi}$ of $(\widetilde{X} \times \A^1_k,triv)$ and $\psi$ of $(Z \times \A^1_k,triv)$ such that $can \circ (j^*\widetilde{\phi}) = (\lambda^*\psi)\circ \sigma$. Viewing $\widetilde{\phi}$ and $\psi$ as elements of $\mathcal{O}(\widetilde{X})^{\times^G}$ and $\mathcal{O}(Z)^{\times^G}$, this condition precisely means that $\sigma$ is the image of $(\widetilde{\phi},\psi)$ by the morphism $\mathcal{O}(\widetilde{X})^{\times^G} \times \mathcal{O}(Z)^{\times^G} \to \mathcal{O}(\widetilde{Z})^{\times^G}$. So the sequence is exact at $\mathcal{O}(\widetilde{Z})^\times$.

\medskip
The image of $\mathcal{O}(X)^{\times^G} \to \mathcal{O}(\widetilde{X})^{\times^G} \times \mathcal{O}(Z)^{\times^G}$ is contained in the kernel of $\mathcal{O}(\widetilde{X})^{\times^G} \times \mathcal{O}(Z)^{\times^G} \to \mathcal{O}(\widetilde{Z})^{\times^G}$. Conversely, if $(\widetilde{\phi},\psi)$ is in the kernel then, by the Units-Pic sequence, it is the image of an element $\phi \in \mathcal{O}(X)^\times$. The morphism $\id_G \times \nu : G \times \widetilde{X} \to G \times X$ is schematically dominant so the map $(\id_G \times \nu)^* : \mathcal{O}(G \times X)^\times \to \mathcal{O}(G \times \widetilde{X})^\times$ is injective. The images of $\alpha^*(\phi)$ and $\pr_2^*(\phi)$ are respectively $\widetilde{\alpha}^*(\widetilde{\phi})$ and $\pr_2^*(\widetilde{\phi})$. Since $\widetilde{\phi}$ is $G$-invariant, we have $\widetilde{\alpha}^*(\widetilde{\phi}) = \pr_2^*(\widetilde{\phi})$, so $\alpha^*(\phi) = \pr_2^*(\phi)$ and thus $\phi \in \mathcal{O}(X)^{\times^G}$.

\medskip
Let $\left((\widetilde{L},\Phi_{\widetilde{L}}),(M,\Phi_M)\right) \in \Pic^G(\widetilde{X}) \times \Pic^G(Z)$. Its image in $\Pic^G(Z)$ is trivial if and only if we have an isomorphism of linearized line bundles $(j^*\widetilde{L}) (\lambda^*M)^{-1} \simeq \widetilde{Z}\times \A^1_k,$ (where $\widetilde{Z}\times \A^1_k$ is endowed with the trivial linearization), that is, if and only if we have an isomorphism of linearized line bundles $\sigma : j^*\widetilde{L} \to \lambda^*M$. Hence, by Proposition \ref{prop_pinching_linearized_line_bundles} again, the sequence is exact at $\Pic^G(\widetilde{X}) \times \Pic^G(Z)$.
\end{proof}

\begin{remark}\label{rem_equiv_Units_Pic}
If $X$ is a proper variety then, since $\mathcal{O}(X) = \mathcal{O}(\widetilde{X})=k$, the equivariant Units-Pic sequence can be simplified as 
$$1 \to \mathcal{O}(\widetilde{Z})^{\times^G} / \mathcal{O}(Z)^{\times^G} \to \Pic^G(X) \to \Pic^G(\widetilde{X}) \times \Pic^G(Z) \to \Pic^G(\widetilde{Z}).$$
\end{remark}

\subsection{Equivariant Picard groups of curves}

We can determine the equivariant Picard group of the almost homogeneous curves classified in Theorem \ref{th_general_classification}. 

\begin{theorem}\label{th_general_PicG}
Let $C$ be a seminormal curve and $G$ a smooth connected algebraic group acting faithfully on $C$.
\begin{enumerate}
\item (homogeneous curves)
\begin{enumerate}
  \item \label{1a}If $C$ is a smooth projective conic and $G \simeq \Autgp{C}$ then $\Pic^G(C) = \Pic C = \Z \cdot \omega_C$ where $\omega_C$ is the canonical sheaf, so $\dfrac{1}{2} \deg : \Pic^G(C) \to \Z$ is an isomorphism.
  \item \label{1b}If $C \simeq \A^1_k$ and $G \simeq \G_{a,k} \rtimes \G_{m,k}$ (acting by affine transformations) then $\Pic^G(C) \simeq \widehat{G}(C) \simeq \Z$.
  \item \label{1c}If $G$ is a form of $\G_{a,k}$ or $\G_{m,k}$ and $C$ is a $G$-torsor then $\Pic^G(C)$ is trivial.
  \item \label{1d}If $C$ is a smooth projective curve of genus $1$ and $G \simeq \Autgp{C}^\circ$ then $\Pic^G(C)$ is trivial.
  \end{enumerate}
\item (regular, non-homogeneous curves)
  \begin{enumerate}
  \item \label{2a}If $C \simeq \PP^1_k$ and $G \simeq \G_{a,k} \rtimes \G_{m,k}$ or $G \simeq \G_{m,k}$ then we have a split exact sequence $$1 \to \widehat{G}(C) \simeq \Z \to \Pic^G(C) \to \Pic C  = \Z \cdot [\infty] \simeq \Z \to 0$$ and the morphism $\Pic^G(C) \to \Z$ is identified with the degree map, so $\Pic^G(C) \simeq \Z^2$.
  \item \label{2b}If $G$ is a form of $\G_{a,k}$, $C$ is the regular completion of a $G$-torsor and $P$ is the point at infinity then we have $\Pic^G(C) = \Z \cdot [P] \simeq \Z$ and this isomorphism is identified with the map $\dfrac{1}{[\kappa(P):k]}\deg$.
  \item \label{2c}If $C \simeq \A^1_k$ and $G \simeq \G_{m,k}$ then $\Pic^G(C) \simeq \widehat{G}(C) \simeq \Z$.
  \item \label{2d}If $C$ is a smooth projective conic and $G$ is the centralizer of a separable point $P$ of degree $2$ then we have a split exact sequence $1 \to \widehat{G}(C) \to \Pic^G(C) \to \Pic C = \Z \cdot [P] \simeq \Z \to 0$ and the morphism $\Pic^G(C) \to \Z$ is identified with $\dfrac{1}{2} \deg$.
  \end{enumerate} 
\item (seminormal, singular, non-homogeneous curves)
  \begin{enumerate}
  \item \label{3a}If $G$ is a non-trivial form of $\G_{a,k}$ and $C$ is obtained by pinching the point at infinity $\widetilde{P}$ of the regular completion $\widetilde{C}$ of a $G$-torsor on a point $P$ whose residue field $\kappa(P)$ is a strict subextension of $\kappa(\widetilde{P})/k$ then we have a split exact sequence $$1 \to \kappa(\widetilde{P})^\times / \kappa(P)^\times \to \Pic^G(C) \to \Z \cdot[\widetilde{P}] \simeq \Z \to 0.$$ and the morphism $\Pic^G(C) \to \Z$ is identified with $\dfrac{1}{[\kappa(\widetilde{P}):k]}\deg$.
  \item \label{3b}If $C$ is obtained by pinching two $k$-rational points of $\PP^1_k$ on a $k$-rational point and $G \simeq \G_{m,k}$ then we have a split exact sequence $$1 \to (k^\times \times k^\times) / k^\times \to \Pic^G(C) \to \widehat{G}(\PP^1_k) \simeq \Z \to 0.$$
  \item \label{3c}If $C$ is obtained by pinching a separable point $\widetilde{P}$ of degree $2$ of a smooth projective conic $\widetilde{C}$ on a $k$-rational point $P$, and $G$ is the centralizer of $\widetilde{P}$ in $\Autgp{\widetilde{C}}$ then we have $\Pic^G(C) \simeq \kappa(\widetilde{P})^\times / \kappa(P)^\times$.
  \end{enumerate}
\end{enumerate} 
\end{theorem}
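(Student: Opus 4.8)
The plan is to treat the three families of Theorem \ref{th_general_classification} separately, using three different tools. For the homogeneous torsor cases \ref{1c} and \ref{1d} the curve $C$ is a $G$-torsor over $\Spec k$ (a smooth projective genus-$1$ curve being a torsor under its own connected automorphism group, which acts by translations), so Lemma \ref{lemma_PicG_torsor} gives $\Pic^G(C) \simeq \Pic(\Spec k) = 0$ at once. For the remaining smooth curves \ref{1a}, \ref{1b}, \ref{2a}, \ref{2b}, \ref{2c}, \ref{2d} I would feed each pair $(C,G)$ into the exact sequence of Proposition \ref{prop_PicG_exact_seq}. For the singular curves \ref{3a}, \ref{3b}, \ref{3c} I would apply the proper-case equivariant Units-Pic sequence of Remark \ref{rem_equiv_Units_Pic} with $\widetilde{X}=\widetilde{C}$ and $Z,\widetilde{Z}$ the pinched points, and bootstrap from the values of $\Pic^G(\widetilde{C})$ obtained in the smooth cases.

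For the smooth non-torsor cases the recipe is uniform. First one computes the character group: for $\Autgp{C}$ of a conic (a form of $\PGL_{2,k}$), for every form of $\G_{a,k}$, and for a non-trivial form of $\G_{m,k}$ one has $X(G)=0$ by the structure lemmas for the forms of $\G_{m,k}$ and $\G_{a,k}$, whereas for $\G_{a,k}\rtimes\G_{m,k}$ and split $\G_{m,k}$ one has $\widehat{G}(C)\simeq\Z$. Since $\mathcal{O}(C)^\times=k^\times$ maps trivially into $\widehat{G}(C)$ (constants are $G$-invariant), the sequence collapses to $1 \to \widehat{G}(C) \to \Pic^G(C) \to \Pic(C) \xrightarrow{\alpha^*} \Pic(G\times C)/\pr_2^*\Pic(C)$. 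Then I would identify $\Pic(C)$ classically ($\Z\cdot\omega_C$ for a conic with $\tfrac12\deg$ an isomorphism, $0$ for $\A^1_k$, $\Z\cdot[\infty]$ for $\PP^1_k$, and $\Z\cdot[P]$ for the regular completion of a $\G_a$-torsor and for the degree-$2$-point conic) and show the generating class is linearizable so that $\alpha^*$ vanishes: $\omega_C$ carries its natural linearization on differential forms, while $\mathcal{O}([P])$ (resp. $\mathcal{O}([\infty])$, resp. $\mathcal{O}(1)$) is linearizable because the underlying divisor is $G$-stable and $C$ is reduced (for $\PP^1_k$ this amounts to lifting the Borel $G\subset\PGL_{2,k}$ to $\GL_{2,k}$). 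Surjectivity onto $\Pic(C)$ together with freeness of $\Pic(C)\simeq\Z$ then yields the asserted split extensions by $\widehat{G}(C)$, and the degree normalizations follow from $\deg[P]=[\kappa(P):k]$.

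For the singular cases I would exploit that $G$, being smooth and connected, acts trivially on the zero-dimensional reduced schemes $Z$ and $\widetilde{Z}$ (Lemma \ref{lemma_action_trivial_dim0}), so $\mathcal{O}(\widetilde{Z})^{\times^G}=\kappa(\widetilde{P})^\times$ and $\mathcal{O}(Z)^{\times^G}=\kappa(P)^\times$, giving exactly the first term $\kappa(\widetilde{P})^\times/\kappa(P)^\times$ of the sequence (or $(k^\times\times k^\times)/k^\times$ in case \ref{3b}). The terms $\Pic^G(Z)$ and $\Pic^G(\widetilde{Z})$ are equivariant Picard groups of spectra of fields, hence equal to $X(G)$ computed over the corresponding residue field: these vanish for a form of $\G_{a,k}$ (case \ref{3a}) but equal $\Z$ over the splitting field $\kappa(\widetilde{P})=K$ of a form of $\G_{m,k}$ (relevant to \ref{3c}), and are $\Z$ and $\Z^2$ for the split torus at the rational points of case \ref{3b}. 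Substituting $\Pic^G(\widetilde{C})$ from the smooth cases, the image of $\Pic^G(C) \to \Pic^G(\widetilde{C})\times\Pic^G(Z)$ is the kernel of the restriction $j^*-\lambda^*$ into $\Pic^G(\widetilde{Z})$; whether the copy of $\Z$ coming from $[\widetilde{P}]$ survives (as in \ref{3a} and \ref{3b}, where $j^*$ lands in a group too small to detect it) or is killed (as in \ref{3c}, where $j^*$ turns out to be injective) produces the two shapes of the answer.

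The main obstacle I anticipate is the bookkeeping in this last step: computing $j^*$ and $\lambda^*$ into the equivariant Picard group of the non-split point $\widetilde{Z}$ requires tracking the weights of the linearizations at the fixed points together with the Galois descent data for the non-split form of $\G_{m,k}$ (where $\Gal(K/k)$ simultaneously permutes the two geometric points over $\widetilde{P}$ and negates the character), and matching these with the $\tfrac{1}{[\kappa(\widetilde{P}):k]}\deg$ normalizations. A secondary, more classical, difficulty is pinning down $\Pic(C)=\Z\cdot[P]$ for the regular completion of a $\G_a$-torsor in case \ref{2b}, which rests on the vanishing of the Picard group of the torsor $C\setminus\{P\}$; everything else is formal manipulation of the two exact sequences.
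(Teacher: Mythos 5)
Your overall architecture coincides with the paper's: Lemma \ref{lemma_PicG_torsor} for the torsor cases \ref{1c} and \ref{1d}, the exact sequence of Proposition \ref{prop_PicG_exact_seq} for the remaining smooth cases, and the equivariant Units--Pic sequence of Remark \ref{rem_equiv_Units_Pic}, bootstrapped from the smooth computations, for the pinched cases. But two steps of your plan genuinely fail. First, in cases \ref{1a} and \ref{2d} your uniform recipe --- linearize a generator of $\Pic C$ and conclude that the forgetful map $\Pic^G(C)\to\Pic C$ is surjective --- breaks down when the conic is split. For $C\simeq\PP^1_k$ one has $\Pic C=\Z\cdot\mathcal{O}(1)$, not $\Z\cdot\omega_C$ (resp.\ not $\Z\cdot[P]$, since $\deg[P]=2$), so your parenthetical identifications of $\Pic C$ are false there, and the actual content of the theorem in the split case is that $\mathcal{O}(1)$ is \emph{not} linearizable: for $G=\PGL_{2,k}$ this is Mumford's classical example, which the paper cites, and for the nonsplit centralizer of a separable degree-$2$ point it is a lifting obstruction (the preimage of $G$ in $\GL_{2}$ is the nonsplit extension of $G$ by $\G_{m,k}$, which admits no section). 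Your argument only yields the inclusion $\Z\cdot\omega_C\subseteq\Pic^G(C)$ and, taken at face value, would ``prove'' the false equality $\Pic^G(\PP^1_k)=\Pic\PP^1_k$ for $G=\PGL_{2,k}$. This non-linearizability input is entirely missing from the proposal.

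Second, in case \ref{2b} your route to $\Pic C=\Z\cdot[P]$ via ``vanishing of the Picard group of the torsor $C\setminus\{P\}$'' is wrong for nontrivial forms of $\G_{a,k}$: such a torsor $U$ is a form of $\A^1_k$, whose ordinary Picard group need not vanish (it is in general a nontrivial finite $p$-group), and correspondingly $\Pic C$ is in general strictly larger than $\Z\cdot[P]$. The theorem asserts $\Pic^G(C)=\Z\cdot[P]$ only for the \emph{equivariant} group, and the paper's proof accordingly restricts a linearized bundle to $U$ and uses the equivariant vanishing $\Pic^G(U)\simeq\Pic(\Spec k)=0$ from Lemma \ref{lemma_PicG_torsor} --- the same lemma you invoke correctly in \ref{1c} and \ref{1d} but here replace by a false non-equivariant statement. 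Since your treatment of \ref{3a} substitutes $\Pic^G(\widetilde{C})$ from \ref{2b}, the gap propagates. As a side remark, your plan for \ref{3c} --- computing $j^*$ on $\Pic^G(\widetilde{C})\simeq\Z\cdot[\widetilde{P}]$ directly and checking injectivity by tracking weights and Galois descent --- is a legitimate alternative to the paper's argument (which instead base-changes to $K=\kappa(\widetilde{P})$, uses case \ref{3b} to force $\deg\nu^*L=0$, and kills the residual character); but as written it is only a sketch, and in your proposal it rests on the two defective steps above.
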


\begin{proof}
Case $\ref{1a}$ : The group of characters $X(G)$ is trivial so, by Proposition \ref{prop_PicG_exact_seq}, the forgetful morphism $\Pic^G(C) \to \Pic C$ is injective. If $C = \PP^1_k$ and $G = \PGL_{2,k}$ then it is well-known that the sheaf $\mathcal{O}_{\PP^1_k}(1)$ is not linearizable (see \cite[p.33]{Mumford_GIT}), but the canonical sheaf $\omega_{\PP^1_k} = \mathcal{O}_{\PP^1_k}(-2)$ is linearizable. Hence $\Pic^G(C)$ is the subgroup of $\Pic C$ generated by $\omega_{\PP^1_k}$. More generally, let $C$ be a smooth conic (embedded in $\PP^2_k$ by a closed immersion $i$) and $G = \Autgp{C}$. There exists a Galois extension $K/k$ such that $C_K \simeq \PP^1_K$. Then the pullback morphism $\Pic C \to \Pic C_K \simeq \Z$ is injective (its kernel consists of the forms of the trivial line bundle $C_K \times \A^1_K$, so by Galois cohomology and Hilbert's $90$th theorem it is trivial) and is identified with the degree morphism. Thus if $C$ is not isomorphic to $\PP^1_k$ then there is no line bundle of degree one (otherwise $C$ would have a $k$-rational point) so $\Pic C$ is generated by the canonical sheaf $\omega_C$, which is linearizable.

\medskip
Case $\ref{1b}$ : Every line bundle over $C$ is isomorphic to the trivial bundle. By \cite[Prop. 2.10]{BRI_lin}, the kernel of the forgetful morphism $\Pic^G(C) \to \Pic C$ is the group $\widehat{G}(C)$. Moreover, $\widehat{G}$ is the constant sheaf $\underline{\Z}$.

\medskip
Cases $\ref{1c}$ and $\ref{1d}$ : The curve $C$ is a $G$-torsor so, by Lemma \ref{lemma_PicG_torsor}, the group $\Pic^G(C)$ is trivial.

\medskip
Case $\ref{2a}$ : By Proposition \ref{prop_PicG_exact_seq} again, the sequence $1 \to \widehat{G}(C) \to \Pic^G(C) \to \Pic C$ is exact. Let $\alpha : G \times C \to C$ be the standard action and $P = \Spec k$ the reduced closed subscheme of $C$ supported by $\infty$. Denote by $[\infty]$ the corresponding Weil divisor, so that $\Pic C = \Z \cdot [\infty]$. We have the exact sequence of sheaves $$0 \to  \mathcal{O}_C(-P) \to \mathcal{O}_C \to \mathcal{O}_P \to 0.$$ Since $\alpha$ and $\pr_2$ are flat, and $P$ is $G$-stable, we have a commutative diagram 
\begin{center}
\begin{tikzpicture}[baseline=(m.center)]
\matrix(m)[matrix of math nodes,
row sep=2.5em, column sep=2.5em,
text height=1.5ex, text depth=0.25ex,ampersand replacement=\&]
{0 \& \alpha^*\mathcal{O}_C(-P) \& \alpha^*\mathcal{O}_C \& \alpha^*\mathcal{O}_{P} \& 0 \\
0 \& \pr_2^*\mathcal{O}_C(-P) \& \pr_2^*\mathcal{O}_C \& \pr_2^*\mathcal{O}_{P} \& 0 \\};
\path[->] (m-1-1) edge (m-1-2);
\path[->] (m-1-2) edge (m-1-3);
\path[->] (m-1-3) edge (m-1-4);
\path[->] (m-1-4) edge (m-1-5);
\path[->] (m-2-1) edge (m-2-2);
\path[->] (m-2-2) edge (m-2-3);
\path[->] (m-2-3) edge (m-2-4);
\path[->] (m-2-4) edge (m-2-5);
\path[->] (m-1-3) edge (m-2-3);
\path[->] (m-1-4) edge (m-2-4);
\end{tikzpicture}
\end{center}
which is exact in rows and where the vertical arrows are isomorphisms. This yields an isomorphism $\alpha^*\mathcal{O}_C(-P) \simeq \pr_2^*\mathcal{O}_C(-P)$. Since $C$ is reduced, by Remark \ref{rem_linearizable_reduced}, this implies that the line bundle corresponding to $-[P]$ is linearizable. Hence the forgetful morphism 
$\Pic^G(C) \to \Pic C$ is surjective.

\medskip
Case $\ref{2b}$ : As above, the forgetful morphism $\Pic^G(C) \to \Pic C$ is injective. Let $U$ be the open subscheme of $C$ which is a $G$-torsor, so that $P$ is the reduced closed subscheme of $C$ supported by the point $C \setminus U$. If $L$ is a linearizable line bundle over $C$ then its restriction $L_{|U}$ is linearizable. But $U$ is a $G$-torsor so $\Pic^G(U)$ is trivial. So $L$ corresponds to a Weil divisor in $\Z \cdot [P]$. Conversely, by the same argument as above, the line bundle corresponding to $-[P]$ is linearizable. Therefore $\Pic^G(C) = \Z \cdot [P]$.

\medskip
Case $\ref{2c}$ : The argument is the same as for the case $\ref{1b}$.

\medskip
Case $\ref{2d}$ : The canonical divisor on $C$ is $-[P]$ so $\Pic C = \Z \cdot [P]$. Thus the morphism $\Pic^G(C) \to \Pic C$ is surjective.

\medskip
Case $\ref{3a}$ : The curve $C$ is obtained by the pinching diagram
\begin{tikzpicture}[baseline=(m.center)]
\matrix(m)[matrix of math nodes,
row sep=2.5em, column sep=2.5em,
text height=1.5ex, text depth=0.25ex,ampersand replacement=\&]
{\widetilde{Z} \& \widetilde{C} \\
Z \& C\\};
\path[->] (m-1-1) edge (m-1-2);
\path[->] (m-1-1) edge (m-2-1);
\path[->] (m-2-1) edge (m-2-2);
\path[->] (m-1-2) edge (m-2-2);
\end{tikzpicture}
where $Z = \Spec \kappa(P)$ and $\widetilde{Z} = \Spec \kappa(\widetilde{P})$. By Remark \ref{rem_equiv_Units_Pic}, the equivariant Units-Pic sequence can be written as $$1 \to \mathcal{O}(\widetilde{Z})^\times / \mathcal{O}(Z)^\times \to \Pic^G(C) \to \Pic^G(\widetilde{C}) \times \Pic^G(Z) \to \Pic^G(\widetilde{Z}).$$ Since the group of characters $X(G)$ is trivial, it follows from Proposition \ref{prop_PicG_exact_seq} that the groups $\Pic^G(Z)$ and $\Pic^G(\widetilde{Z})$ are trivial. Moreover by the case \ref{2b}, we have $\Pic^G(\widetilde{C}) = \Z \cdot [P]$.

\medskip
Case $\ref{3b}$ : We adapt the argument of \cite[Ex. 2.15]{BRI_lin}. The curve $C$ is obtained by the pinching diagram
\begin{tikzpicture}[baseline=(m.center)]
\matrix(m)[matrix of math nodes,
row sep=2.5em, column sep=2.5em,
text height=1.5ex, text depth=0.25ex,ampersand replacement=\&]
{\widetilde{Z} \& \PP^1_k \\
Z \& C\\};
\path[->] (m-1-1) edge node[above] {$j$}(m-1-2);
\path[->] (m-1-1) edge node[left] {$\lambda$} (m-2-1);
\path[->] (m-2-1) edge node[below] {$i$} (m-2-2);
\path[->] (m-1-2) edge node[right] {$\nu$} (m-2-2);
\end{tikzpicture}
where $\widetilde{Z} = \Spec k \sqcup \Spec k$ is the reduced subscheme of $\PP^1_k$ supported by $\{0,\infty\}$ and $Z = \Spec k$. By Remark \ref{rem_equiv_Units_Pic} again, the equivariant Units-Pic exact sequence can be written as
$$1 \to \mathcal{O}(\widetilde{Z})^\times / \mathcal{O}(Z)^\times = (k^\times \times k^\times) / k^\times \to \Pic^G(C) \to \Pic^G(\PP^1_k) \times \Pic^G(Z) \to \Pic^G(\widetilde{Z}).$$ 
By Proposition \ref{prop_PicG_exact_seq} we have isomorphisms $\Pic^G(\PP^1_k) \simeq \Pic \PP^1_k \times \widehat{G}(\PP^1_k) \simeq \Z \times \Z$, $\Pic^G(Z) \simeq \widehat{G}(Z) \simeq \Z$ and $\Pic^G(\widetilde{Z}) \simeq \widehat{G}(\widetilde{Z}) \simeq \Z \times \Z$. Since $G$ acts on the fibers over $0$ and $\infty$ of a line bundle $\mathcal{O}_{\PP^1_k}(n)$ with respective weights $n$ and $0$, the morphism $\Pic^G(\PP^1_k) \times \Pic^G(Z) \to \Pic^G(\widetilde{Z})$ corresponds to $\fonction{(\Z \times \Z) \times \Z}{\Z \times \Z}{((n,m),\ell)}{(n+m,m)-(\ell,\ell)}$. Its kernel is $\Z \cdot ((0,1),1) \simeq \widehat{G}(\PP^1_k)$. 

\medskip
Case $\ref{3c}$ : Once more, we have the exact sequence 
$$1 \to \kappa(\widetilde{P})^\times / \kappa(P)^\times \to \Pic^G(C) \to \Pic^G(\widetilde{C}) \times \Pic^G(Z) \to \Pic^G(\widetilde{Z})$$
where $Z = \Spec k$ and $\widetilde{Z} = \Spec \kappa(\widetilde{P})$. Let $(L,\Phi_L) \in \Pic^G(C)$ and let us show that its image is trivial. By Proposition \ref{prop_PicG_exact_seq}, we have $\Pic^G(Z) \simeq \widehat{G}(Z) = \Hom_{k-\mathrm{gp}}(G,\G_{m,k}) = \{1\}$. Hence $i^*(L,\Phi_L)$ is trivial, and so $j^*\nu^*(L,\Phi_L)$ is trivial too. The line bundle $L_K$ over $C_K$ is linearizable so, by the case $\ref{3b}$, it has degree $0$. So $L$ and $\nu^*L$ have degree $0$ too. Since $\Pic \widetilde{C} = \Z \cdot \omega_{\widetilde{C}}$, the line bundle $\nu^*L$ is trivial. Its linearization is given by an element $\chi \in \widehat{G}(\widetilde{C})$ (and the linearization of $j^*\nu^*L$ is given by $j^*(\chi)$). The morphism $\widehat{G}(\widetilde{Z}) \to \Pic^G(\widetilde{Z})$ is injective so $j^*(\chi)$ is trivial. After the extension of scalars to $K= \kappa(\widetilde{P})$, we have $\chi_K \in \widehat{G_K}(\widetilde{C}_K)$ whose pullback in $\widehat{G_K}(\widetilde{Z}_K)$ is trivial. But $G_K \simeq \G_{m,K}$ so $\widehat{G_K}$ is the constant sheaf $\underline{\Z}$. Hence $\chi_K$ is trivial, so $\chi$ is trivial too. Therefore the image of $(L,\Phi_L)$ in $\Pic^G(\widetilde{C}) \times \Pic^G(Z)$ is trivial.
\end{proof}

\begin{remark}
Since a line bundle on a projective curve is ample if and only if it has positive degree, we have a full description of ample linearized line bundles over seminormal almost homogeneous curves.
\end{remark}

\begin{corollary}
Let $C$ be a seminormal curve and $G$ a non-trivial smooth connected algebraic group acting faithfully on $C$. Then $C$ can be embedded in the projectivization of a finite-dimensional $G$-module, except in the cases $\ref{1d}$, $\ref{3b}$ and $\ref{3c}$ of Theorem \ref{th_general_PicG}, that is: 
\begin{itemize}
 \item $C$ is a smooth projective curve of genus $1$ and $G \simeq \Autgp{C}^\circ$;
 \item $C$ is obtained by pinching two $k$-rational points of $\PP^1_k$ on a $k$-rational point and $G \simeq \G_{m,k}$;
 \item $C$ is obtained by pinching a separable point $\widetilde{P}$ of degree $2$ of a smooth projective conic $\widetilde{C}$ on a $k$-rational point $P$, and $G$ is the centralizer of $\widetilde{P}$ in $\Autgp{\widetilde{C}}$.
\end{itemize}
\end{corollary}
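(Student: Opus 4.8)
The plan is to reduce everything to the criterion of Lemma~\ref{lemma_linearization_embedding}: a quasi-projective $G$-variety admits an equivariant immersion into the projectivization of a finite-dimensional $G$-module if and only if it carries a very ample linearizable line bundle. A curve over $k$ is quasi-projective, and the $n$th power of a linearizable line bundle is again linearizable, so for the embeddability direction it suffices, on each \emph{projective} curve of the list, to exhibit a linearizable line bundle of positive degree: such a bundle is ample on an integral projective curve, and a suitable power of it is very ample. First I would go through the projective cases \ref{1a}, \ref{2a}, \ref{2b}, \ref{2d} and \ref{3a} of Theorem~\ref{th_general_PicG} and read off from the stated description of $\Pic^G(C)$ that the forgetful map $\Pic^G(C) \to \Pic C$ hits a line bundle of positive degree (a positive multiple of $\omega_C^{-1}$, of $[\infty]$, of $[P]$, or of $[\widetilde{P}]$, according to the case); this produces the desired embedding.

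For the affine cases \ref{1b}, \ref{1c} and \ref{2c}, where $\Pic C$ is trivial and no ample bundle is available, I would instead realise $C$ as an open subscheme of one of the projective curves already treated and compose immersions. Concretely, $\A^1_k$ under $\G_{a,k} \rtimes \G_{m,k}$ or $\G_{m,k}$ is $G$-equivariantly open in $\PP^1_k$ (case \ref{2a}); a torsor under a non-trivial form of $\G_{m,k}$ is, by Lemma~\ref{lemma_conic}, the complement of a separable point of degree $2$ in a conic (case \ref{2d}); and a torsor under a form of $\G_{a,k}$ is open in the regular completion treated in case \ref{2b}. In each instance an equivariant open immersion of $C$ into a projective curve, followed by the closed embedding of that curve into $\PP(V)$, is an equivariant immersion of $C$ into $\PP(V)$.

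For the negative direction I would argue by contradiction. If $C$ is one of the projective curves of cases \ref{1d}, \ref{3b} or \ref{3c} and $i : C \to \PP(V)$ is an equivariant immersion, then $C$ being proper forces $i$ to be a closed immersion, so $L = i^*\mathcal{O}_{\PP(V)}(1)$ is very ample; moreover $\mathcal{O}_{\PP(V)}(1)$ is linearized by the $G$-module structure of $V$, hence $L$ is a linearizable line bundle of positive degree on $C$. But Theorem~\ref{th_general_PicG} shows the opposite: $\Pic^G(C)$ is trivial in case \ref{1d}, and the explicit descriptions of $\Pic^G(C)$ in cases \ref{3b} and \ref{3c} (as extracted in the proof of \ref{3c}) force every linearizable line bundle to have degree $0$. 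This contradiction rules out the embedding. The main obstacle is precisely this non-embeddability step: one must be confident that ``linearizable implies degree $0$'' is genuinely what the equivariant Units--Pic sequence yields in cases \ref{3b} and \ref{3c}, and that ampleness fails for degree-$0$ bundles and all their powers on these singular integral projective curves. The positive direction is then essentially bookkeeping over the list of Theorem~\ref{th_general_PicG}.
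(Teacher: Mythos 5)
Your proposal is correct and is essentially the argument the paper intends: the corollary is stated without a separate proof precisely because it follows from Lemma \ref{lemma_linearization_embedding} together with the computations of $\Pic^G(C)$ in Theorem \ref{th_general_PicG} and the remark that ampleness on a projective curve is equivalent to positive degree, which is exactly how you proceed (including the reduction of the affine cases to equivariant open immersions into the projective models, and the degree-$0$ obstruction in cases \ref{1d}, \ref{3b}, \ref{3c}). Your only cosmetic omission is the trivial $\G_{m,k}$-torsor $\A^1_k \setminus \{0\}$ in case \ref{1c}, which embeds equivariantly in $\PP^1_k$ just as in your other affine cases.
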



\bibliographystyle{plain}
\bibliography{AHC}
\addcontentsline{toc}{section}{\protect\numberline{}References}

\end{document}